\theoremstyle{plain}
\newtheorem{thm}{Theorem}
\newtheorem{lem}{Lemma}
\newtheorem{prop}{Proposition}
\newtheorem{cor}{Corollary}
\theoremstyle{definition}
\newtheorem*{rem}{Remark}
\newtheorem*{rems}{Remarks}
\newenvironment{pf}
{\begin{proof}} {\end{proof}}
\newcommand{\disp}{\displaystyle}
\DeclareMathOperator{\sgn}{sgn}
\DeclareMathOperator{\bigO}{\Cal{O}}
\DeclareMathOperator{\Iff}{\Leftrightarrow}
\newcommand{\eps}{\varepsilon}
\newcommand{\vp}{\varphi}
\newcommand{\al}{\alpha}
\newcommand{\be}{\beta}
\newcommand{\ga}{\gamma}
\newcommand{\de}{\delta}
\newcommand{\De}{\Delta}
\newcommand{\Ga}{\Gamma}
\newcommand{\te}{\theta}
\newcommand{\la}{\lambda}
\newcommand{\om}{\omega}
\newcommand{\ol}{\overline}
\newcommand{\nid}{\noindent}
\newcommand{\iny}{\infty}
\newcommand{\del}{ \partial}
\newcommand{\LP}{\Delta}
\newcommand{\gr}{\nabla}
\newcommand{\pri}{\prime}
\newcommand{\hG}{\hat\Ga}
\newcommand{\hg}{\hat\ga}
\newcommand{\dV}{\sqrt{\bar{\ga}}\, \psi\, d\rho dt}
\newcommand{\rhoo}{\rho + \rho^{\al} }
\newcommand{\norm}[1]{\left\vert \left\vert #1\right\vert\right\vert}
\newcommand{\abs}[1]{\left\vert#1\right\vert}
\newcommand{\set}[1]{\left\{#1\right\}}
\newcommand{\brac}[1]{\left[#1\right]}
\newcommand{\pr}[1]{\left( #1 \right) }
\newcommand{\rhoa}[1]{\rho + #1\rho^{\al} }
\newcommand{\Cal}[1]{\ensuremath{\mathcal{#1}}}
\newcommand{\N}{\ensuremath{\mathbb{N}}}
\newcommand{\R}{\ensuremath{\mathbb{R}}}
\newcommand{\Z}{\ensuremath{\mathbb{Z}}}
\newcommand{\C}{\ensuremath{\mathbb{C}}}
\numberwithin{equation}{section}
\numberwithin{lem}{section}
\numberwithin{cor}{section}
\newcommand{\Keywords}[1]{\par\noindent 
{\small{\bf Keywords\/}: #1}}
\newcommand{\MSC}[1]{\par\noindent 
{\small{\bf Mathematics Subject Classification\/}: #1}}
\date{}
\begin{document}
\author{Blair Davey}
\address{Department of Mathematics, The University of Chicago, 5734 South University Ave, Chicago, IL, 60615, USA}
\email{bdavey@math.uchicago.edu}

\thanks{The author was supported by the National Science and Engineering Research Council of Canada, PGSD2-404040-2011.}

\title{Some quantitative unique continuation results for \\ eigenfunctions of the magnetic Schr\"odinger operator}
\maketitle

\begin{abstract}
We prove quantitative unique continuation results for solutions of $-\LP u + W\cdot \gr u + Vu = \la u$, where $\la \in \C$ and $V$ and $W$ are complex-valued decaying potentials that satisfy $|V(x)| \lesssim {\langle x\rangle^{-N}}$ and $|W(x)| \lesssim {\langle x\rangle^{-P}}$.  For $\disp \mathbf{M}(R) = \inf_{|x_0| = R}\norm{u}_{L^2\pr{B_1(x_0)}}$, we show that if the solution $u$ is non-zero, bounded, and $u(0) = 1$, then $\mathbf{M}(R) \gtrsim \exp\pr{-C R^{\be_0}(\log R)^{A( R)}}$, where $\disp \be_0 = \max\set{2 - 2P, \frac{4-2N}{3}, 1}$.  Under certain conditions on $N$, $P$ and $\la$, we construct examples (some of which are in the style of Meshkov) to prove that this estimate for $\disp \mathbf{M}\pr{R}$ is sharp.  That is, we construct functions $u, V$ and $W$ such that $-\LP u + W\cdot \gr u + Vu = \la u$, $|V(x)| \lesssim {\langle x\rangle^{-N}},$ $|W(x)| \lesssim {\langle x\rangle^{-P}}$ and $\abs{u(x)} \lesssim \exp\pr{-c|x|^{\be_0}\pr{\log |x|}^C}$. \\

\Keywords{quantitative unique continuation, elliptic partial differential equation, Carleman estimates, eigenfunction, sharp constructions} \\

\MSC{35J10, 35J15, 35B60}
\end{abstract}

\section{Introduction}

Since all bounded harmonic functions are constant, it seems natural to consider the behavior of bounded solutions to more general elliptic equations.  In \cite{BK}, \cite{K} and \cite{K2}, it was shown that if $u$ solves $\LP u + Vu = 0$, where $u$ and $V$ are bounded with $u$ normalized so that $u(0) = 1$, then $\disp \mathbf{M}(R) := \inf_{|x_0| = R}\norm{u}_{L^2\pr{B_1(x_0)}} \gtrsim \exp\pr{-cR^{4/3}\log R}$.  This result was first proved in \cite{BK}, where Bourgain and Kenig used a Carleman estimate to establish a quantitative unique continuation result, then applied it to a problem in Anderson localization.  The work in \cite{M} shows that when $u$ and $V$ are complex-valued,  this estimate for $\mathbf{M}\pr{R}$ is sharp .  That is, Meshkov constructed non-trivial, bounded, complex-valued functions $u$ and $V$ that satisfy  $\abs{u(x)} \lesssim e^{-c|x|^{4/3}}$ and $\LP u + V u = 0$.  Meshkov also gave a qualitative version of the result from \cite{BK}; he showed that if $u$ decays faster than $\exp\pr{-c |x|^{4/3}}$, then $u$ must equal zero.  Since the Carleman approach does not distinguish between real and complex values, this method does not improve the estimate for $\mathbf{M}(R)$ when we restrict to real-valued $u$ and $V$.  Perhaps it is possible to reduce the exponent of $4/3$ through a different approach.

In \cite{EKPV}, the authors were interested in determining the strongest possible decay rate for solutions of the equation $\del_t u = i\pr{\LP u + Vu}$.  Their results imply that if $u$ and $V$ are time-independent, and $V$ decays according to $|V(x)| \lesssim \langle x \rangle ^{-N}$, then there exists $c_0 > 0$ such that if
$$\int_{\R^n}e^{c_0|x|^{\be_0}}|u(x)|dx < \iny, \; \textrm{where} \;\be_0 = \frac{4-2N}{3},$$
then $u \equiv 0$. For all $0 \le N < 1/2$, the author of \cite{CS} constructed examples in the style of Meshkov \cite{M} to prove that this qualitative result is sharp.

In this paper, we establish quantitative versions of the results from \cite{EKPV} (summarized in the previous paragraph) by finding lower bounds for the function $\disp \mathbf{M}\pr{R}$.  We also consider what happens with the addition of a non-zero magnetic potential and how eigenfunctions behave.  That is, we study the growth of solutions to $-\LP u + W \cdot \gr u + V u = \la u$, where $\la \in \C$ and $V$ and $W$ are complex-valued potentials that decay at infinity.  And under certain conditions on $\la$, $V$ and $W$, we are able to construct examples to prove that our lower bounds for $\disp \mathbf{M}(R)$ are sharp.  If $V$ and $W$ do not both decay too quickly, these examples are done in the style of Meshkov.  Otherwise, the constructions are much simpler.

The fact that Carleman estimates, order of vanishing results, and (quantitative) unique continuation theorems have been useful in various areas, like geometry and physics for example, motivated this paper.  The author very much hopes that the work presented here will find applications in a variety of settings.  \\

Recall that $\langle x \rangle = \sqrt{1 + \abs{x}^2}$.  Let $\la \in \C$ and suppose that $u$ is a solution to 
\begin{equation}
- \LP u + W\cdot \gr u + Vu = \la u  \;\; \textrm{in} \;\; \R^n,
\label{epde}
\end{equation} 
where 
\begin{equation}
|V(x)| \le A_1{\langle x\rangle^{-N}}, 
\label{vBd}
\end{equation}
\begin{equation}
|W(x)| \le A_2{\langle x\rangle^{-P}},
\label{wBd}
\end{equation}
for $N, P, A_1, A_2 \ge 0$.  Assume also that $u$ is bounded,
\begin{equation}
\norm{u}_{\iny} \le C_0,
\label{uBd}
\end{equation}
and normalized,
\begin{equation}
u(0) \ge 1.
\label{L2lBd}
\end{equation}
Define $\be_c = \disp \max\set{2 - 2P, \frac{4-2N}{3}}$, $\be_0 = \max\set{\be_c, 1}$.  For large $R$, let
$$\mathbf{M}(R) = \inf_{|x_0| = R}\norm{u}_{L^2\pr{B_1(x_0)}}.$$

The following theorem is the main result of this paper.

\begin{thm}
Assume  that the conditions described above in (\ref{epde})-(\ref{L2lBd}) hold.  Then there exist constants $\tilde C_5(n)$, $C_6\pr{n, N, P}$, $C_7\pr{n, N, P, A_1, A_2}$, $R_0\pr{n, N, P, \la, A_1, A_2, C_0}$, such that for all $R \ge R_0$,
\begin{enumerate}[(a)]
\item if $\be_c > 1$ ($\be_0 = \be_c$), then
\begin{equation}
\mathbf{M}(R) \ge \tilde C_5\exp\pr{-C_7 R^{\be_0}(\log R)^{C_6}},
\label{Mesta}
\end{equation}
\label{partA}
\item if $\be_c < 1$ ($\be_0 = 1$), then
\begin{equation}
\mathbf{M}(R) \ge \tilde C_5\exp\pr{-C_7 R(\log R)^{C_6\log\log R}}.
\label{Mestb}
\end{equation}
\label{partB}
\end{enumerate}  
\label{MEst}
\end{thm}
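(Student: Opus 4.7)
My plan follows the Bourgain--Kenig/Meshkov framework: prove an $L^2$ Carleman estimate for $-\LP$ with a weight carefully adapted to both potentials, apply it to a suitable cutoff of the solution across a three-annulus region, absorb the first- and zeroth-order potential terms by taking the Carleman parameter $\tau$ large, and then optimize in $\tau$.

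\emph{Rescaling.} Fix $x_0$ with $|x_0| = R$ and set $v(y) = u(Ry)$ for $y \in B_3 \subset \R^n$. The PDE becomes $-\LP v + \tilde W \cdot \gr v + \tilde V v = 0$ with $\tilde V(y) = R^2\pr{V(Ry) - \la}$ and $\tilde W(y) = R\,W(Ry)$. Restricted to $|y| \sim 1$ these satisfy $\norm{\tilde V}_\infty \lesssim R^{2-N} + R^2|\la|$ and $\norm{\tilde W}_\infty \lesssim R^{1-P}$, while $\norm{v}_\infty \le C_0$ and $v(0) \ge 1$. Quantitative unique continuation at scale $R$ is thereby reduced to propagation of smallness on a fixed annular region with large but controlled coefficients.

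\emph{Carleman inequality and absorption.} I would invoke a weighted estimate of the shape
\begin{equation*}
\tau^3 \int \phi^{-1-2\tau}|f|^2 + \tau \int \phi^{1-2\tau}|\gr f|^2 \;\lesssim\; \int \phi^{1-2\tau}|\LP f|^2,
\end{equation*}
valid for $f \in C_c^\infty(B_3 \setminus B_{1/R})$ and $\tau \ge \tau_0$, with a Bourgain--Kenig-type weight (for example $\phi(y) = |y|\exp\pr{\gamma(\log|y|)^2}$). The gradient term on the left is what makes it possible to absorb the magnetic contribution. Apply this to $f = \psi v$, where $\psi$ is a smooth cutoff equal to $1$ on a small ball at the origin and on $B_{1/R}(x_0/R)$ but vanishing in two thin transition annuli; use the PDE where $\psi = 1$ to replace $\LP v$; and choose
\begin{equation*}
\tau \;\sim\; \max\set{\norm{\tilde V}_\infty^{2/3},\; \norm{\tilde W}_\infty^2,\; \tau_0} \;\sim\; R^{\be_c} + \tau_0(\la).
\end{equation*}
The potential contributions then absorb into the left, leaving on the right only commutator terms $|[\LP,\psi]v|$ supported in the two transition annuli, each bounded by $\norm{v}_\infty$ times known weight values.

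\emph{Optimization and the log factors.} Keeping on the left only the $v(0) \ge 1$ contribution near the origin and the lower-bound contribution near $x_0/R$, and exploiting radial monotonicity of $\phi$ to compare the three weight values, yields an inequality of the form
\begin{equation*}
\phi\pr{x_0/R}^{-2\tau}\norm{v}_{L^2(B_{1/R}(x_0/R))}^2 \;\gtrsim\; \phi_{\mathrm{orig}}^{-2\tau} - C_0^2\, \phi_{\mathrm{trans}}^{-2\tau}.
\end{equation*}
Choosing $\tau = C R^{\be_c}(\log R)^{C_6}$ and undoing the rescaling produces (\ref{Mesta}) when $\be_c > 1$. When $\be_c < 1$ the admissible $\tau$ is no longer dominated by $R^{\be_c}$ but by a baseline forced by the $|\la|$ and $\tau_0$ contributions; iterating the above argument over a chain of dyadic annular scales, with $\tau$ slightly increasing at each scale, generates the $(\log R)^{C_6\log\log R}$ factor in (\ref{Mestb}). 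The main obstacle is the Carleman inequality itself: the weight must be tuned so that Meshkov's sharp exponent $(4-2N)/3$ coming from $V$ and the exponent $2-2P$ coming from $W$ are produced \emph{simultaneously}, which requires both the logarithmic correction in $\phi$ and the careful retention of the $\tau\int \phi^{1-2\tau}|\gr f|^2$ term on the left throughout the argument.
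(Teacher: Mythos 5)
Your proposal correctly identifies the overall Carleman-plus-absorption scheme and the role of the gradient term in handling $W$, but it follows the Bourgain--Kenig route of rescaling to the origin, and this is precisely the step that cannot produce the improved exponent $\be_c$ when $\be_c < 4/3$. After setting $v(y) = u(Ry)$, your rescaled potential $\tilde V(y) = R^2\pr{V(Ry) - \la}$ satisfies $|\tilde V| \lesssim R^{2-N}$ only for $|y| \sim 1$; for $|y| \sim 1/R$ one has $|Ry| \sim 1$, the decay of $V$ gives no gain, and so only $|\tilde V| \lesssim R^2$. The Carleman estimate is applied to $f = \psi v$ supported on $B_3 \setminus B_{1/R}$, and the absorption $\tau^3\int \phi^{-1-2\tau}|f|^2 \gtrsim \int \phi^{1-2\tau}|\tilde V f|^2$ then requires $\tau \gtrsim \norm{\tilde V}_{L^\iny(\supp f)}^{2/3} \sim R^{4/3}$, not $\tau \gtrsim R^{(4-2N)/3}$. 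The single-scale argument therefore reproduces Bourgain--Kenig but does not improve it, no matter how the logarithmic correction in $\phi$ is tuned. The paper's fix is a two-point shift (Proposition \ref{IH}): choose $x_0$ and $y_0$ on the same ray with $|y_0| = |x_0|^{\ga}$ and rescale via $u_S(x) = u(y_0 + Sx)$, $S = |y_0 - x_0|$, so the cutoff is supported entirely far from the original origin and the decay rates of $V$ and $W$ are usable everywhere on its support. Iterating this step along a chain $T_1, T_2 = T_1^{\ga_1}, \ldots, T_{m+1} = R$ (with Proposition \ref{baseC} as the base case) is what actually drives the exponent $\be_j$ toward $\be_c$; the logarithmic factors in the theorem come from the $\de_j$'s defined by $T_j^{\de_j} \sim \log T_j$ at each step, not from dyadic scales.

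There is a second, independent gap in how you handle the eigenvalue. Folding $\la$ into $\tilde V$ gives $\norm{\tilde V}_\iny \gtrsim R^2|\la|$ and hence $\tau \gtrsim R^{4/3}|\la|^{2/3}$, a superlinear rate in $R$ that is fatal for part (\ref{partB}). The paper instead proves a Carleman estimate for $\LP + \la$ directly (Corollary \ref{CT}) in the range $\al > C_2\pr{1 + \sqrt{|\la|}}$; after rescaling the eigenvalue becomes $S^2\la$, so the admissible range is $\al \gtrsim S\sqrt{|\la|}$, linear in the scale. That square-root dependence on the eigenvalue is what allows the iteration to drive the exponent all the way down to $1$ when $\be_c < 1$.
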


\begin{rems}
It is interesting to note that the only value in Theorem \ref{MEst} that depends on the eigenvalue, $\la$, is  the starting point for the the radius, $R_0$.  
The missing case of $\be_c = 1$ will be explained later on.
\end{rems}

To prove this theorem, we will use an iterative argument based on two propositions.  The first step in the proof of each proposition uses the following Carleman estimate.  This Carleman estimate may be thought of as a corollary to Lemma \ref{DFCar} in Section \ref{carEst}.  Lemma \ref{DFCar} follows the same approach as the Carleman estimate by Donnelly and Fefferman in \cite{DF1} and \cite{DF2}, but establishes a different estimate.  Various Carleman estimates were used by Donnelly and Fefferman in their study of local geometric properties as well as global growth estimates.  In \cite{DF1} and \cite{DF2}, they established estimates for the Hausdorff measure of nodal sets of eigenfunctions of the Laplacian.  In \cite{DF}, Donnelly and Fefferman proved global estimates for the growth of bounded harmonic functions on non-compact manifolds.  This work is similar to the latter since it presents global estimates for the growth of bounded eigenfunctions of elliptic equations.

\begin{cor} [Corollary to Lemma \ref{DFCar}]
Let $\la \in \C$.  There exist constants $C_1, C_2, C_3$, depending only on the dimension $n$, and an increasing function $ w(r)$, $0 < r < 6$, so that $$\frac{1}{C_1} \le \frac{ w(r)}{r} \le C_1$$ and such that for all $f \in C^\iny_0\left(B_6(0)\setminus\{0\}\right)$, $\al > C_2\pr{1 + \sqrt{\abs{\la}}}$,
$$\al^3\int w^{-2-2\al}\abs{f}^2 + \al \int w^{-2\al}|\gr f|^2 \le C_3 \int  w^{-2\al}\abs{\LP f + \la f}^2.$$
\label{CT}
\end{cor}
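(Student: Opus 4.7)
The plan is to derive the corollary as a specialization of Lemma \ref{DFCar}. That lemma, which follows the Donnelly--Fefferman approach, supplies both the construction of the radial weight $w(r) \asymp r$ on $(0,6)$ and a weighted Carleman estimate for the pure Laplacian on $B_6(0)\setminus\{0\}$. The corollary is obtained by incorporating the zeroth-order eigenvalue perturbation $\la f$ into the operator on the right-hand side.

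Concretely, I would apply Lemma \ref{DFCar} to $f \in C_0^\infty(B_6(0)\setminus\{0\})$ to obtain an inequality of the schematic form
\begin{equation*}
\alpha^3 \int w^{-2-2\alpha}|f|^2 + \alpha \int w^{-2\alpha}|\gr f|^2 \le C \int w^{-2\alpha}|\LP f|^2.
\end{equation*}
The pointwise inequality $|\LP f|^2 \le 2|\LP f + \la f|^2 + 2|\la|^2 |f|^2$ then replaces $\LP f$ by $\LP f + \la f$ on the right, at the cost of the perturbation $2C|\la|^2 \int w^{-2\alpha}|f|^2$, and the core of the proof is absorbing this perturbation into the left-hand side. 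A naive absorption using $w^{-2\alpha} \le (6C_1)^2 w^{-2-2\alpha}$ only delivers the condition $\alpha \gtrsim |\la|^{2/3}$. To reach the sharper threshold $\alpha > C_2(1+\sqrt{|\la|})$, I would distribute the perturbation across both the $\alpha^3$-zeroth-order and the $\alpha$-gradient terms on the left by means of an auxiliary integration-by-parts identity: multiplying the equation $\LP f + \la f = g$ by $w^{-2\alpha}\bar f$ and taking real parts produces a bound for $|\la|\int w^{-2\alpha}|f|^2$ in terms of $\int w^{-2\alpha}|\gr f|^2$, an $\int w^{-2\alpha-1}|\gr w||f||\gr f|$ cross term, and an $\int w^{-2\alpha}|f||g|$ source term. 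A Young's inequality then redistributes a factor of $|\la|^{1/2}$ onto each of the two left-hand side terms, so that the combined absorption succeeds precisely under $\alpha > C_2(1+\sqrt{|\la|})$.

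The main obstacle is the bookkeeping of this absorption: tracking the $|\gr w|$-remainders produced by differentiating the weight $w^{-2\alpha}$, verifying that every constant depends only on $n$, and balancing the gradient and zeroth-order absorption conditions so that the single threshold $\alpha > C_2(1+\sqrt{|\la|})$ is enough to dominate every cross term. The dimensional dependence of $C_1, C_2, C_3$ and the comparison $w \asymp r$ all trace back to Lemma \ref{DFCar}; once that lemma is granted, the corollary follows from the algebraic manipulations sketched above.
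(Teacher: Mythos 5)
Your proposal rests on a misreading of Lemma \ref{DFCar}: that lemma already contains the eigenvalue term, being a weighted bound for $\int \bar r^{-2\al}|\LP u + \la u|^2$ that is valid precisely for $\al > C_2(1+\sqrt{|\la|})$. The paper derives Corollary \ref{CT} from it by specializing to $M=\R^n$ and rescaling the ball $B(p,h)$ to $B_6(0)$ (with the attendant renaming of constants, since $\la$ scales quadratically). There is no separate ``pure Laplacian'' Carleman estimate in the paper from which one is asked to perturb; indeed, the remark after the corollary explains that the entire difficulty was to prove the lemma itself at the $\sqrt{|\la|}$ threshold, by threading $\la$ through the Donnelly--Fefferman commutator computation in $I_4$ so that it appears \emph{linearly} against the $\al^3$ main term.

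Even taken on its own terms, the perturbation-and-absorption route you sketch cannot reach the stated threshold. After the pointwise bound $|\LP f|^2 \le 2|\LP f + \la f|^2 + 2|\la|^2|f|^2$, the error is $|\la|^2\int w^{-2\al}|f|^2$, and naive absorption into $\al^3\int w^{-2-2\al}|f|^2$ gives only $\al\gtrsim|\la|^{2/3}$. The integration-by-parts identity you propose,
\begin{equation*}
|\la|\int w^{-2\al}|f|^2 \le \int w^{-2\al}|\gr f|^2 + 2\al\int w^{-2\al-1}|\gr w|\,|f|\,|\gr f| + \int w^{-2\al}|f|\,|g|,
\end{equation*}
when multiplied through by another factor of $|\la|$ to control $|\la|^2\int w^{-2\al}|f|^2$, produces the term $|\la|\int w^{-2\al}|\gr f|^2$; absorbing that into $\al\int w^{-2\al}|\gr f|^2$ requires $\al\gtrsim|\la|$, which is \emph{worse} than the naive $|\la|^{2/3}$. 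The ``redistribute $|\la|^{1/2}$ onto each term'' step has no concrete realization: Young's inequality splits a product, but the obstruction here is that $\la$ was squared by the triangle inequality at the outset, and no post hoc integration by parts undoes that squaring. The $\sqrt{|\la|}$ threshold is only accessible by keeping $\la$ linear inside the Carleman commutator as the paper does in the proof of Lemma \ref{DFCar}, not by perturbing after the fact.
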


\begin{rem}
While attempting to prove Lemma \ref{DFCar} through the approach of Donnelly and Fefferman, I first established an estimate for all $\al > C_2\pr{1 + \abs{\la}^{1/2 +\eps}}$.  In the scaling argument, since the eigenvalue $\la$ is replaced by $R^2 \la$, this gave $\al \gtrsim R^{1+2\eps}$.  Since $\mathbf{M}(R) \gtrsim \exp\pr{C \al}$, I knew that in order to prove Theorem \ref{MEst}(\ref{partB}), I needed to improve the lemma so that it held for all $\al > C_2\pr{1 + \sqrt{\abs{\la}}}$.  I learned from Carlos Kenig \cite{K0} that he and Claudio Mu\~noz had proven a Carleman estimate that is essentially the same as Corollary \ref{CT} by following the approach presented in \cite{BK}.  This motivated me to improve my result and establish an eigenfunction version of the Carleman estimate in \cite{BK}, \cite{K} and \cite{K2}.
\end{rem}

The first of the two propositions used in the iterative argument is a variation of a theorem in \cite{BK}, \cite{K} and \cite{K2}.  This proposition may be thought of as the base case.

\begin{prop} Assume that conditions (\ref{epde})-(\ref{L2lBd}) hold.  For $\disp |x_0| \ge \left\{\begin{array}{ll} \max\set{\frac{C_2^3 \pr{1 + \sqrt{\abs{\la}}}^3}{4 C_3 w^2\pr{3}A_1^2}, 2} & \textrm{if}\; W \equiv 0 \\ \max\set{\frac{C_2\pr{1 + \sqrt{\abs{\la}}}}{4 C_3 A_2^2}, \frac{w\pr{3} A_1}{4 C_3 A_2^3}, 2} & \textrm{if}\; W \not\equiv 0  \end{array}\right.$,
$$\int_{B_1(x_0)}\abs{u}^2 \ge C_5 \exp\pr{-C_4 |x_0|^{\be_1}\log |x_0|},$$
where $\disp \be_1 = \left\{\begin{array}{ll} 4/3 & \textrm{if}\; W \equiv 0 \\ 2 & \textrm{if}\; W \not\equiv 0 \end{array}\right.$.  Furthermore, $C_5 = C_5(n)$ and $\disp C_4 = C_4\pr{n, A_1, A_2}$.
\label{baseC}
\end{prop}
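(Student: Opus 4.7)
The plan is to combine a single application of the Carleman estimate of Corollary \ref{CT} with a rescaling argument, following the strategy of \cite{BK}.  Set $R = |x_0|$ and $\tilde u(y) = u(Ry)$, so that $\tilde u$ satisfies
\[
-\Delta \tilde u + R\,\tilde W \cdot\nabla \tilde u + R^2 \tilde V\,\tilde u = R^2\lambda\,\tilde u \quad \text{on } \R^n,
\]
with $\tilde W(y) = W(Ry)$, $\tilde V(y) = V(Ry)$, still bounded in $L^\infty$ by $A_2$ and $A_1$ respectively.  The normalization becomes $\tilde u(0) \ge 1$ and $\tilde x_0 := x_0/R$ lies on the unit sphere.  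Standard interior $C^1$-estimates on the scale $\sim 1/R$ (the scale on which the rescaled potentials are regular) yield a ball $B_{c/R}(0)$ on which $|\tilde u|\ge 1/2$, so that $\int_{B_{c/R}(0)}|\tilde u|^2 \gtrsim R^{-n}$; this is the quantitative input that the Carleman weight will amplify.

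Apply Corollary \ref{CT} with the weight centered at $\tilde x_0$ (by translation invariance of $\Delta$) to $f = \eta\tilde u$, where $\eta$ is a smooth radial cutoff equal to $1$ on $B_4(\tilde x_0)\setminus B_{1/(2R)}(\tilde x_0)$ and vanishing on $B_{1/(4R)}(\tilde x_0)\cup (\R^n\setminus B_5(\tilde x_0))$.  Expanding $\Delta(\eta\tilde u) + R^2\lambda\eta\tilde u = \eta(R\tilde W\cdot\nabla\tilde u + R^2\tilde V\,\tilde u) + 2\nabla\eta\cdot\nabla\tilde u + \tilde u\,\Delta\eta$ and using $|\tilde W|\le A_2$, $|\tilde V|\le A_1$, the two equation contributions on the Carleman right-hand side are absorbed into its left provided $\alpha \ge c R^2 A_2^2$ (the $W$-term, into the gradient piece) and $\alpha^3 \ge c R^4 A_1^2 w^2$ (the $V$-term, into the $L^2$ piece).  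Since $w \lesssim 1$ on $\supp\eta$, the latter reduces to $\alpha \gtrsim R^{4/3}A_1^{2/3}$.  Thus the minimal admissible $\alpha$ has order $R^{4/3}$ when $W\equiv 0$ and order $R^2$ when $W\not\equiv 0$, which is exactly the exponent $\beta_1$ in the statement.  The thresholds imposed on $|x_0|$ are precisely what is needed to ensure that this choice of $\alpha$ simultaneously satisfies the Carleman requirement $\alpha > C_2(1 + R\sqrt{|\lambda|})$.

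After absorption, what remains of the Carleman inequality is essentially
\[
\frac{\alpha^3}{2}\int_{\{\eta=1\}} w^{-2-2\alpha}|\tilde u|^2 \;\le\; C\int_{\text{transitions}} w^{-2\alpha}\left(|\nabla\eta|^2|\nabla\tilde u|^2 + |\Delta\eta|^2|\tilde u|^2\right).
\]
Restricting the LHS to $B_{c/R}(0)\subset\{\eta=1\}$ (where $w \sim 1$ and $|\tilde u|^2 \ge 1/4$) produces a lower bound of order $\alpha^3 R^{-n}$.  The outer transition at $|y-\tilde x_0|\sim 5$ carries $w^{-2\alpha}$ exponentially small in $\alpha$ and is negligible.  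On the inner transition at $|y-\tilde x_0|\sim 1/R$ we have $w^{-2\alpha}\sim R^{2\alpha}$, $|\nabla\eta|^2\sim R^2$, $|\Delta\eta|^2\sim R^4$; Caccioppoli applied to the rescaled equation bounds $\int|\nabla\tilde u|^2$ there by $R^4\int_{B_{1/R}(\tilde x_0)}|\tilde u|^2$, so the inner contribution is at most $CR^{2\alpha+6}\int_{B_{1/R}(\tilde x_0)}|\tilde u|^2$.  Combining gives $\alpha^3 R^{-n}\lesssim R^{2\alpha+6}\int_{B_{1/R}(\tilde x_0)}|\tilde u|^2$, and rearranging with $\alpha\sim R^{\beta_1}$ yields $\int_{B_{1/R}(\tilde x_0)}|\tilde u|^2 \gtrsim \exp(-CR^{\beta_1}\log R)$.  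Scaling back to original coordinates and absorbing the polynomial $R^n$-factor into the exponential produces the claimed lower bound on $\int_{B_1(x_0)}|u|^2$.

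The main technical obstacle is the simultaneous bookkeeping of the three lower bounds on $\alpha$ --- two from potential absorption and one from the Carleman hypothesis --- which forces precisely the threshold on $|x_0|$ stated in the proposition.  A secondary point is the careful use of Caccioppoli's inequality on the inner transition annulus, which converts the $|\nabla\tilde u|^2$-integral into an $L^2$-integral of $\tilde u$ itself, so that the final bound involves only the quantity of interest.
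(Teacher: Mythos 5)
Your argument is correct, but it takes a genuinely different route from the paper's. The paper factors Proposition~\ref{baseC} through the standalone order-of-vanishing result, Proposition~\ref{mest}: after the same shift-and-rescale, the Carleman estimate is packaged inside a ``3-ball'' log-convexity inequality, where the parameter $\alpha$ is optimized in terms of the ratio of $L^2$-norms on a small and a large annulus (with a case split on whether that optimal $\alpha$ clears the absorption threshold), and Hadamard interpolation yields $m(r)\gtrsim r^{a_2 M^2}$ (respectively $M^{4/3}$), which is then specialized to $r\sim 1/R$. You instead fix $\alpha$ once at the absorption/Carleman threshold, put the known data --- $|\tilde u|\ge 1/2$ on a ball of radius $\sim 1/R$ sitting at unit distance from the Carleman center --- on the left-hand side, and read off the unknown $\int_{B_{1/R}(\tilde x_0)}|\tilde u|^2$ directly from the inner cutoff transition on the right via Caccioppoli. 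This is precisely the strategy the paper reserves for the iteration step, Proposition~\ref{IH}; the author even remarks that Propositions~\ref{baseC} and~\ref{IH} ``could have been proven in similar ways'' and that two techniques were chosen deliberately to exhibit both. The paper's detour buys a vanishing-order theorem of independent interest; your route is shorter, avoids the $\alpha$-optimization case split, and makes all three competing lower bounds on $\alpha$ --- and hence the threshold on $|x_0|$ --- visible in one place.

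Two bookkeeping points are worth tidying. First, the weight at unit distance from the center is $w(1)^{-2-2\alpha}$, not literally $\sim 1$; what carries the argument is the exponential separation $\bigl(w(1)/w(1/(4R))\bigr)^{2\alpha}\sim R^{2\alpha}$ between the far annulus and the inner transition, while $\bigl(w(1)/w(4)\bigr)^{2\alpha}$ decays fast enough --- since $\alpha\sim R^{\beta_1}\gg\log R$ --- to beat the polynomial ratio and absorb the outer transition. Second, the inner-transition power should be $R^{2\alpha+4}$, not $R^{2\alpha+6}$: Caccioppoli on the annulus of width $\sim 1/R$ in the rescaled picture costs $R^2\bigl(1+A_1+A_2^2+|\lambda|\bigr)$, which combined with $|\nabla\eta|^2\sim R^2$ gives $R^4$. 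Neither slip touches the final exponent $R^{\beta_1}\log R$.
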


To prove this proposition, we will discuss the order of vanishing of a suitably normalized equation, as was done in \cite{K}.  To achieve this result, we will use ``3-ball'' inequalities, a technique that, according to \cite{K}, was first used by Hadamard for harmonic functions.  This proof will be presented in Section \ref{Van}.  Similar results appeared in \cite{B1}, {\cite{B2}}, \cite{B3}, where the authors used Carleman estimates and ``3-ball'' inequalities to establish order-of-vanishing results for linear elliptic equations with $C^1$ electric and magnetic potentials on compact smooth manifolds.  

The second of the two propositions is the step that will be iterated.  Before we may state the proposition, we need to introduce a number of constants.
Let $\om$, $\de$ be small positive constants that will be specified below.  For any $\be > \be_0$, let 
\begin{align*}
\hg &= \left\{\begin{array}{ll} 
\be - 1 + 2P & \be \ge 1 + P - N + \om - \de \\ 
3P - N + \om - \de &  { 1 + P - N + \frac{\om}{3} - \de < \be < 1 + P - N + \om - \de} \\
3( \be - 1) + 2N & \be \le 1 + P - N + \frac{\om}{3} - \de 
\end{array}  \right., \\
a &= \left\{\begin{array}{ll} 1 & \be > 1 + P - N + \frac{\om}{3} - \de \\3 & \be \le 1 + P - N + \frac{\om}{3} - \de \end{array}  \right., \\
\ga &= \hg + a\de, \\
\be^\prime &= \left\{\begin{array}{ll} 2 - \frac{2P}{\ga} & \be > 1 + P - N + \frac{\om}{3} - \de \\ \frac{4}{3} - \frac{2N}{3\ga} & \be \le 1 + P - N + \frac{\om}{3} - \de \end{array}  \right..
\end{align*}
Notice that $\ga > \hat\ga > 1$ and $\be^\prime < \be$.
\begin{prop}
Assume that conditions (\ref{epde})-(\ref{L2lBd}) hold.  Let $\disp  \be > \be_0$.  Let $x_0, y_0 \in \R^n$ be such that $\disp \frac{x_0}{|x_0|} = \frac{y_0}{|y_0|}$ and $\disp |y_0| = |x_0|^{\ga}$. 
Suppose 
\begin{equation}
\int_{B_1(x_0)} \abs{u}^2 \ge C_5\exp(-C_4 |x_0|^{ \be} \log|x_0|).
\label{L2lBd2}
\end{equation}
Then there exists a constant $T_0\pr{n, N, P, \la, A_1, A_2, C_0, C_4, C_5}$ such that whenever $|x_0| \ge T_0$, 
$$\int_{B_1(y_0)} \abs{u}^2 \ge{C}_5\exp(-\tilde{C_4} |y_0|^{ \be^\prime} \log|y_0|),$$
where $\de$ and $\om$ are chosen so that $\disp \brac{\frac{18 C_4}{4^{3+P+ N/3}C_3 w\pr{\tfrac{5}{4}}^{2/3}\pr{A_1^{2/3} + A_2^2} c_n}\log |x_0|} = |x_0|^\de$ (where $c_n$ is a dimensional constant that will be specified in the proof) and $\disp |x_0|^{3P - N} + |x_0| = |x_0|^{3P - N + \om}$.  Furthermore, $\tilde C_4 = \tilde C_4 \pr{n, N, P, A_1, A_2}$.
\label{IH}
\end{prop}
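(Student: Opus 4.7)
The plan is to rescale so that $y_0$ sits on the unit sphere, apply Corollary \ref{CT} to a suitable cutoff of the rescaled $u$, absorb the $V$- and $W$-terms into the LHS of the Carleman inequality, and then rearrange so that the desired lower bound on $\int_{B_1(y_0)}|u|^2$ appears as the leading contribution on the outer commutator side. First I would set $\rho = |y_0| = |x_0|^\gamma$ and $\tilde u(x) = u(\rho x)$, so that $\tilde u$ satisfies $-\Delta\tilde u + \tilde W\cdot\nabla\tilde u + \tilde V\tilde u = \tilde\lambda\tilde u$ with $\tilde V(x) = \rho^2 V(\rho x)$, $\tilde W(x) = \rho W(\rho x)$, and $\tilde\lambda = \rho^2\lambda$. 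Hypotheses (\ref{vBd}) and (\ref{wBd}) translate to $|\tilde V(x)| \le A_1\rho^{2-N}|x|^{-N}$ and $|\tilde W(x)|\le A_2\rho^{1-P}|x|^{-P}$ on $\{\rho|x|\ge 1\}$. After this rescaling $\tilde y_0$ lies on the unit sphere while $\tilde x_0 = x_0/\rho$ has modulus $|x_0|^{1-\gamma}$; crucially, the two points lie on a common ray from the origin, which allows an angularly localized cutoff to restrict the outer commutator to a neighborhood of $\tilde y_0$ alone.

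Next I would take a smooth cutoff $\varphi$ that is identically $1$ on a set containing $\tilde x_0$ together with a small conical neighborhood of $\tilde y_0$, with $\mathrm{supp}\,\varphi$ a slightly larger cone-annulus whose inner radial boundary is just below $|\tilde x_0|$ and whose outer radial boundary is just above $1$. Applying Corollary \ref{CT} to $f = \varphi \tilde u$ (legitimate once $\alpha \gg C_2(1+\rho|\lambda|^{1/2})$) and expanding
\begin{equation*}
\Delta f + \tilde\lambda f = \varphi(\tilde W\cdot\nabla\tilde u + \tilde V\tilde u) + 2\nabla\varphi\cdot\nabla\tilde u + \tilde u\,\Delta\varphi,
\end{equation*}
the first two right-hand terms absorb into the $\alpha^3\int w^{-2-2\alpha}|f|^2$ and $\alpha\int w^{-2\alpha}|\nabla f|^2$ terms on the Carleman LHS using the decay of $\tilde V$ and $\tilde W$. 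The resulting lower thresholds on $\alpha$ split into three regimes — a $V$-dominated branch giving $\alpha \gtrsim \rho^{(4-2N)/3}$, a $W$-dominated branch giving $\alpha \gtrsim \rho^{2-2P}$, and a borderline branch where both thresholds are comparable — which are exactly the three cases in the definition of $\hat\gamma$, with the crossover regulated by $\omega$.

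What remains on the RHS is supported in an inner transition shell at radius $r_0 \sim |\tilde x_0|^{1+c\delta}$ and an outer transition shell at radius just above $1$. On the inner shell, $\|\tilde u\|_\infty\le C_0$ from (\ref{uBd}) together with an interior Cauchy/Caccioppoli bound on $\nabla\tilde u$ yield an inner-commutator term of size $r_0^{-2\alpha}\cdot\mathrm{poly}(\rho)$. On the outer shell, the angular localization of $\varphi$ and another Caccioppoli estimate control the contribution by $\rho^{-n}\int_{B_1(y_0)}|u|^2$ up to a polynomial prefactor. On the other hand, restricting the LHS to $B_{1/\rho}(\tilde x_0)$ and applying (\ref{L2lBd2}) produces the lower bound $\alpha^3 w(|\tilde x_0|)^{-2-2\alpha}\rho^{-n}C_5\exp(-C_4|x_0|^\beta\log|x_0|)$. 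Rearranging the Carleman inequality to isolate the outer commutator, and choosing $\alpha$ of the order $|x_0|^{\hat\gamma+a\delta}\log|x_0|$, one obtains
\begin{equation*}
\int_{B_1(y_0)}|u|^2 \gtrsim C_5\exp(-\tilde C_4|y_0|^{\beta^\prime}\log|y_0|),
\end{equation*}
and the two scaling relations prescribed in the statement for $\delta$ and $\omega$ are precisely those that make this choice of $\alpha$ simultaneously meet the absorption threshold, dominate the inner commutator, and produce the target exponent $\beta^\prime$ via $\gamma\beta^\prime \ge \hat\gamma$.

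The main technical obstacle is the bookkeeping rather than any individual estimate. Because there are two independent decay parameters $N$ and $P$, the absorption threshold on $\alpha$ has three branches that switch roles as $\beta$ varies, and the Carleman weight loss $(r_0/|\tilde x_0|)^{-2\alpha}$ in the inner commutator must be balanced against the logarithmic factor $\log|x_0|$ from the assumed lower bound $\exp(-C_4|x_0|^\beta\log|x_0|)$. Choosing $\delta$, $\omega$, $r_0$, and $\alpha$ so that the three-branch formulas for $\hat\gamma$, $a$, and $\beta^\prime$ all interlock consistently, and so that the resulting proposition closes up under iteration and drives $\beta$ down to $\beta_0$, is the real work; the Carleman step itself is a routine use of Corollary \ref{CT} along the template of \cite{BK}, \cite{K}, \cite{K2}.
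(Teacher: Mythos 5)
The central geometric setup in your proposal is wrong, and the error causes the key absorption step to go in an impossible direction. You rescale by $\rho = |y_0|$ and keep the Carleman singularity at the physical origin, so that $\tilde x_0 = x_0/\rho$ sits at the small radius $|x_0|^{1-\gamma}$ and $\tilde y_0$ sits at radius $1$. Your cutoff $\varphi$ has inner radial boundary $r_0$ just below $|\tilde x_0|$; since the Carleman weight $w^{-2\alpha}$ increases toward the origin, the inner transition shell carries a strictly \emph{larger} weight than the shell $\sim |\tilde x_0|$ where you insert the hypothesis (\ref{L2lBd2}). You then propose to absorb the inner commutator using $\|u\|_\infty \le C_0$. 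Schematically, that absorption condition reads
$$\left(\frac{w(|\tilde x_0|)}{w(r_0)}\right)^{2\alpha}\cdot\text{poly}(\rho)\cdot C_0^2 \;\le\; \tfrac{1}{2}\,\alpha^3\,\rho^{-n}\,C_5\,\exp\!\left(-C_4|x_0|^\beta\log|x_0|\right),$$
and since $r_0 < |\tilde x_0|$ the left-hand side is at least $1$ (indeed it grows exponentially in $\alpha$), while the right-hand side is exponentially small in $|x_0|$. No choice of $\alpha$, $r_0$, $\delta$ or $\omega$ rescues this inequality. You cannot absorb a commutator term at a radius where the Carleman weight is larger than at the radius where the only known lower bound on $u$ lives; the weight loss you mention as something to "balance against" the hypothesis's $\log|x_0|$ factor is in fact an obstruction of a categorically larger order.

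The paper circumvents this by shifting the origin of the Carleman weight to $y_0$ rather than merely rescaling about $0$: it sets $u_S(x) = u(y_0 + Sx)$ with $S = |y_0 - x_0|$, so $y_0$ sits at the weight's singularity, $x_0$ lands on the unit sphere, and the cutoff is an honest annulus $[\tfrac{1}{4S}, 1 + \tfrac{2T}{3S}]$. Then the inner commutator (near $y_0$, biggest weight) is the \emph{conclusion} term $\int_{B_1(y_0)}|u|^2$; the hypothesis about $x_0$ gives the LHS lower bound at the intermediate radius $1 + \tfrac{1}{S}$; and the outer commutator, past $x_0$ at the \emph{smallest} weight, is the one absorbed via $\|u\|_\infty$. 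There the ratio $\bigl(w(1 + \tfrac{1}{S})/w(1 + \tfrac{T}{2S})\bigr)^{2\alpha}$ is strictly less than $1$ and decays exponentially in $\alpha$, so for $\alpha$ large enough it dominates the $\exp(C_4|x_0|^\beta\log|x_0|)$ blowup; this is condition (\ref{abs2}). That change of origin is the decisive ingredient your argument lacks. The remainder of your outline (the three-branch choice of $\alpha$ according to which of $E = S^2T^{-N}$, $F = ST^{-P}$ dominates, the role of $\delta$ and $\omega$, the Caccioppoli estimates on transition shells) is broadly in the right spirit, but the proof cannot close without placing the Carleman singularity at $y_0$.
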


\begin{rem}
It should be pointed out that $\tilde C_4$ does not depend on $C_4$.  In particular, $\tilde C_4$ is a universal constant that depends only on the constants associated with the PDE (\ref{epde}) and the Carleman estimate.  This fact allows us to iterate Proposition \ref{IH}.
\end{rem}

The proof of Proposition \ref{IH} uses the same ideas as the one in \cite{BK}, but is slightly more complicated.  In \cite{BK}, the solution function $u$ is shifted and scaled so that a distant point $x_0$ is sent to zero, and the distance between the new origin and the original origin is normalized.  A Carleman estimate is then applied to this new function and information about the function $u$ at zero is used to establish information about $u$ near $x_0$.  For our purposes, since there is no information about the decay rate of the potentials at zero, this exact technique does not improve on the former estimates.  So instead, we choose two points, $x_0$ and $y_0$ lying on the same ray.  We shift and scale the solution $u$ so that $y_0$ becomes the origin and $|x_0 - y_0|$ becomes 1.  An application of the Carleman inequality to this new function uses information about $u$ near $x_0$ to give an estimate { for $u$ near $y_0$}.  Since we choose $x_0 >> 1$, we can actually use information about the decay rates of the potentials.  This gives the improvement that we need.  The details are presented in Section \ref{IHProof}.  Since Propositions \ref{baseC} and \ref{IH} are so similar, they could have been proven in similar ways. Because we wanted to include the result on the order of vanishing and to present two proof techniques, we decided to prove these propositions in different ways.

We will now explain the idea behind the proof of Theorem \ref{MEst}.  Proposition \ref{baseC} allows us to estimate a lower bound for the $L^2$-size of the solution in a $1$-ball around some point, $x_1$.  With this initial estimate as our hypothesis, we may apply Proposition \ref{IH} to get a lower bound for the $L^2$-size of the solution in a $1$-ball around some point, $x_2$, where $|x_2| >> |x_1|$.  We will then use our estimate for $x_2$ to get an estimate for $x_3$, where $|x_3| >> |x_2|$.  Since $\be^\prime < \be$, the exponent decreases each time we apply Proposition \ref{IH}, so we may form a decreasing sequence of exponents.  To establish the desired lower bound for the $L^2$-size of the solution in a $1$-ball, we will continue to apply Proposition \ref{IH} until the exponent is within a ``reasonable'' neighborhood of $\be_0$.  Since the sequence of exponents does not actually converge to $\be_0$ (due to the presence of the $\de$ terms), the issue of getting ``reasonably'' close to $\be_0$ becomes rather delicate.  The full details of the proof will be presented in Section \ref{MEstProof}.

Recall that $\be_c := \max\set{\frac{4-2N}{3}, 2-2P}$.  The reader may have noticed that there are no results for the cases when $\be_c =1$, or rather, when $\min\set{N, P} = 1/2$.  If we try the same iterative proof that works for $\be_c \ne 1$, we fail.  The reason for this is that the decreasing sequence of exponents has polynomial decay when $\be_c = 1$.  For $\be_c \ne 1$, the sequence has exponential decay.  The fact that the decay is much slower for $\be_c = 1$ means that we cannot reach a ``reasonable'' neighborhood of $\be_0$ while maintaining the other conditions that are required for our propositions to hold true.  Another approach to the case where $\be_c = 1$ is to consider the limit as $\be_c \downarrow 1$.  If $\be_c = 1$, then $V$ and $W$ satisfy the hypotheses for Theorem \ref{MEst}(\ref{partA}) for any $\be_c > 1$.  Therefore, for any $\eps > 0$, $\mathbf{M}( R ) \ge \tilde C_5\exp\pr{-C_7 R^{1+\eps}\log R ^{C_6}}$, where $\tilde C_5$ and $C_7$ are bounded, $C_6 = C_6(1 + \eps)$.  However, a close inspection of the proof shows that $C_6 \lesssim \frac{1}{\eps}$, so $\disp \lim_{\eps \to 0^+}\brac{R^\eps \pr{\log R}^{c/\eps}} \to \iny$ and we cannot establish any result for $\be_c = 1$ by looking at the limiting behavior of the result from Theorem \ref{MEst}(\ref{partA}).

The following theorem shows that, under reasonable conditions, there are constructions that prove that Theorem \ref{MEst} is sharp (up to logarithmic factors).  

\begin{thm}
For any $\la \in \C$, $N, P \ge 0$ chosen so that either 
\begin{enumerate}[(a)]
\item $\disp \be_0 = \be_c > 1$ and $n = 2$ or 
\label{consa}
\item $\be_c < 1$ and $\la \notin \R_{\ge 0}$,
\label{consb}
\end{enumerate}
there exist complex-valued potentials $V$ and $W$ (at least one of which is equal to zero) and a non-zero solution $u$ to (\ref{epde}) such that 
\begin{equation}
|V(x)| \le C{\langle x\rangle^{-N}},
\label{vBd2}
\end{equation}
\begin{equation}
|W(x)| \le C{\langle x\rangle^{-P}}.
\label{wBd2}
\end{equation}
Furthermore, $$|u(x)| \le C\exp\pr{-c|x|^{\be_0}\pr{\log |x|}^A},$$
for some constant $A \in \set{-1, 0}$.
\label{cons}
\end{thm}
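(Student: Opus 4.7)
The plan is to treat the two hypotheses separately, as the two cases call for constructions of quite different character.

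For case (\ref{consa}), with $\be_0=\be_c>1$ and $n=2$, I would produce oscillating complex-valued solutions in the spirit of Meshkov \cite{M}. Split according to whether the maximum defining $\be_c$ is attained by $(4-2N)/3$ (so that $N<1/2$) or by $2-2P$ (so $P<1/2$); at equality either subcase suffices. In the first subcase set $W\equiv 0$ and adapt the Cruz-Sampedro construction \cite{CS} (which handles $\la=0$): in polar coordinates on $\R^2$, glue together on a dyadic sequence of annuli pieces of the form $u_k(r,\te)=a_k(r)e^{in_k\te}+b_k(r)e^{-i(n_k+1)\te}$ with angular frequencies $n_k$ growing at a rate chosen so that the radial balance forces $|u|\sim\exp(-cr^{(4-2N)/3})$, while the residual in $-\LP u+Vu=\la u$ is absorbed into a $V$ of size $r^{-N}$; the term $\la u$ contributes only lower-order corrections and does not affect the leading exponent. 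In the second subcase set $V\equiv 0$ and run the analogous construction with $W$ playing the role of $V$, so that the first-order term $W\cdot\gr u$ absorbs the residual, producing the exponent $2-2P$ together with $|W|\lesssim r^{-P}$. In both subcases a logarithmic factor $(\log|x|)^A$ with $A\in\{-1,0\}$ may appear from the gluing between annuli, matching the statement.

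For case (\ref{consb}), where $\be_c<1$ (hence $\be_0=1$) and $\la\notin\R_{\ge 0}$, no Meshkov-type argument is required; the exponential decay is supplied directly by the spectral parameter. Take $W\equiv 0$. Choose the branch of $\sqrt\la$ with positive imaginary part; then the outgoing fundamental solution $G_\la$ of $-\LP-\la$ on $\R^n$ decays like $|G_\la(x)|\lesssim e^{-c|x|}\abs{x}^{-(n-1)/2}$ with $c=\Im\sqrt\la>0$. Fix a point $x_*\ne 0$, set $\tilde u(x)=G_\la(x-x_*)$, and modify $\tilde u$ smoothly inside a small ball $B_\rho(x_*)$ (chosen with $0\notin B_\rho(x_*)$) to produce a smooth non-vanishing function $u$ on $\R^n$ that agrees with $\tilde u$ outside $B_\rho(x_*)$; normalize by a multiplicative constant so that $u(0)=1$. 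Defining $V=(\LP u+\la u)/u$, one finds $V$ smooth and supported in $B_\rho(x_*)$, hence obeying (\ref{vBd2}) trivially for every $N\ge 0$; the function $u$ itself satisfies $|u(x)|\lesssim\exp(-c|x|)$ as required.

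The principal obstacle is the second subcase of case (\ref{consa}): the standard Meshkov construction is tailored to the electric setting, and the adaptation to a first-order magnetic term requires recalibrating the relative sizes of radial, angular, and potential contributions at each annulus and verifying that the glued potential $W$ obeys the target decay globally. I would expect this to follow the template of \cite{M, CS}, with the transition estimates between consecutive annuli being the most delicate bookkeeping.
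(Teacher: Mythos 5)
For case (\ref{consa}) your plan is the right one in outline -- Meshkov/Cruz-Sampedro gluing on dyadic annuli, with $W\equiv 0$ or $V\equiv 0$ according to which term dominates $\be_c$ -- but you have materially underestimated the role of the eigenvalue $\la$, and you attribute the logarithmic factor to the wrong source. When $\la\ne 0$, the building blocks $r^{-n}e^{-in\vp}$ no longer (nearly) annihilate $\LP+\la$: the term $\la u$ is of size $\abs{\la}r^{-n}$, which after multiplying by $r^N$ (to test $|V|\lesssim r^{-N}$) does not sit below $r^{-N}$. The paper's construction compensates by multiplying each block by a factor $\mu_n(r)=\exp\pr{\tfrac{\la r^2}{4n}+\bigO\pr{r^4/n^3}}$, which is precisely what turns $\pr{\LP+\la}\brac{r^{-n}e^{-in\vp}\mu_n}$ into a term of acceptable size, plus a phase correction $\phi_{a,b}$ inserted in the overlap regions so the two glued pieces are controlled simultaneously. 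This is why, in the electric subcase, one is forced to take $n_j\sim \rho_j^{\be_0}/\log\rho_j$ rather than $n_j\sim\rho_j^{\be_0}$: the extra $1/\log\rho_j$ is needed so that $\mu_n$'s contribution to $V$ stays at order $r^{-N}$. The $A=-1$ in the decay bound is exactly this phenomenon, not an artifact of gluing between annuli; indeed, when $\la=0$ the remark after Lemma 5.1 observes one can take $n\sim\rho^{\be_0}$ and get $A=0$ in the electric case too. Your phrase ``the term $\la u$ contributes only lower-order corrections'' is therefore not accurate as stated and is in fact the principal technical obstacle in case (\ref{consa}).

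For case (\ref{consb}) your route is genuinely different from the paper's. The paper builds explicit radial eigenfunctions $u_m=\exp\pr{c_1 r+c_2\log r+c_3 r^{-1}+\cdots+c_m r^{2-m}}$ by induction on the number of exponent terms, so that the residual potential decays like $r^{-m}$ for any prescribed $m$; these potentials are polynomially decaying but everywhere nonzero, and the construction is real-valued when $\la$ is real. You instead propose cutting off the free outgoing Green's function $G_\la(x-x_*)$ near its singularity to produce a compactly supported $V$, which satisfies any polynomial decay trivially. Both yield the theorem (only existence is required), and yours is the slicker trick. It does need care on two points you pass over: (i) you must check $G_\la$ is nonvanishing away from the pole, which for $n\ge 2$ amounts to the fact that Hankel functions $H^{(1)}_\nu$ have no zeros in the open upper half plane, where $\sqrt\la\,\abs{x}$ lives when $\Im\sqrt\la>0$; and (ii) the modification near $x_*$ must produce a function smooth \emph{and} nonvanishing at $x_*$, which (since $G_\la(x-x_*)$ is radial about $x_*$) requires replacing both the modulus and the phase by even-extendible profiles near $r=0$ to avoid a cone singularity. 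These are fixable, but they are not automatic. What your approach does not buy, and the paper's does, is a family of explicit \emph{radial} eigenfunctions with sharp-rate polynomial potential decay, and the observation that these are real-valued for real $\la$; if one only cares about the stated theorem your route is simpler, but it is less informative about what kinds of potentials can be realized.
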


To prove Theorem \ref{cons}(\ref{consa}), we will use a construction similar to that of Meshkov in \cite{M}.  However, to account for eigenvalues and decaying potentials, our construction is even more complicated.  The proof of Theorem \ref{cons}(\ref{consb}) is much simpler; it relies on a lemma which is proved by mathematical induction.  The idea behind the constructions for Theorem \ref{cons}(\ref{consb}) is based on the following observation: For $\la \in \C$ with $\arg \la \in [-\pi, \pi) \setminus \set{0}$, the function $u_1\pr{r} = \exp\pr{\sgn\pr{\arg \la}\sqrt{-\la} r}$ satisfies an equation of the form (\ref{epde}) with either $V = C r^{-1}$ and $W \equiv 0$, or $V \equiv 0$ and $W = C r^{-1}$.  Furthermore, $\abs{u_1\pr{r}} \lesssim \exp\pr{-C r}$.  We then notice that  if we add a carefully-chosen logarithmic term to the exponent of $u_1$, this new function satisifies equations of the form (\ref{epde}) with a potential that decays like $Cr^{-2}$.  The careful addition of more lower order terms to the exponent drives the potentials to decay faster and faster. Induction is used to establish each additional term in the exponent. { The eigenfunctions that are constructed in the proof of Theorem \ref{cons}(\ref{consb}) are radial. } It is interesting to note that the constructions for Theorem \ref{cons}(\ref{consb}) are real-valued when the eigenvalue $\la$ is real-valued.  This is not the case for Theorem \ref{cons}(\ref{consa}) in which the constructions rely heavily on complex values, even when the eigenvalues are real. These proofs are presented in Section \ref{Mesh}. 

We will now explain why the extra restriction on $\la$ for the case when $\be_c < 1$ is reasonable (when we restrict to $W \equiv 0$).  A classical result of Kato in \cite{Ka} shows that if $\disp \lim_{x\to\iny} |x| |V(x)| = 0$, then the operator $-\LP + V$ has no positive eigenvalues.  This rules out the possibility of constructing eigenfunctions for $\la > 0$ and $\be_c = \frac{4-2N}{3} < \frac{2}{3}$.  To eliminate the other possibilities, we need to use the following lemma, which is a Corollary to Theorem 2.4 in \cite{FHHH}.

\begin{lem}
Suppose $V$ is $o\pr{r^{-1/2}}$ on $\R^n\setminus B_R\pr{0}$.  If $-\LP u + V u = \la u$, then either $\disp e^{\al r}u \notin L^2\pr{\R^n\setminus B_R\pr{0}}$ for every $\al > \sqrt{\max\set{-\la,0}}$, or $u$ vanishes outside a compact set.
\label{noCons}
\end{lem}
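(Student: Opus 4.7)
The plan is to extract Lemma \ref{noCons} as a direct dichotomy from Theorem 2.4 of \cite{FHHH}. The key quantity to track is the critical exponential decay rate
$$\alpha_0 = \sup\set{\alpha \ge 0 : e^{\alpha r} u \in L^2(\R^n\setminus B_R(0))}.$$
Since $\alpha \mapsto \norm{e^{\alpha r} u}_{L^2(\R^n\setminus B_R(0))}$ is nondecreasing, the statement of the lemma is logically equivalent to: if $\alpha_0 > \sqrt{\max\set{-\la,0}}$, then $u$ vanishes outside a compact set. I would prove this contrapositive.

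First I would split into two cases according to whether $\alpha_0$ is finite. If $\alpha_0 < \iny$, the decay hypothesis $V = o(r^{-1/2})$ is precisely what is needed to apply the Froese--Herbst alternative of Theorem 2.4 in \cite{FHHH}, which yields $\alpha_0^2 + \la \le 0$. This forces $\la \le 0$ and $\alpha_0 \le \sqrt{-\la} = \sqrt{\max\set{-\la,0}}$, directly contradicting the standing assumption $\alpha_0 > \sqrt{\max\set{-\la,0}}$. Hence this case cannot occur.

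The remaining case is $\alpha_0 = \iny$, which means $u$ decays in $L^2$ faster than every exponential at infinity. To conclude that $u$ must vanish outside a compact set, I would appeal to a unique continuation at infinity argument: since $V - \la = o(1)$, a Carleman estimate of the Bourgain--Kenig type (or the companion statement packaged inside the FHHH framework) implies that any solution of $-\LP u + Vu = \la u$ whose $L^2$-decay at infinity beats every exponential must be supported in a compact set. Combined with Case~1 this completes the dichotomy.

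The main obstacle is really just correctly identifying the two branches of the FHHH alternative and confirming that our hypothesis on $V$ is strong enough to access both. The finite-$\alpha_0$ branch is the substantive ``Kato-type'' restriction; the infinite-$\alpha_0$ branch is a super-exponential decay / unique continuation statement. Once these are assembled in sequence, the lemma follows from \cite{FHHH} with no further estimation beyond checking the threshold $\sqrt{\max\set{-\la,0}}$.
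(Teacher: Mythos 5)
The paper states this lemma as an immediate corollary of Theorem 2.4 in \cite{FHHH} and gives no independent argument, so there is no internal proof to compare against. Your reconstruction of the deduction is correct and identifies the right structure: once you pass to the contrapositive and introduce the critical rate $\alpha_0$, the branch $\alpha_0 < \infty$ (which already excludes compact support, since a compactly supported $u$ has $\alpha_0 = \infty$) is eliminated by the Froese--Herbst threshold inequality $\alpha_0^2 + \la \le 0$, and the branch $\alpha_0 = \infty$ is exactly the super-exponential-decay-forces-compact-support alternative, which is also part of Theorem 2.4 --- you do not need to reach outside to a Bourgain--Kenig-type Carleman estimate, since the FHHH framework already packages that dichotomy. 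The one technicality worth making explicit is the passage from the lemma's hypothesis $e^{\alpha r}u \in L^2\pr{\R^n\setminus B_R(0)}$ to the global $L^2$ eigenfunction setting in which \cite{FHHH} is phrased: because $u$ solves the elliptic equation on all of $\R^n$ and is therefore locally square-integrable, weighted square-integrability near infinity for some $\alpha > 0$ forces $u \in L^2(\R^n)$, which is what places $u$ in the FHHH regime.
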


Assume that we could construct a non-zero eigenfunction $u$ such that $-\LP u + V u = \la u$, where $\la \ge 0$, $\abs{V(x)} \le \langle x \rangle ^{-N}$,  for some $N > 1/2$ and $\abs{u(x)} \lesssim \exp\pr{-c |x|}$ for some $c > 0$.  Since $\be_c < 1$, we are in the case that Theorem \ref{cons} excludes.  Without loss of generality, we may assume that $u(0) = 1$.  Since $V$ is $o\pr{r^{-1/2}}$ on $\R^n\setminus B_R\pr{0}$ for every $R \ge 0$, we may apply Lemma \ref{noCons}.  If $u$ vanishes outside of a compact set, then by Theorem \ref{MEst}(\ref{partB}), $u$ must be identically zero, which is a contradiction to our original assumption.  Therefore, we must have that $\disp e^{\al r}u \notin L^2\pr{\R^n\setminus B_R\pr{0}}$ for every $\al > 0$.  Since this is clearly not true for any $\al \in \pr{0, c}$, we get another contradiction.  It follows that  no such eigenfunction can exist. \\

The paper is organized as follows.  Section \ref{carEst} presents the Carleman estimate then proves it in the style of Donnelly and Fefferman.  In Section \ref{Van}, the proof of Proposition \ref{baseC} will be presented after a result on the order of vanishing is established.  The proof of Proposition \ref{IH} will be given in Section \ref{IHProof}.  Section \ref{MEstProof} is devoted to explaining the iterative argument that proves Theorem \ref{MEst}.  Although the arguments are similar, $\be_c > 1$ and $\be_c <1$ will be considered separately.  We will conclude Section \ref{MEstProof} with a technical discussion of what happens when $\be_c = 1$.  And in Section \ref{Mesh}, the constructions that prove Theorem \ref{cons} will be presented.  First we will give the Meshkov-type constructions that prove Theorem \ref{cons}(\ref{consa}), then we will prove a lemma that gives the necessary functions to prove Theorem \ref{cons}(\ref{consb}).  Some useful but rather technical lemmas and their proofs may be found in the appendices.  Appendix \ref{AppA} includes a couple of results that are required in the proofs of Propositions \ref{baseC} and \ref{IH}.  In Appendix \ref{AppB}, we present a number of estimates for the elements in the sequences $\set{\ga_j}$ and $\set{\be_j}$.  These results are applied often in the proof of Theorem \ref{MEst}.  Appendix \ref{AppC} includes estimates for the product function $\Ga_j = \ga_1\ldots \ga_j$.  Section \ref{MEstProof} makes use of the results from Appendix \ref{AppC}.  Finally, Appendix \ref{AppD} presents a technical lemma that is used in the Meshkov-type constructions.

%
%
\section{The Carleman estimate for $\LP + \la$}
\label{carEst}

In this section, we will use the same notation that is used in Donnelly and Fefferman's papers: We will write our weight function $w\pr{r}$ as $\bar{r}$.  The following lemma uses the ideas and the notation of their papers to establish a lower bound for a weighted $L^2$-norm of $\pr{\LP + \la}u$.  This estimate differs from those of Donnelly and Fefferman since the exponents on $\al$ are different, a gradient term appears on the left, and there are no integrals over small balls.  However, we do still have the restriction that $\al > C_2\pr{1 + \sqrt{\abs{\la}}}$.

\begin{lem} Let $M$ be a smooth, connected { Riemannian manifold with bounded geometry}.  Let $p \in M$.  There exist constants $C_1, C_2, C_3, h$, depending only on $M$, and an increasing function $ \bar r\pr{r}$, $0 < r < h$, where $h$ { depends on $M$}, so that 
$$\frac{1}{C_1} \le \frac{ \bar r}{r} \le C_1,$$ 
and such that for all $u \in C^\iny_0\left(B(p, h)\setminus\{p\}\right)$, $\al > C_2\pr{1 +\sqrt{\abs{\la}}}$,
$$\al^3\int \bar r^{-2-2\al}\abs{u}^2 + \al \int \bar r^{-2\al}|\gr u|^2 \le C_3 \int  \bar r^{-2\al}\abs{\LP u + \la u}^2.$$
\label{DFCar}
\end{lem}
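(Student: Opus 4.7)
The plan is to follow the Donnelly–Fefferman approach but keep explicit track of the $\la$-dependence at every stage. First I would work in geodesic polar coordinates $(r,\te)$ around $p$, valid on $B(p,h)$ for $h$ smaller than the injectivity radius (which is bounded below by the bounded geometry hypothesis). In these coordinates $\sqrt{g(r,\te)}\, dr\, d\te$ is the volume form and
$$\LP = \del_r^2 + \frac{\del_r \sqrt{g}}{\sqrt{g}}\del_r + r^{-2}\LP_\te^{(r)},$$
where $\LP_\te^{(r)}$ is a smooth family of spherical Laplacians. The weight $\ol r(r)$ is then defined exactly as in \cite{DF1,DF2}: $\ol r$ solves a first-order ODE chosen so that conjugation by $\ol r^{\al}$ together with the change of variable $t = -\log \ol r$ cancels the non-standard radial first-order term in $\LP$. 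Then $\ol r(r) = r + O(r^2)$, which gives $C_1^{-1} \le \ol r/r \le C_1$.

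Next I would perform the standard substitution $u = \ol r^{\al} J^{-1/2} v$ for a Jacobian factor $J$ that turns $dV$ into $dt\, d\te$ up to bounded multiples, so that $\int \ol r^{-2-2\al}|u|^2\, dV \simeq \int |v|^2\, dt\, d\te$ and $\int \ol r^{-2\al}|\LP u + \la u|^2\, dV \simeq \int |L_\al v|^2\, dt\, d\te$ with
$$L_\al v = (\del_t - \al)^2 v + \LP_\te v + \ol r(t)^{2}\la v + \pr{\text{lower order}}.$$
Splitting $L_\al = S + A$ into self- and skew-adjoint parts with respect to $dt\, d\te$, the identity $\norm{L_\al v}^2 = \norm{Sv}^2 + \norm{Av}^2 + \innp{[S,A]v, v}$ reduces everything to a commutator computation. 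The interaction $[\,\del_t^2,\, \al\del_t\,]$ and the $\al^2$ term inside $S$ produce a positive contribution of the form $c\,\al^3 \int|v|^2$, while $[\LP_\te,\al\del_t]$ together with the curvature corrections yields a positive $c\,\al \int|\gr_\te v|^2$ term. The $\la$-contribution appears in $[S,A]$ in the form $\al\ol r^2 |\la|\, |v|^2$, of order $\al |\la| \ol r^2 \le \al|\la| h^2$; choosing $h$ uniformly small and $\al > C_2(1+\sqrt{|\la|})$ forces $\al|\la| \ll \al^3$, so this term is absorbed by the leading positive commutator. Translating back to $u$ gives the first half of the estimate, $\al^3 \int \ol r^{-2-2\al}|u|^2 \le C_3 \int \ol r^{-2\al} |\LP u + \la u|^2$.

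For the gradient piece $\al\int \ol r^{-2\al}|\gr u|^2$, I would multiply the PDE by $\ol r^{-2\al}\ol u$, integrate by parts to get $\int \ol r^{-2\al}|\gr u|^2$ plus boundary-free terms of the form $\al \int \ol r^{-2\al - 1}\ol r'|u||\gr u|$ and $|\la|\int \ol r^{-2\al}|u|^2$, and then absorb these using Cauchy–Schwarz and the $L^2$-bound just proved, once again invoking the threshold $\al > C_2\sqrt{|\la|}$ so that $|\la|\int\ol r^{-2\al}|u|^2 \le \al^2 \int\ol r^{-2\al - 2}|u|^2$. The main obstacle is in the previous paragraph: in the original Donnelly–Fefferman papers $\la$ is a fixed eigenvalue and no attempt is made to get the sharp $\sqrt{|\la|}$ scaling, while the author's remark explains that a naive calculation only yields $\al > C_2(1+|\la|^{1/2+\eps})$. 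Getting the precise $\sqrt{|\la|}$ threshold requires choosing $\ol r$ so that the quadratic form of $[S,A]$ restricted to spherical modes is pointwise non-negative (not just after integration), so that the $\ol r^2\la$ term contributes exactly $\al|\la|\ol r^2$ and nothing of the form $\al^2 |\la|^{1+\eps}$; equivalently, the strategy of \cite{BK,K,K2} mentioned in the remark must be adapted to the Donnelly–Fefferman weight, which is what makes the identification of $\ol r$ and the commutator bookkeeping delicate.
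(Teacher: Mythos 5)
Your proposal is correct in outline and is really the same Donnelly--Fefferman machinery re-packaged, but there are two places where the routes diverge. First, the paper does not formally split $L_\al$ into self-adjoint and skew-adjoint parts $S+A$ and invoke $\norm{L_\al v}^2 = \norm{Sv}^2 + \norm{Av}^2 + \innp{[S,A]v,v}$; instead it writes out, after the conformal change $\bar g = e^{-2\nu r^2}g$ and the substitution $u = e^{-\be\rho}w$, the quantity $I_0 = I_2 + I_3 + I_4$ with $I_2, I_3$ manifest squares and $I_4$ the cross term, discards the squares, and integrates $I_4$ by parts in $\rho$ and $t$, using the Donnelly--Fefferman convexity lemma (their Lemma 2.3) to extract the positive $\nu e^{-2\rho}$ factor. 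The two are of course equivalent in content — your $\innp{[S,A]v,v}$ is exactly their integrated-by-parts $I_4$ — but you should be aware that the positivity does not come from a ``choice of $\bar r$'' alone; it is precisely the conformal factor $\nu$ that engineers the sign, and the paper never needs the pointwise-non-negativity of $[S,A]$ that your last paragraph worries about: a direct estimate $\abs{\pr{\bar\la e^{-2\rho}\psi\sqrt{\bar\ga}}'/\pr{\psi\sqrt{\bar\ga}}} \le c\abs{\la}\nu e^{-2\rho}$ suffices, and absorption against $\be(\be+n-2)\ge\al^2$ gives the $\sqrt{\abs{\la}}$ threshold immediately, so the ``obstacle'' you describe is not really present. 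Second, and more substantively, the paper extracts the gradient term $\al\int \bar r^{-2\al}\abs{\gr u}^2$ from the very same $I_4$ computation (the integrated-by-parts cross term already contains both $(w')^2$ and $\bar\ga^{ij}\del_{t_i}w\,\del_{t_j}w$, which combine with the $\be^2 w^2$ piece to reconstitute $\norm{\bar\gr u}^2_{\bar g}$); you instead propose a separate Caccioppoli-type argument, multiplying the equation by $\bar r^{-2\al}\bar u$ and integrating by parts, then absorbing via the $L^2$ estimate. That alternative does work, but you must weight the Cauchy--Schwarz on $\int\bar r^{-2\al}\bar u f$ by $\al$ (i.e.\ $\tfrac{\al}{2}\int\bar r^{-2\al-2}\abs{u}^2 + \tfrac{1}{2\al}\int\bar r^{-2\al+2}\abs{f}^2$) or you pick up a spurious extra factor of $\al$ on the right-hand side; as you wrote it the estimate would only give $\al\int\bar r^{-2\al}\abs{\gr u}^2 \lesssim \al\int\bar r^{-2\al}\abs{\LP u + \la u}^2$. (Also $\bar r = \int_0^r e^{-\nu s^2}\,ds = r + \bigO(r^3)$, not $r + \bigO(r^2)$, though this is immaterial.)
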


To prove this lemma, we will follow the approach of Donnelly and Fefferman from \cite{DF2} which built on the set-up from \cite{DF1}.  { For our purposes, we only require this result for  functions that are compactly supported on balls of arbitrary radius in $ \R^n$.  A scaling argument allows us to pass from Lemma \ref{DFCar} to Corollary \ref{CT} (with appropriate renaming of constants).}  

\begin{proof}
In $B\pr{p, h}$, we have geodesic polar coordinates $(r, t_1, \ldots, t_{n-1})$, where $r = r(x)$ and $(t_1, \ldots, t_{n-1})$ denotes the standard coordinates on $S^{n-1}$.  The metric is $\disp ds^2 = dr^2 + r^2 \ga_{ij}dt_idt_j$ and the volume element is given by $\disp d vol = r^{n-1}\sqrt{\ga} dr dt$, where $\ga = \det\pr{\ga_{ij}}$.  Note that in Euclidean space, $\ga_{ij}$ is independent of $r$.  Now we introduce a local conformal change in the metric and volume element.  For a large constant $\nu > 0$, let $\disp \bar{g}_{ij} = \exp\pr{-2\nu r^2}g_{ij}$.  The geodesic lines starting from $p$ for this new metric and the original metric coincide.  We see that $\disp \bar{r} = \int_0^r e^{-\nu s^2}ds$.  Near the origin, $\disp \bar{r} = r + \bigO\pr{r^3}$.  Let $\disp \psi = \exp\pr{\tfrac{2}{3}\nu (n-2)\bar{r}^2}$.  In geodesic polar coordinates, the metric is $\disp \ol{ds}^2 = \ol{dr}^2 + \bar{r}^2 \ol{\ga}_{ij}dt_i dt_j$ and we set the modified volume element to be $\disp \ol{d vol} = \bar{r}^{n-1}\psi \sqrt{\ol{\ga}}\,\ol{dr}dt$.  Let $\ol{\LP}$ denote the Laplacian associated with $\disp \pr{\ol{g}_{ij}}$ and set $\ol{\la} = \la \exp\pr{2 \nu r^2}$.  For $\disp u \in C_{0}^\iny\pr{B(p,h)}$ with $u = 0$ near $p$, and $\al > C_2 \pr{1 + \sqrt{|\la|}}$, we want to establish a lower bound for 
\begin{equation}
I = \int \bar{r}^{-2\al} \abs{\pr{\bar{\LP} + \bar{\la}}u}^2 \ol{d vol}.
\label{Idef}
\end{equation}
Let $\bar{r} = e^{-\rho}$, $\disp \be = \al - \frac{n}{2} + 2$ , $u = e^{-\be\rho} w$, $\disp \te = \frac{\del}{\del\rho}\pr{\log \sqrt{\bar{\ga}}}$, $\disp w^\pri = \frac{\del w}{\del \rho}$ and $\disp w^{\pri\pri} = \frac{\del^2 w}{\del \rho^2}$.  We may use polar coordinates to express $I$ as
\begin{equation}
I = \int \abs{w^{\pri\pri} - \pr{n-2+2\be - \te}w^\pri + \be\pr{\be + n - 2 - \te}w + \LP_\rho w + \bar{\la} e^{-2\rho} w}^2 \dV,
\label{Ipoldef}
\end{equation}
where $\disp \LP_{\rho}w = \frac{1}{\sqrt{\bar{\ga}}}\frac{\del}{\del t_i}\pr{\sqrt{\bar{\ga}}\; \ga^{ij}\frac{\del w}{\del t_j}}$.  By removing all $\te$ terms, we get $I_0$:
\begin{equation}
I_0 = \int \abs{w^{\pri\pri} - \pr{n-2+2\be}w^\pri + \be\pr{\be + n - 2}w + \LP_\rho w + \bar{\la} e^{-2\rho} w}^2 \dV.
\label{I0def}
\end{equation}
By the triangle inequality, we have that $\disp I \ge \frac{1}{2} I_ 0 - I_1$, where $\disp I_1 = \int \te^2 \abs{w^\pri - \be w}^2 \dV$.  As is explained in \cite{DF1}, $I_1 \le \frac{1}{4} I_0$ for $h$ sufficiently small.  Therefore, $I \ge \frac{1}{4} I_0$. \\

At this point, the proof begins to diverge from that of Donnelly and Fefferman in \cite{DF2}.  In their proof, they introduce an unknown function called $f$.  This proof does not require $f$.  We write $I_0 = I_2 + I_3 + I_4$, where
\begin{align}
I_2 &= \int \abs{w^{\pri\pri} + \be\pr{\be + n - 2}w + \LP_\rho w + \bar{\la} e^{-2\rho} w}^2 \dV, \\
I_3 &= \pr{2\be + n-2}^2 \int \abs{w^\pri }^2 \dV, \\
I_4 &= -2\pr{2\be + n-2} \int w^\pri\brac{w^{\pri\pri} + \be\pr{\be + n - 2}w + \LP_\rho w + \bar{\la} e^{-2\rho} w} \dV.
\end{align}
Since $I_2$ and $I_3$ are both non-negative, then $I_0 \ge I_4$ and our interest lies in $I_4$.  If we integrate by parts in $t$, we see that
$$I_4 = -2\pr{2\be + n-2} \int \brac{w^\pri w^{\pri\pri} + \be\pr{\be + n - 2} w^{\pri}w - \bar{\ga}^{ij}\frac{\del w^\pri}{\del t_i}\frac{\del w}{\del t_j} + \bar{\la}e^{-2\rho}w^\pri w} \dV.$$ 
Then we integrate by parts in $\rho$ and let $\disp \mu = \brac{\log\pr{\psi \sqrt{\bar{\ga}}}}^\pri$ to get
$$I_4 = \pr{2\be + n-2} \int \brac{\pr{w^\pri}^2 \mu + \be\pr{\be + n - 2}w^2 \mu - \frac{\pr{\psi \sqrt{\bar \ga}\; \bar \ga^{ij}}^\pri}{\psi \sqrt{\bar \ga}}\frac{\del w}{\del t_i}\frac{\del w}{\del t_j} + \frac{\pr{\bar\la e^{-2\rho}\psi \sqrt{\bar \ga}}^\pri}{\psi \sqrt{\bar \ga}} w^2} \dV.$$
By Lemma 2.3 in \cite{DF1}, if $\bar r < h$, then $\disp \mu \ge \nu e^{-2\rho}$ and $\disp - \pr{\bar \ga^{ij}}^\pri \ge \pr{\nu e^{-2\rho} + \mu}\bar \ga^{ij}$.  Therefore,
\begin{equation}
I_4 \ge \pr{2\be + n-2} \int \set{\nu e^{-2\rho}\brac{\be\pr{\be + n - 2}w^2 + \pr{w^\pri}^2 + \bar\ga^{ij}\frac{\del w}{\del t_i}\frac{\del w}{\del t_j}} + \frac{\pr{\bar\la e^{-2\rho}\psi \sqrt{\bar \ga}}^\pri}{\psi \sqrt{\bar \ga}} w^2} \dV.
\label{I4est}
\end{equation}
We will now try to establish an estimate for the last term.  Since
\begin{align*}
\frac{\pr{\bar\la e^{-2\rho}\psi \sqrt{\bar \ga}}^\pri}{\psi \sqrt{\bar \ga}} &= \la \frac{\pr{\exp\pr{2\nu r^2} e^{-2\rho}\psi \sqrt{\bar \ga}}^\pri}{\psi \sqrt{\bar \ga}} \\
&= \la{\exp\pr{2\nu r^2} e^{-2\rho}}\pr{4\nu r \frac{\del r}{\del \rho} - 2 + \mu} \\
&= \la{\exp\pr{2\nu r^2} e^{-2\rho}}\pr{-4\nu \bar r r \exp\pr{\nu r^2} - 2 + \mu}
\end{align*}
and $r, \bar r \le h$, then
$$\abs{\frac{\pr{\bar\la e^{-2\rho}\psi \sqrt{\bar \ga}}^\pri}{\psi \sqrt{\bar \ga}}} \le c|\la| \nu e^{-2\rho},$$
where $c$ depends on $h$ and $\nu$.
If we choose $C_2 \ge \sqrt{6c}$, then since we assume that $\al > C_2\pr{1 + \sqrt{|\la|}}$ we get $\disp |\la| \le \frac{\al^{2}}{C_2^2} \le \frac{\al^2}{6c}$.  Returning to (\ref{I4est}), we see that if $C_2 \ge \frac{n}{4}\pr{\frac{n}{2} - 2}$ then $\be\pr{\be + n - 2} \ge \al^2$, so
\begin{align*}
I_4 
&\ge \frac{5}{3}\al^3 \int \nu e^{-2\rho} w^2 \dV + 2\al \int \nu e^{-4\rho} \norm{\bar \gr w}^2_{\bar g} \dV.
\end{align*}
Since $\disp \bar\gr w = \bar r^{-\be} \bar \gr u - \be \frac{\bar \gr \bar r}{\bar r} w$ then
\begin{align*}
\norm{\bar\gr w}^2_{\bar g} &= \norm{\bar r^{-\be} \bar \gr u}^2_{\bar g} + \norm{\be \frac{\bar \gr \bar r}{\bar r} w}^2_{\bar g} - 2\bar g\pr{\bar r^{-\be} \bar \gr u, \be \frac{\bar \gr \bar r}{\bar r} w} \\
&= \bar r^{-2\be}\norm{\bar \gr u}^2_{\bar g} + \be^2 e^{2\rho}w^2 - 2\be e^{\rho}\bar r^{-\be} \frac{\del u}{\del \bar r}  w \\
&\ge \bar r^{-2\be}\norm{\bar \gr u}^2_{\bar g} + \be^2 e^{2\rho}w^2 - \frac{2\bar r^{-2\be}}{3}\pr{\frac{\del u}{\del \bar r}}^2 - \frac{3}{2} \be^2 e^{2\rho}w^2.
\end{align*}
Since $\disp \norm{\bar \gr u}^2_{\bar g} = \pr{\frac{\del u}{\del \bar r}}^2 + \frac{\bar \ga^{ij}}{\bar r ^2}\frac{\del u}{\del t_i}\frac{\del u}{\del t_j}$ and $\bar \ga^{ij}$ is a positive matrix, then because $\be \le \al$, we get 
\begin{align*}
\norm{\bar\gr w}^2_{\bar g} &\ge  \frac{\bar r^{-2\be}}{3}\norm{\bar \gr u}^2_{\bar g} - \frac{\al^2}{2} e^{2\rho}w^2
\end{align*}
and therefore,
\begin{equation}
\int \bar{r}^{-2\al} \abs{\pr{\bar{\LP} + \bar{\la}}u}^2 \ol{d vol} \ge \frac{\nu}{6}\al^3 \int \bar r^{-2\al - 2} \abs{u}^2 \; \ol{d vol} + \frac{\nu}{6}\al \int \bar r^{-2\al} \norm{\bar \gr u}^2_{\bar g} \; \ol{d vol}
\label{Ibarest}
\end{equation}
Now we need to establish the corresponding estimate for the original metric. That is, we need to eliminate the bars.  Since $\bar g = \phi g$, where $\phi = \exp\pr{-2\nu r^2}$ then $\norm{\bar \gr u }^2_{\bar g} = \phi^{-1} \norm{\gr u}^2_{g} \ge \norm{\gr u}^2_{g}$.  As was shown in \cite{DF1}, $K \gtrsim I$, where
$$K = \int \bar r^{-2\al}\abs{\pr{\LP + \la}u}^2 \ol{d vol},$$
giving the result.
\end{proof}

%
%
\section{Order of vanishing and a proof of Proposition \ref{baseC}}
\label{Van}

For a suitably normalized version of (\ref{epde}), we are interested in determining a lower bound of the form $$m(r) = \max_{|x| \le r}|u| \ge a_1 r^{a_2\be},$$ where $a_1, a_2$ are constants.  This estimate is interesting on its own, but can also be used to provide a proof of Proposition \ref{baseC}.

\begin{prop}
Suppose $u$ is a solution to $-\LP u + W\cdot\gr u + Vu = \tilde\la u $ in $B_{6}(0)$ with $\tilde \la = M^2 \la$, $||V||_\iny \le A_1 M^2$, $||W||_\iny \le A_2 M$ and $||u||_{\iny} \le C_0$.  In addition, assume that $\disp \norm{u}_{L^\iny\pr{B_1(0)}}\ge 1$. If $ M \ge \max\set{\frac{C_2 \pr{1 + \sqrt{\abs{\la}}}}{4 C_3 A_2^2}, \frac{w\pr{3} A_1}{4 C_3 A_2^3}, 1}$, then 
$$m(r) \ge a_1 r^{a_2M^{2}}.$$  
That is, with the notation above, $\be = M^2$.  Furthermore, $a_1 = a_1\pr{n}$ and $a_2 =  C\pr{n} A_2^2$.
\label{mest}
\end{prop}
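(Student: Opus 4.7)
The plan is to apply the Carleman estimate of Corollary \ref{CT} to $f = \eta u$, where $\eta$ is a smooth cutoff supported in an annulus away from the origin and from $\del B_6(0)$. The Carleman parameter will be chosen as $\al = c_n A_2^2 M^2$ for a suitable dimensional constant $c_n$, and the three conditions imposed on $M$ in the hypothesis are exactly what is needed so that (i) the Carleman hypothesis $\al > C_2\pr{1+\sqrt{|\tilde\la|}}$ is valid, (ii) the first-order term $W\cdot \gr u$ can be absorbed into the LHS gradient term $\al \int w^{-2\al}|\gr f|^2$, and (iii) the zeroth-order term $Vu$ can be absorbed into the LHS weighted term $\al^3 \int w^{-2-2\al}|f|^2$. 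After these three absorptions, a three-ball-type inequality will emerge; combined with the normalization $\norm{u}_{L^\iny\pr{B_1(0)}}\ge 1$, it will propagate a lower bound down to $B_r(0)$ in the polynomial form $m(r)\ge a_1 r^{a_2 M^2}$ with $a_2 = C(n) A_2^2$.

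More concretely, I would fix a radial cutoff $\eta\in C^\iny_c(B_5\setminus\set{0})$ that equals $1$ on $B_4\setminus B_{r}$, with $\abs{\gr \eta}\lesssim r^{-1}$, $\abs{\LP \eta}\lesssim r^{-2}$ on the inner transition annulus $A_r := B_r\setminus B_{r/2}$ and $\abs{\gr \eta}, \abs{\LP\eta}=O(1)$ on the outer transition annulus $A_4 := B_5\setminus B_4$. Setting $f=\eta u$ and using the equation in the form $\LP u = W\cdot\gr u + Vu - \tilde\la u$, one obtains
$$\LP f + \tilde\la f = \eta\pr{W\cdot \gr u + Vu} + 2\gr\eta\cdot \gr u + \pr{\LP\eta} u.$$
Squaring and substituting into Corollary \ref{CT} (applied at eigenvalue $\tilde\la$) splits the right-hand side into PDE contributions (supported on $\supp\eta$) and cutoff errors (supported on $A_r\cup A_4$). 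The three thresholds imposed on $M$ in the hypothesis line up exactly with the inequalities required to absorb the PDE contributions into the corresponding terms on the left.

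After absorption, the left-hand side restricted to $B_3\setminus B_{2r}$ is bounded below by $\tfrac{\al^3}{2}\, w(3)^{-2-2\al}\int_{B_3\setminus B_{2r}}\abs{u}^2$. By a standard interior regularity argument of the type collected in Appendix \ref{AppA}, the hypothesis $\norm{u}_{L^\iny\pr{B_1(0)}}\ge 1$ forces $\int_{B_3\setminus B_{2r}}\abs{u}^2 \ge c_n$ for all $r$ smaller than an absolute constant. On the right-hand side, Caccioppoli's inequality on $A_r$ converts $\abs{\gr u}^2$ into $\abs{u}^2$ at the cost of a factor $r^{-2}$, producing a contribution of order $w(r)^{-2\al-4}r^n m(cr)^2$ from the inner annulus and $w(4)^{-2\al}C_0^2$ from the outer annulus. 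Since $w$ is increasing, $w(4)>w(3)$, so the outer contribution decays faster in $\al$ than the lower bound on the LHS and can be absorbed for $\al$ large. Rearranging what remains yields $m(cr)^2\gtrsim r^{2\al+4-n}$, which after a routine renaming of constants gives the claimed bound $m(r)\ge a_1 r^{a_2 M^2}$ with $a_2 = C(n)A_2^2$.

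The main obstacle will be the careful bookkeeping of dimensional constants through the three absorption steps so that the thresholds on $M$ match precisely with those in the statement; the Carleman mechanism itself has been encapsulated in Corollary \ref{CT}. A secondary subtlety is that the Caccioppoli step on the cutoff annuli introduces additional PDE-induced contributions of order $A_1^2 M^4 \abs{u}^2$ and $A_2^2 M^2 \abs{u}^2$, but these are the same size as the PDE terms already absorbed, so they can be handled by enlarging $c_n$ in the choice of $\al$ without changing the final exponent $a_2 M^2$.
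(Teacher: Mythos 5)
The overall plan — apply Corollary \ref{CT} to $\zeta u$, absorb the $W\cdot\gr u$ and $Vu$ contributions into the gradient and weight terms on the left, and then exploit the cutoff-error terms — is the same skeleton the paper uses. But there is a genuine gap at the step where you claim that the normalization alone gives a lower bound on an annulus: \emph{``the hypothesis $\norm{u}_{L^\iny(B_1(0))}\ge 1$ forces $\int_{B_3\setminus B_{2r}}\abs{u}^2 \ge c_n$.''} Interior regularity gives $\norm{u}_{L^\iny(B_1)} \le C_n M^n\norm{u}_{L^2(B_2)}$, hence a lower bound on $\int_{B_2}\abs{u}^2$, but \emph{not} on $\int_{B_2\setminus B_{2r}}\abs{u}^2$: if the mass of $u$ concentrates inside $B_{2r}$ (which is exactly the fast-vanishing regime the proposition is designed to control), the annular integral can be arbitrarily small. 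You cannot rule this out a priori — preventing rapid vanishing at the origin is the content of the result, so invoking it here is circular. (Note also that the constant you get from elliptic regularity is $M$-dependent, not a clean $c_n$, though that particular issue is absorbable.)

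The paper sidesteps this entirely by not using the Carleman left-hand side as a lower bound. Instead, after absorption it reads off an \emph{upper} bound on $\int_{B_{r_1}}\abs{u}^2$ of the form
\[
\int_{B_{r_1}}\abs{u}^2 \le A^2\brac{\tfrac{w(r_1)}{w(r_0)}}^{2\al}\eta_1^2 + A^2\brac{\tfrac{w(r_1)}{w(R_1/2)}}^{2\al}V_1^2,
\]
which is a genuine three-ball (Hadamard-type) inequality relating $B_{2r_0}\subset B_{r_1}\subset B_{R_1}$. It then optimizes $\al$, splits into cases depending on whether the optimizing $\al_1$ exceeds the threshold $4C_3A_2^2M^2$, and uses the normalization plus $\norm{u}_\iny\le C_0$ at the \emph{middle} and \emph{outer} balls respectively to squeeze out the lower bound on the \emph{inner} ball. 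This interpolation structure (i) makes no claim about an annulus away from the origin, so there is no dichotomy to worry about, and (ii) causes the $r_0^{-4}$-type Caccioppoli factors to enter only via $\eta_1^{k_0}$ with $k_0\sim 1/\log(1/r_0)$, so they contribute only a harmless $O(1)$ factor rather than an additive polynomial shift in the exponent $a_2 M^2$. Your proposal fixes $\al = c_n A_2^2 M^2$ and forgoes the interpolation, which both leaves the dichotomy unhandled and produces residual $r^{(4-n)/2}$-type factors that have to be separately explained away. To salvage the approach you would either need to add an explicit dichotomy (if the $L^2$ mass sits in $B_{2r}$ the conclusion is trivially true for suitable constants; otherwise the annular bound holds) or, more cleanly, adopt the three-ball interpolation as in the paper.
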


\begin{rem}
If $W \equiv 0$ and $ M \ge \max\set{\frac{C_2^3 \pr{1 + \sqrt{\abs{\la}}}^3}{4 C_3 w^2\pr{3}A_1^2}, 1}$, then $m(r) \ge a_1 r^{a_2M^{4/3}}$, where $a_2 = C(n) A_1^{2/3}$.  This argument (for the case when $\la = 0$) is presented in \cite{K}.
\end{rem}

We use ``3 ball inequalities'' to prove Proposition \ref{mest}.

\begin{pf}[Proof of Proposition \ref{mest}]
Let $R_1 = 6$, $r_1 = 2$ and $2r_0 << r_1$.  \\
Let $\brac{a,b} = B_b(0)\setminus B_a(0)$ denote the spherical shell centered at zero with outer radius $b$ and inner radius $a$. \\
Set $K_1 = \brac{\tfrac{3}{2}r_0, \tfrac{1}{2}R_1}$, $K_2 = \brac{r_0, \tfrac{3}{2}r_0}$ and $K_3 = \brac{\tfrac{1}{2}R_1, \tfrac{3}{4}R_1}$. \\
Let $\zeta \in C^\iny_0(B_{R_1})$ be a smooth cutoff function such that $\zeta \equiv \left\{ \begin{array}{rl} 1 & \mathrm{on}\; K_1 \\ 0 & \mathrm{on} \; \brac{0, r_0}\cup\brac{\tfrac{3}{4}R_1, R_1} \end{array} \right.$, $|\gr\zeta| \le \left\{ \begin{array}{rl} \frac{C}{r_0} & \mathrm{on}\; K_2 \\ \frac{C}{R_1} & \mathrm{on} \; K_3 \end{array} \right.$ and $|\gr^2\zeta| \le \left\{ \begin{array}{rl} \frac{C}{r_0^2} & \mathrm{on}\; K_2 \\ \frac{C}{R_1^2} & \mathrm{on} \; K_3 \end{array} \right.$.  \\
If we assume that $\al > C_2\pr{1 +\sqrt{\abs{\tilde \la}}} = C_2\pr{1 + M\sqrt{\abs{\la}}}$, then we may apply Corollary \ref{CT} to $f = \zeta u$ to get
$$\al^3\int w^{-2-2\al}\abs{\zeta u}^2 + \al\int w^{-2\al}|\gr (\zeta u)|^2 \le C_3\int w^{-2\al}|\LP (\zeta u) + \tilde\la \zeta u|^2.$$
Since $|\LP u + \tilde \la u| \le A_1 M^2|u| + A_2 M|\gr u|$ and $\zeta \equiv 1$ on $K_1$, we see that
$$\al^3\int_{K_1} w^{-2-2\al}\abs{u}^2 + \al\int_{K_1} w^{-2\al}|\gr u|^2 \le 2C_3\int_{K_1} w^{-2\al}\pr{A_1^2M^4|u|^2 + A_2^2M^2|\gr u|^2} + J,$$
where $\disp J = C_3\int_{K_2\cup K_3} w^{-2\al}|\LP (\zeta u) + \tilde\la \zeta u|^2.$  
Thus,
\begin{align*}
\al^3\int_{K_1} w^{-2-2\al}\abs{u}^2 &+ \al\int_{K_1} w^{-2\al}| \gr u|^2 \le 2C_3w^2\pr{\tfrac{1}{2}R_1} A_1^2 M^4\int_{K_1} w^{-2-2\al}|u|^2 + 2C_3 A_2^2 M^2\int_{K_1}w^{-2\al}|\gr u|^2 + J.
\end{align*}
If $\al > 4C_3 A_2^2 M^2$, then for $ M > \max\set{\frac{C_2 \pr{1 + \sqrt{\abs{\la}}}}{4 C_3 A_2^2}, 1}$, $\al > C_2\pr{1 +\sqrt{\abs{\tilde \la}}} = C_2\pr{1 + M\sqrt{\abs{\la}}}$ holds. Furthermore, as long as $ M > \frac{w\pr{R_1/2} A_1}{4 C_3 A_2^3} $, then $\al^3 > 4C_3w^2\pr{\tfrac{1}{2}R_1} A_1^2 M^4$, so we may absorb the first two terms on the right into the left hand side to get 
\begin{equation}
\frac{\al^3}{2}\int_{K_1} w^{-2-2\al}\abs{u}^2 + \frac{\al}{2}\int_{K_1} w^{-2\al}|\gr u|^2 \le J.
\label{in1}
\end{equation}
Since $|\LP(\zeta u) + \tilde \la \zeta u| \le A_1 M^2 |u| + A_2 M |\gr u| + 2|\gr \zeta||\gr u| + |\LP \zeta||u|$, then
$$|\LP(\zeta u) + \tilde \la \zeta u| \le \left\{ \begin{array}{ll}  
A_1 M^2|u| + A_2 M|\gr u| + \frac{C}{r_0}|\gr u| +\frac{C}{r_0^2}|u| & \mathrm{on} \; K_2 \\
A_1 M^2|u| + A_2 M|\gr u| + \frac{C}{R_1}|\gr u| +\frac{C}{R_1^2}|u| & \mathrm{on} \; K_3 .\end{array} \right. $$
Therefore,
\begin{align*}
J 
  \le & 4C_3\brac{A_1^2 M^4 + \frac{C}{r_0^4}}w\pr{r_0}^{-2\al}\int_{K_2}|u|^2 + 
  4C_3\brac{A_2^2 M^2 + \frac{C}{r_0^2}}w\pr{r_0}^{-2\al}\int_{K_2}|\gr u|^2 + \\
  & 4C_3\brac{A_1^2 M^4 + \frac{C}{R_1^4}}w\pr{\tfrac{R_1}{2}}^{-2\al}\int_{K_3}|u|^2 +
  4C_3\brac{A_2^2 M^2 + \frac{C}{R_1^2}}w\pr{\tfrac{R_1}{2}}^{-2\al}\int_{K_3}|\gr u|^2 .
\end{align*}
By Cacciopoli (Lemma \ref{Cacc} in Appendix \ref{AppA}), 
\begin{align*}
\int_{K_2}|\gr u|^2 &\le C\brac{\frac{C}{r_0^2} + M^2\pr{\abs{\la} + A_1 + A_2^2}}\int_{B_{2r_0}\setminus B_{r_0/2}}|u|^2, \quad \mathrm{and} \\
\int_{K_3}|\gr u|^2 &\le C\brac{\frac{C}{R_1^2} + M^2\pr{\abs{\la} + A_1 + A_2^2}}\int_{B_{R_1}\setminus B_{R_1/4}}|u|^2 .
\end{align*}
Let $K_4 = \{ x \in K_1 : |x| \le r_1\}$.  Since $\al >> 1$, 
\begin{align*}
\int_{K_4} \abs{u}^2 \le &w\pr{r_1}^{2\al +2}\int_{K_4} w^{-2-2\al}\abs{u}^2 \\
\le &w\pr{r_1}^{2\al + 2}\brac{\frac{\al^3}{2}\int_{K_1} w^{-2-2\al}\abs{u}^2 + \frac{\al}{2}\int_{K_1} w^{-2\al}|\gr u|^2}.
\end{align*}
Then, if $\al > 4C_3 A_2^2 M^2$, by (\ref{in1}) and the estimates on $J$,
\begin{align*}
&\int_{K_4} \abs{u}^2 
\le 4C_3w\pr{r_1}^{2\al + 2}\brac{A_1^2 M^4 + \frac{C}{r_0^4}}w\pr{r_0}^{-2\al}\int_{K_2}|u|^2 
+ CC_3 w\pr{r_1}^{2\al + 2}\brac{\pr{\abs{\la} + A_1 + A_2^2}^2M^4 + \frac{1}{r_0^4}}w\pr{r_0}^{-2\al}\int_{B_{2r_0}\setminus B_{r_0/2}}|u|^2 \\
  & + 4C_3 w\pr{r_1}^{2\al + 2}\brac{A_1^2 M^4 + \frac{C}{R_1^4}}w\pr{\tfrac{R_1}{2}}^{-2\al}\int_{K_3}|u|^2
+ CC_3 w\pr{r_1}^{2\al + 2}\brac{\pr{\abs{\la} + A_1 + A_2^2}^2M^4 + \frac{1}{R_1^4}}w\pr{\tfrac{R_1}{2}}^{-2\al}\int_{B_{R_1}\setminus B_{R_1/4}}|u|^2 .
\end{align*}
Let $\eta = ||u||_{L^2(B_{2r_0})}$, $V = ||u||_{L^2(B_{R_1})}$ to get
$$\int_{|x| \le r_1}|u|^2 \le CC_3\brac{1 + \pr{\abs{\la} + A_1 + A_2^2}^2} \set{\brac{\frac{w\pr{r_1}}{w\pr{r_0}}}^{2\al}\eta^2 w\pr{r_1}^2 \pr{M^4 + \tfrac{1}{r_0^4}} + \brac{\frac{w\pr{r_1}}{w\pr{\tfrac{R_1}{2}}}}^{2\al}V^2 w\pr{r_1}^2 \pr{M^4 + \tfrac{1}{R_1^4}}}.$$
Now define $\eta_1^2 = \eta^2 w\pr{r_1}^2 \pr{ M^4 + \tfrac{1}{r_0^4}}$, $V_1^2 = V^2 w\pr{r_1}^2 \pr{ M^4 + \tfrac{1}{R_1^4}}$ and $A^2 = CC_3\brac{1 + \pr{\abs{\la} + A_1 + A_2^2}^2}$.  If $\al > 4C_3 A_2^2 M^2$, then
\begin{equation}
\int_{|x|\le r_1}\abs{u}^2 \le A^2\brac{\frac{w\pr{r_1}}{w\pr{r_0}}}^{2\al}\eta_1^2 + A^2\brac{\frac{w\pr{r_1}}{w\pr{\tfrac{R_1}{2}}}}^{2\al}V_1^2.
\label{L2bd}
\end{equation}
Let $k_0$ be given by $\frac{1}{k_0} = 1 + \frac{\log\brac{\frac{w\pr{r_1}}{w\pr{r_0}}}}{\log\brac{\frac{w\left(R_1/2\right)}{w\pr{r_1}}}}$, so that $\frac{1}{k_0} \simeq \log\frac{1}{r_0}$.  Set $\al_1 = \frac{k_0}{2\log\brac{\frac{w\pr{R_1/2}}{w\pr{r_1}}}}\log\brac{\left(\frac{V_1}{\eta_1} \right)^2}$.  Then, if $\al_1 > 4C_3 A_2^2 M^2$, we can use $\al_1$ in (\ref{L2bd}) to show that
\begin{align*}
||u||_{L^2(B_{r_1})} &\le \sqrt{2}A\eta_1^{k_0}V_1^{1-k_0}  = \sqrt{2}A\brac{||u||_{L^2(B_{2r_0})}w\pr{r_1}\pr{ M^4 + \tfrac{1}{r_0^4}}^{1/2}}^{k_0}\brac{||u||_{L^2(B_{R_1})}w\pr{r_1}\pr{ M^4 + \tfrac{1}{R_1^4}}^{1/2}}^{1-k_0}.
\end{align*}
Otherwise, if $\al_1 \le 4C_3 A_2^2 M^2$, since $||u||_{L^2(B_{r_1})} \le V$ and $\frac{k_0}{2\log\brac{\frac{w\pr{R_1/2}}{w\pr{r_1}}}}\log\brac{\left(\frac{V_1}{\eta_1} \right)^2} < 4C_3 A_2^2 M^2$, then $V_1^2 \le \eta_1^2\exp\pr{\frac{A_3 M^2}{k_0}}$, where $ A_3 = 8C_3\log\brac{\frac{w\pr{R_1/2}}{w\pr{r_1}}} A_2^2.$  So in this case,
$$||u||_{L^2(B_{r_1})} \le \pr{ \frac{ M^4 + \tfrac{1}{r_0^4} }{ M^4 + \tfrac{1}{R_1^4} } }^{1/2}||u||_{L^2(B_{2r_0})}\exp\pr{\frac{A_3 M^2}{k_0}}.$$ 
Combining these estimates, we obtain:
\begin{align*}
||u||_{L^2(B_{r_1})} \le &\sqrt{2}A\brac{||u||_{L^2(B_{2r_0})}w\pr{r_1}\pr{ M^4 + \tfrac{1}{r_0^4}}^{1/2}}^{k_0}\brac{||u||_{L^2(B_{R_1})}w\pr{r_1}\pr{ M^4 + \tfrac{1}{R_1^4}}^{1/2}}^{1-k_0} \\
& + \pr{ \frac{ M^4 + \tfrac{1}{r_0^4} }{ M^4 + \tfrac{1}{R_1^4} } }^{1/2}||u||_{L^2(B_{2r_0})}\exp\pr{\frac{A_3 M^2}{k_0}}.
\end{align*}
By elliptic regularity {(see \cite{HL}, for example)}, $||u||_{L^\iny(B_1)} \le C_nM^n||u||_{L^2(B_2)}$, for $M > 1$.
Thus, $||u||_{L^\iny(B_1)} \le I + II$, where
\begin{align*}
I = &\sqrt{2}AC_nM^n\brac{||u||_{L^2(B_{2r_0})}w\pr{r_1}\pr{ M^4 + \tfrac{1}{r_0^4}}^{1/2}}^{k_0}\brac{||u||_{L^2(B_{R_1})}w\pr{r_1}\pr{ M^4 + \tfrac{1}{R_1^4}}^{1/2}}^{1-k_0}, \\
II = &C_nM^n\pr{ \frac{ M^4 + \tfrac{1}{r_0^4} }{ M^4 + \tfrac{1}{R_1^4} } }^{1/2}||u||_{L^2(B_{2r_0})}\exp\pr{\frac{A_3 M^2}{k_0}}.
\end{align*}
By assumption, $||u||_{L^\iny(B_1)} \ge 1$. \\
If $I \le II$, then
\begin{align*}
1 &\le 2II \\
&\le 2C_nM^n\pr{ \frac{ M^4 + \tfrac{1}{r_0^4} }{ M^4 + \tfrac{1}{R_1^4} } }^{1/2}||u||_{L^2(B_{2r_0})}\exp\pr{\frac{A_3 M^2}{k_0}} \\
&\le 2C_nM^n\left(1 + \tfrac{1}{M^4r_0^4}\right)^{1/2}r_0^{n/2}\max_{|x| \le 2r_0}|u|\exp\pr{\frac{A_3 M^2}{k_0}} \\
&\le \tilde{C_n}\exp\pr{\frac{2A_3M^2}{k_0}}\max_{|x| \le 2r_0}|u| \\
&\le \tilde{C_n}r_0^{-C A_3M^2}\max_{|x| \le 2r_0}|u|,
\end{align*}
since $\frac{1}{k_0} \simeq \log(1/r_0)$.  Rearranging, we get
$$\max_{|x| \le 2r_0}|u| \ge \frac{r_0^{CA_3 M^2}}{\tilde{C_n}} ,$$
as desired. \\
If $II \le I$, then 
$$1 \le 2\sqrt{2}AC_nM^n\brac{||u||_{L^2(B_{2r_0})}w\pr{r_1}\pr{M^4 + \tfrac{1}{r_0^4}}^{1/2}}^{k_0}\brac{||u||_{L^2(B_{R_1})}w\pr{r_1}\pr{M^4 + \tfrac{1}{R_1^4}}^{1/2}}^{1-k_0}.$$
We raise both side to $1/k_0$ and use $||u||_{L^\iny} \le C_0$, to get
\begin{align*}
1 \le &(2\sqrt{2}AC_nM^n)^{1/k_0}||u||_{L^2(B_{2r_0})}w\pr{r_1}\pr{M^4 + \tfrac{1}{r_0^4}}^{1/2}\brac{||u||_{L^2(B_{R_1})}w\pr{r_1}\pr{M^4 + \tfrac{1}{R_1^4}}^{1/2}}^{1/k_0-1} \\
\le &(2\sqrt{2}AC_nM^n w(r_1))^{1/k_0}r_0^{n/2}||u||_{L^\iny(B_{2r_0})}M^2(C_nC_0R_1^{n/2})^{1/k_0 - 1}M^{2(1/k_0 - 1)} \\
\le & \pr{C_n A C_0}^{1/k_0}M^{(n+2)/k_0}||u||_{L^\iny(B_{2r_0})}.
\end{align*}
Since $\frac{1}{k_0} \simeq \log(1/r_0)$, we see that
$$1 \le \left( \frac{1}{r_0}\right)^{C\log\pr{C_n A C_0}} \left( \frac{1}{r_0}\right)^{C\log M} ||u||_{L^\iny(B_{2r_0})},$$
which gives a better bound than the first case and completes the proof.
\end{pf}

We now use Proposition \ref{mest} to prove Proposition \ref{baseC}.

\begin{pf}[Proof of Proposition \ref{baseC}]
Fix $x_0 \in \R^n$ so that $|x_0| = R$ and $\disp \mathbf{M}(R) = \inf_{|x_0| = R}\norm{u}_{L^2\pr{B_1(x_0)}}$.
Set $u_R(x) = u(Rx + x_0)$.  Then $||u_R||_{L^\iny}\le C_0$ and 
$$|\LP u_R + R^2 \la u_R| \le A_1 R^2|u_R| + A_2 R|\gr u_R| .$$ 
Note also that if $\widetilde{x_0} := -x_0/R$, then $| \widetilde{x_0}| = 1$ and $u_R(\widetilde{x_0}) = u(0) \ge 1$ so that $||u_R||_{L^\iny(B_1)} \ge 1$.  By our assumptions on $R$, we may apply Proposition \ref{mest} to $u_R$ with $M = R$ to get
\begin{align*}
\norm{u}_{L^\iny\pr{{B_{1/2}(x_0)}}} = & \norm{u_R}_{L^\iny\pr{B_{1/2R}(0)}}  \\
\ge &a_1(1/2R)^{a_2 R^2}  \qquad \textrm{(by Proposition \ref{mest})} \\
\ge &a_1\exp(-a_2 R^2\log 2R).
\end{align*}
Again, by elliptic regularity, $||u||_{L^\iny(B_{1/2}\pr{x_0})} \le C_nR^n||u||_{L^2(B_1\pr{x_0})}$, for $R > 1$.
If we rearrange then set $\disp C_5 = \frac{a_1}{C_n}$ and $C_4 = 2a_2 + n$, we get the desired result.
\end{pf}

\begin{rem}
If $W \equiv 0$, then the proof of Proposition \ref{baseC} is similar to the one given in \cite{K}.  With the same argument as above, but a different estimate for $m( r )$, we get an exponent of $4/3$ instead of $2$.
\end{rem}

%
%
\section{Proof of Proposition \ref{IH}}
\label{IHProof}

The proof presented below is a variation of the proof given in \cite{BK}.  In that proof, the idea was to pick $x_0$ with $|x_0| = R$ so that $\mathbf{M}( R ) = \norm{u(x)}_{L^\iny\pr{B_1(x_0)}}$.  Then you ``interchange $0$ and $x_0$" and ``rescale to $R = 1$''.  This provided a way of using the information about the behavior of the solution near $0$ to gain information about the behavior near $x_0$.  Proposition \ref{IH} uses information about the behavior near $x_0$ to gain information about the behavior near $y_0$.  Therefore, our idea is to ``shift" (interchange variables) and ``rescale to $S = 1$'', where $S = |y_0 - x_0|$.  We employ this more complicated approach because we want to be able to use information about the decay rates of the potentials, so we need to get away from zero.  

\begin{proof}[Proof of Proposition \ref{IH}]
Let $T = |x_0|$, $\disp S = |y_0 - x_0|$, $T^\prime = |y_0| = S + T$. \\
Let $\disp K_1 = \brac{\frac{1}{2S}, 1 + \frac{T}{2S}}$, $\disp K_2 = \brac{\frac{1}{4S}, \frac{1}{2S}}$, $\disp K_3=\brac{1+\frac{T}{2S},1+\frac{2T}{3S}}$.  \\
Choose a smooth cutoff function $\zeta$ so that $\zeta \equiv 1$ on $K_1$ and $\zeta \equiv 0$ on $\pr{K_1 \cup K_2 \cup K_3}^c$.  Then
$$|\gr \zeta | \lesssim \left\{\begin{array}{ll} S & \textrm{on}\;K_2 \\ \frac{S}{T} & \textrm{on}\;K_3  \end{array}\right., \quad  |\LP \zeta | \lesssim \left\{\begin{array}{ll} S^2 & \textrm{on}\;K_2 \\ \pr{\frac{S}{T}}^2 & \textrm{on}\;K_3  \end{array}\right..$$
Let $u_S(x) = u(y_0 + Sx)$, $\tilde \la = S^2 \la$.  Since
\begin{align*}
|y_0 + Sx| \ge |y_0| - S|x| \ge \left\{\begin{array}{ll}\frac{T}{2} & \textrm{on}\; K_1 \\ T & \textrm{on}\; K_2 \\ \frac{T}{3} & \textrm{on}\; K_3  \end{array} \right.,
\end{align*}
then
\begin{align*}
|\LP u_S + \tilde \la u_S| \le { S^2 |V(y_0 + Sx)| |u_S(x)| + S |W(y_0 + Sx)| |\gr u_S(x)|} \le \left\{\begin{array}{ll}
2^N A_1 E |u_S| + 2^P A_2 F |\gr u_S| & \textrm{on}\; K_1 \\  
A_1 E |u_S| + A_2 F |\gr u_S| & \textrm{on}\; K_2 \\ 
3^N A_1 E |u_S| + 3^P A_2 F |\gr u_S| & \textrm{on}\; K_3  \end{array} \right.,
\end{align*}
where $E = S^2 T^{-N}$, $F = ST^{-P}$.  Notice that we were able to use information about the decay rates of $V$ and $W$ at this point in the proof because we are away from zero. We now apply Corollary \ref{CT} to $f = \zeta u_S$, assuming that $\al > C_2\pr{1 + S\sqrt{\abs{\la}}}$, and use the above estimate on $K_1$.
\begin{align*}
\al^3\int w^{-2-2\al}\abs{f}^2 + \al\int w^{-2\al}|\gr f|^2 &\le C_3 \int_{K_1}  w^{-2\al}\abs{\LP u_S + \tilde\la u_S}^2 + C_3 \int_{K_2 \cup K_3}  w^{-2\al}\abs{\LP f + \tilde\la f}^2 \\
&\le 2^{1+2N}C_3  w^2\pr{1 + \frac{T}{2S}}A_1^2 E^2\int_{K_1}  w^{-2-2\al}\abs{u_S}^2 +2^{1+2P}C_3 A_2^2 F^2\int_{K_1}  w^{-2\al}|\gr u_S|^2 \\
&+ C_3 \int_{K_2 \cup K_3}  w^{-2\al}\abs{\LP f + \tilde\la f}^2.
\end{align*}
If we choose 
\begin{align*}
\al &= 4^{1+P+ {N}/{3}}C_3 w\pr{{5}/{4}}^{2/3}\pr{A_1^{2/3} + A_2^2} \cdot \left\{\begin{array}{ll} F^2 & \be > 1 + P - N + \frac{\om}{3} - \de \\  E^{2/3} & \be \le 1 + P - N + \frac{\om}{3} - \de \end{array}\right. \\
&= 4^{1+P+ {N}/{3}}C_3 w\pr{{5}/{4}}^{2/3}\pr{A_1^{2/3} + A_2^2} \cdot\max\set{F^2, E^{2/3}},
\end{align*}
then 
\begin{align*}
&2^{1+2N}C_3  w^2\pr{1 + \frac{T}{2S}}A_1^2 E^2 \le \frac{4^{1+N}}{2}C_3w^2\pr{\tfrac{5}{4}}A_1^2 E^2 \le \frac{\al^3}{2}, \\
&2^{1+2P}C_3  A_2^2 F^2 = \frac{4^{1+P}}{2}C_3 A_2^2 F^2 \le \frac{\al}{2},
\end{align*}
so we may absorb the first two terms on the right into the left to get
\begin{equation}
\frac{\al^3}{2}\int_{K_1} w^{-2-2\al}\abs{u_S}^2 + \frac{\al}{2}\int_{K_1} w^{-2\al}|\gr u_S|^2 \le J,
\label{est1}
\end{equation}
where $\disp J = C_3 \int_{K_2 \cup K_3}  w^{-2\al}\abs{\LP f + \tilde\la f}^2$. 
\\
Now we claim that if 
$T^{\de} > \max\set{2, \frac{C_2 \pr{1 + \sqrt{\abs{\la}}}}{2^{1 + 2P + 2N/3} C_3 w\pr{5/4}^{2/3} \pr{A_1^{2/3} + A_2^2} }},$
then $\al > C_2\pr{1 + S\sqrt{\abs{\la}}}$.  If $T^\de > 2$, then $T^{\ga - 1} > 2$, so $S = T^\ga - T > \frac{1}{2}T^\ga$.  If $\be \le 1 + P - N + \frac{\om}{3} - \de$, then
\begin{align*}
\al &= 4^{1+P+ {N}/{3}}C_3 w\pr{{5}/{4}}^{2/3}\pr{A_1^{2/3} + A_2^2} \cdot S^{4/3} T^{-2N/3} \\
&> \frac{4^{1+P+ {N}/{3}}C_3 w\pr{{5}/{4}}^{2/3}\pr{A_1^{2/3} + A_2^2}}{2^{1/3}} \pr{T^{\ga - 2N}}^{1/3} S \\
&>  2^{1+2P+ {2N}/{3}}C_3 w\pr{{5}/{4}}^{2/3}\pr{A_1^{2/3} + A_2^2} T^{\be - 1 + \de} S \\
&> C_2 S \pr{1 + \sqrt{\abs{\la}}},
\end{align*}
where we used the fact that $\be > 1$ and the assumption on the size of $T^\de$ to get to the last line.  Assuming that $S > 1$, we get the claimed result for this case.  The other cases follow in a similar way.  Thus, there is no problem with using the Carleman estimate for this choice of $\al$.   \\
Since $\LP f + \tilde\la f= \pr{\LP u_S + \tilde\la u_S}\zeta + 2\gr u_S \cdot \gr \zeta + u_S \, \LP \zeta$, then
\begin{align*}
|\LP f + \tilde \la f| &\lesssim \left\{\begin{array}{ll}  \pr{A_1 E + S^2} |u_S| + \pr{A_2 F + S}|\gr u_S| & \textrm{on} \; K_2 \\ \pr{A_1 E +  \pr{\frac{S}{T}}^2} |u_S| + \pr{A_2 F + {\frac{S}{T}}} |\gr u_S| & \textrm{on} \; K_3\end{array}\right..
\end{align*}
By Caccioppoli (Lemma \ref{Cacc} in Appendix \ref{AppA}), 
\begin{align*}
\int_{K_2} |\gr u_S|^2 &\lesssim \pr{S^2 + \abs{\la} + A_1 E + A_2^2 F^2}\int_{K_2^+}u_S^2 \\ 
\int_{K_3} |\gr u_S|^2 &\lesssim \pr{\pr{\frac{S}{T}}^2 + \abs{\la} + A_1 E + A_2^2 F^2} \int_{K_3^+}u_S^2,  
\end{align*}
where $\disp K_2^+ = \brac{\frac{1}{8S}, \frac{1}{S}}$ and $\disp K_3^+ = \brac{1 + \frac{T}{4S}, 1 + \frac{3T}{4S}}$.  Therefore,
\begin{align}
J &\le c_1C_3 w^{-2\al}\pr{\frac{1}{4S}} \pr{1 + \abs{\la} + A_1 + A_2^2}^2 S^4\int_{K_2^+}u_S^2 + c_2C_3 w^{-2\al}\pr{1 + \frac{T}{2S}} \pr{1 + \abs{\la} + A_1 + A_2^2}^2 S^4 C_0^2,
\label{JEst}
\end{align}
where the bound on the second term on the right follows from (\ref{uBd}). \\
Now we look for a lower bound for the term on the left hand side of (\ref{est1}).
\begin{align}
\frac{\al^3}{2}\int_{K_1} w^{-2-2\al}\abs{u_S}^2 + \frac{\al}{2}\int_{K_1} w^{-2\al}|\gr u_S|^2 &\ge \frac{\al^3}{2} w^{-2-2\al}\pr{1 + \frac{1}{S}}\int_{B_{1/S}\pr{\frac{x_0 - y_0}{S}}}\abs{u(y_0 + Sx)}^2dx 
\nonumber \\
&\ge  \frac{\al^3}{2}S^{-n} w^{-2-2\al}\pr{1 + \frac{1}{S}}\int_{B_{1}(x_0)}\abs{u}^2 
\nonumber \\
&\ge C_5\frac{\al^3}{2}S^{-n} w^{-2-2\al}\pr{1 + \frac{1}{S}}\exp(-C_4 T^{ \be} \log T),
\label{lowBd}
\end{align}
where the last line follows from the assumption (\ref{L2lBd2}). Combining (\ref{lowBd}), (\ref{est1}) and (\ref{JEst}) gives
\begin{align}
& C_5\frac{\al^3}{2}S^{-n} w^{-2-2\al}\pr{1 + \frac{1}{S}}\exp(-C_4 T^{ \be} \log T) \nonumber \\
\le &c_1C_3 w^{-2\al}\pr{\frac{1}{4S}} \pr{1 + \abs{\la} + A_1 + A_2^2}^2 S^4\int_{K_2^+}u_S^2 + c_2C_3 w^{-2\al}\pr{1 + \frac{T}{2S}} \pr{1 + \abs{\la} + A_1 + A_2^2}^2 S^4C_0^2.
\label{ineq}
\end{align}
If
\begin{equation*}
c_2C_3 w^{-2\al}\pr{1 + \frac{T}{2S}} \pr{1 + \abs{\la} + A_1 + A_2^2}^2 S^4C_0^2 \le  C_5\frac{\al^3}{4}S^{-n} w^{-2-2\al}\pr{1 + \frac{1}{S}}\exp(-C_4 T^{ \be} \log T),
\end{equation*}
or
\begin{equation}
\frac{c_2 C_0^2 \pr{1 + \abs{\la} + A_1 + A_2^2}^2 }{4^{2+3P+N}C_5 C_3^2}\brac{\frac{w\pr{1 + \frac{1}{S}}}{w\pr{\tfrac{5}{4}}}}^2\frac{S^{n+4}}{\max\{E^2, F^6\}} \exp\pr{C_4 T^{ \be}\log T} \le \brac{\frac{w\pr{1 + \frac{T}{2S}}}{w\pr{1 + \frac{1}{S}}}}^{2\al},
\label{abs2}
\end{equation}
then we may absorb the second term on the right of (\ref{ineq}) into the left hand side.  \\
Since we may choose $T_0 \ge T_*$, then by Lemma \ref{loglBd}, we see that for $c_n = c_n\pr{n, T_*}$,
\begin{equation}
\log\brac{\frac{ w\pr{1 + \frac{T}{2S}}}{ w\pr{1+ \frac{1}{S}}}} \ge c_{n}\frac{T}{S}.
\label{logEst}
\end{equation}
If we also assume that $ T \ge  \max\set{\frac{c_2 C_0^2 \pr{1 + \abs{\la} + A_1 + A_2^2}^2}{4^{2+3P+N}C_5 C_3^2}, \frac{8\pr{n{ \mathcal{G}} + 2N + 1}}{C_4}}$, where ${ \mathcal{G}}$ is an upper bound for $\ga$ that depends only on $N$ and $P$, then 
\begin{align*}
\log\pr{\frac{c_2 C_0^2 \pr{1 + \abs{\la} + A_1 + A_2^2}^2 }{4^{2+3P+N}C_5 C_3^2}\brac{\frac{w\pr{1 + \frac{1}{S}}}{w\pr{\tfrac{5}{4}}}}^2\frac{S^{n+4}}{\max\{E^2, F^6\}}} &\le \log\pr{T \cdot \frac{S^{n+4}}{S^4 T^{-2N}} } \le \pr{n{ \mathcal{G}} + 2N + 1}\log T \le \frac{C_4}{8} T \log T.
\end{align*}
If we take logarithms on both sides of (\ref{abs2}), and use (\ref{logEst}) and the fact that $\be > 1$, we see that (\ref{abs2}) holds if
\begin{align*}
& \tfrac{9}{8} C_4 T^\be \log T \le 2\al c_n\tfrac{T}{S} \\
\Iff\;& \tfrac{9 C_4}{4^{3+P+ {N}/{3}}C_3 w\pr{{5}/{4}}^{2/3}\pr{A_1^{2/3} + A_2^2} c_n} T^{\be - 1}\log T \le { \max\set{ST^{-2P}, S^{1/3}T^{-2N/3}}} \\
{\Leftarrow}\;& \pr{\tfrac{9 C_4}{4^{3+P+ {N}/{3}}C_3 w\pr{{5}/{4}}^{2/3}\pr{A_1^{2/3} + A_2^2} c_n} \log T}^a T^{\hg} \le S \\
\Leftarrow\;& 2\pr{\tfrac{9 C_4}{4^{3+P+ {N}/{3}}C_3 w\pr{{5}/{4}}^{2/3}\pr{A_1^{2/3} + A_2^2} c_n} \log T}^a T^{\hg} \le T^\prime,
\end{align*}
Since $ \pr{\frac{18 C_4}{4^{3+P+ {N}/{3}}C_3 w\pr{\tfrac{5}{4}}^{2/3}\pr{A_1^{2/3} + A_2^2} c_n} \log T}^a T^{\hg} = T^\ga$ and $T^\prime = T^\ga$ by definition, then condition (\ref{abs2}) is satisfied and (\ref{ineq}) reduces to
\begin{align*}
C_5\frac{\al^3}{4}S^{-n} w^{-2-2\al}\pr{1 + \frac{1}{S}}\exp(-C_4 T^{ \be} \log T) \le c_1C_3 w^{-2\al}\pr{\frac{1}{4S}} \pr{1 + \abs{\la} + A_1 + A_2^2}^2 S^4\int_{K_2^+}u_S^2
\end{align*}
By recalling that $u_S(x) = u(y_0 + Sx)$ and simplifying, we get
\begin{align}
\int_{B_{1}(y_0)}\abs{u}^2 &\ge C_5 \tfrac{4^{2+3P+N}C_3^2 \pr{A_1^{2/3} + A_2^2}^3}{c_1 \pr{1 + \abs{\la} + A_1 + A_2^2}^2} \frac{\max\{E^2, F^6\}}{S^4}\brac{\tfrac{ w\pr{ \tfrac{5}{4}}}{ w\pr{1 + \tfrac{1}{S}}}}^2 \brac{\tfrac{ w\pr{\tfrac{1}{4S}}}{ w\pr{1 + \tfrac{1}{S}}}}^{2\al}\exp\pr{-C_4 T^{ \be}\log T} 
\nonumber \\
&\ge{C}_5 \exp\left\{ -\brac{\log\pr{\tfrac{c_1 \pr{1 + \abs{\la} + A_1 + A_2^2}^2 }{4^{2+3P+N}C_3^2 \pr{A_1^{2/3} + A_2^2}^3}} +\pr{{4 - 3\be_c}} \log T  + (2\al -2)(1 + \tfrac{1}{4})\log S + C_4 T^{ \be}\log T}\right\} 
\label{abs} \\
&\ge C_5 \exp\left\{ -\brac{2\al(1 + \tfrac{1}{4})\log S + C_4\pr{1 + \tfrac{1}{4}} T^{ \be}\log T}\right\},
\label{ll}
\end{align}
where we assumed that $ T \ge \pr{\frac{c_1 \pr{1 + \abs{\la} + A_1 + A_2^2}^2 }{4^{2+3P+N}C_3^2 \pr{A_1^{2/3} + A_2^2}^3}}^{2/5}$ and $ T \ge { \frac{4\pr{4 - 3\be_c}}{C_4}}$ in order to absorb the first and second terms in (\ref{abs}), respectively.  { Furthermore, we used the fact that $S \ge 1$ and the definition of $\be_c$ to get the estimate for the second term in (\ref{abs})}.  We now want to show that we can absorb the second term in (\ref{ll}) into the first term.  Notice that $ C_4 T^{ \be} = \frac{C_4 \al}{4^{1+P+ {N}/{3}}C_3 w\pr{{5}/{4}}^{2/3}\pr{A_1^{2/3} + A_2^2} \cdot\max\set{F^2, E^{2/3}} T^{-\be}}$.  If $\max\{F^2, E^{2/3} \} = F^2$, then 
\begin{align*}
F^2T^{-\be} = \pr{\frac{S}{T^\prime}}^2 \pr{T^\prime}^{2}T^{-2P-\be} > \frac{1}{2}T^{2\ga - 2P - \be} = \frac{1}{2}T^{\be - \be_0 + 2\de} > \frac{1}{2}\pr{\tfrac{18 C_4}{4^{3+P+ {N}/{3}}C_3 w\pr{{5}/{4}}^{2/3}\pr{A_1^{2/3} + A_2^2} c_n} \log T}^2.
\end{align*}
Otherwise, $\max\{F^2, E^{2/3} \} = E^{2/3}$ and
\begin{align*}
E^{2/3}T^{-\be} = \pr{\frac{S}{T^\prime}}^{4/3}\pr{T^\prime}^{4/3}T^{-\frac{2N}{3}-\be}> \frac{1}{2}T^{(4\ga -2N - 3\be)/3} = \frac{1}{2}T^{3(\be - \be_0) + 4\de} > \frac{1}{2}\pr{\tfrac{18 C_4}{4^{3+P+ {N}/{3}}C_3 w\pr{{5}/{4}}^{2/3}\pr{A_1^{2/3} + A_2^2} c_n}\log T}^4.
\end{align*}
As long as $ \log T \ge \frac{8 c_n}{9 C_4}\max\left\{5 C_4 , 4^{1+P+ {N}/{3}}C_3 w\pr{{5}/{4}}^{2/3}\pr{A_1^{2/3} + A_2^2}  \right\}$, then $ C_4 \pr{1 + \tfrac{1}{4}} T^{ \be}\log T \le \frac{\al}{2} \log S$.  It follows that
$$ \int_{B_{1}(y_0)}\abs{u}^2 \ge C_5\exp\left\{ -{3 \cdot 4^{1+P+ {N}/{3}}C_3 w\pr{{5}/{4}}^{2/3}\pr{A_1^{2/3} + A_2^2} \cdot {T^\prime} ^{\be^\prime} \log T^\prime}\right\}.$$
If we set $\tilde C_4 = 3 \cdot 4^{1+P+ N/3} C_3 w\pr{{5}/{4}}^{2/3}\pr{A_1^{2/3} + A_2^2},$ then the proof is complete.
\end{proof}

%
%
\section{Proof of Theorem \ref{MEst}}
\label{MEstProof}

By inspecting the proof just given, we notice that $\be^\prime < \be$.  That is, each application of Proposition \ref{IH} reduces the exponent. This motivates us to define a decreasing sequence of exponents $\{ \be_j \}$, for each fixed $R \ge R_0> 0$, with the desired property that $\disp \lim_{j\to\iny}\be_j \approx \be_0$.  However, due to the presence of the $\de$ terms in the definitions of $\be$ and $\ga$, we do not get that $\disp \lim_{j\to\iny}\be_j = \be_0$.  It will be shown that we can get close enough under certain conditions.  Since $|y_0| > |x_0|$, we will also define an increasing sequence of positive numbers $\set{T_j}$ with the property that $T_{m+1} = R$, for some $m$ sufficiently large.  

The argument goes as follows: First, we will apply Proposition \ref{baseC} to get an initial estimate.  With the result as our hypothesis, we will then apply Proposition \ref{IH} to get another estimate.  With the newest estimate as our hypothesis, we apply Proposition \ref{IH} again.  This process will be repeated many times.  For
\begin{align*}
h_j &:= 1 + P - N + \om_j - \de_j, \\
\ell_j &:= 1 + P - N + \frac{\om_j}{3} - \de_j, 
\end{align*}
let
\begin{align*}
\be_1 &= \left\{\begin{array}{ll} 2 & W \not\equiv 0 \\ \frac{4}{3} & W \equiv 0 \end{array} \right., \\
\be_{j+1} &= \left\{\begin{array}{ll} 
2 - \frac{2P}{\ga_j} &  \be_j \ge h_j \\ 
\frac{4}{3} - \frac{2N}{3\ga_j} &  \be_j \le \ell_j 
\end{array}\right. \quad \textrm{for all}\; j \ge 1,
\end{align*}
where 
\begin{equation}
\ga_j = \left\{\begin{array}{ll} 
\be_j -1 + 2P + \de_j &  \be_j \ge h_j  \\ 
3(\be_j - 1) + 2N + 3\de_j &\be_j \le \ell_j
\end{array}\right..
\label{gamDef}
\end{equation}
At each stage, if $\disp \be_j \in \pr{\ell_j,  h_j}$, or if $\be_j < h_j$ but $\be_{j-1} > h_{j-1}$, then we replace $\be_j$ with $h_j$.  For $x_1$ to be specified, define
\begin{align*}
T_1 &= |x_1|, \\
T_{j+1} &= T_j^{\ga_j}  \quad \textrm{for all}\; j \ge 1.
\end{align*}
We choose each $\de_j$ as it is defined in Proposition \ref{IH}, where $C_4$ is replaced by $\tilde C_4$ after the first step.  Thus
\begin{align}
T_1^{\de_1} &=\frac{18 C_4}{4^{3 + P + N/3} C_3 w\pr{5/4}^{2/3} \pr{A_1^{2/3} + A_2^2}c_n } \log T_1, 
\label{T1ga1} \\
T_j^{\de_j} &={\frac{18 \tilde C_4}{4^{3 + P + N/3} C_3 w\pr{5/4}^{2/3} \pr{A_1^{2/3} + A_2^2} c_n} \log T_j} = \frac{27}{8 c_n} \log T_j \quad \textrm{for all}\; j \ge 2.
\label{Tjgaj}
\end{align}
Each $\om_j$ is chosen so that
\begin{equation}
T_j^{3P - N} + T_j = T_j^{3P - N + \om_j}.
\label{omDef}
\end{equation}
Finally, let
\begin{equation*}
\Ga_j = \ga_1\cdot \ga_2 \cdot \ldots \cdot \ga_j,
\end{equation*}
so that
$$T_{j+1} = T_1^{\Ga_j}.$$

We observe that $\be_{j+1} < \be_j$ and therefore $\ga_{j+1} < \ga_j$.  By studying the proof of Proposition \ref{IH}, we notice that since $T^\de > 2$, then $T_{j+1} > 2T_j$ and consequently, $\de_{j+1} < \de_j$.  Since $\disp \om_j = \frac{\log\pr{1 + T_j^{1 - 3P + N}}}{\log T_j}$, if $3P - N \ge 1$, then $\om_{j+1} < \om_j$ and $\om_j << \de_j$.

Depending on the values of $N$ and $P$, the sequences $\set{\ga_j}$ and $\set{\be_j}$ are defined in one of three ways: 
\begin{itemize}
\item Case 1: we always use the first choice ($\be_j \ge h_j$ for all $j \ge 1$), 
\item Case 2: we always use the second choice ($\be_j \le \ell_j$ for all $j\ge 1$), 
\item Case 3: we start out using the first choice then switch and use only the second choice ($\be_j \ge h_j$ for all $j \le J-1$, $\be_j \le \ell_j$ for all $j \ge J$).  \end{itemize}
Given the values of $N$ and $P$, it is possible to determine which case we fall into.  Our analysis is done separately in each of these three cases.

We will carefully choose $x_1$ and $m$ so that $T_{m+1} = R$.  In order to achieve the bound in Theorem \ref{MEst}(\ref{partA}), we need $m$ to be large enough so that 
\begin{equation}
\be_{m+1}- \be_0 \le (C_6-1) \frac{\log\log R}{\log R}.
\label{bem1}
\end{equation}
And to get the bound in Theorem \ref{MEst}(\ref{partB}), we need to show that
\begin{equation}
\pr{\be_{m+1}- 1}\log T_{m+1} \le \pr{C_6-1}\pr{\log\log R}^2.
\label{bem2}
\end{equation}

In order for Propositions \ref{baseC} and \ref{IH} to apply, we need to ensure that $T_1$ is sufficiently large.  Since $C_4$, $\tilde C_4$ and $C_5$ depend only on the constants associated with the PDE and the Carleman estimate, then for our argument, the $T_0$ that appears in Proposition \ref{IH} is a universal constant.  In a number of the following arguments, we require that each $T_j$ be sufficiently large (in a universal sense).  As we will see, when we combine all of these largeness assumptions, we will have an appropriate starting point for $T_1$.  From now on, we will write $T >> 1$ to mean that $T$ is bounded below by some implicit universal constant.

The cases of $\be_c > 1$ and $\be_c < 1$ are considered separately.

%
%
\subsection{Proof of Theorem \ref{MEst}(\ref{partA})}
\label{Proof1a}

The first goal of this subsection is to find an $m$ so that $R = T_{m+1}$ and so that estimate (\ref{bem1}) holds.  To do this, we start by defining a simpler (but very much related) recursive sequence of exponents, $\{ \hat\be_j \}$.  This is done (in most cases) by ignoring all $\de$  terms.  We will then compare the new sequence to our original one, and it will be clear how large $m$ must be.

Let $\lceil \ \rceil : \R \to \Z$ denote the ceiling function.  That is, for any $x \in \R$, $\lceil x \rceil = x + p \in \Z$, where $p \in [0, 1)$.

\begin{lem}
If we let
\begin{equation}
 m = \left\{\begin{array}{ll} 
\disp \left\lceil\tfrac{\log\log R}{\log(1/2P)} \right\rceil & \textrm{in Case 1: } 2 - 2P \ge \frac{4 - 2N}{3} \\  [\bigskipamount]
\disp \left\lceil\tfrac{\log\log R}{\log(1/2N)} \right\rceil & \textrm{in Case 2: }W \equiv 0 \\ [\bigskipamount]
\disp \left\lceil\tfrac{\log\log R}{\log(1/2N)} \right\rceil + J & \textrm{in Case 3: }W \not\equiv 0 \;\textrm{and}\; \frac{4-2N}{3} > 2 - 2P \end{array}  \right. ,
\label{mVals}
\end{equation}
where $J \le \mathcal{J}\pr{N,P, { T_1}}$ is determined by Lemma \ref{JLem}, then there exists $C_6\pr{N, P}$ such that $\disp \be_{m+1}- \be_0 \le (C_6-1) \frac{\log\log R}{\log R}$.
\label{mLem}
\end{lem}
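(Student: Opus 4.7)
The plan is to compare the perturbed recursion for $\{\be_j\}$ with the idealized recursion obtained by formally setting $\de_j=0$, extract the linearization around $\be_0$, and then choose $m$ so that the geometric decay brings $\be_{m+1}$ to within $\log\log R / \log R$ of $\be_0$. I would first identify $\be_0$ as an attracting fixed point in each regime. In Case 1, the map $F(x) = 2 - \frac{2P}{x - 1 + 2P}$ has fixed points at $x=1$ and $x = 2-2P$, with $F^\pri(2-2P) = 2P$. In Case 2, the map $G(x) = \frac{4}{3} - \frac{2N}{3(3(x-1)+2N)}$ fixes $1$ and $(4-2N)/3$, with $G^\pri((4-2N)/3) = 2N$. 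The hypothesis $\be_c > 1$ is precisely what makes both linearizations strict contractions (i.e. $2P<1$ in Case 1, $2N<1$ in Case 2).

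Next, setting $e_j = \be_j - \be_0$, a short algebraic manipulation (using that $\ga_j - 1 = e_j + \de_j$ in Case 1 and $\ga_j - 1 = 3(e_j + \de_j)$ in Case 2) rewrites each recursion as $e_{j+1} = \frac{\rho}{\ga_j}(e_j + \de_j)$, where $\rho = 2P$ in Case 1, $\rho = 2N$ in Case 2, and $\ga_j \to 1$ as $j \to \iny$. Iterating gives $|e_{m+1}| \lesssim \rho^{m+1}|e_1| + \sum_{j=1}^m \rho^{m-j}\de_j$. The first contribution falls below $\log\log R / \log R$ once $m \ge \log\log R / \log(1/\rho)$, which is exactly the count in (\ref{mVals}). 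The sum is controlled via the estimates in Appendices \ref{AppB} and \ref{AppC}: since $T_j^{\de_j} \sim \log T_j$ and $T_j = T_1^{\Ga_{j-1}}$, one gets $\de_j \lesssim \log\log T_j / \log T_j$, and the product bounds on $\Ga_j$ ensure that this decays fast enough in $j$ for the tail sum to be dominated by $\de_m \lesssim \log\log R / \log R$ (using $T_{m+1} = R$).

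For Case 3 the sequence starts in regime 1 and contracts toward $2-2P$, but since $\be_0 = (4-2N)/3 > 2-2P$ under this case, the $\be_j$ must eventually drop below $\ell_j$ and switch to regime 2. I would invoke Lemma \ref{JLem} to control the finite transition: it bounds by some $J = \mathcal{J}(N,P,T_1)$ the number of steps needed for $\be_j$ to enter regime 2, including the discrete replacements $\be_j \mapsto h_j$ triggered when $\be_j$ lands in the forbidden window $(\ell_j, h_j)$. Once $\be_J \le \ell_J$, the Case 2 analysis above applies verbatim with starting index $J$ and contraction rate $2N$, so $m = J + \lceil \log\log R / \log(1/(2N))\rceil$ suffices, matching (\ref{mVals}). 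The constant $C_6(N,P)$ is chosen to absorb the implicit constants in both the linear contraction estimate and the transitional contribution.

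The main obstacle is the coupled bookkeeping: the perturbations $\de_j$ are defined implicitly through $T_j^{\de_j} \sim \log T_j$ and $T_j = T_1^{\Ga_{j-1}}$, so verifying that $\sum_j \rho^{m-j}\de_j \lesssim \log\log R / \log R$ requires the non-trivial product estimates for $\Ga_j$ assembled in Appendix \ref{AppC}, rather than a naive bound. A secondary subtlety, specific to Case 3, is ensuring compatibility of the two contraction constants and the replacement rule across the regime change; this is precisely what Lemma \ref{JLem} encodes, and invoking it cleanly separates the transient finite-$J$ analysis from the subsequent geometric contraction argument.
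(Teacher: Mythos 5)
Your proposal is essentially correct but takes a genuinely different route from the paper's proof. You linearize the recursion for $\be_j$ around the fixed point $\be_0$, setting $e_j = \be_j - \be_0$ and observing that $e_{j+1} = \frac{\rho}{\ga_j}(e_j + \de_j)$ with $\rho = 2P$ or $2N$, and then unroll to get $e_{m+1} \lesssim \rho^m e_1 + \sum_{j\le m}\rho^{m-j}\de_j$. The paper instead works directly with $\ga_m$: it exploits the closed form $\hg_j = S_j/S_{j-1} \le 1 + \rho^j$ for the idealized ratios (\eqref{hatGamP}, \eqref{hatGamN}, \eqref{hatGamNJ}), combines this with Corollaries \ref{gambeCorP}, \ref{gambeCorN}, \ref{C3begaEst} (which give $\ga_m \le \hg_m + c\,\de_m$), concludes $\ga_m \le 1 + C\frac{\log\log R}{\log R}$, and then plugs $\ga_m$ into the defining formula $\be_{m+1} = 2 - 2P/\ga_m$ (resp.\ $\frac{4}{3} - \frac{2N}{3\ga_m}$). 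Your iteration separates the initial-error contribution from the accumulated $\de$-terms more transparently and makes the contraction-rate $\rho$ explicit, whereas the paper's route is shorter because the closed form for $\hg_j$ telescopes so cleanly that only the terminal $\ga_m$ needs to be controlled. The two approaches ultimately lean on the same Appendix B machinery.

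One small correction worth making explicit in your write-up: you attribute the smallness of $\sum_j \rho^{m-j}\de_j$ to the $\de_j$ ``decaying fast enough in $j$.'' In the regime $\be_c > 1$ the $\Ga_j$ are bounded (they converge to roughly $\frac{1}{1-\rho}$), so $\log T_j \sim \log T_1$ for all $j \le m$ and the $\de_j$ are \emph{approximately constant} along the iteration, not decaying. What actually saves you is that $T_1 = R^{1/\Ga_m}$ with $\Ga_m$ bounded forces $\log T_1 \gtrsim \log R$, hence $\de_1 \lesssim \frac{\log\log R}{\log R}$ already, and the geometric factor $\rho^{m-j}$ provides the suppression away from $j = m$. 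With that correction the bound $\sum_j \rho^{m-j}\de_j \le \frac{\de_1}{1-\rho} \lesssim \frac{\log\log R}{\log R}$ goes through. You should also note (as the paper does implicitly via the replacement rule $\be_j \mapsto h_j$ and the observation that $\be_j \ge h_j$ or $\be_j \le \ell_j$) that $e_j > 0$ throughout, so the one-sided iteration $e_{j+1} \le \rho(e_j + \de_j)$ is valid.
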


Lemma \ref{mLem} will be proved after a few results have been established for each of the three cases. \\

%
%
\nid \textbf{Case 1:} $\disp 2-2P \ge \frac{4-2N}{3}$.

By Lemma \ref{case1}, if $T_1 >> 1$, then each $T_j >> 1$ and $\be_j \ge h_j$ for all $j \ge 1$.  Thus, we are in Case 1.  By setting all of the $\de_j$ terms equal to zero, we define
\begin{align}
\hat{ \be}_1 &= 2, 
\nonumber \\
\hat{\be}_{j+1} &= 2 - \frac{2P}{\hg_j} \quad \textrm{for all} j \ge 1,
\label{PbeHatDef}
\end{align} 
where 
$$\hat{\ga}_j = \hat{\be}_j - 1 + 2P.$$  
If we set $\hat\be_{j+1} = \hat\be_j$, we get two solutions: $2-2P$ and $1$.  Since $\be_0 = 2 - 2P > 1$ then $\disp \lim_{j\to\iny}\hat\be_j  = \be_0$.  By Lemma \ref{beHatEstP},
\begin{align}
\be_{j+1} &\le \hat\be_{j+1} + \frac{(2P)^j}{(\hg_1\ldots \hg_j)^2}\de_1 + \frac{(2P)^{j-1}}{(\hg_2\ldots \hg_{j})^2}\de_2  + \ldots + \frac{2P}{(\hg_j)^2}\de_j ,
\label{betaEstC1}
\end{align}
for all $j\ge 1$.  We will use (\ref{betaEstC1}) to show that the difference between $\be_j$ and $\hat\be_j$ is on the order of $\de_j $. But first we require a couple of facts.
 
 Since 
 $$\de_j \log T_j = \left\{ \begin{array}{ll} 
\log\pr{\frac{18 C_4 \log T_j}{4^{3 + P + N/3} C_3 w\pr{5/4}^{2/3} \pr{A_1^{2/3} + A_2^2} c_n  }} & j = 1 \\ 
\log\pr{\frac{27 \log T_j}{8 c_n}} & j \ge 2 
\end{array}\right. ,$$
then we get the following.
 
 \begin{lem}
 For $T_1 >> 1$,
 \begin{equation}
 \frac{\sqrt{3}}{2}\frac{\log\log T_j}{\log T_j} \le \de_j \le \frac{2}{\sqrt{3}} \frac{\log\log T_j}{\log T_j}.
 \label{epsEst}
 \end{equation}
 \end{lem}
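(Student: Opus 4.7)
The proof plan is essentially direct algebraic manipulation of the defining formulas $(\ref{T1ga1})$ and $(\ref{Tjgaj})$ for $\de_j$. Writing
$$C^{(1)} = \frac{18 C_4}{4^{3+P+N/3} C_3 w(5/4)^{2/3} (A_1^{2/3}+A_2^2)\,c_n}, \qquad C^{(2)} = \frac{27}{8 c_n},$$
we have, in both cases, $\de_j \log T_j = \log\pr{C^{(k)} \log T_j}$ for the appropriate $k \in \{1,2\}$. Dividing by $\log T_j$ and separating the logarithm gives
$$\de_j \;=\; \frac{\log\log T_j}{\log T_j}\pr{1 + \frac{\log C^{(k)}}{\log\log T_j}}.$$
So the lemma reduces to showing that, for $T_1$ large enough, the factor in parentheses lies in the interval $[\sqrt{3}/2,\; 2/\sqrt{3}]$.

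To do this, note that membership in that interval is equivalent to
$$-\frac{2-\sqrt{3}}{2} \;\le\; \frac{\log C^{(k)}}{\log\log T_j} \;\le\; \frac{2-\sqrt{3}}{\sqrt{3}}.$$
Hence it suffices to arrange that $\log\log T_j$ be at least $\max\{\tfrac{2}{2-\sqrt{3}},\tfrac{\sqrt{3}}{2-\sqrt{3}}\}\cdot |\log C^{(k)}|$ for each of the two constants $C^{(1)}, C^{(2)}$. Since $C^{(1)}, C^{(2)}$ are universal constants (depending only on $n$, $N$, $P$, $A_1$, $A_2$ and the Carleman data), this is a single explicit lower bound on $\log\log T_j$. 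Because $\ga_j > 1$, the sequence $\{T_j\}$ is strictly increasing, so $T_j \ge T_1$; taking $T_1 >> 1$ (meaning $T_1$ exceeds a universal threshold) then enforces the required bound simultaneously for all $j \ge 1$.

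I do not expect any genuine obstacle here: the proof is a one-line factorization followed by a size estimate. The only mild subtlety is the case split between $j=1$ and $j\ge 2$ (different constants $C^{(1)}$ versus $C^{(2)}$), but both constants are fixed and universal, so imposing the largeness condition on $T_1$ against the larger of the two suffices. The constants $\sqrt{3}/2$ and $2/\sqrt{3}$ on the right-hand sides are not sharp; they are chosen so that the resulting inequalities $\frac{\sqrt{3}}{2} \de_j \log T_j \le \log\log T_j \le \frac{2}{\sqrt{3}} \de_j \log T_j$ feed cleanly into the comparison arguments with $\hat\be_j$ carried out later in Lemma \ref{mLem} and in Appendix \ref{AppB}.
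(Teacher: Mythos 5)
Your proof is correct and is exactly the argument the paper has in mind; in fact the paper gives no explicit proof for this lemma, merely stating it as an immediate consequence of the displayed formulas for $\de_j\log T_j$, and your one-line factorization $\de_j = \tfrac{\log\log T_j}{\log T_j}\bigl(1 + \tfrac{\log C^{(k)}}{\log\log T_j}\bigr)$ followed by imposing a lower bound on $\log\log T_1$ is precisely how one fills in the gap. The only minor imprecision is that the required threshold on $\log\log T_j$ depends on the sign of $\log C^{(k)}$ — only one of the two bounds $\tfrac{2}{2-\sqrt{3}}$ or $\tfrac{\sqrt{3}}{2-\sqrt{3}}$ is actually binding for a given constant — but taking the maximum of the two, as you do, is certainly sufficient and harmless.
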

 
 Let $C_8 = \frac{1 + 2P}{4P}$.  Since $2 - 2P > 1$, then $2P < 1$, $C_8 > 1$ and $2P C_8 < 1$, so the corresponding geometric series is summable.  The following lemma is used as an intermediate step.
 
 \begin{lem} For $T_1 >> 1$, $\disp \frac{\ga_j}{\hg_j^2} \le C_8$.
 \label{C7Lem}
 \end{lem}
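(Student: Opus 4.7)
The plan is to combine three ingredients: (i) a uniform lower bound $\hg_j \ge 1$ that is specific to Case~1, (ii) the stability estimate (\ref{betaEstC1}) to control $\be_j - \hat\be_j$ by a geometric series in the $\de_k$, and (iii) the fact that every $\de_k$ is $o(1)$ as $T_1 \to \iny$ by (\ref{epsEst}).

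First I would observe that in Case~1 the sequence $\hat\be_j$ is strictly decreasing to its stable fixed point $2 - 2P$, as can be read off from the recursion (\ref{PbeHatDef}). Hence $\hg_j = \hat\be_j - 1 + 2P \ge 1$ for every $j$, so $1/\hg_j^2 \le 1/\hg_j \le 1$. Using $(\hg_k\cdots \hg_{j-1})^2 \ge 1$ inside (\ref{betaEstC1}) (shifted by one index) then yields the crude geometric bound
\[
\be_j - \hat\be_j \;\le\; \sum_{k=1}^{j-1}(2P)^{j-k}\de_k \;\le\; \frac{2P}{1-2P}\,\de_{\max},
\qquad \de_{\max} := \sup_{k\ge 1}\de_k,
\]
where the sum converges because $2P < 1$ in Case~1. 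Adding $\de_j$ to both sides gives
\[
\ga_j - \hg_j \;=\; (\be_j - \hat\be_j) + \de_j \;\le\; \frac{\de_{\max}}{1-2P}.
\]

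Finally, splitting the target ratio and applying the two estimates above,
\[
\frac{\ga_j}{\hg_j^2} \;=\; \frac{1}{\hg_j} + \frac{\ga_j - \hg_j}{\hg_j^2} \;\le\; 1 + \frac{\de_{\max}}{1-2P}.
\]
Since $C_8 - 1 = (1-2P)/(4P)$, the desired inequality $\ga_j/\hg_j^2 \le C_8$ reduces to $\de_{\max} \le (1-2P)^2/(4P)$, which holds for $T_1 \gg 1$: by (\ref{epsEst}) and the monotonicity $T_j \ge T_1$, each $\de_k$ is bounded by a universal multiple of $\log\log T_1/\log T_1$, hence $\de_{\max} \to 0$ as $T_1 \to \iny$. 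The only mild nuance, rather than a real obstacle, is that $\de_1$ has a slightly different defining formula from $\de_j$ for $j \ge 2$ (compare (\ref{T1ga1}) and (\ref{Tjgaj})), but (\ref{epsEst}) applies uniformly in $j$, so a single largeness condition on $T_1$ controls every $\de_k$ at once, and the constant $C_8$ becomes tight only as $P \uparrow 1/2$, where a larger $T_1$ is required.
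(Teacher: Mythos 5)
Your proof is correct, but it takes a genuinely different route from the paper's. The paper proves the lemma by induction on $j$: at stage $k+1$ it feeds the inductive hypothesis $\ga_\ell/\hg_\ell^2 \le C_8$ back into the sum from (\ref{betaEstC1}), and — crucially — uses the intermediate estimate (\ref{epsjk}), $\de_\ell \le \tfrac{4}{3}\,\ga_\ell\cdots\ga_k\,\de_{k+1}$, to convert every $\de_\ell$ in that sum into a multiple of the \emph{current} $\de_{k+1}$. This yields $\ga_{k+1} \le \hg_{k+1} + \tfrac{7+2P}{3(1-2P)}\de_{k+1}$, an error that decays with the iteration index; this sharper form is what is recorded as Corollary \ref{gambeCorP} and is needed downstream (e.g.\ in the proof of Lemma \ref{mLem}, where one wants $\ga_m \le 1 + C_P\log\log R/\log R$). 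Your argument, by contrast, dispenses with both induction and (\ref{epsjk}): it simply discards the denominators $(\hg_k\cdots\hg_{j-1})^2 \ge 1$ in (\ref{betaEstC1}) and bounds every $\de_k$ uniformly by $\de_{\max} = \de_1$, giving $\ga_j - \hg_j \le \de_1/(1-2P)$. This is enough to conclude $\ga_j/\hg_j^2 \le 1 + \de_1/(1-2P) \le C_8$ once $\de_1 \le (1-2P)^2/(4P)$, which indeed holds for $T_1 \gg 1$, so the lemma as stated follows — and with a slightly more generous threshold on $\de_1$ than the paper requires. What you give up is the $j$-decaying error: you only get $\ga_j \le \hg_j + O(\de_1)$, not $\ga_j \le \hg_j + O(\de_j)$, so your version of the argument would not reproduce Corollary \ref{gambeCorP} or the step of Lemma \ref{mLem} that depends on it. As a self-contained proof of the stated lemma, though, it is correct and cleaner.
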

 
 \begin{proof}
 We will use induction to show that this holds.
\quad \\
Base case: $\disp \frac{\ga_1}{\hg_1^2} \le \frac{1 + 2P + \de_1}{(1 + 2P)^2} = \frac{1 + 2P + \de_1}{1 + 4P + 4P^2} \le 1 < C_8$ for $\de_1\le 2P$, which holds if $T_1 >> 1$. \\
Assume that $\disp \frac{\ga_j}{\hg_j^2} \le C_8$ for all $j \le k$. \\
By (\ref{betaEstC1}) and the definition of $\ga_{k+1}$ given by (\ref{gamDef}),
\begin{align*}
\ga_{k+1} &\le \hg_{k+1} + \frac{(2P)^k}{(\hg_1\ldots \hg_k)^2}\de_1 + \frac{(2P)^{k-1}}{(\hg_2\ldots \hg_{k})^2}\de_2  + \ldots + \frac{2P}{(\hg_k)^2}\de_k + \de_{k+1}.
\end{align*}
 For $j \le k$, $T_{k+1} = T_j^{\ga_j \ga_{j+1}\ldots \ga_{k}} \Rightarrow \frac{\log\log T_j}{\log T_j} = \ga_j \ga_{j+1}\ldots \ga_{k}\frac{\log\log T_{k+1} - \log(\ga_j \ldots \ga_{k})}{\log T_{k+1} } \le \ga_j \ga_{j+1}\ldots \ga_{k}\frac{\log\log T_{k+1}}{\log T_{k+1}}$, since $\ga_\ell > 1$ for all $\ell$.   So by (\ref{epsEst}), 
 \begin{equation}
 \de_j \le \frac{4}{3} \ga_j\ldots\ga_k \;\de_{k+1}
 \label{epsjk}
 \end{equation}
Thus,
\begin{align*}
\ga_{k+1} &\le \hg_{k+1} + \frac{4}{3}\brac{(2P)^k\frac{\ga_1\ldots \ga_k}{(\hg_1\ldots \hg_k)^2} + (2P)^{k-1}\frac{\ga_2\ldots \ga_{k}}{(\hg_2\ldots \hg_{k})^2}  + \ldots + 2P\frac{\ga_k}{(\hg_k)^2} + 1}\de_{k+1} - \frac{1}{3}\de_{k+1} \\
&\le \hg_{k+1} + \frac{4}{3}\brac{(2P C_8)^k + (2P C_8)^{k-1}  + \ldots + 2P C_8 + 1 }\de_{k+1} - \frac{1}{3}\de_{k+1} \\
&< \hg_{k+1} + \frac{7 + 2P}{3(1 - 2P)}\de_{k+1} \\
&\le  \hg_{k+1} + C_8 - 1
\end{align*}
 for $T_1 >> 1$. Then
 $$\frac{\ga_{k+1}}{\hg_{k+1}^2} \le \frac{1}{\hg_{k+1}} + \frac{C_8 - 1}{\hg_{k+1}^2} \le C_8,$$
 since $\hg_j \ge 1$ for all $j$, completing the proof.
 \end{proof}
 
 Notice that
 \begin{align*}
 \be_j &= \ga_j + 1 - 2P - \de_j \\
 &\le \hg_j + \frac{7 + 2P}{3(1 - 2P)}\de_j + 1 - 2P - \de_j \quad \textrm{by (\ref{gamEstC1})}\\
 &= \hat\be_j + \frac{4 + 8P}{3(1-2P)}\de_j.
 \end{align*}
This and the proof of Lemma \ref{C7Lem} give the desired corollary.
 
\begin{cor}
If $T_1 >> 1$, then for all $j \ge 1$,
\begin{align}
\ga_{j} &\le \hg_{j} + \frac{7 + 2P}{3(1 - 2P)}\de_j
\label{gamEstC1} \\
\be_j &\le \hat\be_j + \frac{4 + 8P}{3(1-2P)}\de_j.
\label{beEstC1}
\end{align}
\label{gambeCorP}
\end{cor}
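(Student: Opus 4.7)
The plan is to observe that both estimates in the corollary are essentially packaged versions of intermediate steps already carried out in the proof of Lemma \ref{C7Lem}; the corollary just records the constants in a closed form that survived being discarded.

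First, to establish (\ref{gamEstC1}) I would proceed by induction on $j$, mirroring the inductive structure used for Lemma \ref{C7Lem}. For the base case, $\hg_1 = 1 + 2P$ gives $\ga_1 = 1 + 2P + \de_1 = \hg_1 + \de_1$, and since $1 - 2P < 1$ while $7 + 2P > 3$ the coefficient $\frac{7+2P}{3(1-2P)}$ exceeds $1$, so the inequality holds at $j = 1$. For the inductive step, assume (\ref{gamEstC1}) for all $\ell \le k$ (equivalently, assume the bound $\ga_\ell/\hg_\ell^2 \le C_8$ of Lemma \ref{C7Lem}). Exactly the computation in the proof of Lemma \ref{C7Lem} — using (\ref{betaEstC1}) for $\be_{k+1}$, the identity $\ga_{k+1} = \be_{k+1} - 1 + 2P + \de_{k+1}$, the estimate (\ref{epsjk}) on $\de_j$ in terms of $\de_{k+1}$, and summing the geometric series with ratio $2P C_8 < 1$ — yields
$$\ga_{k+1} \le \hg_{k+1} + \frac{4}{3}\sum_{i=0}^{k}\pr{2P C_8}^{i}\de_{k+1} - \tfrac{1}{3}\de_{k+1} \le \hg_{k+1} + \frac{7 + 2P}{3(1 - 2P)}\de_{k+1},$$
which is (\ref{gamEstC1}) at step $k + 1$. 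This is precisely the estimate that was derived in the proof of Lemma \ref{C7Lem}; there it was only used to bound $\ga_{k+1}/\hg_{k+1}^2$ by $C_8$, but here I retain the explicit additive constant.

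Second, to derive (\ref{beEstC1}) I would combine (\ref{gamEstC1}) with the algebraic definitions. In Case 1, the first clause of (\ref{gamDef}) reads $\ga_j = \be_j - 1 + 2P + \de_j$, i.e.\ $\be_j = \ga_j + 1 - 2P - \de_j$. Similarly the recursion (\ref{PbeHatDef}) for $\set{\hat\be_j}$ gives $\hg_j = \hat\be_j - 1 + 2P$, so $\hg_j + 1 - 2P = \hat\be_j$. Substituting (\ref{gamEstC1}) into the formula for $\be_j$ produces
$$\be_j \le \hg_j + \frac{7+2P}{3(1-2P)}\de_j + 1 - 2P - \de_j = \hat\be_j + \pr{\frac{7+2P}{3(1-2P)} - 1}\de_j,$$
and the elementary identity $\frac{7+2P}{3(1-2P)} - 1 = \frac{(7+2P) - 3(1-2P)}{3(1-2P)} = \frac{4 + 8P}{3(1-2P)}$ finishes the bound.

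The main obstacle here is bookkeeping rather than anything conceptual: one must check carefully that the geometric-series summation in the inductive step really does collapse to the advertised constant $\frac{7+2P}{3(1-2P)}$, and that the smallness hypothesis $T_1 \gg 1$ invoked for the base case (to guarantee $\de_1 \le 2P$ and thereby the usability of Lemma \ref{C7Lem}) and for each application of (\ref{epsjk}) is consistent with the universal largeness condition already imposed above. Once these are in place, the corollary follows immediately as an essentially free consequence of the analysis done for Lemma \ref{C7Lem}.
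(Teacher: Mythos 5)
Your proposal is correct and follows essentially the same route as the paper: the paper's own proof of Corollary \ref{gambeCorP} consists of exactly the algebraic substitution you give for (\ref{beEstC1}) together with a pointer to the proof of Lemma \ref{C7Lem}, where the inequality $\ga_{k+1} < \hg_{k+1} + \frac{7+2P}{3(1-2P)}\de_{k+1}$ appears as an intermediate step before being weakened to $C_8 - 1$. Your version simply makes explicit what the paper leaves implicit (the $j=1$ base case and the identification of the geometric-series constant), and the minor gloss that the two inductive hypotheses are ``equivalent'' is harmless since, as you note, the $T_1 \gg 1$ smallness of $\de_j$ needed for that equivalence is already part of the hypotheses.
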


%
%
\nid \textbf{Case 2:} $W \equiv 0$. \\
Since $W \equiv 0$, then we always define our sequences $\set{\be_j}$ and $\set{\ga_j}$ by using the second choice, putting us in Case 2.  
We define
\begin{align*}
\hat{ \be}_1 &= \frac{4}{3}, \\
\hat{\be}_{j+1} &= \frac{4}{3} - \frac{2N}{3\hg_j} \quad \textrm{for all} j \ge 1,
\end{align*} 
where 
$$\hat{\ga}_j = 3(\hat{\be}_j - 1) + 2N.$$  
Setting $\hat\be_{j+1} = \hat\be_j$, we get two solutions: $\disp \frac{4-2N}{3}$ and $1$.  Since $\disp \be_0 = \frac{4-2N}{3} > 1$ then $\disp \lim_{j\to\iny}\hat\be_j  = \be_0$.  As was done in Case 1, we estimate $\be_j$ in terms of $\hat\be_j$ (assuming that $T_1 >> 1$ so that $\de_1 \le 1/3$).
\begin{align}
{ \be}_1 &= \hat{\be}_1, 
\nonumber \\
\Rightarrow {\be}_2 &= \frac{4}{3} - \frac{2N}{3(\hat{\ga}_1 + 3\de_1)} 
\le \frac{4}{3} - \frac{2N}{3\hat{\ga}_1} + \frac{2N}{(\hg_1)^2}  \de_1
= \hat \be_2 + \frac{2N}{(\hg_1)^2}\de_1, 
\nonumber \\
 &\vdots 
 \nonumber \\
\Rightarrow \be_{j+1} &\le \hat\be_{j+1} + \frac{(2N)^j}{(\hg_1\ldots \hg_j)^2}\de_1 + \frac{(2N)^{j-1}}{(\hg_2\ldots \hg_{j})^2}\de_2  + \ldots + \frac{2N}{(\hg_j)^2}\de_j 
\label{betaEstC2}
\end{align}
 We will use (\ref{betaEstC2}) to show that the difference between $\be_j$ and $\hat\be_j$ is on the order of $\de_j $. 
  
 Let $C_9 = \frac{1 + 2N}{4N}$.  Since $\disp \be_0 = \frac{4-2N}{3}> 1$, then $2N < 1$, $C_9 > 1$ and $2N C_9 < 1$, so the corresponding geometric series is summable.  The following lemma is used as an intermediate step.
 
 \begin{lem} For $T_1 >> 1$, $\disp \frac{\ga_j}{\hg_j^2} \le C_9$.
 \label{C8Lem}
 \end{lem}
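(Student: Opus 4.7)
The plan is to mimic the induction argument used for Lemma \ref{C7Lem}, adapted to Case 2 where the defining recursion uses the factor $2N$ in place of $2P$ and $\ga_j = 3(\be_j-1)+2N+3\de_j$, $\hg_j = 3(\hat\be_j-1)+2N$. The proof proceeds by induction on $j$.

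For the base case $j=1$, we have $\be_1 = 4/3$ so $\ga_1 = 1 + 2N + 3\de_1$ and $\hg_1 = 1+2N$. Assuming $T_1 \gg 1$ makes $\de_1$ arbitrarily small (by (\ref{epsEst})), and we get $\ga_1/\hg_1^2 \le (1+2N+3\de_1)/(1+2N)^2 \le 1 < C_9$ since $C_9 = (1+2N)/(4N) > 1$ (as $2N<1$).

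For the inductive step, assume $\ga_i/\hg_i^2 \le C_9$ for all $i\le k$. Apply the estimate (\ref{betaEstC2}) and the identity $\ga_{k+1} = 3(\be_{k+1}-1)+2N+3\de_{k+1}$, noting that $\hg_{k+1} = 3(\hat\be_{k+1}-1)+2N$, to obtain
\[
\ga_{k+1} \le \hg_{k+1} + 3\sum_{j=1}^{k}\frac{(2N)^{k+1-j}}{(\hg_j\cdots\hg_k)^2}\de_j + 3\de_{k+1}.
\]
The key step is to compare the $\de_j$ to $\de_{k+1}$ via (\ref{epsjk}), which holds verbatim in Case 2 since its derivation only used $\ga_\ell > 1$. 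This gives $\de_j \le \tfrac{4}{3}\ga_j\cdots\ga_k\,\de_{k+1}$. Using the inductive hypothesis $\ga_i/\hg_i^2 \le C_9$ telescopes the products:
\[
\ga_{k+1} \le \hg_{k+1} + \de_{k+1}\Bigl[4\sum_{j=1}^{k}(2NC_9)^{k+1-j} + 3\Bigr].
\]

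The crucial arithmetic fact is that $2N C_9 = (1+2N)/2 < 1$, so the geometric series is summable with bound $(1+2N)/(1-2N)$ independent of $k$. Therefore the bracketed correction is bounded by a constant $D = D(N)$, and by choosing $T_1$ sufficiently large we force $D\,\de_{k+1} \le C_9 - 1$. Then
\[
\frac{\ga_{k+1}}{\hg_{k+1}^2} \le \frac{1}{\hg_{k+1}} + \frac{C_9-1}{\hg_{k+1}^2} \le C_9,
\]
since $\hg_{k+1}\ge 1$, completing the induction. The only real obstacle is careful bookkeeping to verify that the geometric series from the inductive hypothesis collapses correctly, which rests entirely on the inequality $2NC_9 < 1$ — a direct consequence of the standing hypothesis $\be_0 = (4-2N)/3 > 1$ in this case.
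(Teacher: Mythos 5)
Your proof is correct and follows essentially the same route as the paper: the same base case, the same use of (\ref{betaEstC2}) and (\ref{epsjk}) to reduce to a geometric series in $2NC_9 < 1$, and the same final absorption of $D\de_{k+1}$ into $C_9 - 1$ for $T_1$ large. Your explicit constant $D = 4(1+2N)/(1-2N) + 3$ indeed simplifies to the paper's $(7+2N)/(1-2N)$, so the two arguments coincide.
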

 
 \begin{proof}
 We will use induction to show that this holds.
\quad \\
Base case: $\disp \frac{\ga_1}{\hg_1^2} \le \frac{1 + 2N + 3\de_1}{(1 + 2N)^2} = \frac{1 + 2N + 3\de_1}{1 + 4N + 4N^2} \le 1 < C_9$ for $3\de_1\le 2N$, which holds if $T_1 >> 1$. \\
Assume that $\disp \frac{\ga_j}{\hg_j^2} \le C_9$ for all $j \le k$. \\
By (\ref{betaEstC2}) and the definition of $\ga_{k+1}$,
\begin{align*}
\ga_{k+1} &\le \hg_{k+1} + \frac{(2N)^k}{(\hg_1\ldots \hg_k)^2}3\de_1 + \frac{(2N)^{k-1}}{(\hg_2\ldots \hg_{k})^2}3\de_2  + \ldots + \frac{2N}{(\hg_k)^2}3\de_k + 3\de_{k+1}.
\end{align*}
Using (\ref{epsjk}), we get
\begin{align*}
\ga_{k+1} &\le \hg_{k+1} + 4\brac{(2N)^k\frac{\ga_1\ldots \ga_k}{(\hg_1\ldots \hg_k)^2} + (2N)^{k-1}\frac{\ga_2\ldots \ga_{k}}{(\hg_2\ldots \hg_{k})^2}  + \ldots + 2N\frac{\ga_k}{(\hg_k)^2} + 1}\de_{k+1} - \de_{k+1} \\
&\le \hg_{k+1} + 4\brac{(2N C_9)^k + (2N C_9)^{k-1}  + \ldots + 2N C_9 + 1 }\de_{k+1} - \de_{k+1} \\
&< \hg_{k+1} + \frac{7 + 2N}{1 - 2N}\de_{k+1} \\
&\le  \hg_{k+1} + C_9 - 1
\end{align*}
 for $T_1 >> 1$. Then
 $$\frac{\ga_{k+1}}{\hg_{k+1}^2} \le \frac{1}{\hg_{k+1}} + \frac{C_9 - 1}{\hg_{k+1}^2} \le C_9,$$
 since $\hg_j \ge 1$ for all $j$, completing the proof.
 \end{proof}
 
 Notice that
 \begin{align*}
 \be_j &= \frac{\ga_j}{3} + 1 - \frac{2N}{3} - \de_j \\
 &\le \frac{\hg_j}{3} + \frac{7 + 2N}{3(1 - 2N)}\de_j + 1 - \frac{2N}{3} - \de_j \quad \textrm{by (\ref{gamEstC2})}\\
 &= \hat\be_j + \frac{4 + 8N}{3(1-2N)}\de_j.
 \end{align*}
This gives the desired corollary.
 
\begin{cor}
If $T_1 >> 1$, then for all $j \ge 1$,
\begin{align}
\ga_j &\le \hg_j + \frac{7 + 2N}{1 - 2N}\de_j
\label{gamEstC2} \\
\be_j &\le \hat\be_j + \frac{4 + 8N}{3(1-2N)}\de_j.
\label{beEstC2}
\end{align}
\label{gambeCorN}
\end{cor}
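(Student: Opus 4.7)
The plan is to obtain both inequalities by recycling the intermediate estimate that was produced inside the proof of Lemma~\ref{C8Lem}, and then translating from $\ga_j$ to $\be_j$ via the linear relation between the two quantities that is specific to Case~2.

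First I would establish (\ref{gamEstC2}). Inspecting the induction step in Lemma~\ref{C8Lem}, the bound
$$\ga_{k+1} \le \hg_{k+1} + \frac{7+2N}{1-2N}\de_{k+1}$$
appears as an intermediate conclusion: it is obtained by summing the geometric series with common ratio $2N C_9 = (1+2N)/2$ (whose tail is $2/(1-2N)$), multiplying by the factor of $4$ produced from (\ref{epsjk}), and then subtracting the $-\de_{k+1}$ correction. In Lemma~\ref{C8Lem} this bound was further coarsened to $\hg_{k+1} + C_9 - 1$ in order to close the induction, but the sharper intermediate form is exactly (\ref{gamEstC2}) for $j = k+1 \ge 2$. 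For the base case $j=1$, note that $\ga_1 = 1 + 2N + 3\de_1$ and $\hg_1 = 3(\hat\be_1 - 1) + 2N = 1 + 2N$, so $\ga_1 - \hg_1 = 3\de_1$, and $3 \le \tfrac{7+2N}{1-2N}$ holds for every $0 \le N < 1/2$.

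Next I would derive (\ref{beEstC2}) directly from (\ref{gamEstC2}) by algebraic manipulation. In Case~2 the defining relation $\ga_j = 3(\be_j - 1) + 2N + 3\de_j$ rearranges to $\be_j = \tfrac{\ga_j}{3} + 1 - \tfrac{2N}{3} - \de_j$, and analogously the hatted quantities (which carry no $\de$-correction) satisfy $\hat\be_j = \tfrac{\hg_j}{3} + 1 - \tfrac{2N}{3}$. Subtracting these two identities and inserting (\ref{gamEstC2}) yields
$$\be_j - \hat\be_j \;=\; \frac{\ga_j - \hg_j}{3} - \de_j \;\le\; \frac{1}{3}\cdot\frac{7+2N}{1-2N}\de_j - \de_j \;=\; \frac{4+8N}{3(1-2N)}\de_j,$$
which is exactly (\ref{beEstC2}).

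The work is essentially algebraic packaging: the genuine inductive content was already produced inside Lemma~\ref{C8Lem}, and the rest is a one-line manipulation using the linear relation between $\ga_j$ and $\be_j$. No serious obstacle is anticipated. The only mildly delicate points are verifying that the sharper intermediate bound in Lemma~\ref{C8Lem} is indeed valid for all $k \ge 1$ (which it is, since the inductive hypothesis $\ga_i/\hg_i^2 \le C_9$ for $i \le k$ is already proved there) and checking that the constants $\tfrac{7+2N}{1-2N}$ and $\tfrac{4+8N}{3(1-2N)}$ are finite and positive; both are guaranteed by the Case~2 hypothesis $\be_0 = \tfrac{4-2N}{3} > 1$, i.e.\ $N < 1/2$.
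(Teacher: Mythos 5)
Your proposal is correct and follows essentially the same route as the paper: extract the sharper intermediate bound $\ga_{k+1} \le \hg_{k+1} + \tfrac{7+2N}{1-2N}\de_{k+1}$ from the induction step of Lemma~\ref{C8Lem}, then obtain (\ref{beEstC2}) from (\ref{gamEstC2}) via the linear relation $\be_j = \tfrac{\ga_j}{3} + 1 - \tfrac{2N}{3} - \de_j$; the paper's write-up presents only the second half of this chain and leaves the appeal to Lemma~\ref{C8Lem} implicit. You also explicitly check the $j=1$ base case (where the intermediate bound from the inductive step does not directly apply), which the paper glosses over; this is a minor but welcome tightening.
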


%
%
\nid \textbf{Case 3:} $W \not\equiv 0$ and $\frac{4-2N}{3} > 2 - 2P$. \\
We will now consider the case where $W \not\equiv 0$, so $\be_1 = 2$, but $\disp \be_0 = \frac{4-2N}{3}$.  We have $3P - N = 1 + \De$, for some $\De > 0$.  Notice that $P - N = 1 + \De - 2P$, so if $P < 1/2$, then $P - N > 0$.  And if $P \ge 1/2$, since $N < 1/2$, we again get that $P - N > 0$.  \\
By Lemma \ref{JLem}, there exists a $J $ such that $\be_{J-1} = h_{J-1}$.  Furthermore, $\be_k \ge h_k$ for all $k \le J-1$.

\begin{lem} 
For $T_1 >> 1$, if $\be_{j-1} \le h_{j-1}$ then $\be_j \le \ell_j$.
\label{decLem}
\end{lem}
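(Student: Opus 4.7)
The plan is to translate the hypothesis $\be_{j-1} \le h_{j-1}$ into a uniform upper bound on $\ga_{j-1}$, then compute $\be_j$ from the appropriate recurrence and verify that the result stays below $\ell_j$ by exploiting the strict gap $3P - N > 1$ that characterises Case 3.

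I would split the hypothesis into two sub-cases. If $\be_{j-1} \le \ell_{j-1}$, then the second-choice formula gives $\ga_{j-1} = 3(\be_{j-1} - 1) + 2N + 3\de_{j-1} \le 3P - N + \om_{j-1}$ and $\be_j = \frac{4}{3} - \frac{2N}{3\ga_{j-1}}$. If instead $\be_{j-1} \in (\ell_{j-1}, h_{j-1}]$, the replacement convention sets $\be_{j-1} = h_{j-1}$, so the first-choice formula gives the identity $\ga_{j-1} = 3P - N + \om_{j-1}$ and $\be_j = 2 - \frac{2P}{\ga_{j-1}}$. A short algebraic calculation shows that both formulae produce the same leading-order value $\frac{4P - 2N}{3P - N}$ at $\om_{j-1} = 0$, and they differ only by a quantity of size $O(\om_{j-1})$.

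The desired inequality $\be_j \le \ell_j = 1 + P - N + \frac{\om_j}{3} - \de_j$ then reduces, after clearing denominators, to a polynomial inequality of the form
$$\text{(leading constant)} + \text{(small terms in } \om_{j-1}, \om_j, \de_j) \le 0.$$
In the first-choice sub-case the leading constant factors neatly as $-(P - N)\Delta$, where $\Delta := 3P - N - 1 > 0$ is the Case 3 gap; this is strictly negative since $P - N > 0$ in Case 3, as recorded in the paragraph preceding the lemma. In the second-choice sub-case the leading constant equals $-\Delta(1 - 2N + \Delta)$, which is strictly negative since $N < 1/2$ in Case 3. Either way, one obtains a fixed negative constant depending only on $N$ and $P$.

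The correction terms are controlled by $\om_{j-1}, \om_j, \de_j$, each of which tends to zero as $T_1 \to \infty$ by (\ref{Tjgaj}) and (\ref{omDef}) together with the monotonicity $T_j \ge T_1$. Hence, for $T_1$ sufficiently large, the perturbations are dominated by the fixed negative leading constant, and the inequality $\be_j \le \ell_j$ follows. The main obstacle is careful bookkeeping: one has to track the indices on $\om_{j-1}, \om_j, \de_{j-1}, \de_j$ (since $\ga_{j-1}$ is built from $\om_{j-1}, \de_{j-1}$ while $\ell_j$ involves $\om_j, \de_j$) and must verify the sign of the leading constant using all three Case 3 hypotheses $3P - N > 1$, $N < 1/2$, and $P - N > 0$ (which in turn requires checking the separate regimes $P < 1/2$ and $P \ge 1/2$ as done earlier in the text).
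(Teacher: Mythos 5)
Your approach mirrors the paper's: split on whether $\be_{j-1} \le \ell_{j-1}$ or $\be_{j-1}$ has been replaced by $h_{j-1}$, then in each sub-case substitute into the appropriate recurrence for $\be_j$, clear denominators, and check that the resulting inequality holds because its constant term is strictly negative and the $\om$ and $\de$ corrections vanish as $T_1 \to \iny$. The paper delegates the $\be_{j-1} = h_{j-1}$ sub-case to Lemma \ref{utol} in Appendix \ref{AppB}, which runs essentially the inline computation you sketch, so the proofs coincide.

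One correction, though, matters for how the lemma is later used. You justify the negativity of the second leading constant $-\De(1-2N+\De)$ by invoking $N < 1/2$. But since $\De = 3P - N - 1$, one has $1 - 2N + \De = 3(P-N)$, so the operative hypothesis is $P > N$, not $N < 1/2$. This is not cosmetic: in the proof of Theorem \ref{MEst}(\ref{partB}), Case 3 (where $\be_c < 1$ forces $N > 1/2$), the paper explicitly reuses Lemma \ref{decLem} with the remark that it ``only uses that $P - N > 0$.'' A proof routed through $N < 1/2$ would fail there, while the paper's argument, which isolates $P - N > 0$ as the key sign condition, transfers. Rewriting $1 - 2N + \De$ as $3(P - N)$ repairs the reasoning and also reveals that your two ``leading constants,'' $-(P-N)\De$ and $-\De(1-2N+\De) = -3\De(P-N)$, are proportional, as they should be given your own observation that the two recurrences produce the same value of $\be_j$ at $\om_{j-1} = 0$.
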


{
\begin{proof}
If $\be_{j-1} \le \ell_{j-1}$, then $\ga_{j-1} \le 3P - N + \om_{j-1}$ and $\be_j \le \frac{4}{3} - \frac{2N}{3(3P - N + \om_{j-1})}$.  
\begin{align*}
& \frac{4}{3} - \frac{2N}{3(3P - N + \om_{j-1})} \le \ell_j \\
\Iff\;& \frac{\De + \om_{j-1}}{\De + \om_j - 3\de_j} \le \frac{3(P-N) + 2N + \om_{j-1}}{2N} \\
\Leftarrow\;& \frac{\De + \de_{j}}{\De - 3\de_j} \le 1 + \frac{3(P-N)}{2N} \\
\Iff\;& \frac{4\de_{j}}{\De - 3\de_j} \le  \frac{3(P-N)}{2N},
\end{align*}
for $T_j >> 1$.  If we ensure that $\de_1 \le \frac{3\De(P-N)}{8N + 9(P-N)}$, since $\disp \de_j \le \de_1$, then the claim follows. \\
Now suppose $\be_{j-1} \le h_{j-1}$ but $\be_{j-1} > \ell_{j-1}$.  Then $\be_{j-1} = h_{j-1}$ and by Lemma \ref{utol}, we get the result.
\end{proof}
}



\begin{cor}
For $T_1 >> 1$, $\be_j \le \ell_j$ for all $j \ge J$.
\label{decLemCor}
\end{cor}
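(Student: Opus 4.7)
The plan is to establish the corollary by a direct induction on $j \ge J$, with Lemma \ref{decLem} providing the propagation step and Lemma \ref{JLem} providing the base case. The largeness condition on $T_1$ will simply be taken large enough to validate both of these lemmas simultaneously.

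For the base case $j = J$, I would invoke Lemma \ref{JLem} to get $\be_{J-1} = h_{J-1}$, so in particular $\be_{J-1} \le h_{J-1}$. Applying Lemma \ref{decLem} with $j$ replaced by $J$ then yields $\be_J \le \ell_J$, as required.

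For the inductive step, I would assume $\be_{j-1} \le \ell_{j-1}$ for some $j \ge J+1$ and aim to deduce $\be_j \le \ell_j$. The crucial (and essentially the only nontrivial) observation is that $\ell_{j-1} \le h_{j-1}$. From the definitions,
\[
h_{j-1} - \ell_{j-1} = \om_{j-1} - \tfrac{\om_{j-1}}{3} = \tfrac{2}{3}\om_{j-1},
\]
and by (\ref{omDef}) we have $T_{j-1}^{3P-N} + T_{j-1} > T_{j-1}^{3P-N}$, forcing $\om_{j-1} > 0$. Hence $\ell_{j-1} \le h_{j-1}$, so the inductive hypothesis gives $\be_{j-1} \le h_{j-1}$, and another application of Lemma \ref{decLem} produces $\be_j \le \ell_j$.

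There is no real obstacle here beyond bookkeeping: the entire argument is the observation that $\ell_{j-1} \le h_{j-1}$ lets the hypothesis of Lemma \ref{decLem} chain from one index to the next, and that Lemma \ref{JLem} gives exactly the inequality $\be_{J-1} \le h_{J-1}$ needed to start the chain. The threshold on $T_1$ is simply the maximum of the largeness thresholds appearing in Lemmas \ref{JLem} and \ref{decLem}.
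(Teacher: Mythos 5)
Your proposal is correct and follows essentially the same route as the paper: start the chain with Lemma~\ref{JLem}, take one step with Lemma~\ref{decLem} to get $\be_J \le \ell_J$, and then iterate Lemma~\ref{decLem} using the fact that $\ell_j \le h_j$ to keep the hypothesis valid. The only difference is that you spell out the elementary computation $h_{j-1} - \ell_{j-1} = \tfrac{2}{3}\om_{j-1} > 0$, which the paper takes as immediate.
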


\begin{proof}
By Lemma \ref{JLem}, there is a $J$ such that $\be_{J-1} \le h_{J-1}$.  Lemma \ref{decLem} implies that $\be_{J} \le \ell_{J}$.  Since $\ell_j < h_j$ for all $j$, Lemma {\ref{decLem}} shows that $\be_j \le \ell_j$ for all $j > J$ as well.
\end{proof}

These results show that we initially use the first choice to define our sequences, but then we switch and use only the second choice.  So, indeed, we are in Case 3.

For Case 3, our simpler sequence of exponents will begin at $J + 1$ instead of $1$.  Let
\begin{align*}
\hat\be_{J+1} &= \frac{4}{3}, \\
\hat\be_{J+j +1} &= \frac{4}{3} - \frac{2N}{3\hg_{J+j}}, \quad \textrm{for all }\; j\ge 1,
\end{align*}
where
$$\hg_{J+j} = 3(\hat\be_{J+j} - 1) + 2N.$$

As in Case 2, if we set $\hat\be_{J+j+1} = \hat\be_{J+j}$, we get two solutions: $\disp \frac{4-2N}{3}$ and $1$.  Since $\disp \be_0 = \frac{4-2N}{3} > 1$ then $\disp \lim_{j\to\iny}\hat\be_{J+j}  = \be_0$.  We estimate $\be_j$ in terms of $\hat\be_j$ (assuming that $\de_{J+1} \le 1/3$) for all $j > J$ using the same idea that was used in Case 2.  But first observe that since $\be_J \le \ell_J$, then $\ga_J \le 1 + \De + \om_J$.
\begin{align}
{ \be}_{J+1} &= \frac{4}{3} - \frac{2N}{3(1 + \De + \om_J)} \le \frac{4}{3} = \hat{\be}_{J+1}, 
\nonumber \\
{\be}_{J+2} &\le \frac{4}{3} - \frac{2N}{3(\hat{\ga}_{J+1} + 3\de_{J+1})} 
\le \frac{4}{3} - \frac{2N}{3\hat{\ga}_{J+1}} + \frac{2N}{(\hg_{J+1})^2}  \de_{J+1}
= \hat \be_{J+2} + \frac{2N}{(\hg_{J+1})^2}\de_{J+1}, 
\nonumber \\
 &\vdots 
 \nonumber \\
\be_{J+j+1} &\le \hat\be_{J+j+1} + \frac{(2N)^j}{(\hg_{J+1}\ldots \hg_{J+j})^2}\de_{J+1} + \frac{(2N)^{j-1}}{(\hg_{J+2}\ldots \hg_{J+j})^2}\de_{J+2}  + \ldots + \frac{2N}{(\hg_{J+j})^2}\de_{J+j} 
\label{betaEstC3}
\end{align}
 We will use (\ref{betaEstC3}) to show that the difference between $\be_j$ and $\hat\be_j$ is on the order of $\de_j $ for all $J > j$.  Using the same proof as that of Lemma \ref{C8Lem}, we get the following lemma.

 \begin{lem}
 For $T_1 >> 1$, $\frac{\ga_j}{{\hg_j}^2} \le C_9$ for all $j > J$.
 \label{C3gamLem}
 \end{lem}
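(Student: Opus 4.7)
The plan is to mirror the induction used to prove Lemma \ref{C8Lem}, but with the starting index shifted to $J+1$, since by Corollary \ref{decLemCor} the sequences $\set{\be_j}$ and $\set{\ga_j}$ are defined using exclusively the ``second choice'' once $j \ge J$. In other words, for $j \ge J+1$ the recursion governing $\be_j$ and $\ga_j$ matches precisely the recursion analyzed in Case 2, and the estimate (\ref{betaEstC3}) plays the role of (\ref{betaEstC2}).

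First I would verify the base case $j = J+1$. From $\be_J \le \ell_J$ we obtain $\ga_J \le 1 + \De + \om_J$, so $\be_{J+1} \le 4/3 = \hat\be_{J+1}$ and therefore $\ga_{J+1} \le \hg_{J+1} + 3\de_{J+1}$. Choosing $T_1$ large enough so that $3\de_{J+1} \le 2N$ gives $\frac{\ga_{J+1}}{\hg_{J+1}^2} \le \frac{1+2N+3\de_{J+1}}{(1+2N)^2} \le 1 < C_9$.

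Next I would run the inductive step exactly as in Lemma \ref{C8Lem}. Assume $\frac{\ga_{J+i}}{\hg_{J+i}^2} \le C_9$ for $1 \le i \le k$. Using (\ref{betaEstC3}) and the definition (\ref{gamDef}) of $\ga_{J+k+1}$, together with the analog of (\ref{epsjk}) (which follows in the same way by writing $T_{J+i} = T_{J+k+1}^{1/(\ga_{J+i}\cdots\ga_{J+k})}$ and invoking (\ref{epsEst})), one obtains
\[
\ga_{J+k+1} \le \hg_{J+k+1} + 4\brac{(2N C_9)^k + (2N C_9)^{k-1} + \cdots + 2N C_9 + 1}\de_{J+k+1} - \de_{J+k+1}.
\]
Since $2N < 1$ (as $\be_0 = \frac{4-2N}{3} > 1$) the geometric series converges, and for $T_1$ sufficiently large the right-hand side is bounded above by $\hg_{J+k+1} + C_9 - 1$. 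Dividing by $\hg_{J+k+1}^2 \ge 1$ closes the induction.

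The only point that requires attention is the size of $T_1$: we must ensure simultaneously that $\de_{J+1} \le 2N/3$ (for the base case), that (\ref{epsEst}) holds for every $j \ge J+1$ (so that (\ref{epsjk}) is available), and that $\frac{7+2N}{1-2N}\de_{J+k+1} \le C_9 - 1$ uniformly in $k$. All three are guaranteed once $T_1$ exceeds some universal constant depending only on $N$, $P$, since the $\de_j$ form a decreasing sequence. No new obstacle appears beyond what was already handled in Case 2, which is why the lemma follows by the same argument.
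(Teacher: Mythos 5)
Your proposal is correct and takes essentially the same route as the paper, which simply states that Lemma \ref{C3gamLem} follows by ``the same proof as that of Lemma \ref{C8Lem}'' using (\ref{betaEstC3}) in place of (\ref{betaEstC2}). You have in fact written out the index-shifted induction in more detail than the paper itself provides, and the base case, inductive step, and handling of the $T_1$-largeness hypotheses all match what the paper intends.
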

 
 \begin{cor}
 If $T_1 >> 1$, then for all $j > J$, 
 \begin{equation}
 \ga_j \le \hg_j +  \frac{7 + 2N}{1-2N}\de_j,
 \label{C3gaEst}
 \end{equation}
 \begin{equation}
 \be_j \le \hat\be_j +  \frac{4 + 8N}{3(1-2N)}\de_j.
 \label{C3begaEst}
 \end{equation}
 \label{C3begaEst}
 \end{cor}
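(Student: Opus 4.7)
The plan is to run the argument of Corollary \ref{gambeCorN} verbatim, but with all indices shifted by $J$. By Corollary \ref{decLemCor}, for every $j > J$ we are in the second branch of the definitions of $\ga_j$ and $\be_{j+1}$, and Lemma \ref{C3gamLem} already supplies the key inductive control $\ga_j/\hg_j^2 \le C_9$. So the skeleton of the proof is identical to Case~2; we only need to check that the ``tail'' starting at index $J+1$ behaves exactly like the fresh start in Case~2.

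First I would combine the inequality (\ref{betaEstC3}) with the identity $\ga_{J+j+1} = 3(\be_{J+j+1}-1) + 2N + 3\de_{J+j+1}$ to obtain
$$\ga_{J+j+1} \le \hg_{J+j+1} + 3\sum_{i=1}^{j}\frac{(2N)^{j-i+1}}{(\hg_{J+i}\cdots\hg_{J+j})^2}\de_{J+i} + 3\de_{J+j+1}.$$
Next, I would prove the analogue of (\ref{epsjk}): since $T_{J+j+1} = T_{J+i}^{\ga_{J+i}\cdots\ga_{J+j}}$ for $1 \le i \le j$ and each $\ga_\ell > 1$, the estimate (\ref{epsEst}) gives $\de_{J+i} \le \tfrac{4}{3}\,\ga_{J+i}\cdots\ga_{J+j}\,\de_{J+j+1}$. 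Substituting this into the displayed bound above and then applying Lemma \ref{C3gamLem} to each factor $\ga_{J+\ell}/\hg_{J+\ell}^2$ bounds the sum by a geometric series with ratio $2NC_9 < 1$. Summing gives
$$\ga_{J+j+1} \le \hg_{J+j+1} + \frac{7+2N}{1-2N}\de_{J+j+1},$$
which is (\ref{C3gaEst}).

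Finally, the bound on $\be_j$ is immediate from this: since $j > J$ falls in the second branch, $\be_j = \tfrac{\ga_j}{3} + 1 - \tfrac{2N}{3} - \de_j$, so
$$\be_j \le \frac{\hg_j}{3} + \frac{7+2N}{3(1-2N)}\de_j + 1 - \frac{2N}{3} - \de_j = \hat\be_j + \frac{4+8N}{3(1-2N)}\de_j,$$
which is (\ref{C3begaEst}). The only point that requires any real care is the seeding step $j = J+1$: I need $\hat\be_{J+1} = 4/3 \ge \be_{J+1}$ so that the recursion can begin, but this was already noted in the computation preceding (\ref{betaEstC3}). The rest is the same geometric-series manipulation used in Cases~1 and~2, so no genuinely new obstacle arises; the main technical point is ensuring that all the largeness assumptions ($T_1 \gg 1$, $\de_{J+1} \le 1/3$, etc.) propagate uniformly past the switching index $J$, which is guaranteed because $J \le \mathcal{J}(N,P,T_1)$ depends only on Case 3 parameters via Lemma \ref{JLem}.
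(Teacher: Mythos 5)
Your proof is correct and takes exactly the route the paper intends: the paper simply states that "the proof of Corollary \ref{C3begaEst} is analogous to that of Corollary \ref{gambeCorN}," and you have carried out that index-shifted Case~2 argument in detail, using (\ref{betaEstC3}), the shifted version of (\ref{epsjk}), and Lemma~\ref{C3gamLem} to produce the geometric series bound, exactly as the paper does implicitly. The final passage from the $\ga_j$ bound to the $\be_j$ bound matches the paper's one-line computation verbatim.
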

 
The proof of Corollary {\ref{C3begaEst}} is analogous to that of Corollary \ref{gambeCorN}.
\\

We now have enough information to prove Lemma \ref{mLem}.

%
%
\begin{proof}[Proof of Lemma \ref{mLem}]
If $2 - 2P \ge \frac{4 - 2N}{3}$, then a computation shows that for all $j \ge 1$,
\begin{equation}
\hg_j = \frac{1 + 2P + \ldots + (2P)^j}{1 + 2P + \ldots + (2P)^{j-1}} = 1 + \frac{(2P)^j}{1 + 2P + \ldots + (2P)^{j-1}} \le 1 + (2P)^j
\label{hatGamP}
\end{equation}
Thus, $\hg_{m} \le 1 + \pr{2P}^{\left\lceil\frac{\log\log R}{\log(1/2P)} \right\rceil } = 1 + \frac{C}{\log R} \le 1 +  \frac{\log\log R}{ \log R}$ so, by (\ref{gamEstC1}) in Corollary \ref{gambeCorP}, $\ga_{m} \le 1 + C_P \frac{\log\log R}{\log R}$. Since 
$$\be_{m+1} = 2 - \frac{2P}{\ga_m} \le 2 - \frac{2P}{1 + C_P \frac{\log\log R}{\log R}} \le 2 - 2P + 2PC_P \frac{\log\log R}{\log R} = \be_0 + \tilde C_P \frac{\log\log R}{\log R},$$
then we get the desired inequality. \\
If $W \equiv 0$, then a computation shows that for all $j \ge 1$,
\begin{equation}
\hg_j = \frac{1 + 2N + \ldots + (2N)^j}{1 + 2N + \ldots + (2N)^{j-1}} = 1 + \frac{(2N)^j}{1 + 2N + \ldots + (2N)^{j-1}} \le 1 + (2N)^j
\label{hatGamN}
\end{equation}
Then $\hg_{m} \le 1 + \pr{2N}^{\left\lceil\frac{\log\log R}{\log(1/2N)} \right\rceil} = 1 + \frac{C}{\log R} \le 1 +  \frac{\log\log R}{ \log R}$ so, by (\ref{gamEstC2}) in Corollary \ref{gambeCorN}, $\ga_{m} \le 1 + C_N \frac{\log\log R}{\log R}$.\\
If $W \not\equiv 0$ and $\disp \frac{4-2N}{3} > 2 - 2P$, then by comparison with Case 2, \begin{equation}
\hg_{J + j} \le 1 + (2N)^{j}
\label{hatGamNJ}
\end{equation}
Thus, $\hg_{m} \le 1 + \pr{2N}^{\left\lceil\frac{\log\log R}{\log(1/2N)} \right\rceil} = 1 + \frac{C}{\log R} \le 1 +  \frac{\log\log R}{ \log R}$ so, by (\ref{C3gaEst}), $\ga_{m} \le 1 + C_N \frac{\log\log R}{\log R}$.
In the last two cases, since
$$\be_{m+1} = \frac{4}{3} - \frac{2N}{3\ga_m} \le  \frac{4}{3} - \frac{2N}{3\pr{1 + C_N \frac{\log\log R}{\log R}}} \le \frac{4-2N}{3} + 2NC_N\frac{\log\log R}{\log R} = \be_0 + \tilde C_N \frac{\log\log R}{\log R},$$
then we get the desired inequality.
\end{proof}

In order to apply Proposition \ref{IH} at each step of the iteration, we must show that each $T_j >> 1$.  Since $T_j \ge T_1$ for all $j$, then it suffices to show that $T_1 >> 1$.  Thus, the second goal of this section is to show that for each fixed $N$ and $P$, there exists an $R_0 > 0$ so that for every $R \ge R_0$, the corresponding $T_1$ is sufficiently large.  That is, if $T_1$ satisfies $R = T_1^{\ga_1\ga_2\ldots\ga_m}$, where each $\ga_j$ depends on $T_k$ for all $k \le j$ and $m$ is a function of $R$ as given in Lemma \ref{mLem}, then we can guarantee that $T_1 >> 1$.

First we will show that if we increase $T_1$, then for each $k$, $T_k$ also increases.  Recall that $T_{k+1} = T_k^{\ga_k}$ for every $k \ge 1$, so $T_{k+1} = T_1^{\ga_1\ldots\ga_k}$.  Also, $\ga_j = \ga_j(T_j)$ for each $j$.  Thus, $T_2 = T_1^{\ga_1}$ is function of $T_1$.  It follows that $\ga_2$ is a function of $T_1$.  If we continue on, we see that for any $j$, $T_j = T_j(T_1)$ and $\ga_j = \ga_j(T_1)$.

\begin{lem}
If $T_1 >> 1$, then for each $k \ge 2$, $T_k$ is an increasing function of $T_1$.
\label{Tinc}
\end{lem}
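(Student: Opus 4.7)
The plan is to proceed by induction on $k$. For the base case $k = 2$, using $T_2 = T_1^{\ga_1}$ together with (\ref{T1ga1}), which gives $\de_1 \log T_1 = \log(C \log T_1)$ for an explicit universal constant $C > 0$, yields the formula
$$\log T_2 = (\be_1 - 1 + 2P)\log T_1 + \log\pr{C\log T_1}$$
in Case 1 (with analogous formulas in Cases 2 and 3, obtained by replacing $\be_1 - 1 + 2P$ with $3(\be_1 - 1) + 2N$ and scaling the logarithmic term accordingly). Since $\be_1 - 1 + 2P > 0$, both summands on the right are manifestly strictly increasing in $T_1$ for $T_1 >> 1$, so $T_2$ is strictly increasing in $T_1$.

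For the inductive step, assume $T_j$ is strictly increasing in $T_1$ for each $2 \le j \le k$. By (\ref{Tjgaj}), $\de_k \log T_k = \log\pr{\tfrac{27}{8c_n}\log T_k}$, so in Case 1,
$$\log T_{k+1} = \mu_k \log T_k + \log\pr{\tfrac{27}{8c_n}\log T_k},$$
where $\mu_k := \be_k - 1 + 2P = 1 + 2P - 2P/\ga_{k-1}$. The final summand is strictly increasing in $T_1$ by the inductive hypothesis, so it remains to verify the same for $\mu_k \log T_k$.

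The main obstacle is that $\mu_k$ itself may decrease as $T_1$ grows (because $\ga_{k-1}$ can decrease), giving a potentially negative contribution to $d(\mu_k \log T_k)/dt$, where $t := \log T_1$. The plan is to strengthen the induction by simultaneously tracking the auxiliary quantities $A_j := d\log T_j/dt$ and $B_j := d\mu_j/dt$, with $A_1 = 1$ and $B_1 = 0$. Differentiating the recursions $\log T_{k+1} = \mu_k \log T_k + \log(C\log T_k)$ and $\mu_{k+1} = 1 + 2P - 2P/\ga_k$ with $\ga_k = \mu_k + \de_k$ produces
$$A_{k+1} = A_k\pr{\mu_k + \tfrac{1}{\log T_k}} + B_k \log T_k, \qquad B_{k+1} = \tfrac{2P}{\ga_k^2}\pr{B_k + \tfrac{A_k(1 - \log(C\log T_k))}{(\log T_k)^2}}.$$
Corollary \ref{gambeCorP} provides the uniform lower bound $\mu_k \ge 1 - \eps$, and since $\ga_k > 1$ and $2P < 1$ (as $\be_0 = 2-2P > 1$ in Case 1), we have the contraction factor $2P/\ga_k^2 < 1$. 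These estimates combine to force $\abs{B_k}$ to decay rapidly in $k$ and to yield $\abs{B_k \log T_k} \ll A_k \mu_k$ for $T_1$ sufficiently large, so the recursion forces $A_{k+1} > 0$ and closes the induction. Cases 2 and 3 are handled by the same scheme, with $2P$ replaced by $2N/3$ and Corollary \ref{gambeCorN} (resp.\ Corollary \ref{C3begaEst}) used in place of Corollary \ref{gambeCorP}.
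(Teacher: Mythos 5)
Your proposal takes a genuinely different route from the paper, and it has a real gap. The paper does not differentiate the recursion at all. Instead, it reformulates the claim as showing that each ratio $T_k/T_{k-1}$ is increasing in $T_1$, observes that $T_k = \tfrac{T_k}{T_{k-1}}\cdots\tfrac{T_2}{T_1}\,T_1$ is a product of positive increasing factors, and then closes the induction by a slick factorization. For instance in Case 1, since $\ga_k - 1 = 2P\bigl(1 - 1/\ga_{k-1}\bigr) + \de_k$ and $T_{k-1} = T_k^{1/\ga_{k-1}}$, one gets the closed form
$$T_k^{\ga_k - 1} = \Bigl(\tfrac{T_k}{T_{k-1}}\Bigr)^{2P}\Bigl(\tfrac{27}{8c_n}\log T_k\Bigr),$$
which is manifestly a product of increasing functions by the inductive hypothesis. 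This avoids any need for quantitative control on how fast the exponents can decay.

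Your approach by contrast differentiates and tracks $A_j := d\log T_j/dt$ and $B_j := d\mu_j/dt$. The computations you set up are correct as far as they go, but the decisive step, namely the assertion that ``these estimates combine to force $|B_k|$ to decay rapidly in $k$ and to yield $|B_k \log T_k| \ll A_k\mu_k$,'' is left unproved, and it is far from automatic. Iterating the recursion for $B_k$ gives a sum of $k$ terms each weighted by a geometric contraction $2P/\ga_\ell^2$, and one must also show that $A_k$ stays bounded below (it need not behave like $\prod_j\mu_j$, precisely because the $B_k\log T_k$ correction may be negative), while $\log T_k$ grows without bound. This is a genuine multi-parameter bookkeeping problem, not a one-line estimate, and as written it does not close the induction.

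Separately, your scheme only treats the subcase where $\be_k \ge h_k$ at every stage. The paper's recursion has two other branches that you do not address: the replacement case $\ga_k = 3P - N + \om_k$ (where $T_k^{\ga_k-1} = T_k^{3P-N-1}+1$, which has a qualitatively different recursion), and the Case 3 switch where $\be_{k-1} = h_{k-1}$ but $\be_k \le \ell_k$, which requires a separate monotonicity argument (the paper uses the auxiliary function $f(x) = (1+x)^N(1+x^{-1})$ and the hypothesis $T_1^\De \ge 1/N$). Dismissing these with ``handled by the same scheme'' is not justified, since the recursion formula itself changes form at those transitions, so the derivative formulas $A_{k+1}$ and $B_{k+1}$ you wrote down would not be valid there.
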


As explained above, $T_k =T_1^{\ga_1\ldots\ga_{k-1}}$, where each $\ga_j$ is a function of $T_1$.  Since $ \frac{d\de_j}{dT_1} < 0$, then$ \frac{d\ga_j}{dT_1} < 0$ and the exponent $\ga_1\ldots\ga_{k-1}$ decreases with respect to $T_1$.  However, as the proof will show, the exponent does not decrease fast enough to ``beat'' the base of $T_1$.

\begin{proof}
Instead of showing the claimed fact, we will prove that $ \frac{T_{k}}{T_{k-1}}$ is an increasing function of $T_1$ for each $k$.  Since $ T_k = \frac{T_k}{T_{k-1}}\ldots\frac{T_2}{T_1} T_1$, and the product of positive increasing functions is increasing, then this fact is sufficient.  Induction will be used. \\
Base case:  Since
$$\frac{T_2}{T_1} = T_1^{\ga_1 - 1} = \left\{\begin{array}{ll} 
T_1^{2P}\brac{\frac{18 C_4}{4^{3 + P + N/3} C_3 w\pr{5/4}^{2/3} \pr{A_1^{2/3} + A_2^2}c_n } \log T_1} & W \not\equiv 0, 2 > \ell_1 \\ 
T_1^{2 + 2N}\brac{\frac{18 C_4}{4^{3 + P + N/3} C_3 w\pr{5/4}^{2/3} \pr{A_1^{2/3} + A_2^2}c_n } \log T_1}^3 & W \not\equiv 0, 2 \le \ell_1 \\ 
T_1^{2N}\brac{\frac{18 C_4}{4^{3 + P + N/3} C_3 w\pr{5/4}^{2/3} \pr{A_1^{2/3} + A_2^2}c_n } \log T_1}^3 & W \equiv 0 \end{array}\right. ,$$
then it is clear that the statement holds for $k = 2$. \\
Assume that $ \frac{T_{j}}{T_{j-1}}$ increases with $T_1$ for all $j \le k$. \\
$\disp \frac{T_{k+1}}{T_k} = T_k^{\ga_k - 1}$, where
$$\ga_k = \left\{\begin{array}{ll} \be_k - 1 + 2P + \de_k & \be_k \ge h_k \\ 3P - N + \om_k & \be_k \in \pr{\ell_k, h_k} \\ 3(\be_k - 1) + 2N + 3\de_k & \be_k \le \ell_k  \end{array}\right..$$
If $\be_k \ge h_k$, then $\be_{k-1} \ge h_{k-1}$ so $\be_k = 2 - \frac{2P}{\ga_{k-1}}$ and
$$T_k^{\ga_k - 1} = T_k^{2 - \frac{2P}{\ga_{k-1}} - 1 + 2P + \de_k - 1} = \pr{\frac{T_k}{T_k^{1/\ga_{k-1}}}}^{2P}\pr{\frac{27}{8 c_n} \log T_k} = \pr{\frac{T_k}{T_{k-1}}}^{2P}\pr{\frac{27}{8 c_n} \log T_k},$$ 
which increases by the inductive hypothesis. \\
If $\ga_k = 3P - N + \om_k$, then by the definition of $\om_k$ as in (\ref{omDef}),
$T_{k+1} = T_k^{\ga_k} = T_k^{3P - N + \om_k} = T_k^{3P - N} + T_k.$
Since $T_{k+1} > 2T_k$, then $3P - N > 1$ in this case.  And
$$T_k^{\ga_k - 1} = \frac{T_k^{\ga_k}}{T_k} = \frac{T_k^{3P - N} + T_k}{T_k} = T_k^{3P - N -1} + 1,$$
which increases with $T_k$ (and hence with $T_1$ by the inductive hypothesis) since $3P - N - 1 > 0$ is constant. \\
If $\be_k \le \ell_k$, then, by our construction, either $\be_{k-1} \le \ell_{k-1}$ or $\be_{k-1} = h_{k-1}$.  If $\be_{k-1} \le \ell_{k-1}$, then $\disp \be_k = \frac{4}{3} - \frac{2N}{3\ga_{k-1}}$ and
$$T_k^{\ga_k - 1} = T_k^{3\pr{\frac{4}{3} - \frac{2N}{3\ga_{k-1}} - 1} + 2N + 3\de_k - 1} = \pr{\frac{T_k}{T_k^{1/\ga_{k-1}}}}^{2N}\pr{\frac{27}{8 c_n} \log T_k}^3 = \pr{\frac{T_k}{T_{k-1}}}^{2N}\pr{\frac{27}{8 c_n} \log T_k}^3,$$
which increases by the inductive hypothesis. Now we consider the case where $\be_k \le \ell_k$ and $\be_{k-1} = h_{k-1}$.  That is, there is a switch in the way the sequences are defined so we are in Case 3.  Thus, $3P - N = 1+ \De$ for some $\De > 0$.  Since $\be_{k-1} = h_{k-1}$, then $\ga_{k-1} = 3P - N + \om_{k-1}$ and $\disp \be_k = 2 - \frac{2P}{\ga_{k-1}}$.  Thus,
\begin{align*}
\ga_k - 1
&= \frac{2}{\ga_{k-1}}\brac{N(3P - N + \om_{k-1} - 1) + \om_{k-1}} + 3\de_k,
\end{align*}
so that
\begin{align*}
T_k^{\ga_k -1} &= T_k^{\frac{2}{\ga_{k-1}}\brac{N(3P - N + \om_{k-1} - 1) + \om_{k-1}} + 3\de_k} \\
&= T_{k-1}^{2\brac{N(3P - N + \om_{k-1} - 1) + \om_{k-1}}}T_{k}^{ 3\de_k} \\
&= \pr{\pr{ \frac{T_{k-1}^{3P - N + \om_{k-1}}}{T_{k-1}}}^N T_{k-1}^{\om_{k-1}}}^2 \pr{\frac{27}{8 c_n} \log T_k}^3 \\
&= \pr{T_{k-1}^{\De} + 1 }^{2N} \pr{1 + T_{k-1}^{-\De}}^2 \pr{\frac{27}{8 c_n} \log T_k}^3,
\end{align*}
where the third line follows from the fact that $T_{k-1}^{3P - N + \om_{k-1}} = T_{k-1}^{3P-N} + T_{k-1} \Rightarrow T_{k-1}^{\om_{k-1}} = 1 + T_{k-1}^{1 - (3P - N)}.$
Let $f(x) = (1+x)^{N}(1 + x^{-1})$.  Then $\disp f^\prime(x) = \frac{(1+x)^{N-1}}{x^2}\brac{Nx^2 + (N-1)x - 1}$ and $f^\prime(x) \ge 0$ if $x \ge \frac{1}{N}$ or $x \le -1$.  If we choose $T_1 >> 1$ so that $T_1^\De \ge \frac{1}{N}$, then $T_{k-1}^\De \ge \frac{1}{N}$ also and $\pr{T_{k-1}^{\De} + 1 }^{N} \pr{1 + T_{k-1}^{-\De}}$ will increase with $T_{k-1}$, since $\De > 0$ is fixed.  It follows that $T_k^{\ga_k - 1}$ increases with $T_1$.
\end{proof}

\begin{lem}
Let $ c_P = \frac{7 + 2P}{3(1-2P)}$, $ c_N = \frac{7 + 2N}{3(1-2N)}$.  Recall that $J \le \mathcal{J}$ is given by Lemma \ref{JLem}.
Using the choices for $m$ given by Lemma \ref{mLem}, there exists $\mathcal{T}_0\pr{n, N, P, \la, A_1, A_2, C_0}$ so that if we let
\begin{equation}
 \log\log R_0 = \left\{\begin{array}{ll} 
\disp 2\brac{\log\pr{1 + c_P\de_1} + \log\pr{\tfrac{1}{1-2P}} + \log\log \mathcal{T}_0} & \textrm{in Case 1: } 2 - 2P \ge \frac{4 - 2N}{3} \\  [\bigskipamount]
\disp 2 \brac{\log\pr{1 + c_N\de_1} + \log\pr{\tfrac{1}{1-2N}} + \log\log \mathcal{T}_0} & \textrm{in Case 2: }W \equiv 0 \\ [\bigskipamount]
\disp 2 \brac{\mathcal{J}\log\ga_1 + \log\pr{1 + c_N\de_{J+1}} + \log\pr{\tfrac{1}{1-2N}} + \log\log \mathcal{T}_0} & \textrm{in Case 3: } W \not\equiv 0, \frac{4-2N}{3} > 2 - 2P \end{array}  \right. ,
\label{RNPVals}
\end{equation}
where $\de_1 = \de_1\pr{\mathcal{T}_0}$, $\de_{J+1}$ correspond to $\mathcal{T}_0^{\Ga_J}$,
then for every $R \ge R_0$, the corresponding $T_1$ is large enough so that all of the above lemmas and inequalities hold.
\label{RNPLem}
\end{lem}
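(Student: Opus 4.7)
The plan is to exploit the monotonicity provided by Lemma \ref{Tinc}: for each fixed number of iterations $m$, the map $T_1 \mapsto T_{m+1} = T_1^{\Ga_m(T_1)}$ is strictly increasing, so showing that $T_1 \ge \mathcal{T}_0$ whenever $R \ge R_0$ is equivalent to showing that if we plug $T_1 = \mathcal{T}_0$ into the iteration, then the output $R = T_1^{\Ga_m(T_1)}$ is at most $R_0$. I would first fix $\mathcal{T}_0 = \mathcal{T}_0(n,N,P,\la,A_1,A_2,C_0)$ to be the maximum of all the ``$T_1 \gg 1$'' thresholds required by the preceding machinery: the base case of Proposition \ref{baseC}, the threshold $T_0$ of Proposition \ref{IH}, and the various smallness conditions on $\de_1$ required by Lemmas \ref{case1}, \ref{JLem}, \ref{C7Lem}, \ref{C8Lem}, \ref{decLem}, \ref{C3gamLem}, \ref{Tinc}, and Corollaries \ref{gambeCorP}, \ref{gambeCorN}, \ref{C3begaEst}. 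This $\mathcal{T}_0$ is universal in the sense described in the excerpt.

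With $T_1 = \mathcal{T}_0$ I would then bound $\log \Ga_m$ case by case using the estimates already recorded. In Case 1, combining (\ref{gamEstC1}) with the geometric bound (\ref{hatGamP}) gives $\ga_j \le (1 + c_P \de_1)(1+(2P)^j)$, and summing yields
\begin{equation*}
\log \Ga_m \le m\log(1 + c_P \de_1) + \log\tfrac{1}{1-2P},
\end{equation*}
where the second term absorbs $\sum_j \log(1+(2P)^j)$. Case 2 is analogous, replacing $P$ by $N$ and using (\ref{gamEstC2}), (\ref{hatGamN}). In Case 3, the product splits as $\Ga_m = \Ga_J \cdot \prod_{j=J+1}^m \ga_j$: the first factor is bounded by $\ga_1^{\mathcal{J}}$ (since $\ga_j$ is decreasing and $J\le \mathcal{J}$ by Lemma \ref{JLem}), and the second factor is estimated exactly as in Case 2 but with $\de_{J+1}$ replacing $\de_1$.

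Inserting $\log R = \Ga_m \log T_1$, i.e. $\log\log R = \log \Ga_m + \log\log \mathcal{T}_0$, and substituting the value of $m$ from Lemma \ref{mLem} produces, in Case 1, an implicit inequality of the form
\begin{equation*}
\log\log R \le \frac{\log\log R}{\log(1/2P)}\,\log(1+c_P\de_1) + \log\tfrac{1}{1-2P} + \log\log \mathcal{T}_0 + O(1).
\end{equation*}
Provided $\mathcal{T}_0$ was chosen large enough that $\log(1+c_P\de_1) \le \tfrac{1}{2}\log(1/2P)$ (which is legitimate because $\de_1 = \de_1(\mathcal{T}_0) \to 0$ as $\mathcal{T}_0 \to \infty$), the $\log\log R$ term on the right can be absorbed into the left-hand side, which doubles the remaining constants and yields exactly the factor of $2$ and the three summands appearing in (\ref{RNPVals}). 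Cases 2 and 3 follow by the same absorption argument with $N$ in place of $P$, with Case 3 picking up the extra contribution $\mathcal{J}\log\ga_1$ from $\Ga_J$.

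The main obstacle is the self-referential structure: $m$ depends on $R$ through $\log\log R$, while $R$ depends on $m$ and $T_1$ through $\Ga_m$, so the desired bound has to be unwound from an implicit inequality rather than read off directly. The resolution is simply to enlarge $\mathcal{T}_0$ enough that $\de_1$ is small relative to $\log(1/2P)$ or $\log(1/2N)$; this is a universal choice that depends only on the data $(n,N,P,\la,A_1,A_2,C_0)$ and not on $R$, which is why $\mathcal{T}_0$ can be fixed \emph{before} $R_0$ is defined.
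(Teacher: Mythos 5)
Your proposal is correct and follows essentially the same route as the paper: fix a universal $\mathcal{T}_0$, bound $\log\Ga_m$ case by case via the estimates (\ref{gamEstC1}), (\ref{gamEstC2}), (\ref{C3gaEst}) combined with (\ref{hatGamP}), (\ref{hatGamN}), (\ref{hatGamNJ}), substitute the value of $m$ from Lemma \ref{mLem}, absorb the $\log\log R$ term on the right using the smallness of $\de_1(\mathcal{T}_0)$, and finish with the monotonicity of Lemma \ref{Tinc}. (One small slip: where you wrote that the output $T_1^{\Ga_m(T_1)}$ at $T_1 = \mathcal{T}_0$ should be at most $R_0$, you mean at most the given $R$ — this is what the paper proves and what the monotonicity argument actually requires.)
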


\begin{proof}[Proof of Lemma \ref{RNPLem}]
Fix $\mathcal{T}_0 >> 1$ so that all of the relevant assertions and lemmas above hold and so that 
\begin{equation}
\frac{1}{2} \ge \left\{\begin{array}{ll} 
\disp \tfrac{\log\pr{1 + c_ P\de_{1}}}{\log(1/2P)} & \textrm{in Case 1: } 2 - 2P \ge \frac{4 - 2N}{3} \\  [\bigskipamount]
\disp \tfrac{\log\pr{1 + c_ N\de_{1}}}{\log(1/2N)}  & \textrm{in Case 2: }W \equiv 0 \\ [\bigskipamount]
\disp \tfrac{\log\pr{1 + c_ N\de_{J+1}}}{\log(1/2N)} & \textrm{in Case 3: } W \not\equiv 0, \frac{4-2N}{3} > 2 - 2P \end{array}  \right. ,
\label{conBdC1}
\end{equation}
Fix $R \ge R_0$, where $R_0$ is given in the statement.  Let $m = m\pr{R}$ be given by Lemma \ref{mLem}. Note that $m$ is also fixed.  \\
If $2 - 2P \ge \frac{4 - 2N}{3}$, an extension of (\ref{gamEstC1}) gives
\begin{align*}
\ga_j &< \hg_j + c_P \de_j < \pr{1 + c_P \de_j} \frac{1 + 2P + \ldots + (2P)^j}{1 + 2P + \ldots + (2P)^{j-1}} \quad \textrm{by (\ref{hatGamP})}.
\end{align*}
Thus,
\begin{align}
\log\Ga_{m} &= \log\ga_1 + \log\ga_2 + \ldots + \log\ga_{m} 
\nonumber \\
&< {\log\pr{1 +  c_P\de_1} + \log(1 + 2P)} +  \log\pr{1 + c_P\de_2} + \log(1 + 2P + (2P)^2) - \log(1 + 2P) + \ldots
\nonumber \\
&+ \log\pr{1 + c_P\de_{m}} + \log(1 + 2P + \ldots + (2P)^{m}) - \log(1 + 2P + \ldots + (2P)^{m-1}) 
\nonumber \\
&= \sum_{j=1}^{m} \log\pr{1 +  c_P\de_j} + \log(1 + 2P + \ldots + (2P)^m)
\nonumber \\
&< m\log\pr{1 + c_P\de_1} + \log\pr{\tfrac{1}{1-2P}}
\label{C1GamBd}
\end{align}
Then for $R \ge R_0$,
\begin{align}
\log\log\pr{\mathcal{T}_0^{\Ga_{m}}}& = \log\Ga_{m} + \log\log \mathcal{T}_0 
\nonumber \\
&< m\log\pr{1 + c_P\de_1} + \log\pr{\tfrac{1}{1-2P}} + \log\log \mathcal{T}_0 \quad \textrm{by (\ref{C1GamBd})} 
\nonumber \\
&\le \pr{\tfrac{\log\log R}{\log(1/2P)} + 1}\log\pr{1 + c_P\de_1} + \log\pr{\tfrac{1}{1-2P}} + \log\log \mathcal{T}_0 \quad \textrm{by (\ref{mVals})} 
\nonumber \\
&\le \tfrac{1}{2}\log\log R + \tfrac{1}{2}\log\log R_0 \quad \textrm{by (\ref{conBdC1}) and the choice for $R_0$}
\nonumber \\
&\le \log\log R.
\label{case1logBd}
\end{align}
If $W \equiv 0$, an extension of (\ref{gamEstC2}) gives 
\begin{align*}
\ga_j &< \hg_j + c_N \de_j < \pr{1 + c_N \de_j} \frac{1 + 2N + \ldots + (2N)^j}{1 + 2N + \ldots + (2N)^{j-1}} \quad \textrm{by (\ref{hatGamN})}.
\end{align*}
By the same argument as above,
\begin{align*}
\log\Ga_{m} 
&< m\log\pr{1 + c_N\de_1} + \log\pr{\tfrac{1}{1-2N}},
\end{align*}
so for all $R \ge R_0$,
\begin{align}
\log\log\pr{\mathcal{T}_0^{\Ga_{m}}} &\le \log\log R.
\label{case2logBd}
\end{align}
If $W \not\equiv 0$ and $\disp \frac{4-2N}{3} > 2 - 2P$, an extension of (\ref{C3gaEst}) gives 
\begin{align*}
\ga_{J+j} &< \hg_{J+j} + c_N \de_{J+j} < \pr{1 + c_N \de_{J+j}} \frac{1 + 2N + \ldots + (2N)^j}{1 + 2N + \ldots + (2N)^{j-1}} \quad \textrm{by (\ref{hatGamNJ})}.
\end{align*}
Thus,
\begin{align}
\log\Ga_{m} &= \log\ga_1 + \ldots + \log\ga_J + \log\ga_{J+1} + \log\ga_{J+2} + \ldots \log\ga_{m} 
\nonumber \\
&< \log\ga_1 + \ldots + \log\ga_J  
\nonumber \\
&+ \log\pr{1 +  c_N\de_{J+1}} + \log(1 + 2N) +  \log\pr{1 + c_N\de_{J+2}} + \log(1 + 2N + (2N)^2) - \log(1 + 2N) + \ldots 
\nonumber \\
&+ \log\pr{1 + c_N\de_{m}} + \log(1 + 2N + \ldots + (2N)^{m}) - \log(1 + 2N + \ldots + (2N)^{m-1})
\nonumber \\
&= \sum_{j=1}^{J} \log\ga_j  + \sum_{j=J+1}^{m} \log\pr{1 +  c_N\de_j} + \log(1 + 2N + \ldots + (2N)^m) \nonumber \\
&< J\log\ga_1 + (m-J)\log\pr{1 + c_N\de_{J+1}} + \log\pr{\tfrac{1}{1-2N}}.
\label{C3GamBd}
\end{align}
Then for all $R \ge R_0$,
\begin{align}
\log\log\pr{\mathcal{T}_0^{\Ga_{m}}}& = \log\Ga_{m} + \log\log \mathcal{T}_0 \nonumber \\
&< J\log\ga_1 + (m-J)\log\pr{1 + c_N\de_{J+1}} + \log\pr{\tfrac{1}{1-2N}} + \log\log \mathcal{T}_0 \quad \textrm{by (\ref{C3GamBd})} \nonumber \\
&\le \mathcal{J}\log\ga_1 + \pr{\tfrac{\log\log R}{\log(1/2N)} + 1}\log\pr{1 + c_N\de_{J+1}} + \log\pr{\tfrac{1}{1-2N}} + \log\log \mathcal{T}_0 \quad \textrm{by (\ref{mVals})} \nonumber \\
&\le \tfrac{1}{2}\log\log R + \tfrac{1}{2}\log\log R_0 \quad \textrm{by (\ref{conBdC1}) and the choice for $R_0$} \nonumber \\
&\le \log\log R.
\label{case3logBd}
\end{align}
By (\ref{case1logBd}), (\ref{case2logBd}) and (\ref{case3logBd}), $\mathcal{T}_0^{\Ga_{m}} \le R$ for all $R \ge R_0$.  Lemma \ref{Tinc} shows that $\disp \frac{d \pr{T_1^{\Ga_{m\pr{R}}(T_1)}}}{d T_1} > 0$.  Therefore, for any $R \ge R_0$, there exists a $T_1 \ge \mathcal{T}_0$ such that $T_{1}^{\Ga_m} = R$, as required.
\end{proof} 

We have shown that there is always an $m$ large enough so that estimate (\ref{bem1}) holds.  With this choice of $m$, we have shown that we can choose $R_0$ so that for all $R \ge R_0$, the corresponding starting point, $T_1$, is always sufficiently far enough away from the origin for all of the above claims to hold.  We are now prepared to prove Theorem \ref{MEst}(\ref{partA}).

\begin{proof}[Proof of Theorem \ref{MEst}(\ref{partA})]
Fix $N, P \ge 0$ so that $\be_0 > 1$. If $\be_0 = \be_1$, then by Proposition \ref{baseC}, the result follows.  Otherwise, define a sequence of positive real numbers $\{T_j\}_{j=1}^{m+1}$ such that $T_{j+1} = T_j^{\ga_j}$ for each $j$ and $T_{m+1} = R$, where $m$ is given by Lemma \ref{mLem} and the determination of $\ga_j$ is described above in (\ref{gamDef}).  Since $R \ge R_0$ then by Lemma \ref{RNPLem}, we know that $T_1$ is large enough for all of the lemmas and conditions above to hold.

Let $x_0 \in \R^n$ be such that $|x_0| = R$ and $\disp \mathbf{M}(R) = \pr{\int_{B_1(x_0)}\abs{u}^2}^{1/2}$.  For each $j = 1, 2, \ldots, m$, let $\disp x_j = \frac{x_0}{|x_0|}T_j$.  Notice that $x_0 = x_{m+1}$.

\nid By Proposition \ref{baseC},
\begin{equation}
\int_{B_1(x_1)}\abs{u}^2 \ge C_5 \exp\pr{-C_4T_1^{\be_1} \log T_1}.
\label{it1a}
\end{equation}
By (\ref{it1a}) and Proposition \ref{IH},
\begin{equation}
\int_{B_1(x_2)} \abs{u}^2 \ge C_5 \exp\pr{ -\tilde C_4 T_2^{ \be_2}\log T_2 }.
\label{it2a}
\end{equation}
Repeating the argument for each $j$, we see that
\begin{equation*}
\int_{B_1(x_j)} \abs{u}^2 \ge C_5 \exp\pr{ -\tilde C_4 T_j^{ \be_j}\log T_j }.
\end{equation*}
Therefore,
\begin{align*}
\int_{B_1(x_0)} \abs{u}^2 &\ge C_5 \exp\pr{ -\tilde C_4T_{m+1}^{\be_{m+1}}\log T_{m+1} } \\
&=  C_5\exp\pr{ -\tilde C_4 R^{\be_{m+1}}\log R } \\
&\ge C_5 \exp\pr{  -\tilde C_4 R^{\be_0}R^{(C_6-1)\frac{\log\log R}{\log R}}\log R } \quad \textrm{(by Lemma \ref{mLem})}\\
&= C_5 \exp\pr{  -\tilde C_4 R^{\be_0}(\log R)^{C_6} }.
\end{align*}
If we set $C_7 = \frac{\tilde C_4}{2}$ and $\tilde C_5 = \sqrt{C_5}$, the proof is complete.
\end{proof}

%
%
\subsection{Proof of Theorem \ref{MEst}(\ref{partB})}

Before we prove Theorem \ref{MEst}(\ref{partB}), we will show that for some $C_6(N,P)$,
\begin{equation*}
\pr{\be_{m+1}- 1}\log T_{m+1} \le \pr{C_6-1}\pr{\log\log R}^2.
\end{equation*}
This requires a couple of lemmas.  Note that since $\be_c < 1$, then $2N, 2P > 1$.

\begin{lem}
For any $j \in \N$,
\begin{align*}
&\pr{\be_{j+1}-1}\log T_{j+1} = \pr{\be_1 - 1}\log T_1 +  \log \pr{\frac{C_4}{3 \cdot 4^{1 + P + N/3} C_3 w\pr{5/4}^{2/3} \pr{A_1^{2/3} + A_2^2} } } + j\log \pr{\frac{27}{8c_n}} + \sum_{k=1}^j \log \log T_k.
\end{align*}
\label{lem1}
\end{lem}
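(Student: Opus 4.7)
The plan is to prove the identity by iterating a single one-step recurrence, which in turn follows from a short algebraic identity plus the explicit forms of $\de_k \log T_k$ already recorded in \eqref{T1ga1} and \eqref{Tjgaj}. The core step is to show
$$(\be_{j+1}-1)\ga_j = (\be_j - 1) + \de_j,$$
which I would verify case-by-case against the definition of $\ga_j$ in \eqref{gamDef}. If $\be_j \ge h_j$, then $\ga_j = \be_j - 1 + 2P + \de_j$ and $\be_{j+1} = 2 - 2P/\ga_j$, so $(\be_{j+1}-1)\ga_j = \ga_j - 2P = \be_j - 1 + \de_j$. If $\be_j \le \ell_j$, then $\ga_j = 3(\be_j - 1) + 2N + 3\de_j$ and $\be_{j+1} = \tfrac{4}{3} - \tfrac{2N}{3\ga_j}$, yielding $(\be_{j+1}-1)\ga_j = (\ga_j - 2N)/3 = \be_j - 1 + \de_j$. (These are the only two situations that arise in the iteration relevant to Theorem \ref{MEst}(\ref{partB}).)

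Multiplying through by $\log T_j$ and using $\log T_{j+1} = \ga_j \log T_j$ (which is immediate from $T_{j+1} = T_j^{\ga_j}$) converts the algebraic identity into the recurrence
$$(\be_{j+1}-1)\log T_{j+1} = (\be_j - 1)\log T_j + \de_j \log T_j.$$
An easy induction (or telescoping) then gives
$$(\be_{j+1}-1)\log T_{j+1} = (\be_1 - 1)\log T_1 + \sum_{k=1}^{j}\de_k \log T_k.$$

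To finish, I would substitute the explicit expressions from \eqref{T1ga1} and \eqref{Tjgaj}, namely
$$\de_1\log T_1 = \log\pr{\tfrac{18 C_4}{4^{3+P+N/3}C_3\,w(5/4)^{2/3}(A_1^{2/3}+A_2^2)\,c_n}} + \log\log T_1,$$
and $\de_k \log T_k = \log(27/(8c_n)) + \log\log T_k$ for $k \ge 2$. Separating the first term from the sum, the $(j-1)$ copies of $\log(27/(8c_n))$ combine with the numerical constant in $\de_1 \log T_1$, and the remaining task is the elementary verification
$$\frac{18}{4^{3+P+N/3}\,c_n} \;=\; \frac{27}{8 c_n}\cdot\frac{1}{3\cdot 4^{1+P+N/3}},$$
which rewrites $\log\bigl(\tfrac{18 C_4}{4^{3+P+N/3}C_3 w(5/4)^{2/3}(A_1^{2/3}+A_2^2)c_n}\bigr)$ as $\log\pr{\tfrac{27}{8c_n}} + \log\pr{\tfrac{C_4}{3\cdot 4^{1+P+N/3}C_3 w(5/4)^{2/3}(A_1^{2/3}+A_2^2)}}$. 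After this substitution, the expression matches the claim exactly.

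The whole argument is essentially bookkeeping; there is no real obstacle. The only place one might stumble is getting the cases for the algebraic identity $(\be_{j+1}-1)\ga_j = (\be_j - 1) + \de_j$ right, since one must remember that both branches of \eqref{gamDef} have been arranged precisely so that this identity holds uniformly, and that the factor of $3$ appearing in the second branch is exactly cancelled by the factor of $3$ in $\be_{j+1} - 1 = \tfrac{\ga_j - 2N}{3\ga_j}$.
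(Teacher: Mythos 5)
Your proof is correct and follows essentially the same route as the paper's: the paper derives the same one-step recurrence $(\be_{j+1}-1)\log T_{j+1} = (\be_j-1)\log T_j + \de_j\log T_j$ (verified separately in each of Cases 1, 2, 3, exactly as you verify the two branches of $(\be_{j+1}-1)\ga_j = (\be_j-1)+\de_j$), telescopes it, and then substitutes the values of $T_1^{\de_1}$ and $T_j^{\de_j}$ from (\ref{T1ga1}) and (\ref{Tjgaj}). Your numerical check $\tfrac{18}{4^{3+P+N/3}c_n} = \tfrac{27}{8c_n}\cdot\tfrac{1}{3\cdot 4^{1+P+N/3}}$ is the same simplification the paper performs, and it is correct.
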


\begin{lem}
If $\log T_1 \lesssim \pr{\log\log R}^2$, then
$$ \pr{\be_{m+1}-1}\log T_{m+1} \lesssim \pr{\log\log R}^2.$$
\label{lem2}
\end{lem}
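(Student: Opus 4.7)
The plan is to invoke Lemma \ref{lem1} directly and bound each of its four summands by $\pr{\log\log R}^2$ (up to constants). Three of the bounds are essentially immediate from the hypothesis; the real content is an estimate of the form $m \lesssim \log\log R$, and this in turn rests on establishing a uniform positive gap $\ga_j - 1 \ge \ga_{\min} - 1 > 0$ for (almost) every $j$.

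By Lemma \ref{lem1}, $\pr{\be_{m+1}-1}\log T_{m+1}$ splits into (i) $\pr{\be_1 - 1}\log T_1$, (ii) a fixed constant, (iii) $m\log\pr{\tfrac{27}{8c_n}}$, and (iv) $\sum_{k=1}^m \log\log T_k$. Since $\be_1 \le 2$, the hypothesis $\log T_1 \lesssim \pr{\log\log R}^2$ handles (i), and (ii) is $O(1)$. Granted $m \lesssim \log\log R$, term (iii) is $\lesssim \log\log R$, and since every $T_k \le T_{m+1} = R$, term (iv) is $\le m\log\log R \lesssim \pr{\log\log R}^2$. Summing yields the lemma.

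To prove $m \lesssim \log\log R$, I would use the defining hypothesis of part (b), namely $\be_c < 1$, which forces both $2P > 1$ and $2N > 1$. A short induction shows that $\be_j > \be_0 = 1$ for every $j$: if $\be_j > 1$, then $\ga_j > 2P$ in the first-choice formula and $\ga_j > 2N$ in the second-choice formula, so in either case $\be_{j+1} > 1$. Setting $\ga_{\min} := \min\set{2P, 2N} > 1$, we therefore have $\ga_j \ge \ga_{\min}$ whenever the first- or second-choice formula is used. By Corollary \ref{decLemCor}, the middle (transition) value $\ga_j = 3P - N + \om_j$ arises at most once (and only in Case 3); at that single index $j^\ast$ I simply record $\ga_{j^\ast} > 1$. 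Hence
\begin{equation*}
\log R = \log T_{m+1} = \ga_1 \ga_2 \ldots \ga_m \log T_1 \ge \ga_{\min}^{m-1}\log T_1,
\end{equation*}
and taking logarithms gives $\pr{m-1}\log \ga_{\min} \le \log\log R - \log\log T_1 \le \log\log R$, so $m \lesssim \log\log R$ as required.

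The main (and really the only) step requiring care is the uniform lower bound $\ga_j \ge \ga_{\min} > 1$. This is exactly where the restriction $\be_c < 1$ enters: if instead $\be_c = 1$, then $\min\set{2P, 2N} = 1$, the lower bound collapses to $\ga_{\min} = 1$, and the geometric growth $\log T_{j+1} \ge \ga_{\min}\log T_j$ degenerates to polynomial growth, so $m$ can no longer be controlled by $\log\log R$. This matches the paper's earlier remark that the iterative scheme — and hence Theorem \ref{MEst} itself — breaks down precisely at $\be_c = 1$.
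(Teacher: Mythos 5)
Your proof is correct and takes essentially the same approach as the paper's: decompose via Lemma \ref{lem1}, use $T_k \le T_{m+1} = R$ to bound each $\log\log T_k$, and establish $m \lesssim \log\log R$ from a uniform geometric lower bound on the $\ga_j$'s. The only cosmetic difference is that the paper bounds $\Ga_m$ case-by-case (by $\pr{2P}^m$, $\pr{2N}^m$, or a mixed product $\pr{2P}^{J-1}\pr{2N}^{m+1-J}$), whereas you package the same fact into a single bound $\ga_j \ge \ga_{\min} = \min\set{2P, 2N} > 1$ together with one exceptional transition index.
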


And to show that this method cannot do any better, we will prove that
\begin{lem}
$$ \pr{\be_{m+1}-1}\log T_{m+1} \gtrsim \pr{\log\log R}^2.$$
\label{lem3}
\end{lem}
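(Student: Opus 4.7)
The plan is to bound $(\be_{m+1}-1)\log T_{m+1}$ from below directly via the identity of Lemma \ref{lem1}. Since $\be_1 - 1 > 0$ and, for $R \ge R_0$, the constant-plus-$m$-linear terms on the right of Lemma \ref{lem1} are nonnegative, I would drop them to get
$$(\be_{m+1}-1)\log T_{m+1} \ge \sum_{k=1}^m \log\log T_k.$$
Iterating the recursion $\log T_{k+1} = \ga_k \log T_k$ gives $\log\log T_k = \log\log T_1 + \sum_{\ell=1}^{k-1}\log\ga_\ell$; dropping $\log\log T_1 \ge 0$ and swapping the order of summation yields
$$\sum_{k=1}^m \log\log T_k \ge \sum_{\ell=1}^{m-1}(m-\ell)\log\ga_\ell.$$

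The crux is a uniform two-sided bound $c_1 \le \log\ga_\ell \le c_2$ with $c_1,c_2 > 0$ depending only on $N,P$. In the regime $\be_c<1$ we have both $P > 1/2$ and $N > 1/2$, so a fixed-point analysis of $\be\mapsto 2-2P/(\be-1+2P)$ (resp.\ $\be\mapsto \tfrac{4}{3} - \tfrac{2N}{3(3(\be-1)+2N)}$) shows that the stable fixed point is $\be_0 = 1$, and consequently $\ga_j$ converges from above to one of $2P$ or $2N$, each strictly greater than $1$. Combined with the already-noted fact $\ga_j > 1$ and the monotonicity of the $\{\ga_j\}$ sequence, this produces a uniform positive lower bound $\log\ga_\ell \ge c_1$; the upper bound $\log\ga_\ell \le \log\ga_1$ is immediate.

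Armed with these bounds, the telescoping identity $\log\log R = \log\log T_1 + \sum_{\ell=1}^m\log\ga_\ell$ together with the upper bound $\log\ga_\ell \le c_2$ forces $m \gtrsim \log\log R$, while the lower bound gives
$$\sum_{\ell=1}^{m-1}(m-\ell)\log\ga_\ell \ge c_1\cdot \tfrac{m(m-1)}{2} \gtrsim m^2 \gtrsim (\log\log R)^2,$$
as desired.

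The main obstacle is verifying the uniform positive lower bound on $\log\ga_\ell$. This requires a case analysis analogous to Cases 1--3 in Section \ref{Proof1a}, but now in the opposite regime $\be_c<1$ where the stable and unstable fixed points of the $\hat\be$-iteration are swapped. Once one checks that in each active definition of $\ga_j$ (either $\be_j-1+2P+\de_j$ or $3(\be_j-1)+2N+3\de_j$) the constant terms $2P$ or $2N$ dominate and exceed $1$, the lower bound on $\log\ga_j$ follows uniformly in $\ell$, and the computation above goes through.
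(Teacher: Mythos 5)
Your argument is correct and essentially the same as the paper's: both proofs reduce the claim to showing $\sum_{k=1}^m\log\log T_k\gtrsim m^2$ (you via the per-factor lower bound $\log\ga_\ell\ge\log(2Q)>0$ and a double-sum rearrangement; the paper via the equivalent product bound $\Ga_j\ge(2Q)^j$) combined with $m\gtrsim\log\log R$ (both obtained from $\ga_\ell\le\ga_1$). The one loose step is your assertion that the constant and $m$-linear terms in Lemma~\ref{lem1} are nonnegative and can be dropped: in fact $\log(27/(8c_n))$ and $\log\bigl(C_4/(3\cdot4^{1+P+N/3}C_3\,w(5/4)^{2/3}(A_1^{2/3}+A_2^2))\bigr)$ need not have a definite sign, so instead of discarding them you should observe that they contribute only $O(m)=O(\log\log R)$, which is dominated by the $(\log\log R)^2$ main term you have already produced — a dominance the paper's proof also relies on implicitly.
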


Our analysis depends on the values of $N$ and $P$, so it is broken into the 3 cases. \\
\\
\nid \textbf{Case 1:} (a) $\disp 2 - 2P \ge \frac{4-2N}{3}$ or (b) $\disp 2 - 2P < \frac{4-2N}{3}$ with $P \le N$.\\

In Case 1(a), by Lemma \ref{case1}, $2-2P \ge h_j$ for all $j\ge 1$.  Since $\be_j \ge 1 > 2 - 2P$ for all $j \ge 1$, then we always define our sequences using the first choice.  \\

For Case 1(b), since $P \le N$ and $3P - N > 1$, then $\om_j << \de_j$ so that $h_j \le 1$ for all $j$, so again, we always define our sequences using the first choice.

\begin{proof}[Proof of Lemma \ref{lem1} for Case 1]
Notice that
\begin{align*}
\pr{\be_{j+1} -1}\log T_{j+1} &= \pr{1 - \frac{2P}{\ga_j}}\log\pr{ T_{j}^{\ga_j}} \\
&= \pr{\frac{\ga_j - 2P}{\ga_j}}{\ga_j}\log\pr{ T_{j}} \\
&= \pr{\be_j -1 + 2P + \de_j - 2P}\log T_{j} \\
&= \pr{\be_j -1}\log T_{j} + \de_j\log T_{j}.
\end{align*}
So by repeatedly applying this rule, we see that
\begin{align*}
\pr{\be_{j+1} -1}\log T_{j+1} &= \pr{\be_{1} -1}\log T_{1} + \de_{1}\log T_{1} + \de_2\log T_2 + \ldots +  \de_j\log T_{j} \\
&= \pr{\be_{1} -1}\log T_1 + \sum_{k=1}^j \log \pr{T_k^{\de_k}} 
\end{align*}
If we { use $\disp T_1^{\de_1} = \frac{C_4}{3\cdot 4^{1 + P + N/3} C_3 w\pr{5/4}^{2/3} \pr{A_1^{2/3} + A_2^2}} \frac{27}{8 c_n} \log T_1,$ (from (\ref{T1ga1})) and} (\ref{Tjgaj}) to simplify this expression, we get the claimed result.
\end{proof}

\begin{proof}[Proof of Lemma \ref{lem2} for Case 1]
Since the $T_j$s increase, then $T_j \le R$ and using the expression from Lemma \ref{lem1} we see that
$$\pr{\be_{m+1} -1}\log T_{m+1} \le \log T_1 +  \log \pr{\frac{C_4}{3 \cdot 4^{1 + P + N/3} C_3 w\pr{5/4}^{2/3} \pr{A_1^{2/3} + A_2^2}} } + m \log \pr{\frac{27}{8 c_n}} + m \log \log R.$$  
However, $\disp \Ga_m \ge \pr{2P}^m$, so $\disp m \le \frac{\log \Ga_m}{\log \pr{2P}} = \frac{\log\log R - \log\log T_1}{\log \pr{2P}} < \frac{\log\log R}{\log \pr{2P}}$.  Combining this with the hypothesis on $T_1$, we see that
$$\pr{\be_{m+1} -1}\log T_{m+1} \lesssim \pr{\log\log R}^2.$$
\end{proof}

\begin{proof}[Proof of Lemma \ref{lem3} for Case 1]
Since $T_{j+1} = T_1^{\Ga_j}$, then
\begin{align*}
\sum_{j=1}^m \log \log T_j &= \log\pr{\log T_1 \log T_2 \ldots \log T_m} \\
&= \log\pr{\Ga_1 \Ga_2 \ldots \Ga_{m-1} \pr{\log T_1}^m} \\
&= m\log\log T_1 + \log\pr{\Ga_1 \Ga_2 \ldots \Ga_{m-1}}
\end{align*}
Since $\Ga_j \ge \pr{2P}^j$, then
$$\sum_{j=1}^m \log \log T_j \ge m\log\log T_1 + \log\brac{\pr{2P}^{\sum_{j=1}^{m-1}j}} > m\log\log T_1 + cm^2 \log\pr{2P} $$
We know that $T_1^{\Ga_m} = R$, so $\disp \Ga_m = \frac{\log R}{\log T_1}$.  Since $\Ga_m \le \ga_1^m$, then $\disp m \ge \frac{\log \Ga_m}{\log \ga_1} = \frac{\log\log R - \log\log T_1}{\log \ga_1}$.  Thus, there exists some $c > 0$ so that $m \ge c \log\log R$.  It follows that
$$\pr{\be_{m+1} -1}\log T_{m+1} \gtrsim \pr{\log\log R}^2.$$
\end{proof}

\nid \textbf{Case 2:} $W \equiv 0$ \\

Since $W \equiv 0$, we always make the second choice to define $\be_{j+1}$.

\begin{proof}[Proof of Lemma \ref{lem1} for Case 2]
Notice that
\begin{align*}
\pr{\be_{j+1} -1}\log T_{j+1} &= \pr{\frac{1}{3} - \frac{2N}{3\ga_j}}\log\pr{ T_{j}^{\ga_j}} \\
&= \pr{\frac{\ga_j - 2N}{3\ga_j}}{\ga_j}\log\pr{ T_{j}} \\
&= \pr{\frac{3(\be_j -1) + 2N + 3\de_j - 2N}{3}}\log T_{j} \\
&= \pr{\be_j -1}\log T_{j} + \de_j\log T_{j}.
\end{align*}
The rest of the proof is as in Case 1.
\end{proof}

\begin{proof}[Proof of Lemma \ref{lem2} for Case 2]
We use the fact that $\disp \Ga_m \ge \pr{2N}^m$ and proceed as in Case 1.
\end{proof}

\begin{proof}[Proof of Lemma \ref{lem3} for Case 2]
We use the fact that $\disp \Ga_m \ge \pr{2N}^m$ and proceed as in Case 1.
\end{proof}

\nid \textbf{Case 3:} $\disp 2 - 2P < \frac{4-2N}{3}$, $W \not\equiv 0$ and $P > N$. \\ 

Since $\disp 2 - 2P < \frac{4-2N}{3}$, then there exists $\De > 0$ such that $3P - N = 1 + \De$.  By Lemma \ref{JLem}, there exists a $J$ such that $\be_{J-1} = h_{J-1}$.  Furthermore, $\be_k \ge h_k$ for all $k \le J-1$.  { Since Lemma \ref{decLem} only uses that $P - N > 0$, and does not require that $\be_0 > 1$, Lemma \ref{decLem} and Corollary \ref{decLemCor} still hold for case (b).  Thus,} we initially use only the first choice to define our sequences.  And once we switch to using the second choice, which inevitably happens, we use only the second choice.  That is, we are in Case 3.

\begin{proof}[Proof of Lemma \ref{lem1} for Case 3]
We note that $\be_j \le \ell_j$ for all $j \ge J$, $\be_{J-1} = h_{J-1}$ and $\be_j \ge h_j$ for all $j \le J - 2$.  As in the proofs for Case 1 and 2, we see that
\begin{align*}
\pr{\be_{j+1} -1}\log T_{j+1} &= \pr{\be_j -1}\log T_{j} + \de_j\log T_{j} \quad \textrm{for all $j$}.
\end{align*}
So, as in Cases 1 and 2, we can repeatedly apply this rule to get the result.
\end{proof}

\begin{proof}[Proof of Lemma \ref{lem2} for Case 3]
 We use the fact that $\disp \Ga_m \ge \pr{2P}^{J-1}\pr{2N}^{m+1-J} \ge \pr{2N}^m$ since $P > N$, and proceed as in Case 1.
\end{proof}

\begin{proof}[Proof of Lemma \ref{lem3} for Case 3]
We use the fact that $\Ga_j \ge \pr{2N}^j$ and proceed as in Case 1.
\end{proof}

We have now proved our lemmas for all possible cases.  We need $T_1$ to be sufficiently large in order for the above results to be valid.  Also, in order to use Lemma \ref{lem2}, we must ensure that $T_1$ is bounded above by some function of $R$.  

\begin{lem}
Let $T_{N,P}$ be a value of $T_1$ that is large enough for { Lemma \ref{Tinc} and Corollary \ref{GaQuo}} to hold.  Set $R_0 = e^{T_{N,P}}$.  There exists a constant $C$, depending on $N$, $P$ and $T_{N,P}$, such that for every $R \ge R_0$, there exists $T_1 \in \brac{T_{N,P}, T_{N,P}^{C}}$ and $m \in \N$, such that $T_1^{\Ga_m(T_1)} = R$.  Moreover, $\log T_1 \le C\pr{\log\log R}^2$.
\label{TChoiceb}
\end{lem}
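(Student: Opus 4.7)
The plan is to keep $T_1 = T_{N,P}$ as a baseline, identify the correct iteration count $m$ from the resulting base sequence, and then nudge $T_1$ upward just enough that $T_1^{\Ga_m(T_1)}$ hits $R$ exactly. Starting at $T_1 = T_{N,P}$, consider the base sequence $\{T_k(T_{N,P})\}_{k\ge 1}$. In each of the three cases treated in Section \ref{MEstProof}, the $\ga_j$ are bounded above by a constant $\mathcal{G}(N,P)$ and below by a constant $\ga_*(N,P)>1$, so $T_k(T_{N,P}) \to \infty$. Since $T_1(T_{N,P}) = T_{N,P} < e^{T_{N,P}} = R_0 \le R$, I would pick the unique $m \in \N$ with $T_{m+1}(T_{N,P}) \le R < T_{m+2}(T_{N,P})$.

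With $m$ now fixed, I would regard $g(T_1) := T_{m+1}(T_1) = T_1^{\Ga_m(T_1)}$ as a function of $T_1 \ge T_{N,P}$. By Lemma \ref{Tinc}, $g$ is continuous and strictly increasing in $T_1$, takes the value $T_{m+1}(T_{N,P}) \le R$ at $T_1 = T_{N,P}$, and diverges as $T_1 \to \infty$. The intermediate value theorem then produces a unique $T_1^* \ge T_{N,P}$ with $(T_1^*)^{\Ga_m(T_1^*)} = R$. Taking logs in this defining equation and in the overshoot inequality $R < T_{N,P}^{\Ga_{m+1}(T_{N,P})}$ yields
\begin{equation*}
\log T_1^* \;<\; \ga_{m+1}(T_{N,P}) \cdot \frac{\Ga_m(T_{N,P})}{\Ga_m(T_1^*)} \cdot \log T_{N,P}.
\end{equation*}

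The hard part is controlling the ratio $\Ga_m(T_{N,P})/\Ga_m(T_1^*)$: a priori each $\ga_j(T_1^*)$ differs from $\ga_j(T_{N,P})$, and these discrepancies multiply across $m$ factors (with $m$ itself growing like $\log\log R$). This is exactly what Corollary \ref{GaQuo} in Appendix \ref{AppC} is designed to bound, uniformly in $m$. Combining it with the uniform bound $\ga_{m+1}(T_{N,P}) \le \mathcal{G}$ gives $\log T_1^* \le C \log T_{N,P}$ for some $C = C(N,P,T_{N,P})$, equivalently $T_1^* \in [T_{N,P}, T_{N,P}^C]$. The second assertion is then easy: the choice $R_0 = e^{T_{N,P}}$ implies $\log\log R \ge \log T_{N,P}$ for every $R \ge R_0$, and since $T_{N,P}$ is large we have $\log T_{N,P} \ge 1$, so after possibly enlarging $C$ we conclude $\log T_1^* \le C (\log\log R)^2$.
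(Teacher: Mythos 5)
Your proof is correct and follows essentially the same route as the paper: pick $m$ maximal with $T_{N,P}^{\Ga_m(T_{N,P})} \le R$, invoke Lemma \ref{Tinc} (together with the intermediate value theorem, which the paper leaves implicit) to produce $T_1$ with $T_1^{\Ga_m(T_1)} = R$, then take logarithms and bound $\log T_1$ via Corollary \ref{GaQuo} applied to the ratio $\Ga_m(T_{N,P})/\Ga_m(T_1)$ together with a uniform bound on $\ga_{m+1}(T_{N,P})$. The only cosmetic difference is that you bound $\ga_{m+1}$ by a constant $\mathcal{G}(N,P)$ where the paper uses the monotonicity bound $\ga_{m+1}(T_{N,P}) \le \ga_1(T_{N,P})$.
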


\begin{proof}
Fix $R \ge R_0$.  Let $m$ be the largest integer such that $T_{N,P}^{\Ga_m(T_{N,P})} \le R$.  Then $R < T_{N,P}^{\Ga_{m+1}(T_{N,P})}$.  By Lemma \ref{Tinc} (which also applies when $\be_0 = 1$), there exists $T_1 \ge T_{N,P}$ such that $T_1^{\Ga_m(T_1)} = R$.  We need to show that $T_1$ is bounded above.  Combining inequalities, we see that $$T_1^{\Ga_m(T_1)} = R < T_{N,P}^{\Ga_{m+1}(T_{N,P})} = \pr{T_{N,P}^{\Ga_{m}(T_{N,P})}}^{\ga_{m+1}(T_{N,P})},$$
or 
$$T_1 < \pr{T_{N,P}^{\pr{\Ga_{m}(T_{N,P})/\Ga_m(T_1)}}}^{\ga_{m+1}(T_{N,P})}.$$  
By Corollary \ref{GaQuo}, $\disp \Ga_{m}(T_{N,P})/\Ga_m(T_1) \le c$, where $c$ depends on $N$, $P$ and $T_{N,P}$.  Furthermore, $\ga_{m+1}(T_{N,P}) \le \ga_{1}(T_{N,P}) \le \tilde{c}$.  Letting $C = c \tilde{c}$, we see that $T_1 \le T_{N,P}^C$.
In addition, $\log T_1 \le C\log T_{N,P} = C \log\log R_0 \le C \pr{\log\log R}^2$.
\end{proof}

We may now proceed to the proof of the main theorem.

\begin{proof}[Proof of Theorem \ref{MEst}(\ref{partB})]
Fix $N, P > 1/2$.  Consider $R \ge R_0 = e^{T_{N,P}}$, where $T_{N,P}$ is introduced in Lemma \ref{TChoiceb}.  By Lemma \ref{TChoiceb}, there exists $T_1 \ge T_{N,P}$, $m\in\N$ such that $T_1^{\Ga_m} = R$.  Furthermore, $\log T_1 \le C \pr{\log\log R}^2$, so we may apply Lemma \ref{lem2}.

Define a sequence of positive real numbers $\{T_j\}_{j=1}^{m+1}$ such that $T_{j+1} = T_j^{\ga_j}$ for each $j$ and $T_{m+1} = R$, where the determination of $\ga_j$ is described above in (\ref{gamDef}).

Let $x_0 \in \R^n$ be such that $|x_0| = R$ and $\disp \mathbf{M}(R) =\pr{ \int_{B_1(x_0)}\abs{u}^2}^{1/2}$.  For each $j = 1, 2, \ldots, m$, let $\disp x_j = \frac{x_0}{|x_0|}T_j$.  Notice that $x_0 = x_{m+1}$.

\nid By Proposition \ref{baseC},
\begin{equation}
\int_{B_1(x_1)}\abs{u}^2 \ge C_5 \exp\pr{-C_4T_1^{\be_1} \log T_1}.
\label{it1}
\end{equation}
By (\ref{it1}) and Proposition \ref{IH},
\begin{equation}
\int_{B_1(x_2)} \abs{u}^2 \ge C_5 \exp\pr{ -\tilde C_4 T_2^{ \be_2}\log T_2 }.
\label{it2}
\end{equation}
Repeating the argument for each $j$, we see that
\begin{equation*}
\int_{B_1(x_j)} \abs{u}^2 \ge C_5 \exp\pr{ -\tilde C_4 T_j^{ \be_j}\log T_j }.
\end{equation*}
Therefore,
\begin{align}
\int_{B_1(x_0)} \abs{u}^2 &\ge C_5 \exp\pr{ -\tilde C_4 T_{m+1}^{\be_{m+1}}\log T_{m+1} } \nonumber \\
&= C_5 \exp\pr{ -\tilde C_4 R\log R \;T_{m+1}^{\be_{m+1}-1}}.
\label{mth}
\end{align}
By Lemma \ref{lem2}, 
$$T_{m+1}^{\be_{m+1}-1} = \exp\brac{\pr{\be_{m+1}-1}\log T_{m+1}} \le \exp\brac{C \pr{\log\log R}^2} = \pr{\log R}^{C\log\log R}$$
Returning to inequality (\ref{mth}), we see that
$$\int_{B_1(x_0)} \abs{u}^2 \ge C_5 \exp\pr{ -\tilde C_4 R \pr{\log R}^{C_6\log\log R}}.$$
If we set $C_7 = \frac{\tilde C_4}{2}$ and $\tilde C_5 = \sqrt{C_5}$, the proof is complete.
\end{proof}

\subsection{Comments on $\be_c = 1$}

Recall that $\be_c := \max\set{\frac{4-2N}{3}, 2-2P}$.  The reader may have noticed that there are no results for the cases when $\be_c =1$.  In this subsection, we will explain why the iterative argument breaks down and what happens in the limit when $\be_0 \to 1$ from above.

From the proof of Theorem \ref{MEst}, we have that
$$\int_{B_1(x_0)} \abs{u}^2 \ge  C_5 \exp\pr{ -\tilde C_4 T_{m+1}^{\be_{m+1}}\log T_{m+1} } = C_5 \exp\pr{ -\tilde C_4 R\log R \;R^{\be_{m+1}-1}}.$$
Therefore, if we want to get a result similar to Theorem \ref{MEst} for the cases when $\be_c  = 1$, we need to ensure that $R^{\be_{m+1}-1} \le R^\eps$ for any $\eps > 0$.  If we choose $m = C\frac{\log R}{\pr{\log\log R}^k}$ for some $k \in \N$, then by Lemma \ref{be1Diff}, $R^{\be_{m+1}-1} \gtrsim \pr{\log R}^{\pr{\log\log R}^{k-1}}$, so there may be a hope of achieving this bound.  However, if we make $m$ of a smaller order, then we cannot establish the desired bound.  Suppose we have $R = T_{m+1} = T_1^{\Ga_m}$ with $m = C\frac{\log R}{\pr{\log\log R}^k}$ for some $k \in \N$.  By Lemma \ref{lT1Zero}, $\log T_1 \to 0$ as $R \to \iny$.  However, in Proposition \ref{baseC}, it is imperative that $\log T_1$ be much larger than 0.  Thus, there exists $R_0 \in \R$ such that for all $R \ge R_{0}$, the corresponding $T_1$ is not large enough for one of our main propositions to apply.  This shows that the iterative argument breaks down.

It is interesting that the iterative argument works for both $\be_c > 1$ and $\be_c < 1$, but fails when $\be_c = 1$  However, it is perhaps not that surprising since the behavior of our sequences is exponential for $\be_c \ne 1$ but linear for $\be_c = 1$.

If $\be_c = 1$, then $V$ and $W$ satisfy the hypotheses for Theorem \ref{MEst}(\ref{partA}) for any $\be_c > 1$.  Therefore, for any $\eps > 0$, $\mathbf{M}( R ) \ge \tilde C_5\exp\pr{-C_7 R^{1+\eps}\log R ^{C_6}}$, where $\tilde C_5$ and $C_7$ are bounded, $C_6 = C_6(1 + \eps)$.  If we study the proof of Lemma \ref{mLem}, we see that $C_6 \lesssim \frac{1}{\eps}$.  Since $\disp \lim_{\eps \to 0}\brac{R^\eps \pr{\log R}^{c/\eps}} \to \iny$, we cannot establish any result for $\be_c = 1$ by looking at the limiting behavior of the result from Theorem \ref{MEst}(\ref{partA}).

%
%
\section{Proof of Theorem \ref{cons}}
\label{Mesh}

To prove Theorem \ref{cons}(\ref{consa}), we will prove Proposition \ref{consP} below with a Meshkov-type construction.   Some straightforward calculations prove Proposition \ref{consD}, which gives us Theorem \ref{cons}(\ref{consb}).  These propositions show that we do not need both potentials.  That is, if $V$ is the dominant potential (meaning that the decay of $W$ is faster), then we can construct a solution to equation (\ref{epde}) with $W \equiv 0$.  And if $W$ is the dominant potential, then we can construct a solution to equation (\ref{epde}) with $V \equiv 0$.  We will consider each proposition separately.  

\subsection{Proof of Theorem \ref{cons}(\ref{consa})}

\begin{prop}  For any $\la \in \C$, we have the following.
\begin{enumerate}[(a)]
\item If $\disp \be_0 = \frac{4-2N}{3} > 1$, then there exists a potential $V$ and an eigenfunction $u$ such that
\begin{equation}
\LP u + \la u = V u,
\label{VPDE}
\end{equation}
where
\begin{equation}
|V(x)| \le C \langle x \rangle ^{-N}
\label{Vbd}
\end{equation}
and
\begin{equation}
|u(x)| \le C\exp\pr{-c\frac{|x|^{\be_0}}{\log |x|}}.
\label{Vubd}
\end{equation}
\item  If $\be_0 = 2-2P > 1$, then there exists a potential $W$ and an eigenfunction $u$ such that
\begin{equation}\LP u + \la u = W \cdot \gr u,
\label{WPDE}
\end{equation}
where
\begin{equation}
|W(x)| \le C\langle x \rangle ^{-P}
\label{Wbd}
\end{equation}
and
\begin{equation}
|u(x)| \le C\exp\pr{-c |x|^{\be_0}}.
\label{Wubd}
\end{equation}
\end{enumerate}
\label{consP}
\end{prop}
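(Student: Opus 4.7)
The plan is to carry out a Meshkov-type construction in dimension two. I would decompose $\R^2$ into a sequence of concentric annuli $A_j = \{R_j \le r \le R_{j+1}\}$ with radii $R_j \nearrow \iny$ growing geometrically, and on each annulus define $u$ in polar coordinates $(r,\theta)$ as a carefully chosen complex-valued superposition of angular modes $e^{\pm i k_j \theta}$ multiplied by radial profiles that solve the ODE $f'' + r^{-1} f' - k_j^2 r^{-2} f + \la f = 0$ on the bulk of $A_j$. The sequence $\{k_j\}$ grows with $R_j$, and it is the growth rate of $k_j$ that governs both the decay rate of $|u|$ and the size of the resulting potential.

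More precisely, I would arrange that on the bulk of each $A_j$ the function $u$ solves $\LP u + \la u = 0$ exactly (or up to a negligible error), so that the resulting potentials are essentially supported on thin transition strips between consecutive annuli. The radial profiles are modeled on modified Bessel functions adjusted for $\la$; when $k_j \gg R_j\sqrt{|\la|}$ these exhibit the expected WKB-type exponential behavior. Choosing $k_j$ proportional (up to logarithmic corrections) to $R_j^{1-2N/3}$ yields the decay $|u(x)| \lesssim \exp(-c|x|^{(4-2N)/3}/\log|x|)$ needed for part (a), while choosing $k_j \sim R_j^{1-P}$ yields $|u(x)| \lesssim \exp(-c|x|^{2-2P})$ needed for part (b).

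The transitions between consecutive annuli would be performed by smoothly interpolating between the modes $e^{i k_j \theta}$ and $e^{i k_{j+1}\theta}$ on a thin angular strip, using the technical construction lemma advertised in Appendix \ref{AppD}. On these strips one then reads off the coefficient of the equation: in part (a) one sets $V := (\LP u + \la u)/u$ and verifies (\ref{Vbd}) directly from the support and magnitude of $\LP u + \la u$ together with lower bounds for $|u|$ on the transition strip; in part (b) one takes $V \equiv 0$ and sets $W := \overline{\gr u}\,(\LP u + \la u)/|\gr u|^2$, exploiting the fact that $|\gr u|$ is explicit and bounded below on the transition strip. The decay bounds (\ref{Wbd}), (\ref{Vubd}), (\ref{Wubd}) then follow from the explicit choice of $R_j$, $k_j$, and the widths of the transition strips.

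The main obstacle is to juggle the competing constraints that (i) $|u|$ decreases at the prescribed rate across each annulus, (ii) the jump from $k_j$ to $k_{j+1}$ is small enough that $\LP u$ and $\gr u$ remain controlled through the transition, and (iii) the transition strips are neither too wide (which would inflate the resulting potential) nor too narrow (which would make derivatives blow up). The presence of the eigenvalue $\la$ introduces an additional oscillation of wavenumber $\sqrt{|\la|}$ in every radial profile, which must be matched in phase at every transition so that the interpolated function does not overshoot the target size; this is the source of most of the extra complication beyond Meshkov's original argument. Finally, the bookkeeping of the logarithmic corrections responsible for the $(\log|x|)^{-1}$ factor in part (a) is the most delicate technical point, and it is what pins down the precise growth rates of the three sequences $\{R_j\}$, $\{k_j\}$, and the transition widths.
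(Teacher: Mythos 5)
Your proposal correctly identifies the paper's overall strategy — a Meshkov-type annular construction in two dimensions, with complex angular modes whose order is incremented across successive annuli, and the potential then read off as the residual $(\LP u + \la u)/u$ in part (a) or chosen in the angular direction to match $\LP u + \la u$ in part (b). However, two of the concrete scaling choices that make the construction work are set incorrectly, and the resulting construction would not achieve the claimed decay.

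First, the annuli must not grow geometrically. The paper takes $\rho_{j+1} = \rho_j + 6\rho_j^{\al}$ with $\al = 1 - \be_0/2 < 1$, so that the annular width is a vanishing fraction of the radius. This is essential: with angular index $n_j$ set to produce the target decay, the increment $k_j = n_{j+1}-n_j$ that must be absorbed by the phase interpolation is then only of order $\rho_j^{\be_0/2}$ (up to logs), much smaller than $n_j \sim \rho_j^{\be_0}$, and it is this smallness that keeps the resulting potential within the bound $C r^{-N}$ (resp.\ $Cr^{-P}$). With geometrically growing annuli, the index must jump by order $n_j$ itself over each annulus to maintain the decay, which blows up the potential.

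Second, your stated choices $k_j \sim R_j^{1-2N/3}$ and $k_j \sim R_j^{1-P}$ for the angular modes do not produce the claimed decay. For part (b), if $\log|u|$ is to fall off like $-R^{2-2P}$, the angular index on the annulus at radius $\rho$ must be of order $\rho^{2-2P}$ (so that the derivative $\tfrac{d}{dr}\log M(r) = (-n+O(k))/r + \ldots \le -cr^{\be_0 - 1}$); your exponent $1-P = \be_0/2$ is the order of the mode \emph{increment}, not the mode itself. Likewise for part (a) the index must be $\sim \rho^{(4-2N)/3}/\log\rho$, not $\rho^{1-2N/3}$. One further small point: Lemma \ref{imEst} in Appendix \ref{AppD} is not an interpolation device for joining modes across annuli; it is a bound on $\abs{\mathrm{Im}\pr{\tfrac{r^{-2k}\mu_n}{b\mu_{n-2k}}}}$ that guarantees a lower bound on $|u|$ in the middle sectors of each transition, needed precisely when $\la\notin\R$. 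The phase matching you identify as the main difficulty is instead handled in the paper by the multiplicative corrections $\mu_n(r)$ and $\exp(\phi_{a,b}(r))$ interpolated by radial cutoffs.
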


Before presenting the constructions for Proposition \ref{consP}, we need a couple of lemmas.  For $\la \in \C$, use the principal branch to define
$$ \mu_n( r ) := \exp\pr{n\brac{\log\pr{\sqrt{1 - \frac{\la r^2}{n^2}}+1}- \sqrt{1 - \frac{\la r^2}{n^2}} - \log 2 + 1}}.$$
As we will specify below, $n >> r$, so $ \abs{\frac{\la r^2}{n^2}} < 1$. It follows that $ \Re \pr{1 - \frac{\la r^2}{n^2}} > 0$, so all square root terms are well defined (and have positive real part) with this choice of branch cut.  Since the argument for the logarithmic term has real part greater than 1, that term, and hence the function $\mu_n$, is well defined with this branch choice.
A power series expansion of the exponent gives
$$\mu_n\pr{r} = \exp\pr{\frac{\la r^2}{4n} + \frac{\la^2 r^4}{32 n^3} + \frac{\la^3 r^6}{96 n^5} + \ldots}.$$
Whenever $ \abs{\frac{\la r^2}{n^2}} < 1$, the power series in the exponent converges everywhere.

\begin{lem}
Suppose $\rho$ is a large positive number $(\rho > \rho_0 > 0)$, $\disp \be_0 = \frac{4-2N}{3} > 1$, $n \in \N$ is such that $\disp \abs{n - \frac{\rho^{\be_0}}{\log \rho}} \le 1$ and $k \in \N$ is such that $\disp \abs{k - 6\pr{\be_0 - \frac{1}{\log \rho}}\frac{\rho^{\be_0/2}}{\log \rho} } \le 10$.  Let $\disp \al =1 - \tfrac{\be_0}{2}$.  Then in the annulus $\brac{\rho, \rhoa{6}}$ it is possible to construct an equation of the form (\ref{VPDE}) and a solution $u$ of this equation such that the following hold:
\begin{enumerate}
\item (\ref{Vbd}), where $C$ does not depend on $\rho$, $n$ or $k$.
\item If $r \in \brac{\rho, \rhoa{0.1}}$, then $u = r^{-n}e^{-in\vp}\mu_n$. \\
If $r \in \brac{\rhoa{5.9}, \rhoa{6}}$, then $u = ar^{-(n+k)}e^{-i(n+k)\vp}\mu_{n+k}$, for some $a\in \C\setminus\set{0}$.
\item Let $m(r) = \max\set{|u(r, \vp)| : 0 \le \vp \le 2\pi}$.  Then there exists a $c > 0$, not depending on $\rho$, $n$ or $k$, such that
\begin{equation}
\ln m(r) - \ln m(\rho) \le - c\int_\rho^r \frac{t^{\be_0-1}}{\log t}dt + \ln 2
\label{Nint}
\end{equation}
for any $r \in \brac{\rho, \rhoa{6}}$.
\end{enumerate}
\label{meshN}
\end{lem}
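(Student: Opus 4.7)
My plan is to adapt Meshkov's construction \cite{M} to accommodate the eigenvalue $\la$ and the decay rate $N$. The starting observation is that the ``pure mode''
$u_m(r,\vp) := r^{-m} e^{-im\vp}\mu_m(r)$
is an exact eigenfunction of an operator with a tiny potential. Writing $\phi_m := \log \mu_m$, a direct computation gives $\phi_m'(r) = \la r/(m(s_m+1))$, where $s_m := \sqrt{1 - \la r^2/m^2}$. Substituting into the reduced radial ODE $\mu_m'' + (1-2m)r^{-1}\mu_m' + \la \mu_m$ and simplifying using $\la r^2 = m^2(1-s_m)(1+s_m)$ yields
\begin{equation*}
(\LP + \la)\, u_m = \frac{\la}{m\,s_m}\, u_m.
\end{equation*}
Since $m \asymp \rho^{\be_0}/\log\rho$ and $r \le \rhoa{6} \asymp \rho$, we have $|\la r^2/m^2| \lesssim |\la|\rho^{2-2\be_0}(\log\rho)^2 \to 0$, so $|s_m| \asymp 1$. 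Hence each pure mode generates an effective potential of size $|\la|\log\rho/\rho^{\be_0}$, which is dominated by $C\rho^{-N}$ because $\be_0 - N = (4-5N)/3 > 0$ whenever $N < 1/2$ (i.e.\ whenever $\be_0 > 1$).

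Next I would interpolate between $u_n$ at $r = \rho$ and an appropriate scalar multiple of $u_{n+k}$ at $r = \rhoa{6}$ by inserting the intermediate modes $u_{n+j}$ for $j=1,\ldots,k-1$, and subdividing the annulus into $k$ sub-annuli $I_j = [r_j, r_{j+1}]$ of equal width $h := 6\rho^{\al}/k \asymp \rho^{1-\be_0}\log\rho$. On $I_j$ the candidate has the form
\begin{equation*}
u(r,\vp) = a_j(r)\, u_{n+j}(r,\vp) + b_j(r)\, u_{n+j+1}(r,\vp),
\end{equation*}
where the radial profiles $(a_j, b_j)$ transition smoothly from $(A_j, 0)$ at $r_j$ to $(0, A_{j+1})$ at $r_{j+1}$. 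Continuity across each interface determines the amplitudes $A_j$, which compose into the overall scalar $a$ of the statement, while the first- and last-sub-annulus boundary data match the prescribed $u = r^{-n}e^{-in\vp}\mu_n$ on $[\rho, \rhoa{0.1}]$ and $u = a r^{-(n+k)}e^{-i(n+k)\vp}\mu_{n+k}$ on $[\rhoa{5.9},\rhoa{6}]$.

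The main obstacle, and where I expect Appendix~\ref{AppD} (together with auxiliary estimates on $|\mu_m|$, $|\phi_m'|$, $|\phi_m''|$) to be required, is bounding $V = (\LP u + \la u)/u$. Using the pure-mode identity, the coefficient of $u_{n+j}$ in $\LP u + \la u$ is
\begin{equation*}
\frac{a_j\,\la}{(n+j)\,s_{n+j}} + a_j'' + \frac{1-2(n+j)}{r}\,a_j' + 2a_j'\,\phi_{n+j}'(r),
\end{equation*}
and analogously with $a_j \to b_j$, $n+j\to n+j+1$. The classical Meshkov strategy is to design $(a_j, b_j)$ so that these two expressions vanish up to an error of size $\rho^{-N}$; because the drift coefficient $(1-2(n+j))/r \asymp -\rho^{\be_0-1}/\log\rho$ is large, the leading balance reduces to a first-order linear ODE for each of $a_j, b_j$, whose solutions concentrate the transition near the unique crossover radius $r_j^\ast \in I_j$ at which $|u_{n+j}(r_j^\ast)| = |u_{n+j+1}(r_j^\ast)|$. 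Since $|u_{n+j}/u_{n+j+1}| = r\,|\mu_{n+j}/\mu_{n+j+1}|$ depends only on $r$ and sweeps monotonically through $1$ across $I_j$, one can control the denominator $a_j u_{n+j} + b_j u_{n+j+1}$ from below pointwise in $\vp$, and the residual $V$ inherits only the subleading error of size $O(|\la|/m) + O(1/(m r)) = O(\rho^{-N})$.

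Finally, part (3) follows from tracking $m(r) := \max_\vp |u(r,\vp)|$ across the sub-annuli. On $I_j$ the dominant factor in $|u|$ is $r^{-(n+j)}|\mu_{n+j}(r)|$, whose logarithmic derivative is
\begin{equation*}
-\frac{n+j}{r} + \Re\,\phi_{n+j}'(r) \le -c\,\frac{\rho^{\be_0-1}}{\log \rho} \le -c\,\frac{r^{\be_0-1}}{\log r},
\end{equation*}
since $(n+j)/r \asymp \rho^{\be_0-1}/\log\rho$ dominates $|\phi_m'(r)| \lesssim |\la|\rho^{1-\be_0}\log\rho$ throughout the annulus. Integrating from $\rho$ to $r$ produces the claimed upper bound~(\ref{Nint}), with the additive $\ln 2$ absorbing the finitely many interface matching factors, each within $[\tfrac{1}{2}, 2]$.
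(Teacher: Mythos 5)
Your observation about the pure modes is correct: one can verify directly that $(\LP + \la)u_m = \tfrac{\la}{m s_m}u_m = \tfrac{\la}{\sqrt{m^2-\la r^2}}u_m$, which for $m\asymp\rho^{\be_0}/\log\rho$ is of size $\bigO(r^{-\be_0}\log r)\lesssim r^{-N}$; this is also the starting point of the paper's construction. But the interpolation scheme you propose — a chain of adjacent modes $u_n\to u_{n+1}\to\cdots\to u_{n+k}$, glued on $k$ thin sub-annuli by radial coefficients $(a_j,b_j)$ — is fundamentally different from Meshkov's and has a gap at precisely the point Meshkov's design is built to handle.

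The decisive problem is zeros. On $I_j$ you set $u = a_j(r)u_{n+j}+b_j(r)u_{n+j+1}$, and both terms carry angular factors $e^{-i(n+j)\vp}$ and $e^{-i(n+j+1)\vp}$ of the \emph{same sign}. Factoring,
$$u = r^{-(n+j)}\mu_{n+j}\,e^{-i(n+j)\vp}\Bigl[A(r) + B(r)e^{-i\vp}\Bigr],\qquad A=a_j,\ B=b_j\,r^{-1}\mu_{n+j+1}/\mu_{n+j}.$$
For fixed $r$, the bracket traces a circle of center $A(r)$ and radius $|B(r)|$ as $\vp$ varies; it passes through $0$ exactly when $|A(r)|=|B(r)|$. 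Since your scheme forces a transition from $(A_j,0)$ at $r_j$ to $(0,A_{j+1})$ at $r_{j+1}$, the intermediate value theorem gives a crossover radius $r_j^\ast$ with $|A(r_j^\ast)|=|B(r_j^\ast)|$, at which $u(r_j^\ast,\vp^\ast)=0$ for some $\vp^\ast$. Then $V=(\LP u+\la u)/u$ blows up there unless the numerator happens to vanish to matching order, which the ODE choices for $a_j,b_j$ do not enforce. Your claim that ``one can control the denominator $a_j u_{n+j}+b_j u_{n+j+1}$ from below pointwise in $\vp$'' is therefore false as stated. This is exactly the obstruction Meshkov's construction is designed around: the paper does not increment $m$ one step at a time, but jumps $n\to n-2k\to n+2k\to n+k$ and, crucially, flips the sign of the angular frequency in Step 1 (from $e^{-in\vp}$ to $e^{iF(\vp)}$ with $F(\vp)=(n+2k)\vp+\Phi(\vp)$). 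The phase corrector $\Phi$ — built from a periodic $f$ with $\Phi(\vp_m)=0$ and $\Phi\equiv-4k(\vp-\vp_m)$ near the nodes $\vp_m$ — arranges that near each $\vp_m$ the two terms are exact pure modes (so $V$ is an honest radial function regardless of whether $u$ vanishes), while on the sectors $P_m$ between nodes the phase $S(\vp)$ stays bounded away from $2\pi\Z$, giving $|e^{iS}-z(r)|\ge\tfrac12\sin(\pi/7)$ (this is where Lemma~\ref{imEst} on the imaginary part of $z(r)=r^{-2k}\mu_n/(b\mu_{n-2k})$ enters). Nothing in your one-mode-at-a-time scheme supplies this protection.

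There is a secondary, quantitative issue with the $k$ thin sub-annuli. You take widths $h\asymp\rho^\alpha/k\asymp\rho^{1-\be_0}\log\rho$, so any cutoff or transition profile on $I_j$ contributes second-derivative terms of size $h^{-2}\asymp\rho^{2\be_0-2}/(\log\rho)^2$ to $\LP u$. Since $\rho^{-N}=\rho^{3\be_0/2-2}$ and $2\be_0-2>3\be_0/2-2$ for $\be_0>0$, these terms exceed the allowed potential size by a factor $\rho^{\be_0/2}/(\log\rho)^2\to\infty$. The paper avoids this by using sub-annuli of width $\asymp\rho^\alpha=\rho^{1-\be_0/2}$ (six of them, independent of $k$), with the mode jump of size $\bigO(k)$ absorbed by the angular construction rather than by many thin radial transitions; the resulting $\psi^{(j)}=\bigO(r^{-j\alpha})$ contributions are of size $n\cdot r^{\be_0/2-2}\asymp\rho^{3\be_0/2-2}/\log\rho\lesssim\rho^{-N}$.
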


\begin{lem}
Suppose $\rho$ is a large positive number $(\rho > \rho_0 > 0)$, $\be_0 = 2-2P > 1$, $n \in \N$ is such that $\disp \abs{n - \rho^{\be_0}} \le 1$ and $k \in \N$ is such that $\disp \abs{k -6\be_0\rho^{\be_0/2}} \le 40$.  Let $\disp \al =1 - \tfrac{\be_0}{2}$.  Then in the annulus $\brac{\rho, \rhoa{6}}$ it is possible to construct an equation of the form (\ref{WPDE}) and a solution $u$ of this equation such that the following hold:
\begin{enumerate}
\item (\ref{Wbd}), where $C$ does not depend on $\rho$, $n$ or $k$.
\item If $r \in \brac{\rho, \rhoa{0.1}}$, then $u = r^{-n}e^{-in\vp}\mu_n$. \\
If $r \in \brac{\rhoa{5.9}, \rhoa{6}}$, then $u = ar^{-(n+k)}e^{-i(n+k)\vp}\mu_{n+k}$, for some  $a\in \C\setminus\set{0}$.
\item Let $m(r) = \max\set{|u(r, \vp)| : 0 \le \vp \le 2\pi}$.  Then there exists a $c > 0$, not depending on $\rho$, $n$ or $k$, such that
\begin{equation}
\ln m(r) - \ln m(\rho) \le - c\int_\rho^r t^{\be_0-1}dt + \ln 2
\label{Pint}
\end{equation}
for any $r \in \brac{\rho, \rhoa{6}}$.
\end{enumerate}
\label{meshP}
\end{lem}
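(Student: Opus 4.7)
The plan is to construct $u$ and $W$ by a Meshkov-style splicing of ``pure modes,'' directly parallel to what I would do for Lemma \ref{meshN}, but adapted to absorb the residual $\LP u + \la u$ into a first-order term $W \cdot \gr u$ rather than into a zeroth-order potential $V u$. The heuristic behind the parameter choices is that an angular mode of frequency $m$ at radius $r$ has $|\gr u| \sim (m/r)|u|$, so a magnetic potential of size $r^{-P}$ can compensate errors of size $(m/r) r^{-P}|u|$. With $m \sim r^{\be_0}$, this gives an error tolerance of order $r^{\be_0 - 1 - P}$, a larger budget than in Lemma \ref{meshN}, which is exactly what allows $n \sim \rho^{\be_0}$ and $k \sim 6\be_0 \rho^{\be_0/2}$ to appear here without logarithmic factors.

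Concretely, I would partition $\brac{\rho, \rhoa{6}}$ into $k$ concentric sub-annuli of approximately equal width $\sim \rho^\al/k$, and on the $j$-th sub-annulus take $u$ to be a linear combination, via smooth radial cutoffs $\eta_j$, of two consecutive pure modes $b_{j-1} r^{-(n+j-1)} e^{-i(n+j-1)\vp} \mu_{n+j-1}(r)$ and $b_j r^{-(n+j)} e^{-i(n+j)\vp}\mu_{n+j}(r)$. The coefficients $b_j$ are chosen inductively so that the two modes being spliced have comparable magnitude at each interface, which is possible because $r^{-m}|\mu_m(r)|$ varies slowly in $m$ for $m \sim r^{\be_0}$ and $r$ confined to a window of width $\rho^\al$; the relevant asymptotics for $\mu_m$ are supplied by Appendix \ref{AppD}. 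Thus $u$ agrees with $r^{-n}e^{-in\vp}\mu_n$ on the innermost sub-annulus and with $a r^{-(n+k)}e^{-i(n+k)\vp}\mu_{n+k}$ on the outermost one, with the angular frequency shifting by one at each of the $k$ interior transitions.

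Because each pure mode is designed to annihilate $\LP + \la$, the error $E := \LP u + \la u$ is supported inside the transition layers and arises only from $\eta_j^\pri$ and $\eta_j^{\pri\pri}$ striking the two mode profiles. On each transition layer I would define $W := E\, \overline{\gr u}/|\gr u|^2$, with $W \equiv 0$ elsewhere; then $\LP u + \la u = W \cdot \gr u$ is satisfied pointwise. The bound $|W| \le C\langle x\rangle^{-P}$ then reduces to a non-degeneracy statement $|\gr u| \gtrsim (m/r)|u|$ on the transition layers (which follows from the fact that the two spliced angular modes have distinct frequencies and cannot cancel uniformly in $\vp$), together with the direct estimate $|E| \lesssim \pr{k/\rho^\al}^2|u| + \pr{k/\rho^\al}|u|/r$; plugging in $m \sim \rho^{\be_0}$ and $k \sim \rho^{\be_0/2}$ and using $\be_0 = 2 - 2P$ yields $|W| \lesssim \rho^{-P}$, as required.

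The decay estimate (\ref{Pint}) then follows by tracking $m(r)$ circle by circle: at each of the $k$ transitions, the maximum drops by a multiplicative factor $\kappa < 1$ that is bounded away from $1$ uniformly in $\rho, n, k$ (since $r^{-(m+1)}|\mu_{m+1}|$ decays strictly faster in $r$ than $r^{-m}|\mu_m|$ on the transition radius when $m \sim \rho^{\be_0}$), giving $m\pr{\rhoa{6}} \lesssim \kappa^k m(\rho)$; distributed as a Riemann sum across the $k$ sub-annuli this is (\ref{Pint}) up to the additive $\ln 2$. The main obstacle will be the two analytical steps of the third paragraph: verifying the angular non-degeneracy of $|\gr u|$ uniformly on every transition layer, and producing sharp enough estimates on the ratios $\mu_{n+j+1}/\mu_{n+j}$ across a single sub-annulus so that the inductive choice of $b_j$ remains consistent through all $k$ transitions without amplifying the overall size of $u$.
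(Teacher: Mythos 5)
There is a genuine gap, and it sits exactly at the two points you flagged as "the main obstacle." First, the quantitative budget for your fine splicing fails. If you make $k$ transitions inside an annulus of total width $6\rho^{\al}$, each cutoff satisfies $|\gr \eta_j| \sim k\rho^{-\al}$, and the dominant error term is the cross term $2\gr\eta_j\cdot\gr(\text{mode}) \sim k\rho^{-\al}\cdot\frac{n}{r}\,|\text{mode}|$ (your stated bound $|E|\lesssim (k/\rho^\al)^2|u| + (k/\rho^\al)|u|/r$ drops the factor $n$ in this term, which is what hides the problem). Since $|\gr u|\lesssim \frac{n}{r}|u|$, any $W$ with $W\cdot\gr u = E$ must satisfy $|W|\gtrsim k\rho^{-\al}\sim \rho^{\be_0-1}=\rho^{1-2P}$, which for $P<1/2$ is far larger than the required $\rho^{-P}$; the correct budget only closes if the frequency jumps by $\sim k$ across layers of full width $\sim\rho^{\al}$, so that $|\gr\eta|\sim\rho^{-\al}=\rho^{-P}$, which is what the paper does (a few jumps of size $2k$, $4k$, $k$, not $k$ jumps of size $1$). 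Second, the non-degeneracy you invoke does not hold in the form you need. When you glue two modes with angular frequencies of the same sign and nearly equal size ($-(n+j-1)$ and $-(n+j)$), each gradient is essentially the same multiple of its mode, $\gr M_m\approx -\frac{m}{r}\pr{\hat r + i\hat\vp}M_m$, so at angles $\vp$ where the two modes nearly cancel, $\gr u$ degenerates along with $u$ relative to the size of the individual modes, while $E$ (which hits each mode separately through $\eta_j^{\pri}$) does not; hence $W:=E\,\overline{\gr u}/|\gr u|^2$ cannot be kept of size $r^{-P}$ on the transition layers. This is precisely the obstruction the Meshkov machinery is built to evade: the paper's proof of Lemma \ref{meshP} keeps the intermediate mode of \emph{opposite} angular frequency (via $F(\vp)=(n+2k)\vp+\Phi(\vp)$ from the proof of Lemma \ref{meshN}), and in the overlap annuli (Steps 1C and 4C) it takes a two-component $W=w_1(\cos\vp,\sin\vp)+w_2(i\sin\vp,-i\cos\vp)$ and solves a $2\times 2$ system whose determinant is of size $n^2/r^2$ exactly because the two angular frequencies have opposite signs; this yields $|W|\lesssim r^{\be_0/2-1}=r^{-P}$ with no pointwise lower bound on $u$ or $\gr u$ needed there.

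Your outer framework (splicing pure modes $r^{-m}e^{\pm im\vp}\mu_m$, absorbing the error into $W\cdot\gr u$ on transition layers, and summing the logarithmic decrements to get (\ref{Pint})) is the right shape, and the final decay bookkeeping would go through once the construction is fixed. But as written, the construction itself does not produce an admissible $W$: you would need to replace the $k$ unit-step transitions by the paper's few large-step transitions of width $\sim\rho^{\al}$, and to handle the overlap regions either by the opposite-frequency intermediate mode with the two-component $W$, or by some other mechanism that controls $W$ where the spliced modes interfere destructively.
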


Since these lemmas are so similar, it is not surprising that their proofs are as well.  We will present the more complicated proof first, that of Lemma \ref{meshN}.  We will then show the proof of Lemma \ref{meshP}.

\begin{pf}[Proof of Lemma \ref{meshN}]
As $r$ increases from $\rho$ to $\rhoa{6}$, we rearrange equation (\ref{VPDE}) and its solution $u$ so that all of the above conditions are met.  This process is broken down into four major steps.

Throughout this proof, the number $C$ is a constant that is independent of $\rho$, $n$ and $k$. \\
\nid\emph{Step 1: $r \in \brac{\rho, \rhoa{2}}$}.  During this step, the function $u_1 = r^{-n}e^{-in\vp}\mu_{n}( r )$ is rearranged to $u_2 = -br^{-n+2k}e^{iF(\vp)}\mu_{n-2k}( r )$, both of which satisfy an equation of the form (\ref{VPDE}), where $b$ is a complex number and $F$ is a function that will be defined shortly.

Let $\vp_m = 2\pi m/(2n + 2k)$, for $m = 0, 1 , \ldots, 2n + 2k -1$.  Then $\disp \set{\vp_m}_{m=0}^{2n+2k-1}$ is the set of all solutions to $\disp e^{-in\vp} - e^{i(n+2k)\vp} = 0$ on $\set{0 \le \vp \le 2\pi}$.  Let $T = \pi/(n+k)$.  On $\brac{0, T}$, we define $f$ to be a $C^1$ function such that $f(\vp) = -4k$ for $\vp \in \brac{0, T/5} \cup \brac{4T/5, T}$.  We also require that $f$ satisfies the following:
\begin{align}
&-4k \le f(\vp) \le 5k, \;\; 0 \le \vp \le T,
\label{f1} \\
&\int_0^T f(\vp) d\vp = 0,
\label{f2} \\
&|f^\pri(\vp)| \le Ck/T = Ck(k+n)/\pi,  \;\; 0 \le \vp \le T.
\label{f3}
\end{align}
We extend $f$ periodically (with period $T$) to all of $\R$ and set
$$\Phi(\vp) = \int_0^\vp f(t) dt.$$
By (\ref{f2}), $\Phi$ is $T$-periodic and $\Phi(\vp_m) = \Phi(mT) = 0$.  Furthermore, $\Phi$ is $2\pi$-periodic.  By (\ref{f1})-(\ref{f3}), the following facts hold for all $\vp \in \R$:
\begin{align}
&|\Phi(\vp)| \le 5kT = 5\pi k/(n+k),
\label{Phi1} \\
&|\Phi^\pri(\vp)| \le 5k,
\label{Phi2} \\
&|\Phi^{\pri\pri}(\vp)| \le Ckn.
\label{Phi3} 
\end{align}
Also, for all $\vp \in \set{|\vp - \vp_m| \le T/5}$, 
\begin{equation}
\Phi(\vp) = -4k(\vp - \vp_m) = -4k\vp + b_m,
\label{Phi4}
\end{equation}
where $b_m$ is some real number.

Set 
\begin{equation}
F(\vp) = (n+2k)\vp + \Phi(\vp).
\label{FDef}
\end{equation}  
If $|\vp - \vp_m| \le T/5$, then $u_2 = -be^{ib_m}r^{-(n-2k)}e^{i(n-2k)\vp}\mu_{n-2k}( r )$.  \\

Choose $b = (\rhoo)^{-2k}\frac{\mu_n\pr{\rhoo}}{\mu_{n-2k}\pr{\rhoo}}$ so that $|u_1(\rhoo, \vp)| = |u_2(\rhoo, \vp)|$.  Since $|u_2(r, \vp)/u_1(r, \vp)| =\abs{ br^{2k}\frac{\mu_{n-2k}( r )}{\mu_n\pr{r}}}$, then by the assumptions on $k$ and $\rho$ and the behavior of $\mu_n$ and $\mu_{n-2k}$,
\begin{align}
&|u_2(r, \vp)/u_1(r, \vp)| \le e^{-C}, \quad r \in \brac{\rho, \rhoa{\tfrac{2}{3}}}
\label{uBd1} \\
&|u_2(r, \vp)/u_1(r, \vp)| \ge  e^{C}, \quad r \in \brac{\rhoa{\tfrac{4}{3}}, \rhoa{2}}.
\label{uBd2}
\end{align}
 
 Choose smooth cutoff functions $\psi_1$, $\psi_2$, $\psi_3$, $\psi_4$ such that $\psi_1(r) = \left\{ \begin{array}{rl} 1 & \mathrm{if} \quad r \le \rhoa{\tfrac{4}{3}} \\ 0 & \mathrm{if} \quad r \ge \rhoa{\tfrac{5}{3}} \end{array}\right.$, \\
$\psi_2(r) = \left\{ \begin{array}{rl} 0 & \mathrm{if} \quad r \le \rhoa{\tfrac{1}{3}} \\ 1 & \mathrm{if} \quad r \ge \rhoa{\tfrac{2}{3}} \end{array}\right.$, $\psi_3\pr{r} = \left\{ \begin{array}{rl} 1 & \mathrm{if} \quad r \le \rhoa{\tfrac{5}{3}} \\ 0 & \mathrm{if} \quad r \ge \rhoa{1.9} \end{array}\right.$ and $\psi_4\pr{r} = \left\{ \begin{array}{rl} 0 & \mathrm{if} \quad r \le \rhoa{0.1} \\ 1 & \mathrm{if} \quad r \ge \rhoa{\tfrac{1}{3}} \end{array}\right.$.  Moreover, we require that
\begin{equation}
0 \le |\psi_i(r)| \le 1 \quad\mathrm{and}\quad |\psi_i^{(j)}(r)| \le Cr^{-j\al} \quad \forall r \in \R^+, i= 1,2,3,4, j = 1, 2.  
\label{1psi}
\end{equation}  

Let
\begin{align*}
\phi_{a,b}\pr{r} &= -\frac{1}{2}\int \frac{\la r}{\sqrt{a^2 - \la r^2}\sqrt{b^2 - \la r^2}} dr \\
&= -\frac{1}{4}\log \pr{2\sqrt{a^2 - \la r^2}\sqrt{b^2 - \la r^2} + 2\la r^2 - a^2 - b^2}
\end{align*}
When $a, b = n + \bigO\pr{k}$, we see that
\begin{align*}
\phi \pr{r}&= \bigO\pr{\log r} \\
\phi^\prime\pr{r} &= \bigO\pr{r^{1-2\be_0}\pr{\log r}^2} \\
\phi^{\prime\prime}\pr{r} &= \bigO\pr{r^{-2\be_0}\pr{\log r}^2}
\end{align*}

We set 
$$u = \psi_1u_1\exp\pr{\psi_4 \phi_{n, n-2k}} + \psi_2u_2\exp\pr{\psi_3 \phi_{n, n-2k}}.$$
For the rest of step 1, we will abbreviate $\phi_{n,n-2k}$ with $\phi$. \\

\nid \emph{Step 1A: $r \in \brac{\rho, \rhoa{\tfrac{2}{3}}}$}.

Since $\psi_1 \equiv 1$ and $\psi_3 = \psi_4$ on the support of $\psi_2$, then on this annulus,
$$u = u_1\exp\pr{\psi_4 \phi} + \psi_2u_2\exp\pr{\psi_3 \phi} =  \brac{u_1 + \psi_2u_2}\exp\pr{\psi_4 \phi}.$$
Therefore, by (\ref{uBd1}),
\begin{equation}
\exp\pr{-\psi_4 \phi}|u| \ge |u_1| - |u_2| \ge (1 - e^{-C})|u_1| \ge e^{C^\pri}|u_2| > 0.
\label{1AuN}
\end{equation}
We see that
\begin{align*}
\LP u + \la u &= \pr{D_1  + \tilde D_1}u_1\exp\pr{\psi_4 \phi},
\end{align*}
where 
\begin{align} 
D_1 &= \brac{\psi_2^\pri\tfrac{1 - 2\sqrt{\pr{n-2k}^2 - \la r^2}}{r}+ \psi_2^{\pri\pri} - \psi_2\pr{\tfrac{8nk + 2(n+2k)\Phi^\pri + (\Phi^\pri)^2 - i\Phi^{\pri\pri}}{r^2} - \tfrac{\la}{\sqrt{(n-2k)^2 - \la r^2}}}}\frac{u_2}{u_1} + \tfrac{\la}{\sqrt{n^2 - \la r^2}}  \nonumber \\
&= \bigO\pr{n\cdot r^{\be_0/2 -2}}
\label{D_1ord} \\
\tilde D_1 &= 2\brac{-\tfrac{\sqrt{n^2 - \la r^2}}{r} +\pr{\psi_2^\pri - \psi_2\tfrac{\sqrt{\pr{n-2k}^2 - \la r^2}}{r}}\frac{u_2}{u_1}}\pr{\psi_4^\pri \phi + \psi_4 \phi^\pri} \nonumber \\
&+ \brac{\frac{1}{r}\pr{\psi_4^\pri \phi + \psi_4 \phi^\pri} + \pr{\psi_4^\pri \phi + \psi_4 \phi^\pri}^2 + \pr{\psi_4^{\pri\pri} \phi + 2\psi_4^\pri \phi^\pri+ \psi_4 \phi^{\pri\pri}}}\pr{1 + \psi_2\frac{u_2}{u_1}} \nonumber \\
&= \bigO\pr{r^{3\be_0/2-2}}.
\label{tD_1ord} 
\end{align}
Let $\disp V = \pr{D_1+\tilde D_1}\frac{u_1\exp\pr{\psi_4 \phi}}{u}$ so that by (\ref{1AuN}), (\ref{D_1ord}) and (\ref{tD_1ord}), $|V| \le Cr^{3\be_0/2 -2} = Cr^{-N}$.  
This completes step 1A. \\ \\
\emph{Step 1B: $r \in \brac{\rhoa{\tfrac{4}{3}}, \rhoa{2}}$}.

Since $\psi_2 \equiv 1$ and $\psi_3 = \psi_4$ on the support of $\psi_1$, then on this annulus,
$$u = u_1\exp\pr{\psi_4 \phi} + \psi_2u_2\exp\pr{\psi_3 \phi} = \brac{\psi_1 u_1 + u_2} \exp\pr{\psi_3 \phi}.$$
On this annulus, by (\ref{uBd2}),
\begin{equation}
\exp\pr{-\psi_3 \phi}|u| \ge |u_2| - |u_1| \ge (1 - e^{-C})|u_2| \ge e^{C^\pri}|u_1| > 0.
\label{1BuN}
\end{equation}
We see that
\begin{align*}
\LP u + \la u &= \pr{E_1 + \tilde E_1}u_2\exp\pr{\psi_3 \phi},
\end{align*}
where 
\begin{align} 
E_1 &= \brac{\psi_1^\pri \tfrac{1 - 2\sqrt{n^2 - \la r^2}}{r} + \psi_1^{\pri\pri} +\psi_1\tfrac{\la}{\sqrt{n^2 - \la r^2}}}\frac{u_1}{u_2} + \tfrac{\la}{\sqrt{(n-2k)^2 - \la r^2}} - \tfrac{8nk + 2(n+2k)\Phi^\pri + (\Phi^\pri)^2 - i\Phi^{\pri\pri}}{r^2} \nonumber \\
&= \bigO\pr{n \cdot r^{\be_0/2 - 2}}
\label{E_1ord} \\
\tilde E_1 &=  2\brac{\psi_1^\pri \frac{u_1}{u_2} -\psi_1\tfrac{\sqrt{n^2 - \la r^2}}{r}\frac{u_1}{u_2}  -\tfrac{\sqrt{\pr{n-2k}^2 - \la r^2}}{r}}\pr{\psi_3^\pri \phi + \psi_3 \phi^\pri} \nonumber \\
&+ \brac{\frac{1}{r}\pr{\psi_3^\pri \phi + \psi_3 \phi^\pri} + \pr{\psi_3^\pri \phi + \psi_3 \phi^\pri}^2 + \pr{\psi_3^{\pri\pri} \phi + 2\psi_3^\pri \phi^\pri+ \psi_3 \phi^{\pri\pri}}}\pr{\psi_1 \frac{u_1}{u_2} + 1} \nonumber \\
&= \bigO\pr{r^{3\be_0/2-2}}.
\label{tE_1ord} 
\end{align}
Let $\disp V = \pr{E_1 + \tilde E_1}\frac{u_2\exp\pr{\psi_3 \phi}}{u}$ so that by (\ref{1BuN}), (\ref{E_1ord}) and (\ref{tE_1ord}), $|V| \le Cr^{3\be_0/2 -2} = Cr^{-N}$.  
This completes step 1B. \\ 
\\
\emph{Step 1C: $r \in \brac{\rhoa{\tfrac{2}{3}}, \rhoa{\tfrac{4}{3}}}$.}

On this annulus, $\psi_j \equiv 1$ for $j = 1, \ldots, 4$, so 
$$u = \pr{u_1 + u_2}\exp\pr{\phi} = \pr{r^{-n}e^{-in\vp}\mu_n\pr{r} - br^{-n+2k}e^{iF(\vp)}\mu_{n-2k}\pr{r}}\exp\pr{\phi},$$
Since $u_2 = -br^{-n+2k}e^{i(n-2k)}\mu_{n-2k}( r )$ on $\set{|\vp -\vp_m| \le \frac{T}{5}}$ for $m = 0, 1, \ldots, 2n+2k-1$, then we will first consider these regions.  We have
\begin{align*}
\LP u + \la u &= J_1u,
\end{align*}
where 
\begin{align} 
J_1 &=  \frac{\la}{\sqrt{(n-2k)^2 - \la r^2}} +  \frac{\la}{\sqrt{n^2 - \la r^2}} + \frac{\phi^\pri}{r} + \pr{\phi^\pri}^2 + \phi^{\pri\pri} = \bigO\pr{r^{-\be_0}\log r}.
\label{J_1ordN} 
\end{align}
\\
As long as $\rho_0 >> 1$, then $V = J_1 \le C r^{-N}$.
\\
Now we will consider the annular sectors 
$$P_m = \set{(r, \vp) : r \in \brac{\rho + \tfrac{2}{3}\rho^\al, \rho + \tfrac{4}{3}\rho^\al}, \vp_m + \frac{T}{5} \le \vp \le \vp_m + \frac{4T}{5}}, \quad \textrm{for} \; m = 0, 1, \ldots, 2n+2k-1.$$
Notice that
\begin{equation}
|u| = |br^{-n+2k}\mu_{n-2k}( r )\exp\pr{\phi}|\abs{e^{i(F(\vp) + n\vp)}- \frac{\mu_n}{br^{2k}\mu_{n-2k}}} = |u_2|\abs{\exp\pr{\phi}}\abs{e^{i(F(\vp) + n\vp)}- \frac{ r^{-2k}\mu_n}{b\mu_{n-2k}}}.
\label{s4uBd}
\end{equation}
We study the behaviour of $S(\vp) = F(\vp) + n\vp$.  On the segment $[\vp_m, \vp_{m+1}]$, by (\ref{FDef}), $S(\vp) = (2n+2k)\vp + \Phi(\vp)$.  Thus $S(\vp_m) = 2\pi m$ and $S(\vp_{m+1}) = 2\pi(m+1)$.  Moreover,
$$S^\pri(\vp) = 2n +2k + \Phi^\pri(\vp) = 2n +2k + f(\vp).$$
By (\ref{f1}) and the conditions on $n$ and $k$, it may be assumed that $S^\pri(\vp) > n > 0$.  That is, $S$ increases monotonically on $[\vp_m, \vp_{m+1}]$. Therefore, if
$$\vp_m + T/5 \le \vp \le \vp_m + 4T/5,$$
then
$$2\pi m + \frac{nT}{5} \le S(\vp) \le 2\pi(m+1) - \frac{nT}{5},$$
or
$$2\pi m + \frac{n\pi}{5(n+k)} \le S(\vp) \le 2\pi(m+1) - \frac{n\pi}{5(n+k)}.$$
Since $k = \bigO(n^{1/2})$, then for $\disp \vp \in \brac{\vp_m + \frac{T}{5}, \vp_m + \frac{4T}{5}}$ we may assume that 
\begin{equation}
2\pi m + \frac{\pi}{7} \le S(\vp) \le 2\pi(m+1) - \frac{\pi}{7}.
\label{Sbd}
\end{equation}
From Lemma \ref{imEst} and (\ref{Sbd}), it follows that $\disp \abs{e^{iS(\vp)} -\frac{ r^{-2k}\mu_n}{b\mu_{n-2k}}} \ge \frac{1}{2}\sin\pr{\frac{\pi}{7}}$.  Therefore, by (\ref{s4uBd}),
\begin{equation}
|u(r,\vp)| \ge\frac{1}{2} |u_2(r, \vp)|\abs{\exp\pr{\phi}}\sin\pr{\frac{\pi}{7}}, \quad (r,\vp) \in P_m, \;\; m = 0, 1, \ldots, 2n +2k-1.
\label{1CuN}
\end{equation}

Then
\begin{align*}
\LP u + \la u &= J_1 u + K_1 u_2 \exp\pr{\phi},
\end{align*}
where 
\begin{align} 
K_1 &= -\brac{\frac{8nk + 2(n+2k)\Phi^\pri + (\Phi^\pri)^2 - i\Phi^{\pri\pri}}{r^2}}  = \bigO\pr{\frac{r^{3\be_0/2-2}}{\pr{\log r}^2}}.
\label{K_1ord}
\end{align}

Let $\disp V = J_1 + K_1\frac{u_2 \exp\pr{\phi}}{u}$.  It follows from (\ref{J_1ordN}), (\ref{1CuN}) and (\ref{K_1ord}) that $|V| \le Cr^{3\be_0/2 - 2} = Cr^{-N}$.
This completes step 1C. \\ 
\\
\emph{Step 2: $r \in \brac{\rhoa{2}, \rhoa{3}}$}. The solution $u_2 = -br^{-n+2k}e^{iF(\vp)}\mu_{n-2k}( r )$ is rearranged to $u_3 = -br^{-n+2k}e^{i(n+2k)\vp}\mu_{n-2k}( r )$.

Choose a smooth cutoff function $\psi$ such that $\psi(r) = \left\{ \begin{array}{rl} 1 & r \le \rhoa{\tfrac{7}{3}} \\ 0 & r \ge \rhoa{\tfrac{8}{3}} \end{array} \right.$ and 
\begin{equation}
|\psi^{(j)}(r)| \le Cr^{-j\al} \quad j=0,1,2, \quad r \in\R^+.
\label{psiB}
\end{equation}
Set $u = -br^{-n+2k}\exp i\brac{\psi(r)\Phi(\vp) + (n+2k)\vp}\mu_{n-2k}( r )$.  Then
\begin{align*}
\LP u + \la u &= D_2 u,
\end{align*}
where
\begin{align}
D_2 &= -\tfrac{8nk}{r^2} - (\psi^\pri\Phi)^2 + i\Phi\pr{\frac{\psi^\pri}{r} + \psi^{\pri\pri}} -2\tfrac{(n+2k)}{r^2}\psi\Phi^\pri - \frac{(\psi\Phi^\pri)^2}{r^2} + i\frac{\psi\Phi^{\pri\pri}}{r^2} + \tfrac{\la}{\sqrt{(n-2k)^2 - \la r^2}} - 2i\psi^\pri \Phi\tfrac{\sqrt{(n-2k)^2 -\la r^2}}{r}
\nonumber \\
&= \bigO\pr{n \cdot k \cdot r^{-2}}.
\label{D_2ord}
\end{align}
Let $\disp V = D_2$ so that by (\ref{D_2ord}), $|V| \le Cr^{3\be_0/2 -2} = Cr^{-N}$.  
This completes step 2. \\ 
\\
\emph{Step 3: $r \in \brac{\rhoa{3}, \rhoa{4}}$}.  The solution $u_3 = -br^{-n+2k}e^{i(n+2k)\vp}\mu_{n-2k}( r )$ is rearranged to $u_4 = -b_1r^{-n-2k}e^{i(n+2k)\vp}\mu_{n+2k}( r )$.

Choose a smooth cutoff function $\psi$ such that $\psi(r) = \left\{ \begin{array}{rl} 1 & r \le \rhoa{\tfrac{10}{3}} \\ 0 & r \ge \rhoa{\tfrac{11}{3}} \end{array} \right.$ and $\psi$ satisfies condition (\ref{psiB}).  Let $d = (\rhoa{3})^{4k}\frac{\mu_{n-2k} (\rhoa{3})}{\mu_{n+2k}(\rhoa{3})}$, so that $g( r ) = dr^{-4k}\frac{\mu_{n+2k} ( r )}{\mu_{n-2k}( r )}$ satisfies $1 \ge \abs{g\pr{r}} \ge e^{-C}$ for all $r \in \brac{\rhoa{3}, \rhoa{4}}$.  Set $b_1 = bd$.  Let 
$$u = u_3\brac{\psi + (1-\psi)g} = \left\{ \begin{array}{rl} u_3 & r \le \rhoa{\tfrac{10}{3}} \\ u_4 & r \ge \rhoa{\tfrac{11}{3}} \end{array}\right..$$
Let $h(r)=\psi + (1-\psi)g$.  Since $\disp g^\pri( r ) = \brac{-\tfrac{4k}{r} - \tfrac{2\la k r}{(n+2k)(n-2k)} + \bigO\pr{\tfrac{kr^3}{n^4}}}g( r )$ and \\
$\disp g^{\pri\pri}( r ) = \brac{\tfrac{4k}{r^2} - \tfrac{2\la k}{(n+2k)(n-2k)} + \tfrac{16k^2}{r^2} + \tfrac{16\la k^2}{(n+2k)(n-2k)} + \bigO\pr{\tfrac{k^2r^2}{n^4}}}g( r )$, then for all $r \in \brac{\rhoa{3}, \rhoa{4}}$,
\begin{align}
\abs{h\pr{r}} &\ge e^{-\tilde{C}},
\label{hBd} \\
\abs{h^\pri\pr{r}} &\le C\frac{r^{\be_0/2-1}}{\log r},
\label{hpBd} \\
\abs{\LP h\pr{r}} &\le C\frac{r^{\be_0-2}}{\pr{\log r}^2}.
\label{LhBd}
\end{align}
Then
\begin{align*}
\LP u + \la u &= D_3 u,
\end{align*}
where
\begin{align}
D_3 &= -\tfrac{8nk}{r^2} + \tfrac{\la}{\sqrt{(n-2k)^2 - \la r^2}} - 2\tfrac{\sqrt{(n-2k)^2 - \la r^2}}{r}\frac{h^\pri}{h} + \frac{\LP h}{h} = \bigO\pr{n \cdot k \cdot r^{-2} }.
\label{D_3ord}
\end{align}
Let $\disp V = D_3$ so that by (\ref{D_3ord}), $|V| \le Cr^{3\be_0/2 -2} = Cr^{-N}$.
This completes step 3. \\ 
\\
\emph{Step 4: $r \in \brac{\rhoa{4}, \rhoa{6}}$}.  The solution $u_4 = -b_1r^{-n-2k}e^{i(n+2k)\vp}\mu_{n+2k}( r )$ is rearranged to $u_5 = ar^{-(n+k)}e^{-i(n+k)\vp}\mu_{n+k}( r )$.

Choose $a = b_1(\rhoa{5})^{-k}\frac{\mu_{n+2k}(\rhoa{5})}{\mu_{n+k}(\rhoa{5})}$ so that $|u_4(\rhoa{5}, \cdot)| = |u_5(\rhoa{5}, \cdot)|$.  Since $|u_5(r, \vp)/u_4(r, \vp)| = \abs{\pr{\frac{r}{\rhoa{5}}}^k\frac{\mu_{n+2k}(\rhoa{5})}{\mu_{n+k}(\rhoa{5})}\frac{\mu_{n+k}( r )}{\mu_{n+2k}( r )}}$, then by the assumptions on $k$ and $\rho$,
\begin{align}
|u_5(r, \vp)/u_4(r, \vp)| &\le e^{-C}, \quad r \in \brac{\rhoa{4}, \rhoa{\tfrac{14}{3}}},
\label{uBd3} \\
|u_5(r, \vp)/u_4(r, \vp)| &\ge  e^{C}, \quad r \in \brac{\rhoa{\tfrac{16}{3}}, \rhoa{6}}.
\label{uBd4}
\end{align}
Choose smooth cutoff functions $\psi_1$, $\psi_2$, $\psi_3$, $\psi_4$ such that $\psi_1(r) = \left\{ \begin{array}{rl} 1 & \mathrm{if} \quad r \le \rhoa{\tfrac{16}{3}} \\ 0 & \mathrm{if} \quad r \ge \rhoa{\tfrac{17}{3}} \end{array}\right.$, \\
$\psi_2(r) = \left\{ \begin{array}{rl} 0 & \mathrm{if} \quad r \le \rhoa{\tfrac{13}{3}} \\ 1 & \mathrm{if} \quad r \ge \rhoa{\tfrac{14}{3}} \end{array}\right.$, $\psi_3\pr{r} = \left\{ \begin{array}{rl} 1 & \mathrm{if} \quad r \le \rhoa{\tfrac{17}{3}} \\ 0 & \mathrm{if} \quad r \ge \rhoa{5.9} \end{array}\right.$ and $\psi_4\pr{r} = \left\{ \begin{array}{rl} 0 & \mathrm{if} \quad r \le \rhoa{4.1} \\ 1 & \mathrm{if} \quad r \ge \rhoa{\tfrac{13}{3}} \end{array}\right.$.  Moreover, we require that each cutoff function satisfy condition (\ref{1psi}). 

We set 
$$u = \psi_1u_4\exp\pr{\psi_4 \phi_{n+k, n+2k}} + \psi_2u_5\exp\pr{\psi_3 \phi_{n+k, n+2k}}.$$
For the rest of step 4, we will abbreviate $\phi_{n+k,n+2k}$ with $\phi$. \\
\\
\emph{Step 4A: $r \in \brac{\rhoa{4}, \rhoa{\tfrac{14}{3}}}$}.
Since $\psi_1 \equiv 1$ and $\psi_3 = \psi_4$ on the support of $\psi_2$, then
$$u = u_4 \exp\pr{\psi_4 \phi} + \psi_2 u_5 \exp\pr{\psi_3 \phi} = \brac{u_4 + \psi_2 u_5}\exp\pr{\psi_4 \phi}.$$
On this annulus, by (\ref{uBd3}),
\begin{equation}
\exp\pr{-\psi_4 \phi} |u| \ge |u_4| - |u_5| \ge (1 - e^{-C})|u_4| \ge e^{C^\pri}|u_5| > 0.
\label{uBd5}
\end{equation}
We have
\begin{align*}
\LP u + \la u&= \pr{D_4 + \tilde D_4}u_4\exp\pr{\psi_4 \phi},
\end{align*}
where
\begin{align}
D_4 &= \tfrac{\la}{\sqrt{(n+2k)^2 - \la r^2}} + \brac{\frac{\psi_2^\pri}{r} + \psi_2^{\pri\pri}- \tfrac{2\sqrt{(n+k)^2 - \la r^2}}{r}\psi_2^{\pri}+ \psi_2\tfrac{ \la}{\sqrt{(n+k)^2 - \la r^2}}}\frac{u_5}{u_4} \nonumber \\
&= \bigO\pr{n \cdot r^{\be_0/2 - 2}}
\label{D_4ord} \\
\tilde D_4 &= 2\brac{-\tfrac{\sqrt{(n+2k)^2 - \la r^2}}{r} + \pr{\psi_2^\pri - \psi_2\tfrac{\sqrt{(n+k)^2 - \la r^2}}{r}}\frac{u_5}{u_4}}\pr{\psi_4^\pri\phi + \psi_4 \phi^\pri} \nonumber \\
&+ \pr{1 + \psi_2 \frac{u_5}{u_4}}\brac{\frac{1}{r}\pr{\psi_4^\pri \phi + \psi_4 \phi^\pri} + \pr{\psi_4^\pri \phi + \psi_4 \phi^\pri}^2 + \pr{\psi_4^{\pri\pri} \phi + 2\psi_4^\pri \phi^\pri+ \psi_4 \phi^{\pri\pri}}} \nonumber \\
&= \bigO\pr{r^{3\be_0/2-2}}.
\label{tD_4ord}
\end{align}
Let $\disp V = \pr{D_4 + \tilde D_4}\frac{u_4 \exp\pr{\psi_4 \phi}}{u}$ so that by (\ref{uBd5}) - (\ref{tD_4ord}), $|V| \le Cr^{3\be_0/2 -2} = Cr^{-N}$.  
This completes step 4A. \\ 
\\
\emph{Step 4B: $r \in \brac{\rhoa{\tfrac{16}{3}}, \rhoa{6}}$}.

On this annulus, since $\psi_2 \equiv 1$ and $\psi_3 = \psi_4$ on the support of $\psi_1$, then
$$u = \psi_1 u_4\exp\pr{\psi_4 \phi} + u_5 \exp\pr{\psi_3 \phi} = \brac{\psi_1 u_4 + u_5}\exp\pr{\psi_3 \phi}.$$
By (\ref{uBd4}),
\begin{equation}
\exp\pr{-\psi_3 \phi} |u| \ge |u_5| - |u_4| \ge (1 -e^{-C})|u_5| \ge e^{C^\pri}|u_4| > 0.
\label{uBd6}
\end{equation}
We have
\begin{align*}
\LP u + \la u &= \pr{E_4 + \tilde E_4} u_5 \exp\pr{\psi_3 \phi},
\end{align*}
where
\begin{align}
E_4 &= \brac{\frac{\psi_1^\pri}{r} + \psi_1^{\pri\pri}- 2\psi_1^{\pri}\tfrac{\sqrt{(n+2k)^2 - \la r^2}}{r}+ \psi_1\tfrac{\la}{\sqrt{(n+2k)^2 - \la r^2}}}\frac{u_4}{u_5} + \tfrac{\la}{\sqrt{(n+k)^2 - \la r^2}} \nonumber \\
&= \bigO\pr{n \cdot r^{\be_0/2 - 2}},
\label{E_4ord} \\
\tilde E_4 &= 2\brac{\pr{\psi_1^\pri - \psi_1\tfrac{\sqrt{(n+2k)^2 - \la r^2}}{r}}\frac{u_4}{u_5} - \tfrac{\sqrt{(n+k)^2 - \la r^2}}{r}}\pr{\psi_3^\pri\phi + \psi_3 \phi^\pri} \nonumber \\
&+ \pr{\psi_1 \frac{u_4}{u_5} + 1}\brac{\frac{1}{r}\pr{\psi_3^\pri \phi + \psi_3 \phi^\pri} + \pr{\psi_3^\pri \phi + \psi_3 \phi^\pri}^2 + \pr{\psi_3^{\pri\pri} \phi + 2\psi_3^\pri \phi^\pri+ \psi_3 \phi^{\pri\pri}}} \nonumber \\
&= \bigO\pr{r^{3\be_0/2-2}}.
\label{tE_4ord}
\end{align}
Let $\disp V = \pr{E_4 + \tilde E_4}\frac{u_5 \exp\pr{\psi_3 \phi}}{u}$ so that by (\ref{uBd6}) - (\ref{tE_4ord}), $|V| \le Cr^{3\be_0/2 -2} = Cr^{-N}$. 
This completes step 4B. 
\\ 
\emph{Step 4C: $r \in \brac{\rhoa{\tfrac{14}{3}}, \rhoa{\tfrac{16}{3}}}$}.

On this annulus, since all cutoff functions are equivalent to zero, $u = \pr{u_4 + u_5}\exp\pr{\phi}$ and
\begin{align*}
\LP u + \la u &= J_4u,
\end{align*}
where 
\begin{align} 
J_4 &=  \frac{\la}{\sqrt{(n+2k)^2 - \la r^2}} +  \frac{\la}{\sqrt{(n+k)^2 - \la r^2}} + \frac{\phi^\pri}{r} + \pr{\phi^\pri}^2 + \phi^{\pri\pri} = \bigO\pr{r^{-\be_0}\log r} ,
\label{J_4ord} 
\end{align}
\\
As long as $\rho_0 >> 1$, then with $V = J_4$, $\abs{V} \le C r^{-N}$.
\\ 
We now prove the last statement of the lemma.  We first define a function $M(r)$ that will help estimate $m(r) = \max\set{|u(r,\vp)| : 0 \le \vp \le 2\pi}$.  Let
$$ M(r) = \left\{ \begin{array}{ll} 
r^{-n}\mu_{n}( r )\exp\pr{\psi_4 \phi_{n, n-2k}} & \rho \le r \le \rhoo \\
br^{-n+2k}\mu_{n-2k}( r )\exp\pr{\psi_3 \phi_{n, n-2k}} & \rhoo \le r \le \rhoa{2} \\
br^{-n+2k}\mu_{n-2k}( r ) & \rhoa{2} \le r \le \rhoa{3} \\
br^{-n+2k}\mu_{n-2k}( r )h( r ) & \rhoa{3} \le r  \le \rhoa{4} \\
b_1r^{-(n+2k)}\mu_{n+2k}( r )\exp\pr{\psi_4 \phi_{n+k, n+2k}} & \rhoa{4} \le r \le \rhoa{5} \\
ar^{-(n+k)}\mu_{n+k}( r )\exp\pr{\psi_3 \phi_{n+k, n+2k}} & \rhoa{5} \le r \le \rhoa{6} \end{array}\right..$$
Note that $M(r)$ is equal to the modulus of the functions $u_1, \ldots, u_5$ from which our solution $u(r, \vp)$ is constructed.  Also, $M(r)$ is a continuous, piecewise smooth function for which $m(r) \le 2M(r)$ and $M(\rho) = m(\rho)$.  Therefore,
$$\ln m(r) - \ln m(\rho) \le \ln 2 + \ln M(r) - \ln M(\rho).$$
Since $ \mu_n( r ) = \exp\pr{\frac{\la r^2}{4n} + \bigO\pr{\frac{r^4}{n^3}}}$, $\psi_j^\pri = \bigO\pr{r^{\be_0/2 - 1}}$, $\phi = \bigO\pr{\log r}$ and $\disp \phi^\pri\pr{r} = \bigO\pr{\frac{r}{n^2}}$, then everywhere on $\brac{\rho, \rho + 6\rho^{\al}}$, except at a finite number of points where $M(r)$ is not differentiable, we have
$$\frac{d}{dr}\ln M(r) = \frac{-n + \bigO(k)}{r} + \bigO\pr{\frac{r}{n}} + \bigO\pr{r^{\be_0/2 -1}\log r} \le -c \frac{r^{\be_0-1}}{\log r},$$
by the conditions on $n$, $k$.  Therefore,
$$ \ln m(r) - \ln m(\rho) \le \ln 2 + \int_\rho^r(\ln M(t))^\pri dt \le \ln 2 - c\int_\rho^r \frac{t^{\be_0-1}}{\log t}dt,$$
proving the lemma.
\end{pf}

\begin{rem}
If $\la = 0$, then $\mu_{\cdot} \equiv 1$ and $\phi_{\cdot, \cdot} \equiv 0$, so the above construction simplifies greatly.  If fact, if we take $n \sim \rho^{\be_0}$ and $k \sim \rho^{\be_0/2}$, then condition (\ref{Vbd}) is satisfied and we get (\ref{Pint}) as we did with the previous construction.
\end{rem}

We will now present the proof of Lemma \ref{meshP}, the slightly less-complicated construction.  Many of the steps in this proof are identical to those in the proof of Lemma \ref{meshN}, so we will often refer to them.

\begin{pf}[Proof of Lemma \ref{meshP}]
As $r$ increases from $\rho$ to $\rhoa{6}$, we rearrange equation (\ref{WPDE}) and its solution $u$ so that all of the above conditions are met.  This process is broken down into four major steps.

\nid\emph{Step 1: $r \in \brac{\rho, \rhoa{2}}$}.  During this step, the function $u_1 = r^{-n}e^{-in\vp}\mu_{n}( r )$ is rearranged to $u_2 = -br^{-n+2k}e^{iF(\vp)}\mu_{n-2k}( r )$, both of which satisfy an equation of the form (\ref{WPDE}), where $b$ and $F$ are as in the proof of Lemma \ref{meshN}.

Choose smooth cutoff functions $\psi_1$, $\psi_2$ as in step 1 of the proof of Lemma \ref{meshN}.
We set 
$$u = \psi_1u_1 + \psi_2u_2.$$

\nid \emph{Step 1A: $r \in \brac{\rho, \rhoa{\tfrac{2}{3}}}$}.
On this annulus, $\psi_1 \equiv 1$ and by (\ref{uBd1}),
\begin{equation}
|u| \ge |u_1| - |u_2| \ge (1 - e^{-C})|u_1| \ge e^{C^\pri}|u_2| > 0.
\label{1Au}
\end{equation}
Let $W = w(i\sin\vp, -i\cos\vp)$ for $w$ to be determined.  Then
\begin{align*}
W\cdot\gr u &= w d_1 u_1,\\
\LP u + \la u &= D_1u_1,
\end{align*}
where 
\begin{align} 
d_1 &= -\frac{n}{r} + \psi_2\frac{n + 2k + \Phi^\pri}{r}\frac{u_2}{u_1} = \bigO\pr{r^{\be_0-1}}, 
\label{d_1ord}
\end{align}
and $D_1$ is as in (\ref{D_1ord}).
Since $\disp \left| \frac{u_2}{u_1} \right| \le e^{-C}$ then $\disp d_1 \ne 0$ and $\disp |d_1| \ge Cr^{\be_0-1}$.  Let $\disp w = \frac{D_1}{d_1}$ so that by (\ref{d_1ord}) and (\ref{D_1ord}), $|W| \le Cr^{\be_0/2 -1} = Cr^{-P}.$ 
This completes step 1A. \\ 
\\
\emph{Step 1B: $r \in \brac{\rhoa{\tfrac{4}{3}}, \rhoa{2}}$}.
On this annulus, $\psi_2 \equiv 1$ and by (\ref{uBd2}),
\begin{equation}
|u| \ge |u_2| - |u_1| \ge (1 - e^{-C})|u_2| \ge e^{C^\pri}|u_1| > 0.
\label{1Bu}
\end{equation}
Let $W = w(i\sin\vp, -i\cos\vp)$ for $w$ to be determined.  Then
\begin{align*}
W\cdot \gr u &= we_1u_2, \\
\LP u + \la u &= E_1u_2,
\end{align*}
where 
\begin{align} 
e_1 &= -\psi_1\frac{n}{r}\frac{u_1}{u_2} + \frac{n + 2k + \Phi^\pri}{r} = \bigO\pr{r^{\be_0-1}}, 
\label{e_1ord}
\end{align}
and $E_1$ is as in (\ref{E_1ord}).
Since $\disp \abs{\frac{u_1}{u_2}} \le e^{-C}$ then $\disp e_1 \ne 0$ and $\disp |e_1| \ge Cr^{\be_0-1}$.  Let $\disp w = \frac{E_1}{e_1}$ so that by (\ref{e_1ord}) and (\ref{E_1ord}), $|W| \le Cr^{\be_0/2 -1} = Cr^{-P}.$
This completes step 1B. \\ 
\\ 
\emph{Step 1C: $r \in \brac{\rhoa{\tfrac{2}{3}}, \rhoa{\tfrac{4}{3}}}$.}

On this annulus, $\psi_1 \equiv 1 \equiv \psi_2$, so 
$$u = u_1 + u_2 = r^{-n}e^{-in\vp}\mu_n\pr{r} - br^{-n+2k}e^{iF(\vp)}\mu_{n-2k}( r ),$$
Let $W = w_1(\cos \vp, \sin \vp) + w_2(i\sin\vp, -i\cos\vp)$ for $w_1$, $w_2$ to be determined.  Then
\begin{align*}
W\cdot \gr u &= w_1j^1_1u_1 + w_2j^2_1u_1 + w_1\tilde j^1_1u_2  + w_2\tilde j^2_1u_2, \\
\LP u + \la u &= J_1u_1 + \tilde J_1 u_2,
\end{align*}
where 
\begin{align} 
j^1_1 &= -\tfrac{\sqrt{n^2 - \la r^2}}{r} = \bigO(r^{\be_0-1}), 
\label{j_11ord} \\
j^2_1 &=- \tfrac{n}{r} = \bigO(r^{\be_0-1}), 
\label{j_12ord} \\
\tilde j^1_1 &= -\tfrac{\sqrt{(n-2k)^2 - \la r^2}}{r} = \bigO(r^{\be_0-1}), 
\label{tj_11ord} \\
\tilde j^2_1 &= \tfrac{n + 2k + \Phi^\pri}{r} = \bigO(r^{\be_0-1}), 
\label{tj_12ord} \\
J_1 &=  \tfrac{\la}{\sqrt{n^2 - \la r^2}} = \bigO\pr{r^{-\be_0}} ,
\label{J_1ordP} \\
\tilde J_1 &= \tfrac{\la}{\sqrt{(n-2k)^2 - \la r^2}}  - \tfrac{8nk + 2(n+2k)\Phi^\pri + (\Phi^\pri)^2 - i\Phi^{\pri\pri}}{r^2} = \bigO\pr{r^{3\be_0/2-2}}.
\label{tJ_1ord}
\end{align}
\\
 If we let
 \begin{align*}
 w_1 &= \frac{\tilde j_1^2 J_1 - j_1^2 \tilde J_1}{j_1^1 \tilde j_1^2 - j_1^2 \tilde j_1^1} = \tfrac{r^2}{n\sqrt{(n-2k)^2 - \la r^2} + \pr{n+2k + \Phi^\pri}\sqrt{n^2 - \la r^2}}\pr{j_1^2 \tilde J_1 - \tilde j_1^2 J_1} = \bigO\pr{r^{\be_0/2 - 1}} \\
 w_2 &= \frac{j_1^1 \tilde J_1 - \tilde j_1^1 J_1}{j_1^1 \tilde j_1^2 - j_1^2 \tilde j_1^1} = \tfrac{r^2}{n\sqrt{(n-2k)^2 - \la r^2} + \pr{n+2k + \Phi^\pri}\sqrt{n^2 - \la r^2}}\pr{\tilde j_1^1 J_1 - j_1^1 \tilde J_1}= \bigO\pr{r^{\be_0/2 - 1}}, \\
 \end{align*}
then (\ref{WPDE}) is satisfied.   Since $\be_0/2 - 1 = \tfrac{1}{2}(2-2P) - 1 = - P$, then we see that $|W| \le Cr^{-P}$.  This completes step 1C. \\ 
\\
\emph{Step 2: $r \in \brac{\rhoa{2}, \rhoa{3}}$}. The solution $u_2 = -br^{-n+2k}e^{iF(\vp)}\mu_{n-2k}( r )$ is rearranged to $u_3 = -br^{-n+2k}e^{i(n+2k)\vp}\mu_{n-2k}( r )$ by setting $u = -br^{-n+2k}\exp i\brac{\psi(r)\Phi(\vp) + (n+2k)\vp}\mu_{n-2k}( r )$, as in the proof of Lemma \ref{meshN}.  Let $W = w(\cos\vp, \sin\vp)$ for $w$ to be determined.  Then
\begin{align*}
W\cdot\gr u &= wd_2u, \\
\LP u + \la u &= D_2 u,
\end{align*}
where
\begin{align}
d_2 &=-\frac{\sqrt{(n-2k)^2 - \la r^2}}{r} + i\psi^\pri \Phi= \bigO\pr{r^{\be_0-1}}
\label{d_2ord},
\end{align}
and $D_2$ is as in (\ref{D_2ord}).
By the conditions on $n$ and $k$, $d_2 \ne 0$ so $|d_2| \ge Cr^{\be_0-1}$.  Let $\disp w = \frac{D_2}{d_2}$ so that by (\ref{d_2ord}) and (\ref{D_2ord}), $|W| \le Cr^{\be_0/2 -1} = Cr^{-P}.$  
This completes step 2. \\ 
\\
\emph{Step 3: $r \in \brac{\rhoa{3}, \rhoa{4}}$}.  The solution $u_3 = -br^{-n+2k}e^{i(n+2k)\vp}\mu_{n-2k}( r )$ is rearranged to $u_4 = -b_1r^{-n-2k}e^{i(n+2k)\vp}\mu_{n+2k}( r )$ by setting $u = u_3\brac{\psi + (1-\psi)g} = u_3 h$, as in Lemma \ref{meshN}.
Let $W = w(\cos\vp, \sin\vp)$ for some $w$ to be determined.  Then
\begin{align*}
W\cdot \gr u & = wd_3u, \\
\LP u + \la u &= D_3 u,
\end{align*}
where
\begin{align}
d_3 &= -\frac{\sqrt{(n-2k)^2 - \la r^2}}{r} + \frac{h^\pri}{h} = \bigO\pr{r^{\be_0-1}},
\label{d_3ord}
\end{align}
and $D_3$ is as in (\ref{D_3ord}).  By (\ref{hpBd}) and the conditions on $n$ and $k$, $d_3 \ne 0$ so that $|d_3| \ge Cr^{\be_0-1}$.  Let $\disp w = \frac{D_3}{d_3}$ so that by (\ref{d_3ord}) and (\ref{D_3ord}), $|W| \le Cr^{\be_0/2 -1} = Cr^{-P}.$ 
This completes step 3. \\ 
\\
\emph{Step 4: $r \in \brac{\rhoa{4}, \rhoa{6}}$}.  The solution $u_4 = -b_1r^{-n-2k}e^{i(n+2k)\vp}\mu_{n+2k}( r )$ is rearranged to $u_5 = ar^{-(n+k)}e^{-i(n+k)\vp}\mu_{n+k}( r )$.
Choose smooth cutoff functions $\psi_1$ and $\psi_2$ as in Lemma \ref{meshN} and set
$$u = \psi_1u_4 + \psi_2u_5.$$
\emph{Step 4A: $r \in \brac{\rhoa{4}, \rhoa{\tfrac{14}{3}}}$}.

On this annulus, by (\ref{uBd3}),
\begin{equation}
|u| \ge |u_4| - |u_5| \ge (1 - e^{-C})|u_4| \ge e^{C^\pri}|u_5| > 0.
\label{uBd5P}
\end{equation}
Let $W = w(i\sin\vp, -i\cos\vp)$ for $w$ to be determined.  Since $\psi_1 \equiv 1$ in this annulus, we have
\begin{align*}
W\cdot \gr u &= wd_4u_4, \\
\LP u + \la u&= D_4 u_4,
\end{align*}
where
\begin{align}
d_4 &=  \frac{n+2k}{r}  - \psi_2\frac{n+k}{r}\frac{u_5}{u_4} = \bigO\pr{r^{\be_0-1}},
\label{d_4ord}
\end{align}
and $D_4$ is as in (\ref{D_4ord}).  Since $\disp \abs{\frac{u_5}{u_4}} \le e^{-C}$, then $|d_4| \ge Cr^{\be_0-1}$.  Let $\disp w = \frac{D_4}{d_4}$ so that by (\ref{d_4ord}) and (\ref{D_4ord}), $|W| \le Cr^{\be_0/2 -1} = Cr^{-P}.$
This completes step 4A. \\ 
\\
\emph{Step 4B: $r \in \brac{\rhoa{\tfrac{16}{3}}, \rhoa{6}}$}.

On this annulus, by (\ref{uBd4}),
\begin{equation}
|u| \ge |u_5| - |u_4| \ge (1 -e^{-C})|u_5| \ge e^{C^\pri}|u_4| > 0.
\label{uBd6P}
\end{equation}
Let $W = w(i\sin\vp, -i\cos\vp)$ for $w$ to be determined.  Since $\psi_2 \equiv 1$ in this annulus, we have
\begin{align*}
W\cdot \gr u &= we_4 u_5, \\
\LP u + \la u &= E_4 u_5,
\end{align*}
where
\begin{align}
e_4 &= \psi_1\frac{n+2k}{r}\frac{u_4}{u_5}  - \frac{n+k}{r} = \bigO\pr{r^{\be_0-1}},
\label{e_4ord}
\end{align}
and $E_4$ is as in (\ref{E_4ord}).  Since $\disp \abs{\frac{u_4}{u_5}} < e^{-C}$, then $|e_4| \ge Cr^{\be_0-1}$.  Let $\disp w = \frac{E_4}{e_4}$ so that by (\ref{e_4ord}) and (\ref{E_4ord}), $|W| \le Cr^{\be_0/2 -1} = Cr^{-P}.$
This completes step 4B. \\
\\ 
\emph{Step 4C: $r \in \brac{\rhoa{\tfrac{14}{3}}, \rhoa{\tfrac{16}{3}}}$}.

On this annulus, $u = u_4 + u_5$.
Let $W = w_1(\cos \vp, \sin \vp) + w_2(i\sin\vp, -i\cos\vp)$ for $w_1$, $w_2$ to be determined.  Then
\begin{align*}
W\cdot \gr u &= w_1j^1_4u_4 + w_2j^2_4u_4 + w_1\tilde j^1_4u_5  + w_2\tilde j^2_4u_5, \\
\LP u + \la u &= J_4u_4 + \tilde J_4 u_5,
\end{align*}
where 
\begin{align} 
j^1_4 &= -\tfrac{\sqrt{(n+2k)^2 - \la r^2}}{r} = \bigO(r^{\be_0-1}), 
\label{j_11ord} \\
j^2_4 &= \tfrac{n+2k}{r} = \bigO(r^{\be_0-1}), 
\label{j_12ord} \\
\tilde j^1_4 &= -\tfrac{\sqrt{(n+k)^2 - \la r^2}}{r} = \bigO(r^{\be_0-1}), 
\label{tj_11ord} \\
\tilde j^2_4 &= -\tfrac{n + k}{r} = \bigO(r^{\be_0-1}), 
\label{tj_12ord} \\
J_4 &=  \tfrac{\la}{\sqrt{(n+2k)^2 - \la r^2}} = \bigO\pr{r^{-\be_0}} ,
\label{J_1ordP} \\
\tilde J_4 &= \tfrac{\la}{\sqrt{(n+k)^2 - \la r^2}} = \bigO\pr{r^{-\be_0}}.
\label{tJ_1ord}
\end{align}
\\
 If we let
 \begin{align*}
 w_1 &= \frac{\tilde j_4^2 J_4 - j_4^2 \tilde J_4}{j_4^1 \tilde j_4^2 - j_4^2 \tilde j_4^1} = -\tfrac{\la r}{\sqrt{(n+k)^2 - \la r^2}\sqrt{(n+2k)^2 - \la r^2}}\pr{\tfrac{\pr{n+k}\sqrt{(n+k)^2 - \la r^2} + \pr{n+2k}\sqrt{(n+2k)^2 - \la r^2}}{\pr{n+k}\sqrt{(n+k)^2 - \la r^2} + \pr{n+2k}\sqrt{(n+2k)^2 - \la r^2}}} = \bigO\pr{r^{1 - 2\be_0}} \\
 w_2 &= \frac{j_4^1 \tilde J_4 - \tilde j_4^1 J_4}{j_4^1 \tilde j_4^2 - j_4^2 \tilde j_4^1} = - \tfrac{\la r k (2n+3)}{\sqrt{(n+k)^2 - \la r^2}\sqrt{(n+2k)^2 - \la r^2}\pr{\pr{n+k}\sqrt{(n+k)^2 - \la r^2} + \pr{n+2k}\sqrt{(n+2k)^2 - \la r^2}}} = \bigO\pr{r^{1 - 5\be_0/2}},
 \end{align*}
then (\ref{WPDE}) is satisfied.  Since $\be_0 > 1$, then $1 - 2\be_0 \le \be_0/2 - 1$ and $\abs{W} \le C r^{-P}$.  This completes step 4C. \\
\\ 
We now prove the last statement of the lemma.  We first define a function $M(r)$ that will help estimate $m(r) = \max\set{|u(r,\vp)| : 0 \le \vp \le 2\pi}$.  Let
$$ M(r) = \left\{ \begin{array}{ll} 
r^{-n}\mu_n( r ) & \rho \le r \le \rhoo \\
br^{-(n-2k)}\mu_{n-2k}( r ) & \rhoo \le r \le \rhoa{3} \\
br^{-n+2k}\mu_{n-2k}( r )h( r ) & \rhoa{3} \le r \le \rhoa{4} \\
b_1r^{-(n+2k)}\mu_{n+2k}( r ) & \rhoa{4} \le r \le \rhoa{5} \\
ar^{-(n+k)}\mu_{n+k}( r ) & \rhoa{5} \le r \le \rhoa{6} \end{array}\right..$$
Note that $M(r)$ is equal to the modulus of the functions $u_1, \ldots, u_5$ from which our solution $u(r, \vp)$ is constructed.  Also, $M(r)$ is a continuous, piecewise smooth function for which $m(r) \le 2M(r)$ and $M(\rho) = m(\rho)$.  Therefore,
$$\ln m(r) - \ln m(\rho) \le \ln 2 + \ln M(r) - \ln M(\rho).$$
Since $\mu_n( r ) = \exp\pr{\frac{\la r^2}{4n} + \bigO\pr{\frac{r^4}{n^3}}}$, then everywhere on $\brac{\rho, \rho + 6\rho^{\al}}$, except at a finite number of points where $M(r)$ is not differentiable, we have
$$\frac{d}{dr}\ln M(r) = \frac{-n + \bigO(k)}{r} + \bigO\pr{\frac{r}{n}} \le -c r^{\be_0-1},$$
by the conditions on $n$, $k$.  Therefore,
$$ \ln m(r) - \ln m(\rho) \le \ln 2 + \int_\rho^r(\ln M(t))^\pri dt \le \ln 2 - c\int_\rho^r t^{\be_0-1}dt,$$
proving the lemma.
\end{pf}

We now use the lemmas to construct examples and prove Proposition \ref{consP}.  

\begin{pf}[Proof of Proposition \ref{consP}]
We recursively define a sequence of numbers $\set{\rho_j}_{j=1}^\iny$.  For $\rho_1$, we choose a sufficiently large positive number.  Then if $\rho_j$ has been chosen, we set $\rho_{j+1} = \rho_j + 6\rho_j^{\al}$.  Suppose that $N$ and $P$ are chosen so that $\be_0 = \frac{4-2N}{3} > 1$.  We then let $n_j = \Big\lfloor \frac{\rho_j^{\be_0}}{\log \rho_j}\Big\rfloor = \frac{\rho_j^{\be_0}}{\log \rho_j} - \eps_j$ and $k_j = n_{j+1} - n_j$.  In order to use Lemma \ref{meshN}, we must estimate $k_j$:
\begin{align*}
k_j &= \tfrac{\rho_{j+1}^{\be_0}}{\log \rho_{j+1}} - \tfrac{\rho_j^{\be_0}}{\log \rho_j} + \eps_j - \eps_{j+1} \\
&= \tfrac{(\rho_j + 6\rho^{\al}_j)^{\be_0}}{\log \pr{\rho_j + 6\rho^{\al}_j}} - \tfrac{\rho_j^{\be_0}}{\log \rho_j} - \Delta_\eps \\
&= \tfrac{\rho_j^{\be_0}}{\log \rho_j}\brac{(1 + 6\rho_j^{\al-1})^{\be_0}\pr{1 + \tfrac{\log\pr{1 + 6\rho_j^{\al - 1}}}{\log \rho_j}}^{-1} - 1} - \Delta_\eps \\
&=  \tfrac{\rho_j^{\be_0}}{\log \rho_j}\brac{\pr{1 + 6\be_0\rho_j^{\al-1} + 18\be_0(\be_0-1)\rho_j^{2\al-2} + \bigO(\rho_j^{3\al-3})}\pr{1 - \tfrac{6\rho_j^{\al - 1}}{\log \rho_j} + \bigO\pr{\tfrac{\rho_j^{2\al - 2}}{\log \rho_j}}} - 1} - \Delta_\eps \\
&= \tfrac{\rho_j^{\be_0}}{\log \rho_j}\brac{6\pr{\be_0 - \tfrac{1}{\log \rho_j}}\rho_j^{\al-1} + 18\be_0(\be_0-1)\rho_j^{2\al-2} + \bigO\pr{\tfrac{\rho_j^{2\al - 2}}{\log \rho_j}}} + \bigO(1) \\
&= 6\pr{\be_0 - \tfrac{1}{\log \rho_j}}\tfrac{\rho_j^{\be_0/2}}{\log \rho_j} + \tfrac{18\be_0(\be_0-1)}{\log \rho_j} + \bigO\pr{\tfrac{1}{\pr{\log \rho_j}^2}} + \bigO(1) \\
&\le 6\pr{\be_0 - \tfrac{1}{\log \rho_j}}\tfrac{\rho_j^{\be_0/2}}{\log \rho_j} + 10,
\end{align*}
for a sufficiently large $\rho_1$.  Therefore, $ \abs{k_j - 6\pr{\be_0 - \frac{1}{\log \rho_j}}\frac{\rho_j^{\be_0/2}}{\log \rho_j} } \le 10$. \\
If $N$ and $P$ are chosen so that $\disp \be_0 = 2-2P > 1$, then we let $\disp n_j = \lfloor \rho_j^{\be_0} \rfloor = \rho_j^{\be_0} - \eps_j$ and $k_j = n_{j+1} - n_j$.  In order to use Lemma \ref{meshP}, we must estimate $k_j$:
\begin{align*}
k_j &= \rho_{j+1}^{\be_0} - \rho_j^{\be_0} + \eps_j - \eps_{j+1} \\
&= (\rho_j + 6\rho^{\al}_j)^{\be_0}- \rho_j^{\be_0} - \Delta_\eps \\
&= \rho_j^{\be_0}\brac{(1 + 6\rho_j^{\al-1})^{\be_0} - 1} - \Delta_\eps \\
&=  \rho_j^{\be_0}\brac{1 + 6\be_0\rho_j^{\al-1} + 18\be_0(\be_0-1)\rho_j^{2\al-2} + \bigO(\rho_j^{3\al-3}) - 1} - \Delta_\eps \\
&= 6\be_0\rho_j^{\be_0/2}+ 18\be_0(\be_0-1)+ \bigO\pr{\rho_j^{-\be_0/2}} \\
&\le 6\be_0\rho_j^{\be_0/2} + 40,
\end{align*}
for a sufficiently large $\rho_1$.  Therefore, $\disp \abs{k_j - 6\be_0\rho_j^{\be_0/2} } \le 40$.

For $j = 1, 2, \ldots$, we let $u_j$ denote the solutions of equations of the form (\ref{VPDE}) or (\ref{WPDE}), denoted by $L_j u_j = 0$.  By Lemma \ref{meshN} and Lemma \ref{meshP}, these equations and their solutions be constructed in the annulus $\set{\rho_j \le r \le \rho_{j+1}}$.  Note that  $\disp u_j(\rho_j, \vp) = \rho_j^{-n_j}e^{-in_j\vp}\mu_{n_j}\pr{\rho_j}$ and $\disp u_j(\rho_{j+1}, \vp) = a_j\rho_{j+1}^{-n_{j+1}}e^{-in_{j+1}\vp}\mu_{n_{j+1}}\pr{\rho_{j+1}}$.

Set $\rho_0 = 0$ and denote by $g_0(r)$ a smooth function in $\brac{0, \rho_1}$ such that $g_0(r) = r^{n_1}$ in a neighbourhood of $0$ while $g_0(r) = r^{-n_1}$ in a neighbourbood of the point $\rho_1$.  We suppose also that $g_0(r) > 0$ on $(0, \rho_1)$.  Let $u_0 = g_0(r)e^{-in_1\vp}\mu_{n_1}( r )$ and denote by $L_0u_0 = 0$ the equation of the form (\ref{VPDE}) or (\ref{WPDE}) which the function $u_0$ satisfies.

We define a differential operator $L$ in $\R^2$ by setting $L = L_j$ for $\rho_j \le r \le \rho_{j+1}$, $j = 0, 1, \ldots$.  We define a $C^2$ function $u$ on $\R^2$ by setting $u(r, \vp) = u_j(r, \vp)$ if $\rho_j \le r \le \rho_{j+1}$, $j = 0, 1$, and
$$u(r, \vp) = \pr{\prod_{i=1}^{j-1}a_i}u_j(r, \vp),$$
if $\rho_j \le r \le \rho_{j+1}$, $j = 2, 3,\ldots$.  Then it is clear that $u$ satisfies $Lu = 0$ in $\R^2$.   

We must now estimate $\abs{u}$.  Set $m(r) = \max\set{|u(r, \vp)| : 0 \le \vp \le 2\pi}$.  For a given $r \in \R^+$, we choose $\ell \in \Z$ so that $\rho_\ell \le r \le \rho_{\ell+1}$.  Then
\begin{align*}
\ln m(r) = &(\ln m(r) - \ln m(\rho_\ell)) + (\ln m(\rho_\ell) - \ln m(\rho_{\ell-1})) + \cdots + (\ln m(\rho_2) - \ln m(\rho_{1})) + \ln m(\rho_1)
\end{align*}
If $\be_0 = \frac{4-2N}{3}$, then by Lemma \ref{meshN}, for sufficiently large $r$, we have
$$\ln m(r) \le \ell\ln 2 - c\int_{\rho_1}^r \frac{t^{\be_0-1}}{\log t}dt + \ln m(\rho_1).$$
Since $\ell \lesssim r$ and $\be_0  > 1$, then we get
$\disp \ln m(r) \le C - c\frac{r^{\be_0}}{\log r}.$  Therefore,
$$m(r) \le C\exp\pr{-c\frac{r^{\be_0}}{\log r}}$$
and (\ref{Vubd}) holds.
If $\be_0 = 2-2P$, then by Lemma \ref{meshP}, for sufficiently large $r$, we have
$$\ln m(r) \le \ell\ln 2 - c\int_{\rho_1}^r t^{\be_0-1}dt + \ln m(\rho_1).$$
Since $\ell \lesssim r$ and $\be_0  > 1$, then we get $\disp \ln m(r) \le C - cr^{\be_0}.$  Thus,
$$m(r) \le C\exp\pr{-cr^{\be_0}},$$
so (\ref{Wubd}) holds.  This completes the constructions of the examples for Theorem \ref{cons}(\ref{consa}).
\end{pf}

%
%
\subsection{Proof of Theorem \ref{cons}(\ref{consb})}

The following lemma is based on the observation that $u_1\pr{r} = \exp\pr{\sgn\pr{\arg \la}\sqrt{-\la} r}$ satisfies an equation of the form (\ref{epde}) with either $V = C r^{-1}$ and $W \equiv 0$, or $V \equiv 0$ and $W = C r^{-1}$.  And whenever $\la \in \C$ with $\arg \la \in [-\pi, \pi) \setminus \set{0}$, $\abs{u_1\pr{r}} \lesssim \exp\pr{-C r}$.  By adding lower order terms to the exponent of $u_1$, we can make the potentials decay faster and faster. This lemma serves as the main building block in the constructions that prove Theorem \ref{cons}(\ref{consb})

\begin{lem}
For any $m \in \N$, there exists a function $u_m$ such that
\begin{equation}
\LP u_m + \la u_m = \brac{\frac{d_m}{r^m} + \frac{d_{m+1}}{r^{m+1}} + \ldots + \frac{d_{2(m-1)}}{r^{2(m-1)}}}u_m,
\label{mCons}
\end{equation}
for some constants $d_m, d_{m+1}, \ldots d_{2(m-1)} \in \C$.  Furthermore, if $\la \in \C \setminus \R_{\ge 0}$ then there exist positive constants $C_m$ and $R_m$ such that
\begin{align}
&\abs{u_m(x)} \le \exp\pr{-C_m |x|} ,
\label{decay} \\
&\abs{\frac{\del_r u_m\pr{r}}{u_m\pr{r}}} \ge C_m,
\label{decayDer}
\end{align}
for all $|x| \ge R_m$.
\label{consbLem}
\end{lem}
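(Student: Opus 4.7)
The plan is to proceed by induction on $m$, constructing $u_m$ explicitly in the form
$$u_m\pr{r} = \exp\pr{\mu r + a_0 \log r + \sum_{j=1}^{m-2} \frac{a_j}{r^j}},$$
where $\mu \in \C$ satisfies $\mu^2 = -\la$ and $a_0, a_1, \ldots, a_{m-2} \in \C$ are constants to be chosen recursively. When $\la \notin \R_{\ge 0}$, the two square roots of $-\la$ have nonzero real parts of opposite sign, so I can fix $\mu$ with $\Re(\mu) < 0$. Since $u_m = e^{\phi_m}$ is radial, the equation $\LP u_m + \la u_m = V u_m$ collapses to the scalar identity $V = \phi_m'' + (\phi_m')^2 + \frac{n-1}{r}\phi_m' + \la$, and the problem reduces to choosing the $a_j$ so that $V$ is a finite sum of negative powers of $r$ starting at $r^{-m}$.

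The base cases are direct. For $m = 1$, setting $\phi_1 = \mu r$ yields $V_1 = (n-1)\mu/r$, so $d_1 = (n-1)\mu$. For $m = 2$, the substitution $\phi_2 = \phi_1 + a_0 \log r$ produces $V = \mu\brac{2a_0 + (n-1)}/r + a_0(a_0 + n - 2)/r^2$; choosing $a_0 = -(n-1)/2$ cancels the $1/r$ coefficient and leaves $V_2 = -(n-1)(n-3)/(4 r^2)$. For the inductive step from $m$ to $m+1$ with $m \ge 2$, I would set $\phi_{m+1} = \phi_m + a_{m-1}/r^{m-1}$ and compute directly
$$V_{m+1} - V_m = -\frac{2(m-1)a_{m-1}\phi_m'(r)}{r^m} + \frac{(m-1)(m - n + 1)a_{m-1}}{r^{m+1}} + \frac{(m-1)^2 a_{m-1}^2}{r^{2m}}.$$
Since $\phi_m'(r) = \mu + \bigO(1/r)$, the leading new contribution lies at order $1/r^m$ with coefficient $-2(m-1)\mu a_{m-1}$, while every other perturbation term occupies orders $1/r^{m+1}$ through $1/r^{2m}$. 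Choosing $a_{m-1} = d_m/\brac{2(m-1)\mu}$, which is well defined because $\mu \neq 0$, exactly kills the $1/r^m$ coefficient of $V_m$, and the resulting $V_{m+1}$ takes the form $\sum_{k = m+1}^{2m} d_k'/r^k$ demanded at level $m+1$.

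The decay and derivative estimates (\ref{decay})--(\ref{decayDer}) then follow by reading off the asymptotics of $\phi_m$. Since $\Re(\mu) < 0$ and the remaining terms contribute only $\bigO(\log r)$ to $\Re(\phi_m(r))$, I get $\Re(\phi_m(r)) \le \tfrac{1}{2}\Re(\mu)\, r$ for all $r$ sufficiently large, and hence $\abs{u_m(r)} \le \exp\pr{-\tfrac{1}{2}\abs{\Re(\mu)}\, r}$. Similarly $\phi_m'(r) = \mu + \bigO(1/r)$ gives $\abs{\partial_r u_m / u_m} = \abs{\phi_m'(r)} \ge \tfrac{1}{2}\abs{\mu}$ for $r$ large, which yields (\ref{decayDer}).

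The main bookkeeping task, and the only delicate point in the argument, is verifying at each inductive step that no perturbation term lands at an order $1/r^k$ with $k < m$. This is immediate from the identity $\tfrac{d}{dr}\pr{a_{m-1}/r^{m-1}} = -(m-1)a_{m-1}/r^m$, which already begins at order $1/r^m$; multiplying it by $\phi_m'$ or by $(n-1)/r$ only raises its order further. Thus the form of $V_m$ as a finite truncated Laurent series in $1/r$ starting precisely at $1/r^m$ is preserved by the induction, closing the argument.
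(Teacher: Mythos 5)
Your proof is correct and takes essentially the same route as the paper: both arguments construct $u_m = \exp(\phi_m)$ with $\phi_m$ a truncated expansion in powers of $r$ down to $r^{2-m}$ (with a $\log r$ term at order zero), and use induction on $m$, adding a term $c\,r^{1-m}$ at each step and choosing its coefficient $c = d_m/\bigl(2(m-1)\mu\bigr)$ to cancel the leading $r^{-m}$ term of the potential. Your notation is marginally cleaner than the paper's (writing $\mu$ with $\Re(\mu)<0$ in place of $\sgn(\arg\lambda)\sqrt{-\lambda}$), but the inductive mechanism, the base cases $m=1,2$, the cancellation identity, and the derivation of the decay and derivative bounds from $\phi_m'(r)=\mu+\bigO(1/r)$ are all the same.
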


We will prove Lemma \ref{consbLem} by induction.  All of our {$u_m$} functions depend only on the radius.

\begin{proof}
For $\la \in \C$, we write $\la = \abs{\la}e^{i \arg \la}$, where $\arg \la \in [-\pi, \pi)$.  By our restriction, $\arg \la \ne 0$. \\
\emph{Base cases:} Let $u_1\pr{r} = \exp\pr{f_1( r )}$, where $f_1( r ) = \sgn\pr{\arg \la}\sqrt{-\la}r$.  Since $\Re \pr{\sgn\pr{\arg \la}\sqrt{-\la}} < 0$, then conditions (\ref{decay})-(\ref{decayDer}) are satisfied with $R_1$ equal to any positive number.  Also,
\begin{align*}
\frac{\LP u_1}{u_1} + \la &= \frac{n-1}{r}f_1^\prime + f_1^{\pri\pri} + \pr{f_1^\pri}^2 + \la \\
&=  \frac{(n-1)\sgn\pr{\arg \la}\sqrt{-\la}}{r},
\end{align*}
so condition (\ref{mCons}) is satisfied.
This completes the case $m = 1$. \\
Let $u_2\pr{r} = \exp\pr{f_1\pr{r} - \frac{n-1}{2}\log r}$.  Since $f_1\pr{r} = \sgn\pr{\arg \la}\sqrt{-\la}r$, then it is possible to choose $R_2$ and $C_2$ so that (\ref{decay}) and (\ref{decayDer}) hold.  We see that
\begin{align*}
\frac{\LP u_2}{u_2} + \la &= \frac{n-1}{r}\pr{\sgn\pr{\arg \la}\sqrt{-\la} - \frac{n-1}{2r}} + \frac{n-1}{2 r^2} + \pr{\sgn\pr{\arg \la}\sqrt{-\la} - \frac{n-1}{2r}}^2 + \la \\
&= - \frac{(n-1)(n-3)}{4r^2},
\end{align*}
which completes the case $m = 2$. \\
\emph{Inductive hypothesis:} For any $m \ge 2$, we will assume that there exists a function $u_m\pr{r} = \exp\pr{f_m\pr{r}}$ such that (\ref{mCons}) and (\ref{decay}) are satisfied.  That is,
$$\frac{n-1}{r}f_m^\prime + f_m^{\prime\prime} + \pr{f_m^\prime}^2 + \la = \frac{d_m}{r^m} + \frac{d_{m+1}}{r^{m+1}} + \ldots + \frac{d_{2(m-1)}}{r^{2(m-1)}},$$
where $f_m$ takes the form $f_m\pr{r} = c_1 r + c_2 \log r + c_3 r^{-1} + \ldots c_m r^{2-m}$.  If $\Re \pr{c_1} < 0$, then there exist constants $C_m$ and $R_m$ so that (\ref{decay})-(\ref{decayDer}) holds.  \\
Let $u_{m+1}\pr{r} = \exp\pr{f_m\pr{r} + c_{m+1}r^{1-m}}$ for a constant $c_{m+1}$ to be determined.  Then
\begin{align*}
\frac{\LP u_{m+1}}{u_{m+1}} + \la 
&= \frac{n-1}{r}\brac{f_{m}^\prime - \pr{m-1}c_{m+1}r^{-m} } + f_{m}^{\pri\pri} + m\pr{m-1}c_{m+1}r^{-(m+1)} + \brac{f_{m}^\prime - \pr{m-1}c_{m+1}r^{-m}}^2 + \la \\
&= \frac{n-1}{r}f_m^\prime + f_m^{\prime\prime} + \pr{f_m^\prime}^2 + \la \\
&+\frac{\brac{m - \pr{n-1}}\pr{m-1}c_{m+1}}{r^{m+1}}  - 2\pr{m-1}c_{m+1}\brac{\frac{c_1}{r^m} + \frac{c_2}{r^{m+1}} + \ldots - \frac{\pr{m-2}c_m}{r^{2m-1}}} + \frac{\brac{\pr{m-1}c_{m+1}}^2}{r^{2m}} \\
&= \frac{d_m - 2(m-1)c_1c_{m+1}}{r^m} + \frac{d_{m+1} + \brac{m - (n-1) -2c_2}(m-1)c_{m+1}}{r^{m+1}} + \ldots + \frac{\brac{\pr{m-1}c_{m+1}}^2}{r^{2m}}.
\end{align*}
If we let $c_{m+1} = \frac{d_m}{2(m-1)c_1}$, then condition (\ref{mCons}) is satisfied.  Furthermore, we may find $R_{m+1} \ge R_m$, $C_{m+1} > 0$ so that our decay requirements are fulfilled.
\end{proof}

With Lemma \ref{consbLem}, we are able to prove the following proposition which immediately implies Theorem \ref{cons}(\ref{consb})

\begin{prop}  
For any $\la \in \C\setminus \R_{\ge 0}$, we have the following.
\begin{enumerate}[(a)]
\item If $\disp \be_0 = \frac{4-2N}{3} < 1$, then there exists a potential $V$ and a { radial} eigenfunction $u$ such that (\ref{VPDE}) and (\ref{Vbd}) hold.
\item  If $\be_0 = 2-2P < 1$, then there exists a potential $W$ and a { radial} eigenfunction $u$ such that (\ref{WPDE}) and (\ref{Wbd}) hold.
\end{enumerate}
In both cases,
\begin{equation}
|u(x)| \le C\exp\pr{-c |x|}.
\label{Wubd1}
\end{equation}
\label{consD}
\end{prop}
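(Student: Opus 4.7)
The plan is to feed Lemma \ref{consbLem} an integer $m$ matched to the desired decay exponent and then promote the radial asymptotic eigenfunction $u_m$ to a smooth global eigenfunction by a radial cutoff--and--glue argument, letting the potential absorb any error on the matching region. Throughout, we work with radial functions, consistent with the remark that the eigenfunctions here are to be radial.

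For part (a), fix $m = \lceil N \rceil$ and invoke Lemma \ref{consbLem} to obtain $u_m = \exp(f_m)$ satisfying $\LP u_m + \la u_m = V_0 u_m$ on $\set{r > 0}$, where $V_0(r) = \sum_{j=m}^{2(m-1)} d_j r^{-j}$. For $r \ge 1$ one has $|V_0(r)| \le C r^{-m} \le C\langle x \rangle^{-N}$, and (\ref{decay}) gives exponential decay for $|x| \ge R_m$. To define $u$ globally, pick $R > \max\set{R_m, 1}$ and a smooth radial cutoff $\chi$ with $\chi \equiv 0$ for $r \le R$ and $\chi \equiv 1$ for $r \ge R+1$; take $u$ to be any smooth, radial, nowhere vanishing extension of $u_m|_{\set{r \ge R+1}}$ to $\R^n$ (existence being clear by radial interpolation, possibly after rescaling $u_m$ by a nonzero constant). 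Defining $V := (\LP u + \la u)/u$ then yields $V = V_0$ on $\set{r \ge R+1}$, a smooth bounded expression on $\set{R \le r \le R+1}$, and a bounded expression on $\set{r \le R}$; since $\langle x \rangle^{-N}$ is bounded below on every compact set, the bound (\ref{vBd2}) holds globally, and (\ref{Wubd1}) is inherited from (\ref{decay}).

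For part (b), take $m = \lceil P \rceil$ and use the same $u_m$. On $\set{r \ge R+1}$ set $W(x) = w(r)\, x/|x|$ with $w(r) = V_0(r)\, u_m(r)/u_m'(r)$; the identity $W \cdot \gr u_m = w\, u_m' = V_0 u_m$ gives (\ref{WPDE}), while (\ref{decayDer}) yields $|w(r)| \le C_m^{-1}|V_0(r)| \le C\langle x \rangle^{-P}$. The subtlety is the interior: for any smooth radial $u$ the gradient $\gr u$ vanishes at the origin, so $W$ cannot be recovered by division there, and consistency forces $(\LP u + \la u)(0) = 0$. I would arrange $u$ to coincide with a multiple of the smooth radial Helmholtz solution $v$ (the analytic solution of $v'' + \tfrac{n-1}{r} v' + \la v = 0$ with $v(0)=1$) on a small ball $\set{r \le R_0}$, so that $\LP u + \la u \equiv 0$ there and we may take $W \equiv 0$ on that ball. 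On the intermediate annulus $\set{R_0 \le r \le R+1}$ one interpolates radially so that $u'(r) \neq 0$ in the open annulus (a generic condition, enforceable by a small perturbation of $R_0$ or of the interpolant), and then sets $W = (\LP u + \la u)/u'(r) \cdot x/|x|$, which is smooth and bounded there. The main obstacle in the whole proof is precisely this Helmholtz matching at the origin for part (b); part (a) is essentially free because a bounded $V$ is automatically $O(\langle x \rangle^{-N})$ on compact sets, while the outer asymptotic behavior and the decay of $u$ are delivered directly by Lemma \ref{consbLem}.
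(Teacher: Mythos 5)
Your proof is correct and reaches the same conclusion, but via a genuinely different gluing. Both you and the paper feed Lemma~\ref{consbLem} the integer $m = \lceil N \rceil$ or $\lceil P \rceil$ and use $u_m$ verbatim outside a large ball; the divergence is in how the interior is filled in. The paper hard-glues $u_m$ at a single radius $R_m$ to an explicit interior piece — the constant $u_m(R_m)$ for part~(a) with $V = \la$, and the Gaussian $C\exp\bigl(-\tfrac{\la}{2n}r^2\bigr)$ for part~(b) with $W = -\tfrac{\la r}{n}\hat r$. (The Gaussian is precisely the two-term Taylor approximation of the radial Helmholtz solution, and its residual $\LP u + \la u = \tfrac{\la^2 r^2}{n^2}u$ is absorbed into $W$.) You instead interpolate smoothly across a transition annulus and, for part~(b), match $u$ to the \emph{exact} radial Helmholtz solution near the origin so that $W \equiv 0$ there, with the annular $W$ defined by division by $u'$. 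Your route buys a genuinely smooth eigenfunction: as written, the paper's piecewise formula only matches values (not first derivatives) across the sphere $|x| = R_m$, so its $u$ is merely $C^0$ there and the equation would acquire a distributional singularity on that sphere unless one additionally smooths the corner; your construction builds the smoothing in from the start. The cost of your route is the extra non-degeneracy condition you flag for part~(b) — that $u'\ne 0$ on the open annulus, needed to divide by $u'$ — which you are right to note is arrangeable by a small perturbation (it is the statement that a smooth $\C^*$-valued curve with prescribed nonzero endpoint jets can be chosen to be an immersion), though it would be worth a line of justification. Neither the boundedness of $V$ on compacts in part~(a), nor the vanishing of $W$ at the origin in part~(b), causes trouble, since $\langle x\rangle^{-N}$ and $\langle x\rangle^{-P}$ are bounded below on compacts; you correctly exploit this.
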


\begin{proof}
If $ \be_0 = \frac{4-2N}{3} < 1$, let $m = \lceil N \rceil$. Otherwise, let $m = \lceil P \rceil $.  Since $\be_0 < 1$, m$ \ge 1$.  Let $R_m$ and $u_m$ be as in Lemma \ref{consbLem}.  Without loss of generality, we may assume that $R_m \ge 1$.  Set
\begin{align*}
u\pr{r} &= 
\left\{\begin{array}{ll} 
u_m\pr{r} & \textrm{if}\; r \ge R_m \\ 
u_m\pr{R_m} & \textrm{if} \; r \le R_ m  \; \textrm{and} \; \be_0 = \frac{4-2N}{3} \\ 
C \exp\pr{-\frac{\la}{2n}r^2} & \textrm{if} \; r \le R_ m  \; \textrm{and} \; \be_0 = 2-2P
\end{array} \right., \\
V\pr{r} &= 
\left\{\begin{array}{ll} 
d_mr^{-m} + d_{m+1}r^{-(m+1)} + \ldots + d_{2(m-1)}r^{-2(m-1)} & \textrm{if}\; r \ge R_m \\ 
\la & \textrm{if} \; r \le R_ m  
\end{array} \right. \\
W\pr{r, {\vp}} &= 
\left\{\begin{array}{ll} 
\brac{\frac{d_m}{c_1 r^{m} + \ldots - (m-2)c_m r} + \ldots + \frac{d_{2(m-1)}}{c_1 r^{2(m-1)}+ \ldots - (m-2)c_m r^{m-1}}}\pr{\cos\vp, \sin \vp} & \textrm{if}\; r \ge R_m \\ 
-\frac{\la r}{n}\pr{\cos\vp, \sin\vp} & \textrm{if} \; r \le R_ m  
\end{array} \right.,
\end{align*}
where $C$ is chosen so that $u_m(R_m) = C\exp\pr{-\frac{\la}{2n} R_m^2}$. By (\ref{decay}) in Lemma \ref{consbLem}, $\abs{u(x)} \lesssim \exp\pr{- c |x|}$. \\
If $ \be_0 = \frac{4-2N}{3}$, then $\LP u + \la u = V u$, where $\abs{V(x)} \lesssim \langle x \rangle^{-m} \le \langle x \rangle^{-N}$.  \\
By (\ref{decayDer}), $W$ is well-defined.  If $\be_0 = 2 - 2P$, then $\LP u + \la u = W \cdot \gr u$, where $\abs{W(x)} \lesssim \langle x \rangle ^{-m} \le \langle x \rangle^{-P}$.
\end{proof}

\appendix
\newcommand{\appsection}[1]{\let\oldthesection\thesection
  \renewcommand{\thesection}{\oldthesection}
  \section{#1}\let\thesection\oldthesection}
  
  \appsection{}
\label{AppA}

This Appendix presents some technical lemmas that are used in the proofs of Propositions \ref{baseC} and \ref{IH}. In the proof of Proposition \ref{IH}, we use the following fact about the ratio of the weight functions. 

\begin{lem}
Suppose $T \ge T_*$ and $S = T^{\ga} - T >> T$.  Then there exists a constant $c_n$, depending on $T_*$ and the dimension $n$, such that
$$\log \brac{ \frac{w\pr{1 + \frac{T}{2S}}}{w\pr{1 + \frac{1}{S}}} }  \ge \frac{c_n T}{S}.$$
\label{loglBd}
\end{lem}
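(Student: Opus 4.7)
The plan is to rewrite the log-ratio as a line integral of the logarithmic derivative of the weight and then bound the integrand below on the relevant interval. Since $w$ is smooth and positive on $(0, 6)$, we may write
$$\log \brac{\frac{w\pr{1 + \frac{T}{2S}}}{w\pr{1 + \frac{1}{S}}}} = \int_{1 + \frac{1}{S}}^{1 + \frac{T}{2S}} \frac{w'(s)}{w(s)}\, ds.$$
The hypothesis $S = T^\ga - T \gg T$ forces $T/S$ to be small (in particular $< 1$), so both endpoints lie in a fixed bounded interval, say $[1, 2]$, independent of $T$ and $S$.

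Next I would use the explicit structure of $w$ from Section \ref{carEst}: recall $w( r ) = \bar r = \int_0^r e^{-\nu s^2}\, ds$, so that $w'( r ) = e^{-\nu r^2}$ is a fixed positive function and $\tfrac{1}{C_1} \le \tfrac{w( r )}{r} \le C_1$. On the interval $[1, 2]$ we therefore have $w( r ) \le 2 C_1$ and $w'( r ) \ge e^{-4\nu}$, so the logarithmic derivative is uniformly bounded below:
$$\frac{w'( r )}{w( r )} \ge \frac{e^{-4\nu}}{2 C_1} =: c_n', \qquad r \in [1, 2],$$
where $c_n'$ depends only on the dimension through $\nu$ and $C_1$.

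For the length of the interval, a direct computation gives
$$\pr{1 + \tfrac{T}{2S}} - \pr{1 + \tfrac{1}{S}} = \frac{T - 2}{2S}.$$
Choosing $T_* \ge 4$ (which we are free to do since $c_n$ is allowed to depend on $T_*$), we get $T - 2 \ge T/2$ whenever $T \ge T_*$, hence the interval has length at least $T/(4S)$. Combining the two estimates,
$$\log \brac{\frac{w\pr{1 + \frac{T}{2S}}}{w\pr{1 + \frac{1}{S}}}} \ge c_n' \cdot \frac{T}{4S} = \frac{c_n T}{S},$$
with $c_n := c_n'/4$ depending only on $n$ and $T_*$.

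There is no real obstacle here: once one recognizes that the arguments $1 + 1/S$ and $1 + T/(2S)$ both lie in the fixed compact set $[1, 2]$ (a consequence of $S \gg T$), the estimate reduces to the elementary fact that $w'/w$ is uniformly positive on any compact subinterval of $(0, 6)$.
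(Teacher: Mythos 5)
Your proof is correct. It takes a slightly different route from the paper: the paper decomposes $w\pr{1 + \frac{T}{2S}}$ as $w\pr{1 + \frac{1}{S}}$ plus the incremental integral $\int_{1+1/S}^{1+T/(2S)} e^{-\nu s^2}\,ds$, bounds the resulting ratio $x$ between two constant multiples of $T/S$, and then uses the power-series estimate $\log(1+x) \ge x/2$ for $x \ll 1$. You instead write the log-ratio directly as $\int_{1+1/S}^{1+T/(2S)} w'(s)/w(s)\,ds$ via the fundamental theorem of calculus and bound the integrand below by a uniform constant on $[1,2]$; the interval-length accounting (handled by your requirement $T_* \ge 4$) plays the role of the paper's absorption of the $T-2$ versus $T$ discrepancy into the constant $\mu$. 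Both arguments hinge on the same two facts — that both endpoints lie in a fixed compact subinterval of $(0,6)$ because $S \gg T$, and that $w$ is comparable to its argument there — so the routes are essentially equivalent in content. Yours is arguably cleaner since it avoids the explicit Taylor-expansion step, while the paper's version makes the ``ratio is a small quantity of order $T/S$'' structure more visible, which is the quantity that actually gets plugged into the Carleman absorption argument.
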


\begin{proof}
Since $\disp w\pr{r} = \bar r = \int_0^r e^{-\nu s^2} ds$, then 
$$w\pr{1 + \frac{T}{2S}} = \int_0^{1 + \frac{T}{2S}} e^{-\nu s^2} ds =  \int_0^{1 + \frac{1}{S}} e^{-\nu s^2} ds + \int_{1 + \frac{1}{S}}^{1 + \frac{T}{2S}} e^{-\nu s^2} ds. $$
Using a Taylor expansion, it can be shown that for some $\mu > 0$
$$\frac{T}{2 \, S} \ge \int_{1 + \frac{1}{S}}^{1 + \frac{T}{2S}} e^{-\nu s^2} ds \ge \frac{T}{(2+\mu) \, S}. $$
Since $S >> 1$, then 
$$ c_n \ge  \int_0^{1 + \frac{1}{S}} e^{-\nu s^2} ds \ge \tilde c_n.$$
It follows that 
$$1 >> \frac{T}{2\tilde c_n \, S} \ge \frac{ \int_{1 + \frac{1}{S}}^{1 + \frac{T}{2S}} e^{-\nu s^2} ds }{  \int_0^{1 + \frac{1}{S}} e^{-\nu s^2} ds } \ge \frac{T}{(2+\mu)c_n  \, S}. $$
Therefore,
\begin{align*}
\log \brac{ \frac{w\pr{1 + \frac{T}{2S}}}{w\pr{1 + \frac{1}{S}}} } &=  \frac{ \int_{1 + \frac{1}{S}}^{1 + \frac{T}{2S}} e^{-\nu s^2} ds }{  \int_0^{1 + \frac{1}{S}} e^{-\nu s^2} ds } - \frac{1}{2}\pr{ \frac{ \int_{1 + \frac{1}{S}}^{1 + \frac{T}{2S}} e^{-\nu s^2} ds }{  \int_0^{1 + \frac{1}{S}} e^{-\nu s^2} ds }}^2 + \ldots \\
&\ge \frac{1}{2} \frac{ \int_{1 + \frac{1}{S}}^{1 + \frac{T}{2S}} e^{-\nu s^2} ds }{  \int_0^{1 + \frac{1}{S}} e^{-\nu s^2} ds } \\
& \ge  \frac{c_n T}{S}.
\end{align*}

\end{proof}

Finally, we present a standard result in elliptic theory that is used in the proof of our two main propositions.

\begin{lem}[Caccioppoli's Inequality]
Suppose $u$ satisfies $\LP u + W \cdot \gr u + V u = 0$ in $\R^n$, where $|V| \le M$, $|W| \le N$.  Then
$$\int_{B_r} |\gr u|^2 \lesssim \pr{\frac{1}{r^2} + M + N^2} \int_{B_{2r}} |v|^2.$$
\label{Cacc}
\end{lem}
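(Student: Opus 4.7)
The plan is to use the standard energy method with a smooth radial cutoff. I would introduce $\eta \in C^\infty_0\pr{B_{2r}}$ with $\eta \equiv 1$ on $B_r$, $0 \le \eta \le 1$, and $|\gr \eta| \lesssim 1/r$. Multiplying the PDE $\LP u + W\cdot\gr u + Vu = 0$ by $\eta^2 \bar u$ and integrating over $\R^n$, the Laplacian term yields, after integration by parts,
\begin{equation*}
\int \eta^2 \bar u\, \LP u = -\int \eta^2 |\gr u|^2 - 2\int \eta\, \bar u\, \gr\eta\cdot \gr u.
\end{equation*}
Putting this into the weak form of the equation gives
\begin{equation*}
\int \eta^2 |\gr u|^2 = -2\int \eta\, \bar u\, \gr\eta \cdot \gr u + \int \eta^2 \bar u\, W\cdot \gr u + \int \eta^2 V |u|^2.
\end{equation*}

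The next step is to absorb the gradient terms on the right into the left using Cauchy--Schwarz together with Young's inequality $ab \le \tfrac{1}{4\eps}a^2 + \eps b^2$. Specifically, I would estimate
\begin{equation*}
\left| 2\int \eta\, \bar u\, \gr \eta \cdot \gr u \right| \le \tfrac{1}{4}\int \eta^2 |\gr u|^2 + 4\int |\gr \eta|^2 |u|^2,
\end{equation*}
and, using $|W| \le N$,
\begin{equation*}
\left| \int \eta^2 \bar u\, W\cdot \gr u \right| \le \tfrac{1}{4}\int \eta^2 |\gr u|^2 + N^2\int \eta^2 |u|^2.
\end{equation*}
The potential term is bounded directly by $M\int \eta^2 |u|^2$. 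Combining these, the two $\tfrac14 \int \eta^2 |\gr u|^2$ contributions can be absorbed into the left, leaving
\begin{equation*}
\tfrac{1}{2}\int \eta^2 |\gr u|^2 \le 4\int |\gr \eta|^2 |u|^2 + \pr{N^2 + M}\int \eta^2 |u|^2.
\end{equation*}

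Finally, I would use $\eta \equiv 1$ on $B_r$ and $|\gr \eta|^2 \lesssim 1/r^2$ supported in $B_{2r}$ to conclude
\begin{equation*}
\int_{B_r} |\gr u|^2 \lesssim \pr{\tfrac{1}{r^2} + M + N^2}\int_{B_{2r}} |u|^2,
\end{equation*}
as required. There is no genuine obstacle here: the argument is entirely routine, and the only care needed is in choosing the Young's inequality constants so that both the $\bar u\,\gr\eta\cdot \gr u$ term and the $\bar u\, W\cdot \gr u$ term can be simultaneously absorbed. The balance $M + N^2$ (rather than $M + N$) is dictated precisely by the fact that $W$ is paired against $\gr u$ and so must enter through a quadratic Young's inequality, while $V$ multiplies $|u|^2$ directly.
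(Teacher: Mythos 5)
Your proof is correct and follows the same approach as the paper: test the equation against $\eta^2 \bar u$ with the standard cutoff, integrate by parts, and absorb the gradient terms on the right via Cauchy--Schwarz and Young's inequality, which is exactly where the $M + N^2$ combination arises. You have simply spelled out the constants in the absorption step more explicitly than the paper does.
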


\begin{proof}
Let $\eta \in C^{\iny}_0(B_{2r})$ be a smooth cutoff function such that $0 \le \eta \le 1$ and $\eta \equiv 1$ in $B_r$.  Then $\vp = \eta^2 u$ is a test function.  Therefore,
$$\int 2\eta u \gr \eta \cdot \gr u + \eta^2 |\gr u|^2 = \int V \abs{u}^2 \eta^2 + W\cdot\gr u \eta^2 u.$$
Rearranging and using Cauchy-Schwarz, we get
$$\int \eta^2 |\gr u|^2 \lesssim \pr{||V||_\iny + ||W||_\iny^2 + |\gr \eta|^2 } \int_{B_{2r}}\abs{u}^2,$$
which gives the claimed fact.
\end{proof}

%
%
  \appsection{}
\label{AppB}

In this Appendix, we present a number of results related to the sequences that are used in the proof of Theorem \ref{MEst}.  The following lemma gives a condition for when the first choice is used in the definitions of $\be_{j+1}$ and $\ga_{j+1}$.

\begin{lem}
If $T_j >> 1$ and $\disp 2 - 2P \ge \frac{4-2N}{3}$, then $2-2P \ge h_j$.
\label{case1}
\end{lem}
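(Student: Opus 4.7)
The plan is to unwind the inequality $2-2P \ge \frac{4-2N}{3}$ into the form $3P-N \le 1$, and then exploit this to bound $\om_j$ from above in a way that makes $h_j = 1+P-N+\om_j-\de_j$ come in below $2-2P$. The key observation is that the defining relation $T_j^{3P-N+\om_j} = T_j^{3P-N}+T_j$ controls $\om_j$ very tightly in the regime $3P-N\le 1$, while the lower bound on $\de_j$ provides just enough slack to absorb the resulting logarithmic error.

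First I would rewrite the hypothesis: $2-2P \ge \frac{4-2N}{3}$ is equivalent to $3P-N \le 1$. Under this condition, for $T_j > 1$ we have $T_j^{3P-N} \le T_j$, so the defining relation for $\om_j$ gives
\begin{equation*}
T_j^{3P-N+\om_j} = T_j^{3P-N}+T_j \le 2T_j.
\end{equation*}
Taking logarithms and dividing by $\log T_j$ yields
\begin{equation*}
\om_j \le 1-(3P-N)+\frac{\log 2}{\log T_j}.
\end{equation*}

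Next I would recall the defining identity for $\de_j$: either $T_1^{\de_1} = \frac{18 C_4}{4^{3+P+N/3}C_3 w(5/4)^{2/3}(A_1^{2/3}+A_2^2)c_n}\log T_1$ or $T_j^{\de_j} = \frac{27}{8c_n}\log T_j$ for $j\ge 2$. In either case, once $T_j$ is sufficiently large (in the universal sense fixed at the start of Section \ref{MEstProof}), we obtain $T_j^{\de_j}\ge 2$, hence
\begin{equation*}
\de_j \ge \frac{\log 2}{\log T_j}.
\end{equation*}
Combining the two estimates,
\begin{equation*}
h_j = 1+P-N+\om_j-\de_j \le 1+P-N+\bigl(1-(3P-N)\bigr) = 2-2P,
\end{equation*}
as required.

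The proof is essentially routine once the correct bounds on $\om_j$ and $\de_j$ are isolated; there is no real obstacle beyond making sure the hypothesis $T_j \gg 1$ is strong enough to guarantee $T_j^{\de_j}\ge 2$. The only mild subtlety is the boundary case $3P-N = 1$, where $T_j^{3P-N}+T_j = 2T_j$ exactly and $\om_j = \frac{\log 2}{\log T_j}$; here the argument still works because $1-(3P-N) = 0$ and $\de_j$ dominates $\om_j$ for large $T_j$.
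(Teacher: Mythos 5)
Your proof is correct and relies on the same ingredients as the paper's: the hypothesis rewritten as $3P-N \le 1$, and the defining relations for $\om_j$ and $\de_j$ used to reduce to an inequality valid for $T_j$ large. The only difference is stylistic: the paper runs a chain of equivalences down to $C\log T_j \ge T_j^{-(1-3P+N)}+1$, whereas you extract separate one-sided bounds $\om_j \le 1-(3P-N)+\tfrac{\log 2}{\log T_j}$ and $\de_j \ge \tfrac{\log 2}{\log T_j}$ and let the $\tfrac{\log 2}{\log T_j}$ terms cancel — same content, slightly cleaner bookkeeping.
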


\begin{proof}
If $\disp 2 - 2P \ge \frac{4-2N}{3}$, then $1 - 3P + N \ge 0$. We see that
\begin{align*}
&2 - 2P \ge h_j \\
\Iff\; &2 - 2P \ge 1 + P - N + \om_j - \de_j \\
\Iff\; &1 + \de_j\ge 3P - N+ \om_j \\
\Iff\; &T_j^{1 + \de_j} \ge T_j^{3P - N+ \om_j} \\
\Iff\; &T_j (C\log T_j) \ge T_j^{3P - N} + T_j \\
\Iff\; & C\log T_j \ge T_j^{-(1-3P+N)} + 1,
\end{align*}
which holds for $T_j$ sufficiently large, giving the result.
\end{proof}

The following lemma provides a bound on the exponent in terms of a simplified version of the exponent.

\begin{lem}
Let $\hat\be_j$ and $\hg_j$ be as defined in \S \ref{Proof1a}.  If $\de_1 \le 1$ and  $\be_k \ge h_k$ for $k = 1, \ldots, j+1$, then
\begin{equation*}
\be_{j+1} \le \hat\be_{j+1} + \frac{(2P)^j}{(\hg_1\ldots \hg_j)^2}\de_1 + \frac{(2P)^{j-1}}{(\hg_2\ldots \hg_{j})^2}\de_2  + \ldots + \frac{2P}{(\hg_j)^2}\de_j.
\end{equation*}
\label{beHatEstP}
\end{lem}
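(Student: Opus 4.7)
The proof will be a direct induction on $j$, exploiting the fact that the recursion $\be_{k+1}=2-2P/\ga_k$ with $\ga_k=\be_k-1+2P+\de_k$ is monotone in its input, and that the perturbation from $\hat\ga_k=\hat\be_k-1+2P$ to $\ga_k$ is exactly $(\be_k-\hat\be_k)+\de_k$. The plan is to establish along the way the auxiliary monotonicity $\be_k\ge\hat\be_k$ (and hence $\ga_k\ge\hat\ga_k$) so that denominators can be bounded below by the hat quantities.

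For the base case $j=1$, we have $\be_1=\hat\be_1=2$, so $\ga_1=\hg_1+\de_1$ and a direct computation gives
\[
\be_2-\hat\be_2=\frac{2P}{\hg_1}-\frac{2P}{\hg_1+\de_1}=\frac{2P\,\de_1}{\hg_1(\hg_1+\de_1)}\le\frac{2P}{\hg_1^2}\de_1,
\]
which is the claim for $j=1$.

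For the inductive step, assume the bound holds at stage $j$. Since $\be_k\ge\hat\be_k$ (inductively, by monotonicity of $x\mapsto 2-2P/x$ together with $\ga_k\ge\hg_k$), we may write
\[
\be_{j+2}-\hat\be_{j+2}=\frac{2P}{\hg_{j+1}}-\frac{2P}{\ga_{j+1}}=\frac{2P\bigl[(\be_{j+1}-\hat\be_{j+1})+\de_{j+1}\bigr]}{\hg_{j+1}\,\ga_{j+1}}\le\frac{2P}{\hg_{j+1}^{2}}\bigl[(\be_{j+1}-\hat\be_{j+1})+\de_{j+1}\bigr],
\]
where the inequality uses $\ga_{j+1}\ge\hg_{j+1}$. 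Substituting the inductive bound for $\be_{j+1}-\hat\be_{j+1}$ and distributing the factor $2P/\hg_{j+1}^{2}$ yields
\[
\be_{j+2}-\hat\be_{j+2}\le\frac{(2P)^{j+1}}{(\hg_1\cdots\hg_{j+1})^{2}}\de_1+\frac{(2P)^{j}}{(\hg_2\cdots\hg_{j+1})^{2}}\de_2+\cdots+\frac{2P}{\hg_{j+1}^{2}}\de_{j+1},
\]
which is the claimed inequality at stage $j+1$.

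There is no real obstacle here: the only mild point is to propagate the auxiliary inequality $\be_k\ge\hat\be_k$ through the induction, which follows automatically from $\de_k\ge 0$ and the monotonicity of the recursion. The hypothesis $\be_k\ge h_k$ is used only to ensure that we are in fact in the regime where the first choice $\ga_k=\be_k-1+2P+\de_k$ governs the sequences $\{\be_k\}$ and $\{\ga_k\}$, matching the recursion defining $\{\hat\be_k\}$ and $\{\hg_k\}$; the assumption $\de_1\le 1$ is only invoked to guarantee the smallness needed to keep the telescoping estimate meaningful.
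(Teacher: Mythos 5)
Your proof is correct and follows essentially the same inductive approach as the paper: both use the elementary bound $\frac{2P}{\hg_k}-\frac{2P}{\hg_k+\varepsilon}\le\frac{2P}{\hg_k^2}\varepsilon$ (for $\varepsilon\ge 0$) applied to the perturbation $\ga_k-\hg_k=(\be_k-\hat\be_k)+\de_k$, propagated through the recursion. The only difference is cosmetic: you make the induction and the auxiliary monotonicity $\be_k\ge\hat\be_k$ (hence $\ga_k\ge\hg_k$) fully explicit, where the paper carries the accumulated error forward by repeated substitution; your remark that $\de_1\le 1$ is never actually needed for the inequality is also correct.
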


\begin{proof}
The condition on the smallness of $\de_1$ ensures that each $\de_k$ is small and therefore we can do a Taylor expansion.  The condition that $\be_k \ge h_k$ for $k = 1, \ldots, j+1$ ensures that the first definition is always used to define each $\be_k$.  We will use each estimate for $\be_k$ to estimate $\be_{k+1}$.
\begin{align*}
{ \be}_1 &= \hat{\be}_1, \\
\Rightarrow {\be}_2 &= 2 - \frac{2P}{\hat{\ga}_1 + \de_1} 
\le 2 - \frac{2P}{\hat{\ga}_1} + \frac{2P}{(\hg_1)^2}  \de_1
= \hat \be_2 + \frac{2P}{(\hg_1)^2}\de_1, \\
\Rightarrow \be_3 &\le 2 - \frac{2P}{\hat{\ga}_2 + \frac{2P}{\hg_1^2}\de_1 + \de_2} 
\le 2 - \frac{2P}{\hat{\ga}_2} + \frac{(2P)^2}{(\hg_1 \hg_2)^2}\de_1  + \frac{2P}{(\hg_2)^2}\de_2 
= \hat\be_3 + \frac{(2P)^2}{(\hg_1 \hg_2)^2}\de_1  + \frac{2P}{(\hg_2)^2}\de_2, \\
 &\vdots  \\
\Rightarrow\be_{j+1} &\le \hat\be_{j+1} + \frac{(2P)^j}{(\hg_1\ldots \hg_j)^2}\de_1 + \frac{(2P)^{j-1}}{(\hg_2\ldots \hg_{j})^2}\de_2  + \ldots + \frac{2P}{(\hg_j)^2}\de_j ,
\end{align*}
as required.
\end{proof}

And now we present another bound on the exponent.

\begin{lem}
If $\be_k \ge h_k$ for $k = 1, \ldots, j$, then for $T_1 >> 1$,
$$\be_{j+1} \le \hat\be_{j+1} + f(T_1, P)\cdot j,$$
where $f$ is independent of $P$ for $2P \ge 1$, $\disp \frac{\del f}{\del T_1} < 0$ and $\hat\be_{j+1}$ is given by (\ref{PbeHatDef}).
\label{beBoundP}
\end{lem}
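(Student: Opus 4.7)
The plan is to take as input the pointwise bound provided by Lemma \ref{beHatEstP}, namely
$$
\be_{j+1} \le \hat\be_{j+1} + \sum_{k=1}^{j} \frac{(2P)^{j+1-k}}{(\hg_k\cdots\hg_j)^{2}}\,\de_k,
$$
and to control the sum on the right by $f(T_1,P)\cdot j$, with $f$ decreasing in $T_1$ and, crucially, independent of $P$ as long as $2P\ge 1$. The main technical lever is a clean closed form for the products $\hg_k\cdots\hg_j$ combined with a sharp lower bound on each $\hg_m$ in the regime $2P\ge 1$.

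First I would unwind definition (\ref{PbeHatDef}) and (\ref{hatGamP}) to write
$$
\hg_m = \frac{S_{m+1}}{S_m},\qquad S_m := 1+2P+\cdots+(2P)^{m-1},
$$
so that the product telescopes:
$$
\prod_{m=k}^{j}\hg_m = \frac{S_{j+1}}{S_k}.
$$
The key inequality I would then establish is that $\hg_m\ge \max\{1,2P\}$ for all $m$. The bound $\hg_m\ge 1$ is immediate, and for $2P\ge 1$ the inequality $\hg_m\ge 2P$ is equivalent to $S_{m+1}\ge 2P\cdot S_m$, i.e.\ to $(2P)^m \ge (2P-1)\,S_m = (2P)^m - 1$, which is trivially true. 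This is the one step where the hypothesis $2P\ge 1$ enters essentially; I expect everything else to be bookkeeping.

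In the regime $2P\ge 1$ the bound $\hg_m\ge 2P$ yields
$$
\frac{(2P)^{j+1-k}}{(\hg_k\cdots\hg_j)^{2}} \le \frac{(2P)^{j+1-k}}{(2P)^{2(j+1-k)}} = (2P)^{-(j+1-k)}\le 1,
$$
so the sum is bounded by $\sum_{k=1}^{j}\de_k$. Invoking the universal estimate (\ref{epsEst}) together with the monotonicity of $T\mapsto \log\log T/\log T$ (which makes $\de_k$ decreasing in $T_k$, and $T_k\ge T_1$), each $\de_k \le \tfrac{2}{\sqrt{3}}\,\tfrac{\log\log T_1}{\log T_1}$. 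Setting
$$
f(T_1,P) := \tfrac{2}{\sqrt{3}}\,\tfrac{\log\log T_1}{\log T_1}
$$
gives a bound that is manifestly independent of $P$ and satisfies $\partial_{T_1}f<0$ for $T_1\gg 1$, which yields the conclusion in this case.

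For the complementary regime $2P<1$ (where $f$ is allowed to depend on $P$), I would use only $\hg_m\ge 1$ and sum a geometric series:
$$
\sum_{k=1}^{j}\frac{(2P)^{j+1-k}}{(\hg_k\cdots\hg_j)^{2}}\,\de_k \le \de_1 \sum_{k=1}^{j}(2P)^{j+1-k} \le \frac{2P}{1-2P}\,\de_1,
$$
which is bounded independently of $j$ and hence trivially $\le f(T_1,P)\cdot j$ with $f(T_1,P) := \tfrac{2P}{1-2P}\,\de_1$, still decreasing in $T_1$. The only substantive obstacle is establishing the inequality $\hg_m\ge 2P$ in the $2P\ge 1$ regime; the rest is a direct application of this estimate, telescoping, and (\ref{epsEst}).
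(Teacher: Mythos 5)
Your proposal is correct, and in fact it is both shorter and more faithful to the lemma's stated conclusion than the paper's own argument. Both proofs start from Lemma \ref{beHatEstP} and from the telescoping identity $\hg_m = \hG_m/\hG_{m-1}$. The paper then takes a more laborious route: it refines the bound on $\de_{k+1}$ using $T_{k+1} > T_1^{\hG_k}$, substitutes this term by term into the sum, and after several rearrangements invokes $\hG_j \ge j$ (valid for $2P \ge 1$) to arrive at $\hat\be_{j+1} + \tfrac{4}{3}\de_1 + j\,\tfrac{\log \hG_1}{\sqrt{3}\log T_1}$. You instead isolate the elementary inequality $\hg_m \ge 2P$ (indeed $\hG_m - 2P\,\hG_{m-1} = 1 > 0$ for all $m$), which when $2P\ge 1$ forces every weight $(2P)^{j+1-k}/(\hg_k\cdots\hg_j)^2 \le (2P)^{-(j+1-k)} \le 1$ and collapses the entire sum to $\sum_{k\le j}\de_k \le j\cdot\tfrac{2}{\sqrt 3}\tfrac{\log\log T_1}{\log T_1}$ via (\ref{epsEst}) and the eventual monotonicity of $T\mapsto \log\log T/\log T$. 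Beyond brevity, your $f(T_1,P) = \tfrac{2}{\sqrt 3}\tfrac{\log\log T_1}{\log T_1}$ is genuinely free of $P$, whereas the paper's final coefficient $\tfrac{\log\hG_1}{\sqrt 3 \log T_1} = \tfrac{\log(1+2P)}{\sqrt 3 \log T_1}$ (and the $4^{P}$ buried inside $\de_1$) still carries a $P$-dependence, so your version makes the lemma's ``independent of $P$ for $2P\ge 1$'' assertion literally true rather than merely serviceable. Your treatment of the complementary branch $2P<1$, summing the geometric series with only $\hg_m\ge 1$ to get the $j$-independent bound $\tfrac{2P}{1-2P}\de_1$, is likewise fine and parallels the paper's appeal to Corollary \ref{gambeCorP}.
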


\begin{proof}
If $2P < 1$, then we are in the same situation as Case 1 from \S \ref{Proof1a} and by Corollary \ref{gambeCorP}, $\be_{j+1} \le \hat\be_{j+1} + C_P\de_{j+1}$, giving the desired result.  So we will consider the case where $2P \ge 1$.
Since $\be_k \ge h_k$, then the first choice is used to define our sequence and $\disp \hg_j = \frac{\hG_j}{\hG_{j-1}} = \frac{1 + 2P + \ldots + (2P)^j}{1 + 2P + \ldots + (2P)^{j-1}}$. 
By Lemma \ref{beHatEstP},
\begin{align*}
\be_{j+1} &\le \hat\be_{j+1} + \frac{(2P)^j}{(\hg_1\ldots \hg_j)^2}\de_1 + \frac{(2P)^{j-1}}{(\hg_2\ldots \hg_{j})^2}\de_2  + \ldots + \frac{2P}{(\hg_j)^2}\de_j \\
&= \hat\be_{j+1} + \frac{(2P)^j}{(\hG_j)^2}\de_1 + \frac{(2P)^{j-1}}{(\hG_j)^2}\pr{\hG_1}^2\de_2  + \ldots + \frac{2P}{(\hG_j)^2}\pr{\hG_{j-1}}^2\de_j.
\end{align*}
Since $T_{k+1} = T_1^{\Ga_k} > T_1^{\hG_k}$, then by (\ref{epsEst}),
$$\de_{k+1} \le \frac{2}{\sqrt{3}}\frac{\log\log T_{k+1}}{\log T_{k+1}} \le \frac{2}{\sqrt{3}}\frac{\log\log T_1 + \log\hG_k}{\hG_k \log T_1} \le \frac{2}{\sqrt{3}}\pr{\frac{2}{\sqrt{3}}\frac{\de_1}{\hG_k} + \frac{\log\hG_k}{\hG_k \log T_1}} = \frac{4}{{3}}\frac{\de_1}{\hG_k} + \frac{2}{\sqrt{3}}\frac{\log\hG_k}{\hG_k \log T_1}.$$
Thus,
\begin{align*}
\be_{j+1} &\le \hat\be_{j+1} + \frac{(2P)^j}{(\hG_j)^2}\de_1 + \frac{(2P)^{j-1}}{(\hG_j)^2}\pr{\hG_1}^2\pr{\frac{4}{3}\frac{\de_1}{\hG_1} + \frac{2}{\sqrt{3}}\frac{\log\hG_1}{\hG_1 \log T_1}}  + \ldots + \frac{2P}{(\hG_j)^2}\pr{\hG_{j-1}}^2\pr{\frac{4}{3}\frac{\de_1}{\hG_{j-1}} + \frac{2}{\sqrt{3}}\frac{\log\hG_{j-1}}{\hG_{j-1} \log T_1}} \\
&< \hat\be_{j+1} + \frac{4}{3}\frac{\de_1}{\pr{\hG_j}^2}\brac{(2P)^j + (2P)^{j-1}\hG_1 + \ldots + 2P\hG_{j-1}} + \frac{2}{\sqrt{3}}\frac{1}{\pr{\hG_j}^2 \log T_1}\brac{(2P)^{j-1}\hG_1\log\hG_1  + \ldots + 2P\hG_{j-1}\log\hG_{j-1}} \\
&< \hat\be_{j+1} + \frac{4}{3}\frac{\de_1}{\pr{\hG_j}^2}\brac{(2P)^j + (2P)^{j-1}\hG_1 + \ldots + 2P\hG_{j-1}}
+ \frac{2}{\sqrt{3}}\frac{1}{\pr{\hG_j}^2 \log T_1}\brac{(2P)^{j-1}\hG_1\log\hG_1  + \ldots + 2P\hG_{j-1}(j-1)\log\hG_1} \\
&< \hat\be_{j+1} + \frac{4}{3}\frac{\de_1}{\pr{\hG_j}^2}j\sum_{k=1}^j (2P)^k + \frac{1}{\sqrt{3}}\frac{\log\hG_1}{\pr{\hG_j}^2 \log T_1}j^2\sum_{k=1}^j(2P)^k \\
&\le \hat\be_{j+1} + \frac{4}{3}\de_1 + j\frac{\log\hG_1}{\sqrt{3}\log T_1},
\end{align*}
since $\hG_j \ge j$ for $2P \ge 1$.  Since $\de_1$ and $\frac{\hG_1}{\log T_1}$ decrease with $T_1$ and there is no dependence on $P$, we get the result.
\end{proof}

Using the above bound on the exponent, we may show that, in certain cases, the exponent will reach a specific lower bound.

\begin{lem}
If $T_1 >> 1$, $P > N$ whenever $2P \ge 1$, and $3P - N = 1 + \De$ for some $\De > 0$, then there exists a $J $ such that $\be_{J-1} = h_{J-1}$.  Furthermore, $J \le \mathcal{J}\pr{N,P, { T_1}}$.
\label{JLem}
\end{lem}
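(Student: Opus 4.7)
The plan is to argue by contradiction: assume $\be_k \ge h_k$ for every $k$, so the first-choice rule is in force throughout and no replacement ever occurs. I will then produce an index $j^{\ast}=j^{\ast}(N,P)$ with $\be_{j^{\ast}}<h_{j^{\ast}}$, from which the first transition must have already happened, giving the desired $J$.

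First I would compare $\be_j$ with the simpler sequence $\hat\be_j$ from \S\ref{Proof1a}: Lemma \ref{beBoundP} yields $\be_{j+1}\le\hat\be_{j+1}+f(T_1,P)\cdot j$ when $2P\ge 1$, and Corollary \ref{gambeCorP} yields $\be_{j+1}\le\hat\be_{j+1}+C_P\,\de_{j+1}$ when $2P<1$. In both cases the error term tends to zero as $T_1\to\infty$. Next I would linearize $\hat\be_{j+1}=2-2P/\hat\ga_j$ about the fixed points by writing $d_j=\hat\be_j-(2-2P)$; the recursion $d_{j+1}=(2P/\hat\ga_j)\,d_j$ telescopes to
\[
 d_{j+1}=\frac{(2P)^{j+1}}{\hat\Ga_j}=\frac{(2P)^{j+1}}{1+2P+\cdots+(2P)^j}.
\]
A short case analysis then gives $\hat\be_j\downarrow L:=\max\{2-2P,\,1\}$, with geometric rate when $2P\ne 1$ and rate $1/j$ when $2P=1$; in particular $L=2-2P$ for $2P\le 1$ and $L=1$ for $2P>1$.

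Now I would use the hypotheses. When $2P<1$, $3P-N>1$ is equivalent to $2-2P<1+P-N$; when $2P\ge 1$, the additional hypothesis $P>N$ gives $1<1+P-N$. Thus in both cases $L<1+P-N=\lim_j h_j$, so $\eta:=(1+P-N)-L>0$ depends only on $N,P$. Choose $j^{\ast}=j^{\ast}(N,P)$ so large that $\hat\be_{j^{\ast}}<L+\eta/10$; this is possible using only the explicit formula for $d_j$. Then choose $T_1=T_1(N,P,j^{\ast})$ large enough that the error bound of Lemma \ref{beBoundP}/Corollary \ref{gambeCorP} at step $j^{\ast}$ is below $\eta/10$ and that $\om_{j^{\ast}},\de_{j^{\ast}}<\eta/20$ (which is permissible because $T_j\ge T_1$ for every $j$, making both quantities vanish with $T_1$). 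Combining these,
\[
 \be_{j^{\ast}}\le\hat\be_{j^{\ast}}+\tfrac{\eta}{10}<L+\tfrac{\eta}{5}<(1+P-N)-\tfrac{\eta}{10}<h_{j^{\ast}},
\]
contradicting the standing assumption $\be_k\ge h_k$ for all $k$. Setting $\mathcal{J}(N,P,T_1):=j^{\ast}+1$ and reading off the first index $J-1\le j^{\ast}$ at which the replacement rule applies completes the proof.

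The main delicate point will be that $j^{\ast}$ must be a function of $N,P$ alone while the error $f(T_1,P)\cdot j^{\ast}$ grows linearly in $j^{\ast}$; this forces a correspondingly stringent lower bound on $T_1$. The marginal case $2P=1$ is the worst, since $\hat\be_j-1\sim 1/j$ forces $j^{\ast}\sim 1/\eta$ and hence a larger $T_1$-threshold, but all such thresholds still depend only on $N,P,T_1$ as required by the statement.
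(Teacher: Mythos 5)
Your proposal is correct and follows essentially the same route as the paper: both use Lemma \ref{beBoundP}/Corollary \ref{gambeCorP} to bound $\be_j$ by $\hat\be_j$ plus an error vanishing as $T_1\to\infty$, identify the limit $\max\{2-2P,1\}$ of the $\hat\be_j$, observe that the hypotheses force this limit below $1+P-N$, and then choose an index $j^\ast$ depending only on $N,P$ (first) and $T_1$ large (second) so that the error and $\de$, $\om$ corrections are absorbed. The only difference is cosmetic — you organize it as a contradiction argument, whereas the paper picks the first $k$ with $\be_{k+1}\le h_{k+1}$ directly — and your explicit closed form $d_{j}=(2P)^{j}/\hat\Ga_{j-1}$ is algebraically equivalent to the paper's expression $\hat\be_{k+1}=1+\tfrac{1}{1+2P+\cdots+(2P)^k}$.
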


\begin{proof}
If $\be_1 < h_1$, replace $\be_1$ with $h_1$ and we are done.  Otherwise, there exists a $k \ge 2$ such that $\be_j \ge h_j$ for $j = 1, \ldots, k$.  By Lemma \ref{beBoundP}, $\be_{k+1} \le \hat\be_{k+1} + f(T_1, P)\cdot k$, where
$$\hat\be_{k+1} = 2 - \frac{2P}{\hg_k} = 2 - \frac{2P\pr{1 + \ldots + (2P)^{k-1}}}{1 + \ldots + (2P)^k} = 1 + \frac{1}{1 + \ldots + (2P)^k}$$
We will show that for $k >> 1$ and $T_1 >> 1$, we can ensure that $\be_{k+1} \le h_{k+1}$. \\
If $2P < 1$, then as shown in the proof of Lemma \ref{beBoundP}, $\be_{k+1} \le \hat\be_{k+1} + C_p\de_{k+1}$ and by (\ref{hatGamP}) from Case 1 in \S \ref{Proof1a}, $\hat\be_{k+1} \le 2 - \frac{2P}{1 + (2P)^k} \le 2 - 2P + (2P)^{k+1}$.  Thus,
\begin{align*}
& \be_{k+1} \le h_{k+1} \\
\Leftarrow\; & \hat\be_{k+1} + C_p\de_{k+1} \le h_{k+1} \\
\Leftarrow\; & 2 - 2P + (2P)^{k+1} + C_p\de_{k+1} \le 1 + P - N + \om_{k+1} - \de_{k+1} \\
\Iff \; & (2P)^{k+1} + \pr{1+C_p}\de_{k+1} \le \De + \om_{k+1}
\end{align*}
If we choose $k$, $T_1$ so that $(2P)^{k+1} \le \De/2$ and $\disp \de_1 \le \frac{\De}{2(1 + C_P)}$, then it follows that $\be_{k+1} \le h_{k+1}$.  Therefore, there exists a $J-1 \le k+1$ such that $\be_{J-1} \le h_{J-1}$ but $\be_{J-2} > h_{J-2}$.  As per the rules of our construction, we set $\be_{J-1} = h_{J-1}$. \\
Now we consider the case where $2P \ge 1$.
\begin{align*}
& \be_{k+1} \le h_{k+1} \\
\Leftarrow\; & 1 + \frac{1}{1 + \ldots + (2P)^k} + f(T_1)\cdot k \le 1 + P - N + \om_{k+1} - \de_{k+1} \\
\Iff\; & \frac{1}{1 + \ldots + (2P)^k} + f(T_1)\cdot k  + \de_{k+1}  \le  P - N + \om_{k+1}
\end{align*}
First we will choose $k$ so that $\disp \frac{1}{1 + \ldots + (2P)^k}  \le \frac{P-N}{2}$.  Then we will choose $T_1 >> 1$ so that $f(T_1) \le \frac{P-N}{4k}$ and $\de_1 \le \frac{P-N}{4}$ to get our result.  Again, there exists a $J-1 \le k+1$ such that $\be_{J-1} = h_{J-1}$.
\end{proof}

Once the lower specific lower bound has been reached, the exponent will continue to decrease in a controlled way.

\begin{lem}
Suppose $P > N$.  For $T_1 >> 1$, if $\be_{j-1} = h_{j-1}$, then $\be_j \le \ell_j$.
\label{utol}
\end{lem}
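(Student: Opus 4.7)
The idea is to compute $\be_j$ explicitly under the hypothesis $\be_{j-1}=h_{j-1}$ and then to verify the desired inequality $\be_j \le \ell_j$ directly, treating $\om_{j-1}, \om_j, \de_j$ as small parameters.

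Since $\be_{j-1}=h_{j-1}=1+P-N+\om_{j-1}-\de_{j-1}$ lies in the top branch of the piecewise definition, formula (\ref{gamDef}) together with (\ref{omDef}) gives
\[
\ga_{j-1} \;=\; \be_{j-1}-1+2P+\de_{j-1} \;=\; 3P-N+\om_{j-1},
\]
and the first choice in the definition of $\be_j$ then yields
\[
\be_j \;=\; 2-\frac{2P}{\ga_{j-1}} \;=\; 2-\frac{2P}{3P-N+\om_{j-1}}.
\]
The sought bound $\be_j\le\ell_j=1+P-N+\tfrac{\om_j}{3}-\de_j$ is therefore equivalent, after simple algebra, to
\[
\frac{P-N+\om_{j-1}}{3P-N+\om_{j-1}} \;\le\; P-N+\frac{\om_j}{3}-\de_j. \qquad (\ast)
\]

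Recall we sit in the Case 3 regime, where $3P-N=1+\De$ with $\De>0$; in particular $3P-N>1$ and $P-N>0$. Send all $\om_\bullet, \de_\bullet$ to zero; the inequality $(\ast)$ degenerates to $\tfrac{P-N}{3P-N}\le P-N$, which holds \emph{strictly} because $\tfrac{1}{3P-N}<1$. Concretely, the gap between the two sides in the limit equals
\[
\eta \;:=\; (P-N)\Bigl(1-\tfrac{1}{3P-N}\Bigr) \;=\; \tfrac{(P-N)\De}{1+\De} \;>\; 0.
\]

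Thus the only work is to check that the correction terms $\om_{j-1}, \om_j, \de_j$ are small compared to $\eta$ once $T_1\gg 1$. From (\ref{omDef}) we have $\om_{j-1}=\log(1+T_{j-1}^{-\De})/\log T_{j-1}=O\!\bigl(T_{j-1}^{-\De}/\log T_{j-1}\bigr)$ and likewise for $\om_j$, while (\ref{epsEst}) gives $\de_j\lesssim \log\log T_j/\log T_j$. Since $T_{j-1}, T_j\ge T_1$, each of these quantities can be made smaller than, say, $\eta/10$ by choosing $T_1$ larger than some threshold $T_*=T_*(N,P)$. Substituting the resulting bounds into $(\ast)$, for example using $\tfrac{P-N+\om_{j-1}}{3P-N+\om_{j-1}}\le\tfrac{P-N}{3P-N}+\tfrac{\om_{j-1}}{3P-N}$ on the left and absorbing $\om_j/3-\de_j$ into the $\eta/10$ budget on the right, yields $(\ast)$ and completes the proof. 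The main technical nuisance is simply bookkeeping on $\om_\bullet, \de_\bullet$; the structural content is the strict inequality $3P-N>1$ inherited from the Case 3 hypothesis, which is what makes the lemma true.
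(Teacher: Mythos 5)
Your proof is correct and takes essentially the same route as the paper: both reduce the claim to the observation that, after setting $\ga_{j-1}=3P-N+\om_{j-1}$ and writing out $\be_j=2-2P/\ga_{j-1}$, the limiting inequality has a positive margin $\De(P-N)$ (the paper clears the denominator and chooses $\de_1\le\De(P-N)/\bigl(3(3P-N)\bigr)$ explicitly; you keep the fraction and phrase the same margin as $\eta=(P-N)\De/(1+\De)$), into which the small corrections $\om_{j-1},\om_j,\de_j$ are absorbed once $T_1\gg 1$.
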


\begin{proof}
If $\be_{j-1} = h_{j-1}$, then $\ga_{j-1} = 3P - N + \om_{j-1}$ and $\be_j = 2 - \frac{2P}{3P - N + \om_{j-1}}$.  
\begin{align*}
& \be_ j \le \ell_j \\
\Iff\;& 2 - \frac{2P}{3P - N + \om_{j-1}} \le 1 + P - N + \frac{\om_j}{3} - \de_j \\
\Leftarrow\;& \pr{1 - P + N + \de_j}\pr{3P - N + \om_{j-1}} \le 2P \\
\Iff\;& \de_j\pr{3P - N} + \om_{j-1}\pr{1-P + N} + \de_j\om_{j-1} \le 2P - \pr{1 - P + N}\pr{3P - N} \\
\Iff\;&  \de_j\pr{3P - N} + \om_{j-1}\pr{1-P + N} + \de_j\om_{j-1} \le \De\pr{P - N}
\end{align*}
If we choose $\de_1 \le \frac{\De\pr{P-N}}{3\pr{3P - N}}$, since $\om_{j-1}, \de_{j-1} \le \de_1 \le 1$ and $1 - P + N < 1 < 3P - N$ , we get the result.
\end{proof}

%
%
  \appsection{}
\label{AppC}

In this appendix, we will discuss the specifics of $\Ga_j$.  These results are useful in the proof of Theorem \ref{MEst}.  Recall that $\Ga_j = \ga_1 \ldots \ga_j$.

\begin{lem}
Suppose we are in either Case 1 or Case 2.  That is, there is no switching in the way the sequences are defined.  Then
$$ \Ga_j = S_j + a\sum_{1\le k\le j} S_{k-1}S_{j-k}\de_k + a^2\sum_{1\le k < \ell \le j}S_{k-1}S_{\ell -k -1}S_{j-\ell}\de_k\de_\ell +a^3 \sum_{1\le k < \ell < m \le j}S_{k-1}S_{\ell -k -1}S_{m -\ell -1}S_{j-m}\de_k\de_\ell\de_m + \ldots + a^j\de_1\ldots\de_j,$$
where $\disp a = \left\{ \begin{array}{ll} 1 & \textrm{in Case 1} \\ 3 & \textrm{in Case 2} \end{array}\right.$ and $\disp S_k =\left\{ \begin{array}{ll} 1 + 2P+ \ldots + (2P)^k & \textrm{in Case 1} \\ 1 + 2N+ \ldots + (2N)^k & \textrm{in Case 2} \end{array}\right.$.
\label{GaEst}
\end{lem}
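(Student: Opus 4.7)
The plan is to establish a second-order linear recurrence for $\Ga_j$ that holds uniformly across the two cases and then verify the closed-form expression by induction. Writing $p = 2P$ and $a = 1$ in Case 1, and $p = 2N$ and $a = 3$ in Case 2, so that $S_1 = 1 + p$, the definitions give $\ga_k = \be_k - 1 + p + a\de_k$ and $\be_{k+1}$ is determined by $1 + p - \pr{\be_{k+1} - 1 + p} = p/\ga_k$. This yields $\ga_{k+1} - a\de_{k+1} = 1 + p - p/\ga_k$; multiplying by $\ga_k$ and then by $\Ga_{k-1}$, and using $\Ga_k = \Ga_{k-1}\ga_k$, produces the recurrence
\begin{equation*}
\Ga_{k+1} = (S_1 + a\de_{k+1})\Ga_k - (S_1 - 1)\Ga_{k-1},
\end{equation*}
with initial data $\Ga_0 = 1 = S_0$ and $\Ga_1 = \ga_1 = S_1 + a\de_1$ (since $\be_1 = 2$ or $4/3$ forces $\ga_1 - a\de_1 = S_1$).

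Next I would denote by $G_j$ the right-hand side of the claimed identity and show it obeys the same recurrence. The sum is indexed by subsets $I = \set{k_1 < \cdots < k_m} \subseteq \set{1, \ldots, j}$ with contribution $a^m \prod_i \de_{k_i} \prod_{i=0}^m S_{k_{i+1}-k_i-1}$, under the convention $k_0 = 0$ and $k_{m+1} = j+1$. To compute $G_{j+1}$, partition subsets of $\set{1,\ldots,j+1}$ by whether they contain $j+1$. Those containing $j+1$ pick up a trailing factor $S_0 = 1$ and contribute exactly $a\de_{j+1}G_j$. Those contained in $\set{1,\ldots,j}$ with maximum element $k_m$ appear in $S_1 G_j$ with trailing factor $S_1 S_{j-k_m}$ and, if $k_m \le j-1$, in $(S_1 - 1)G_{j-1}$ with trailing factor $(S_1-1)S_{j-1-k_m}$. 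The single algebraic identity
\begin{equation*}
S_1 S_n - (S_1 - 1) S_{n-1} = S_{n+1} \qquad \text{for all } n \ge 0,
\end{equation*}
with convention $S_{-1} = 0$, is immediate from $S_n = (p^{n+1}-1)/(p-1)$ since $(1+p)S_n - p S_{n-1} = S_n + p(S_n - S_{n-1}) = S_n + p^{n+1} = S_{n+1}$. This identity turns the trailing factors in $S_1 G_j - (S_1 - 1)G_{j-1}$ precisely into $S_{j+1-k_m}$, which is the trailing $S$-factor a subset with max element $k_m \le j$ must carry in the expansion of $G_{j+1}$; the boundary case $k_m = j$ is handled by $S_{-1} = 0$.

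Since $\Ga_j$ and $G_j$ satisfy the same second-order linear recurrence with matching values at $j = 0$ and $j = 1$, they agree for all $j \ge 0$. The only mildly delicate step is the subset-bookkeeping that matches each term on the right-hand side of the recurrence with a family of subsets of $\set{1,\ldots,j+1}$; once that matching is made explicit, the telescoping identity for the partial geometric sums $S_n$ does all the remaining work.
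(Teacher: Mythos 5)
Your proof takes essentially the same route as the paper's: you derive the recurrence $\Ga_{k+1} = (S_1 + a\de_{k+1})\Ga_k - (S_1-1)\Ga_{k-1}$, which is exactly the recurrence the paper uses, and your telescoping identity $S_1 S_n - (S_1-1)S_{n-1} = S_{n+1}$ is precisely the paper's Lemma~\ref{hGForm}. The only structural difference is one of exposition: the paper substitutes the inductive hypothesis directly and states that ``simplifications and applications of Lemma~\ref{hGForm} give the required formula,'' whereas you index the right-hand side by subsets of $\set{1,\dots,j}$, verify that it satisfies the same second-order recurrence, and conclude by uniqueness given the two initial values; that is a cleaner and more verifiable write-up of the same induction, not a different idea. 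One small slip worth fixing: in Case 2 the definitions read $\ga_k = 3(\be_k - 1) + 2N + 3\de_k$ and $\be_{k+1} = \tfrac{4}{3} - \tfrac{2N}{3\ga_k}$, so your unified intermediate statements ``$\ga_k = \be_k - 1 + p + a\de_k$'' and ``$\be_{k+1}$ is determined by $1 + p - (\be_{k+1} - 1 + p) = p/\ga_k$'' are not literal restatements of the Case~2 definitions. The relation you actually use, $\ga_{k+1} - a\de_{k+1} = 1 + p - p/\ga_k$, is nevertheless correct in both cases (for Case~2, $3(\be_{k+1}-1) + 2N = 1 + 2N - 2N/\ga_k$), so the recurrence and everything downstream are unaffected; you should simply either restrict those unified intermediate formulas to Case~1 or rephrase them to accommodate the extra factor of $3$.
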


\begin{lem}
Suppose we are in Case 3.  Then $3P - N = 1 + \De$ for some $\De > 0$.  Let $S_\ell = 1 + 2N + \ldots + (2N)^\ell$, $V_0 = 1$, $V_{\ell} = 1 + S_{\ell - 1}\De$ for all $\ell \ge 1$, $\eps_1 = \om_{J-1}$, $\eps_\ell = 3\de_{J-2+\ell}$ for all $\ell \ge 2$.  Then for all $j \ge 1$,
$$\frac{\Ga_{J-2+j}}{\Ga_{J-2}} = V_j + \sum_{1 \le k \le j}V_{k-1}S_{j-k}\eps_k + \sum_{1 \le k < \ell \le j}V_{k-1}S_{\ell - k -1}S_{j-\ell}\eps_k\eps_\ell  + \sum_{1 \le k < \ell < m \le j}V_{k-1}S_{\ell - k -1}S_{m-\ell -1}S_{j-m}\eps_k\eps_\ell\eps_m + \ldots + \eps_1\eps_2\ldots\eps_j.$$
\label{GaEstC3}
\end{lem}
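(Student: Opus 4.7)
The natural strategy is induction on $j$ via a second-order linear recurrence. Set $R_j = \Ga_{J-2+j}/\Ga_{J-2} = \ga_{J-1}\ga_J\cdots\ga_{J-2+j}$, so that $R_0 = 1$, $R_1 = \ga_{J-1}$, and $R_{j+1} = \ga_{J-1+j}\, R_j$. Since we are in the regime $\be_k \le \ell_k$ for $k \ge J$, the definition of $\ga_k$ gives $\ga_{J-1+j} = 3(\be_{J-1+j} - 1) + 2N + 3\de_{J-1+j}$ with $\be_{J-1+j} = \tfrac{4}{3} - \tfrac{2N}{3\ga_{J-2+j}}$, so that
\[
\ga_{J-1+j} = 1 + 2N + \eps_{j+1} - \frac{2N}{\ga_{J-2+j}}.
\]
Multiplying by $R_j$ and using $R_j/\ga_{J-2+j} = R_{j-1}$ produces the clean recurrence
\[
R_{j+1} = (1 + 2N + \eps_{j+1})\, R_j - 2N\, R_{j-1}, \qquad j \ge 1.
\]

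The base cases are immediate: $R_0 = V_0 = 1$, and $R_1 = \ga_{J-1} = 3P - N + \om_{J-1} = 1 + \De + \eps_1 = V_1 + V_0 S_0 \eps_1$, matching the asserted formula at $j=0,1$. I would also record, by a one-line computation from $V_m = 1 + S_{m-1}\De$ and $S_m - S_{m-1} = (2N)^m$, the parallel identities
\[
V_{m+1} = (1+2N)\, V_m - 2N\, V_{m-1}, \qquad S_{m+1} = (1+2N)\, S_m - 2N\, S_{m-1} \quad (m \ge 1),
\]
which are precisely the $\eps$-free versions of the recurrence for $R$ and are what will propagate the formula through the inductive step.

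For the inductive step, expand $R_{j-1}$ and $R_j$ according to the asserted formula and substitute into the recurrence, organizing the resulting expression by subsets $I \subseteq \{1,\dots,j+1\}$ indexing the monomial $\prod_{k \in I}\eps_k$. Two cases arise. If $j+1 \notin I$, then the coefficient at stage $j+1$ is $(1+2N)\, c_I(j) - 2N\, c_I(j-1)$, where $c_I(j-1) = 0$ when $\max(I) = j$; applying the $V$-identity to $I = \emptyset$ and the $S$-identity to the trailing factor $S_{j-\max(I)}$ of the remaining terms produces exactly the desired coefficient $c_I(j+1)$. If $j+1 \in I$, write $I = I' \cup \{j+1\}$ with $I' \subseteq \{1,\dots,j\}$; then the $\eps_{j+1} R_j$ term contributes $c_{I'}(j)$, which equals $c_I(j+1)$ because the new trailing factor $S_{(j+1)-(j+1)} = S_0 = 1$ is trivial and the internal gap-factors match automatically.

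The main obstacle is the combinatorial bookkeeping: for each subset $I = \{k_1 < \cdots < k_r\}$, only the \emph{trailing} factor $S_{j-k_r}$ is updated by the recurrence ($S_{j-k_r} \mapsto S_{j+1-k_r}$), while the ``internal'' factors $V_{k_1-1}$ and $S_{k_i - k_{i-1} - 1}$ stay frozen. One must verify carefully that the endpoint cases $k_r = j$ (where $c_I(j-1) = 0$ and the update reduces to $(1+2N)\cdot S_0 = S_1$) and $k_r = j+1$ (handled entirely by the $\eps_{j+1} R_j$ contribution) are absorbed correctly, so that no extra or missing monomials appear. Once this term-by-term matching is checked against the $V$- and $S$-identities, the induction closes.
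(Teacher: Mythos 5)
Your proof follows exactly the paper's strategy: derive the second-order recurrence $\Ga_{J-2+j+1} = (1 + 2N + \eps_{j+1})\Ga_{J-2+j} - 2N\,\Ga_{J-2+j-1}$, verify the base cases, and close the induction using Lemma~\ref{hGForm} (your $S$-recurrence) together with the companion $V$-recurrence $V_{m+1} = (1+2N)V_m - 2N V_{m-1}$. The paper compresses the inductive step to ``substituting the formulas and simplifying gives the result,'' whereas you spell out the combinatorial bookkeeping --- organizing the expansion by the subsets $I$ indexing $\prod_{k\in I}\eps_k$, observing that only the trailing $S$-factor is updated by the recurrence, and checking the endpoint cases $\max(I) = j$ and $\max(I) = j+1$. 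Recording the $V$-identity explicitly (the paper leaves it implicit) is a useful addition; this is a more complete account of the same argument.

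One subtlety affects your write-up and the paper's proof alike. You justify the recurrence for all $j \ge 1$ from $\be_{J-1+j} = \tfrac{4}{3} - \tfrac{2N}{3\ga_{J-2+j}}$, which holds only when $\be_{J-2+j} \le \ell_{J-2+j}$, i.e.\ for $j \ge 2$. At $j = 1$ one has $\be_{J-1} = h_{J-1} > \ell_{J-1}$, so the construction defines $\be_J = 2 - \tfrac{2P}{\ga_{J-1}}$ via the first branch, giving $\ga_J = 3 + 2N + 3\de_J - \tfrac{6P}{\ga_{J-1}}$, which exceeds $1 + 2N + \eps_2 - \tfrac{2N}{\ga_{J-1}}$ by $\tfrac{2\om_{J-1}}{\ga_{J-1}}$; the claimed identity is therefore off by $2\om_{J-1}$ already at $j = 2$, and the excess propagates. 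This is harmless for the intended application --- the identity is immediately weakened to the upper bound of Corollary~\ref{GaEstC3Cor}, where $\om_{J-1}\ll\de_{J-1}$ absorbs the slack --- but as a statement of exact equality the lemma, the paper's proof, and your proof all gloss over this $j=1$ boundary term.
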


Since $\om_{J-1} << \de_{J-1}$ in Case 3, then we get the following corollary.

\begin{cor}
With the notation as in the statement of Lemma \ref{GaEstC3} and $a = 3$, for all $j \ge 1$,
$$\frac{\Ga_{J-2+j}}{\Ga_{J-2}} \le V_j + a\sum_{1 \le k \le j}V_{k-1}S_{j-k}\de_{J-2+k} + a^2\sum_{1 \le k < \ell \le j}V_{k-1}S_{\ell - k -1}S_{j-\ell}\de_{J-2+k}\de_{J-2+\ell} + \ldots + a^j\de_{J-1}\de_{J}\ldots\de_{J-2+j}.$$
\label{GaEstC3Cor}
\end{cor}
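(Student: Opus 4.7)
The plan is to derive the corollary directly from the equality in Lemma \ref{GaEstC3} by replacing each $\eps_k$ with an upper bound of the form $a\,\de_{J-2+k}$, with $a = 3$. Since the right-hand side of Lemma \ref{GaEstC3} is a sum of products of $\eps_k$'s (each term of degree $s$ being homogeneous of degree $s$ in the $\eps$'s), any uniform bound $\eps_k \le a\,\de_{J-2+k}$ transfers through, giving each degree-$s$ term a prefactor of $a^s$ and turning every $\eps_k$ into $\de_{J-2+k}$, which is exactly the form claimed in the corollary.

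For $k \ge 2$, the inequality $\eps_k \le a\,\de_{J-2+k}$ is immediate, in fact an equality, since by definition $\eps_k = 3\,\de_{J-2+k} = a\,\de_{J-2+k}$. The only nontrivial step is $k=1$: we need to check that $\eps_1 = \om_{J-1} \le 3\,\de_{J-1} = a\,\de_{J-1}$. This is where the hypothesis $T_1 \gg 1$ (in Case 3) enters. From the defining equation \eqref{omDef}, $\om_{j} = \log\pr{1 + T_j^{1 - (3P-N)}}/\log T_j = \log\pr{1 + T_j^{-\De}}/\log T_j$, so for $T_{J-1}$ large we have $\om_{J-1} \lesssim T_{J-1}^{-\De}/\log T_{J-1}$, whereas by \eqref{epsEst}, $\de_{J-1} \gtrsim \log\log T_{J-1}/\log T_{J-1}$. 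Thus $\om_{J-1}/\de_{J-1} \to 0$, and in particular $\om_{J-1} \le 3\,\de_{J-1}$ once $T_1$ is sufficiently large (recall $T_{J-1} \ge T_1$). This is precisely the content of the parenthetical remark ``$\om_{J-1} \ll \de_{J-1}$ in Case 3'' preceding the corollary.

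With $\eps_k \le a\,\de_{J-2+k}$ established for every $k \ge 1$, the corollary follows by substituting these bounds into the formula of Lemma \ref{GaEstC3}. Since every summand is a product of nonnegative quantities (recall $V_\ell, S_\ell > 0$ and all $\eps_k \ge 0$), the inequalities preserve through the sums, and each degree-$s$ summand $V_{k_1 - 1}S_{k_2 - k_1 - 1}\cdots S_{j - k_s}\,\eps_{k_1}\cdots\eps_{k_s}$ is dominated by $a^s V_{k_1 - 1}S_{k_2 - k_1 - 1}\cdots S_{j - k_s}\,\de_{J-2+k_1}\cdots\de_{J-2+k_s}$, producing exactly the right-hand side of the claimed inequality. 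The only obstacle is the comparison of $\om_{J-1}$ with $\de_{J-1}$, and that is handled by the large-$T_1$ assumption as above.
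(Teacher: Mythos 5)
Your proof is correct and takes essentially the same approach as the paper: the paper dispatches this corollary with the one-line remark that $\om_{J-1} \ll \de_{J-1}$ in Case 3 and then substitutes into the exact formula of Lemma \ref{GaEstC3}, which is exactly what you spell out (including the verification via the defining relation (\ref{omDef}) and the estimate (\ref{epsEst}) that $\om_{J-1}/\de_{J-1} \to 0$ as $T_1 \to \infty$, and the observation that $\eps_k = 3\,\de_{J-2+k}$ exactly for $k \ge 2$).
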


For simplicity, we will let $Q = P$ in Case 1 and $Q = N$ in Case 2 and 3.  A straightforward computation gives the following lemma.

\begin{lem}
For any $j \in \N$, $S_jS_1 - S_{j-1}2Q = S_{j+1}$.
\label{hGForm}
\end{lem}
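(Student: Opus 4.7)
The plan is to prove this by direct algebraic manipulation, since $S_j$ is defined by an explicit geometric-sum formula. Recall that $S_k = 1 + 2Q + (2Q)^2 + \cdots + (2Q)^k$ and $S_1 = 1 + 2Q$, so the left-hand side expands as
\begin{equation*}
S_j S_1 - 2Q\, S_{j-1} = S_j + 2Q\, S_j - 2Q\, S_{j-1} = S_j + 2Q\,(S_j - S_{j-1}).
\end{equation*}

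The key observation is that $S_j - S_{j-1} = (2Q)^j$, which is immediate from the definition of $S_j$ as a geometric sum. Substituting this back gives
\begin{equation*}
S_j + 2Q \cdot (2Q)^j = S_j + (2Q)^{j+1} = 1 + 2Q + (2Q)^2 + \cdots + (2Q)^j + (2Q)^{j+1} = S_{j+1},
\end{equation*}
which is precisely the desired identity. There is no real obstacle here — the lemma is essentially just a restatement of the telescoping property of a geometric series, and the only step that requires any thought is noticing to group $S_j S_1 - 2Q\, S_{j-1}$ as $S_j + 2Q(S_j - S_{j-1})$. One could alternatively proceed by induction on $j$ (base case $j=1$: $S_1 S_1 - 2Q = (1+2Q)^2 - 2Q = 1 + 2Q + (2Q)^2 = S_2$; inductive step using $S_{j+1} = S_j + (2Q)^{j+1}$), but the direct computation is cleaner and more transparent.
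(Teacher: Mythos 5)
Your proof is correct, and it is exactly the kind of ``straightforward computation'' the paper itself invokes without bothering to write out (the lemma appears in Appendix C with no proof given). The telescoping observation $S_j - S_{j-1} = (2Q)^j$ is the right step, and the base-case/inductive alternative you mention is also fine; there is nothing to add.
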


\begin{proof}[Proof of Lemma \ref{GaEst}]
We will present the proof for Case 1.  Case 2 is similar.  We will proceed by induction.  \\
For $j = 1$, we get $\Ga_1 = S_1 + \de_1 = 1 + 2P + \de_1$, as required. \\
Assume the given formula holds for all $k \le j$. \\
By definition, $\ga_{j+1} = \be_{j+1} - 1 + 2P + \de_{j+1} = 2 - \frac{2P}{\ga_j} -1 + 2P+ \de_{j+1}$, so $\Ga_{j+1} = \Ga_j\pr{S_1 + \de_{j+1}} - \Ga_{j-1}2P$.  Substituting the formulas for $\Ga_j$ and $\Ga_{j-1}$, we get 
\begin{align*}
\Ga_{j+1} &= \Ga_j\pr{S_1 + \de_{j+1}} - \Ga_{j-1}2P \\
&= \pr{S_j + \sum_{1\le k\le j} S_{k-1}S_{j-k}\de_k + \sum_{1\le k < \ell \le j}S_{k-1}S_{\ell -k -1}S_{j-\ell}\de_k\de_\ell + \ldots + \de_1\ldots\de_j}\pr{S_1+ \de_{j+1}} \\
& - \pr{S_{j-1} + \sum_{1\le k\le j-1} S_{k-1}S_{j-1-k}\de_k + \sum_{1\le k < \ell \le j-1}S_{k-1}S_{\ell -k -1}S_{j-1-\ell}\de_k\de_\ell + \ldots + \de_1\ldots\de_{j-1}}2P 
\end{align*}
Simplifications and applications of Lemma \ref{hGForm} give the required formula.
\end{proof}

\begin{proof}[Proof of Lemma \ref{GaEstC3}]
Again, we proceed by induction. \\
For $j=1$, $\disp \frac{\Ga_{J-2+1}}{\Ga_{J-2}} = \ga_{J-1} = h_{J-1} - 1 + 2P + \de_{J-1} = 3P - N + \om_{J-1} = 1 + \De + \eps_1 = V_1 + \eps_1$, as required. \\
Assume that the formula above holds for all $k \le j$. \\
As before, $\Ga_{J-2+j+1} = \Ga_{J-2+j}\pr{1 + 2N + \eps_{j+1}} - \Ga_{J-2+j-1}2N$.  Substituting the formulas for $\disp \frac{\Ga_{J-2+j}}{\Ga_{J-2}}$ and $\disp \frac{\Ga_{J-2+j-1}}{\Ga_{J-2}}$ and simplifying, we get
\begin{align*}
\frac{\Ga_{J+j+1}}{\Ga_{J-2}} &= \pr{V_j + \sum_{1 \le k \le j}V_{k-1}S_{j-k}\eps_k + \sum_{1 \le k < \ell \le j}V_{k-1}S_{\ell - k -1}S_{j-\ell}\eps_k\eps_\ell  + \ldots + \eps_1\eps_2\ldots\eps_j}\pr{1 + 2N + \eps_{j+1}}\\
& - \pr{V_{j-1} + \sum_{1 \le k \le j-1}V_{k-1}S_{j-1-k}\eps_k + \sum_{1 \le k < \ell \le j-1}V_{k-1}S_{\ell - k -1}S_{j-1-\ell}\eps_k\eps_\ell  + \ldots + \eps_1\eps_2\ldots\eps_{j-1}}2N \\
\end{align*}
Applying Lemma \ref{hGForm} and simplifying gives the result.
\end{proof}

We would like an upper bound for $\Ga_j$ when $\be_c < 1$.

\begin{lem}
Suppose $\be_c < 1$ and $T_1 >> 1$.  If we are in Case 1 or 2, then $\Ga_j \le C S_j$.  If we are in Case 3, then $\disp \frac{\Ga_{J-2+j}}{\Ga_{J-2}} \le C V_j $.  In all cases, $C$  is a constant that depends on $N$, $P$ and $\de_1$.
\label{hGUB}
\end{lem}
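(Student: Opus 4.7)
The plan is to work directly from the explicit expansions of $\Ga_j$ given by Lemma \ref{GaEst} and Corollary \ref{GaEstC3Cor}, and exploit the fact that $\be_c < 1$ is precisely the condition $2Q > 1$, where $Q = P$ in Case 1 and $Q = N$ in Cases 2 and 3. Under this condition $S_k$ grows geometrically, with the trivial bracketing
$$ (2Q)^k \;\le\; S_k \;\le\; \frac{(2Q)^{k+1}}{2Q-1}. $$

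The key combinatorial observation in Cases 1 and 2 is that in the $r$-th summand of Lemma \ref{GaEst}, the $r+1$ factors $S_{k_1-1},S_{k_2-k_1-1},\ldots,S_{j-k_r}$ have indices summing to exactly $j-r$, so
$$ S_{k_1-1}\cdots S_{j-k_r} \;\le\; \frac{(2Q)^{(j-r)+(r+1)}}{(2Q-1)^{r+1}} \;=\; \frac{2Q}{(2Q-1)^{r+1}}(2Q)^j \;\le\; \frac{2Q}{(2Q-1)^{r+1}}S_j. $$
Combining this with $\sum_{k_1<\cdots<k_r}\de_{k_1}\cdots\de_{k_r}\le \tfrac{1}{r!}\pr{\sum_k\de_k}^r$ and summing over $r$ gives
$$ \Ga_j \;\le\; \frac{2Q}{2Q-1}\,S_j\,\exp\!\pr{\frac{a\sum_{k=1}^{j}\de_k}{2Q-1}}. $$
It then remains to check that $\sum_k\de_k$ is bounded uniformly in $j$ by a quantity depending only on $N$, $P$, $\de_1$. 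Since the expansion shows $\Ga_{k-1}\ge S_{k-1}\ge (2Q)^{k-1}$, one has $\log T_k = \Ga_{k-1}\log T_1 \ge (2Q)^{k-1}\log T_1$, and inserting this into \eqref{epsEst} yields $\de_k\lesssim \pr{(k-1)\log(2Q)+\log\log T_1}/\brac{(2Q)^{k-1}\log T_1}$. Summing the two resulting geometric series gives $\sum_k\de_k\lesssim \de_1$ with constants depending only on $Q$, producing the desired $C=C(N,P,\de_1)$.

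For Case 3, I would run the same argument on Corollary \ref{GaEstC3Cor}. The factor $V_a = 1+\De S_{a-1}$ satisfies $\De(2N)^{a-1}\le V_a\le 1+\tfrac{\De}{2N-1}(2N)^a$ for $a\ge 1$, so it is comparable to $S_{a-1}$ up to $(N,\De)$-constants. The same index count (now with one $V$-factor replacing one $S$-factor, total index still $j-r$) gives
$$ V_{k_1-1}S_{k_2-k_1-1}\cdots S_{j-k_r} \;\le\; \frac{C(N,\De)}{(2N-1)^{r+1}}\,V_j, $$
and the lower bound $\Ga_{J-2+k-1}/\Ga_{J-2}\ge V_{k-1}\ge \De(2N)^{k-2}$ (read off the same formula) produces the geometric decay of $\de_{J-2+k}$ needed to sum the exponential series as before.

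The calculation is essentially bookkeeping; the only real content is the index-sum identity together with $2Q>1$, which makes every geometric series convergent. The mildly subtle step is the mixed $V$--$S$ product in Case 3, which must be estimated from above (to control each summand by a multiple of $V_j$) and from below (to get the required decay of $\de_{J-2+k}$); otherwise the three cases follow one template.
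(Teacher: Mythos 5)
Your proof is correct and takes a genuinely different, and in fact cleaner, route than the paper's. The paper bounds each multi-sum in Lemma~\ref{GaEst} term by term using nearest-neighbor pairings $S_{k-1}\de_k$ controlled by Lemma~\ref{aBL1}, ratio estimates $\de_\ell/\de_{\ell-k}$ from Lemma~\ref{aBL2}, and the weighted sum estimate of Lemma~\ref{aBL3}; this is an intricate chain of telescoping products that eventually yields a geometric series in the number $n$ of $\de$-factors, summable only under the smallness condition $\de_1 < (2Q-1)^2/(acd)$. You instead estimate the $S$-factors and the $\de$-factors separately and globally: (i) the index-sum identity $\sum(\text{indices})=j-r$ combined with the two-sided geometric bracketing $(2Q)^k \le S_k \le (2Q)^{k+1}/(2Q-1)$ bounds each $S$-product by $\frac{2Q}{(2Q-1)^{r+1}}S_j$; (ii) the elementary-symmetric-polynomial inequality $e_r(\de_1,\dots,\de_j)\le(\sum\de_k)^r/r!$ then collapses the $r$-th layer and summing over $r$ produces the exponential form $\Ga_j\le\frac{2Q}{2Q-1}S_j\exp(a\sum\de_k/(2Q-1))$; (iii) the crude lower bound $\log T_k\ge(2Q)^{k-1}\log T_1$ and the monotonicity of $x\mapsto(\log x)/x$ (the same device the paper uses implicitly in its estimate $\de_{k+1}\le\tfrac{2}{\sqrt{3}}(\log\log T_1+\log\hG_k)/(\hG_k\log T_1)$) give $\sum\de_k\lesssim\de_1$ with $Q$-dependent constants. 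The Case~3 adaptation via the two-sided bounds on $V_a$ is also sound. What your version buys is transparency and the removal of the explicit $\de_1$-smallness threshold from the heart of the estimate; what the paper's version buys is closer compatibility with the sharper, term-level estimates (Lemmas~\ref{aBL1}--\ref{aBL3}) that are reused elsewhere, e.g.~in the lower-bound Lemmas~\ref{C12B1}--\ref{C3B1} and the $\be_c=1$ discussion, where a global symmetric estimate would not directly help. One expository remark: when you write ``inserting this into \eqref{epsEst},'' it is worth making explicit that the step relies on the decrease of $(\log x)/x$ for $x>e$, since $\Ga_{k-1}\ge(2Q)^{k-1}$ makes the numerator $\log\log T_k$ a priori larger than $(k-1)\log(2Q)+\log\log T_1$, and it is only the ratio that is controlled.
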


To prove this lemma, we first need a few technical facts.

\begin{lem}
Suppose $\be_c < 1$, $\disp \log\log T_1 \ge 2\max\set{\log(2Q), \log\pr{\frac{1}{2Q-1}}}$ and that we are in either Case 1 or Case 2.  For any $k \ge 0$, $S_{k}\de_{k+1} \le d \pr{k+1} \de_1$, where $d$ is a constant.
\label{aBL1}
\end{lem}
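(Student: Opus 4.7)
The plan is to exploit the closed-form expression for $S_k$ that is available when $\be_c < 1$ (equivalently $2Q>1$), namely $S_k = \frac{(2Q)^{k+1}-1}{2Q-1}$, together with the comparison $\Ga_k \ge \hat\Ga_k = S_k$. This comparison holds because in Cases 1 and 2 we always have $\ga_j = \hat\ga_j + a\de_j \ge \hat\ga_j$, and a short induction (using $\hat\be_{j+1} = 2 - 2Q/\hat\ga_j$) verifies that $\hat\Ga_j = \hat\ga_1\cdots\hat\ga_j = S_j$; this is essentially the telescoping already used to derive (\ref{hatGamP}) and (\ref{hatGamN}).

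From $T_{k+1} = T_1^{\Ga_k} \ge T_1^{S_k}$ I would then take logarithms to obtain $\log T_{k+1} \ge S_k \log T_1$. Applying the defining relation (\ref{Tjgaj}) for $k+1 \ge 2$,
\[
\de_{k+1} = \frac{\log\bigl(\tfrac{27}{8c_n}\log T_{k+1}\bigr)}{\log T_{k+1}} \le \frac{\log\log T_{k+1} + C}{\log T_{k+1}},
\]
so
\[
S_k\de_{k+1} \le \frac{\log S_k + \log\log T_1 + C}{\log T_1}.
\]

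Next I would estimate $\log S_k$. The geometric series formula gives $S_k \le \frac{(2Q)^{k+1}}{2Q-1}$, hence $\log S_k \le (k+1)\log(2Q) + \log\frac{1}{2Q-1}$. The hypothesis $\log\log T_1 \ge 2\max\bigl\{\log(2Q),\log\tfrac{1}{2Q-1}\bigr\}$ therefore yields
\[
\log S_k + \log\log T_1 \le (k+1)\tfrac{\log\log T_1}{2} + \tfrac{\log\log T_1}{2} + \log\log T_1 \le 2(k+1)\log\log T_1,
\]
so that $S_k\de_{k+1} \le \frac{(2(k+1)\log\log T_1 + C)}{\log T_1}$. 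Comparing with (\ref{epsEst}), which gives $\de_1 \asymp \frac{\log\log T_1}{\log T_1}$, the right-hand side is bounded by $d(k+1)\de_1$ for some constant $d$ (absorbing the additive $C$ by taking $T_1$ large, and absorbing the mildly different form (\ref{T1ga1}) of $\de_1$ into $d$). The case $k=0$ is trivial since $S_0 = 1$.

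The main obstacle is essentially bookkeeping: one must be sure that the inductive comparison $\Ga_k \ge S_k$ is robust to the presence of the $\de_j$-corrections (which only help, since they are positive), and one must track the difference between the formulas (\ref{T1ga1}) and (\ref{Tjgaj}) when comparing $\de_{k+1}$ to $\de_1$. Both issues are handled by taking $T_1$ sufficiently large in a universal sense (which is already the standing assumption throughout the iteration argument), and the resulting constant $d$ then depends only on $N$, $P$, and $c_n$.
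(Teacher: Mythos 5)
Your proof is correct and follows essentially the same route as the paper's: $\Ga_k \geq S_k$ gives $\log T_{k+1} \geq S_k\log T_1$, monotonicity of $x \mapsto (\log x + C)/x$ for large $x$ yields the stated bound on $S_k\de_{k+1}$, and the hypothesis on $\log\log T_1$ together with the geometric-series bound $\log S_k \le (k+1)\log(2Q) + \log\frac{1}{2Q-1}$ and the comparison $\de_1 \asymp \frac{\log\log T_1}{\log T_1}$ from (\ref{epsEst}) close the argument (the paper obtains $\Ga_k > S_k$ by citing Lemma \ref{GaEst} rather than re-deriving it). The only slip is the claimed equality $\ga_j = \hg_j + a\de_j$: in the Section \ref{Proof1a} notation $\hg_j$ is built from $\hat\be_j$ rather than $\be_j$, so one actually has $\ga_j - \hg_j = a(\be_j - \hat\be_j) + a\de_j \geq a\de_j$; since your induction only requires $\ga_j \geq \hg_j$, this does not affect the argument.
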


\begin{proof}
If $k = 0$, then this is clearly true. For $k \ge 1$, by definition and Lemma \ref{GaEst}, $T_{k+1} = T_1^{\Ga_k} > T_1^{S_k}$.  Therefore, 
$$\frac{\log\log T_{k+1}}{\log T_{k+1}} < \frac{\log\log T_1 + \log S_k}{S_k\log T_1}= \frac{\log\log T_1}{S_k\log T_1}\pr{1 + \frac{\log S_k}{\log\log T_1}}.$$
Since $\log S_k < \pr{k+1}\log\pr{2Q} + \log\pr{\frac{1}{2Q-1}}$ and $\disp \de_{\ell} \sim \frac{\log\log T_{\ell}}{\log T_{\ell}}$, then by our assumption on $T_1$, the result follows.
\end{proof}

\begin{lem}
Suppose $\be_c < 1$, $\disp \log\log T_{J-1} \ge 2\max\set{\log(2N), \log\pr{\frac{1}{2N-1}} + \log(1 + \De)}$ and that we in Case 3.  
\begin{itemize}
\item[(a)] For any $k \ge 0$, $V_{k}\de_{J-2+k+1} \le d \pr{k+1} \de_{J-1}$;
\item[(b)] for any $k \ge 1$, $S_{k-1}\de_{J-2+k+1} \le d \pr{k+1} \de_{J-1}$, 
\end{itemize}
where $d$ is a constant.
\label{aBL1C3}
\end{lem}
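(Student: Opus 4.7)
The plan is to mimic the proof of Lemma \ref{aBL1}, but shifted so that $T_{J-1}$ plays the role of $T_1$ and $V_j$ plays the role of $S_k$. The starting observation comes from Lemma \ref{GaEstC3}: since every cross term in the formula for $\Ga_{J-2+j}/\Ga_{J-2}$ involves a product of nonnegative $\eps_\ell$'s, we have the clean lower bound
\[
\frac{\Ga_{J-2+j}}{\Ga_{J-2}} \ge V_j.
\]
Exponentiating base $T_1$ and using $T_{J-1} = T_1^{\Ga_{J-2}}$ turns this into
\[
T_{J-2+j+1} = T_1^{\Ga_{J-2+j}} \ge T_{J-1}^{V_j},
\]
which is the analog of $T_{k+1} \ge T_1^{S_k}$ used in Lemma \ref{aBL1}.

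To prove (a), I will combine this with the exact identity $T_\ell^{\de_\ell} = \frac{27}{8c_n}\log T_\ell$ from (\ref{Tjgaj}) and the fact that $f(T) := (\log\log T + C)/\log T$ is decreasing for all sufficiently large $T$. Applying this monotonicity at $T_{J-2+j+1} \ge T_{J-1}^{V_j}$ yields
\[
\de_{J-2+j+1} \;\le\; \frac{\log V_j + \log\log T_{J-1} + C}{V_j\, \log T_{J-1}}.
\]
Multiplying by $V_j$ and using $\de_{J-1} \asymp \log\log T_{J-1}/\log T_{J-1}$, the inequality reduces to controlling the single term $\log V_j/\log T_{J-1}$; everything else is already $O(\de_{J-1})$.

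For part (b), once (a) is in hand the estimate is immediate: the definition $V_k = 1 + S_{k-1}\De$ gives $S_{k-1} \le V_k/\De$ whenever $k\ge 1$, and since $\De$ depends only on $N$ and $P$, we absorb $1/\De$ into the final constant $d$.

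The main technical obstacle is the bound $\log V_j/\log\log T_{J-1} \lesssim j+1$, and this is exactly what the hypothesis on $T_{J-1}$ is designed for. Writing $V_j = 1 + S_{j-1}\De \le (1+\De)S_{j-1}$, we get $\log V_j \le j\log(2N) + \log(1+\De) + \log\tfrac{1}{2N-1}$, and the assumption $\log\log T_{J-1} \ge 2\max\{\log(2N),\ \log\tfrac{1}{2N-1}+\log(1+\De)\}$ forces $\log V_j/\log\log T_{J-1} \le (j+1)/2$. Combining this with the display above yields $V_j\,\de_{J-2+j+1} \le d(j+1)\de_{J-1}$ for a universal constant $d$, completing (a) and, via the reduction above, (b).
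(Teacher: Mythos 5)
Your proof is correct and follows essentially the same approach as the paper's: lower-bound $T_{J-2+k+1}$ by $T_{J-1}^{V_k}$ via Lemma \ref{GaEstC3}, exploit the monotonicity of $\log\log T/\log T$, and use the largeness hypothesis on $T_{J-1}$ to control $\log V_k/\log\log T_{J-1}$ by $(k+1)/2$. The only (minor) difference is that you derive part (b) from part (a) via $S_{k-1}\le V_k/\Delta$, whereas the paper proves (b) directly from the parallel bound $T_{J-2+k+1}>T_{J-1}^{S_{k-1}\Delta}$; both lose the same factor of $1/\Delta$, which is absorbed into $d$.
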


\begin{proof}
If $k = 0$, since $V_0 = 1$, the claimed result holds.  Suppose $k \ge 1$.  By definition and Lemma \ref{GaEstC3}, $T_{J-2+k+1} = T_{J-1}^{\ga_{J-1}\ldots\ga_{J-2+k}}= T_{J-1}^{\frac{\Ga_{J-2+k}}{\Ga_{J-2}}} > T_{J-1}^{V_k}$.  Since $V_k = 1 + S_{k-1}\De > S_{k-1}\De$, then $T_{J-2+k+1}> T_{J-1}^{S_{k-1}\De}$ as well.  Therefore, 
\begin{align*}
& \frac{\log\log T_{J-2+k+1}}{\log T_{J-2+k+1}} < \frac{\log\log T_{J-1} + \log V_k}{V_k\log T_{J-1}}= \frac{\log\log T_{J-1}}{V_k\log T_{J-1}}\pr{1 + \frac{\log V_k}{\log\log T_{J-1}}}, \;\textrm{and} \\
& \frac{\log\log T_{J-2+k+1}}{\log T_{J-2+k+1}} < \frac{\log\log T_{J-1} + \log \pr{S_{k-1}\De}}{S_{k-1}\De\log T_{J-1}}= \frac{\log\log T_{J-1}}{S_{k-1}\De\log T_{J-1}}\pr{1 + \frac{\log \pr{S_{k-1}\De}}{\log\log T_{J-1}}}
\end{align*}
Since $\log\pr{S_{k-1}\De} < \log V_k \le \log\brac{S_{k-1}(1+\De)}< k\log\pr{2N} + \log\pr{\frac{1}{2N-1}} + \log\pr{1 + \De}$ and $\disp \de_{\ell} \sim \frac{\log\log T_{\ell}}{\log T_{\ell}}$, then by our assumption on $T_{J-1}$, the result follows. \\
\end{proof}

\begin{lem}
Suppose $\be_c < 1$ and $\log\log T_1 \ge \log\pr{2Q}$.  If $\ell > k$, then $\disp \frac{\de_\ell}{\de_{\ell - k}} \le c\pr{\frac{1}{2Q}}^k\frac{\ell}{\ell - k}$, where $c$ is a constant.
\label{aBL2}
\end{lem}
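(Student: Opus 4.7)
The plan is to reduce the asserted estimate to two-sided control of the product $A := \ga_{\ell-k}\ga_{\ell-k+1}\cdots\ga_{\ell-1}$, which links $T_\ell$ to $T_{\ell-k}$ through $T_\ell = T_{\ell-k}^A$.

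I would start by invoking the equivalence $\de_j \asymp \frac{\log\log T_j}{\log T_j}$ from (\ref{epsEst}) to write
$$\frac{\de_\ell}{\de_{\ell-k}} \;\asymp\; \frac{\log\log T_\ell}{\log\log T_{\ell-k}}\cdot\frac{\log T_{\ell-k}}{\log T_\ell} \;=\; \frac{1}{A}\pr{1 + \frac{\log A}{\log\log T_{\ell-k}}},$$
the last equality using $\log\log T_\ell = \log A + \log\log T_{\ell-k}$. This cleanly splits the target inequality into a multiplicative factor $A^{-1}$, which should absorb the $(2Q)^{-k}$ decay, and a logarithmic correction, which should absorb the factor $\ell/(\ell-k)$.

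Two-sided bounds on $A$ come from an elementary induction on the recursion (\ref{gamDef}). Since $\be_c < 1$ forces $2Q > 1$ (with $Q=P$ in Case 1 and $Q=N$ in Cases 2-3) and $\ga_1 \ge 2Q$, the implication ``$\ga_{j-1}\ge 2Q \Rightarrow \be_j \ge 1 \Rightarrow \ga_j \ge 2Q$'' propagates $\ga_j \ge 2Q$ through every $j$; the monotonicity of $\{\be_j\}$ supplies a matching upper bound $\ga_j \le C(N,P)$. Hence $(2Q)^k \le A \le C^k$. For the denominator $\log\log T_{\ell-k}$ I would appeal to Lemma \ref{GaEst} (or its Case-3 analogue Lemma \ref{GaEstC3}): positivity of every $\de$-contribution gives $\Ga_{\ell-k-1} \ge S_{\ell-k-1} \ge (2Q)^{\ell-k-1}$, so
$$\log\log T_{\ell-k} \;\ge\; (\ell-k-1)\log(2Q) + \log\log T_1 \;\ge\; (\ell-k)\log(2Q),$$
using the standing hypothesis $\log\log T_1 \ge \log(2Q)$. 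Substituting these estimates yields
$$\frac{\de_\ell}{\de_{\ell-k}} \;\le\; c(2Q)^{-k}\pr{1 + \frac{k\log C}{(\ell-k)\log(2Q)}} \;\le\; c'(2Q)^{-k}\frac{\ell}{\ell-k},$$
which is the claim.

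The main obstacle I anticipate lies in the Case 3 scenario, where the $\ga_j$-recursion switches form at $j=J$ and the pre-switch lower bound $\ga_j \ge 2P$ must be reconciled with the post-switch target $\ga_j \ge 2N$. Here one uses the Case-3 hypothesis $P > N$ available from Lemma \ref{JLem} so that $2P \ge 2N$, and at the transition itself verifies $\ga_J \ge 2N$ from the hybrid formula $\ga_J = 3-6P/\ga_{J-1} + 2N + 3\de_J$ together with $\ga_{J-1} = 3P - N + \om_{J-1} \ge 2P$. Beyond $J$ the Case-2 induction takes over and the argument proceeds uniformly; there is no comparable subtlety in the pure Case 1 or Case 2 setting.
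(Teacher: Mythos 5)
Your proof is correct, and the skeleton — reduce to $\de_j\asymp\frac{\log\log T_j}{\log T_j}$, use the identity $T_\ell = T_{\ell-k}^{A}$ with $A=\ga_{\ell-k}\cdots\ga_{\ell-1}$, and bound $\log\log T_{\ell-k}\gtrsim(\ell-k)\log(2Q)$ via the standing hypothesis on $\log\log T_1$ — is the same as the paper's. The one place you diverge is in how $A$ enters. The paper needs only the \emph{lower} bound $A>(2Q)^k$: it observes $T_\ell>T_{\ell-k}^{(2Q)^k}$ and then invokes the monotone decrease of $T\mapsto\frac{\log\log T}{\log T}$ to pass immediately to
$\frac{\log\log T_\ell}{\log T_\ell}<(2Q)^{-k}\frac{\log\log T_{\ell-k}+k\log(2Q)}{\log T_{\ell-k}}$,
so no upper control on $A$ is required. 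You instead expand $\log\log T_\ell=\log A+\log\log T_{\ell-k}$ exactly and then must bound the correction $\log A$ from above, which forces you to also establish $\ga_j\le C(N,P)$. That upper bound is indeed available (from $\be_j\le\be_1\le 2$ and $\de_j,\om_j\le 1$), so your argument closes, but it is a genuine extra ingredient — the paper's monotonicity trick is the cleaner route. Your Case-3 paragraph, checking $\ga_J>2N$ across the switch and noting $P>N$ so the pre-switch bound $\ga_j>2P$ is stronger than $2N$, is a useful elaboration of a step the paper leaves implicit in the chain $T_\ell=T_{\ell-k}^{\ga_{\ell-k}\cdots\ga_{\ell-1}}>T_{\ell-k}^{(2Q)^k}$.
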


\begin{proof}
By definition, $T_\ell = T_{\ell - k}^{\ga_{\ell-k}\ldots\ga_{\ell -1}} > T_{\ell - k}^{\pr{2Q}^k}$.  Thus,
$$\frac{\log\log T_{\ell}}{\log T_\ell} < \pr{\frac{1}{2Q}}^k \frac{\log\log T_{\ell - k} + k\log\pr{2Q}}{\log T_{\ell - k}} = \pr{\frac{1}{2Q}}^k \frac{\log\log T_{\ell - k}}{\log T_{\ell - k}}\pr{1 + \frac{k\log\pr{2Q}}{\log\log T_{\ell - k}}}. $$
Since $\disp T_{\ell - k} = T_1^{\ga_1\ldots \ga_{\ell - k -1}} > T_1^{\pr{2Q}^{\ell - k -1}}$, then $\log\log T_{\ell - k} > \pr{\ell - k -1}\log(2Q) + \log\log T_1 \ge \pr{\ell- k}\log(2Q)$, by the assumption on $T_1$.  Since $\disp \de_{k} \sim \frac{\log\log T_{k}}{\log T_{k}}$ for any $k$, the result follows.
\end{proof}

\begin{cor}
Suppose we are in Case 3 with $\be_c < 1$, $\log\log T_{J-1} \ge \log\pr{2N}$.  If $\ell > k$, then $\disp \frac{\de_{J-2+\ell}}{\de_{J-2 +\ell - (k-1)}} \le c\pr{\frac{1}{2N}}^{k-1}\frac{\ell}{\ell - k+1}$, where $c$ is a constant.
\label{aBL2Cor}
\end{cor}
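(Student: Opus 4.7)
The plan is to adapt the argument of Lemma \ref{aBL2} to the shifted Case~3 setting. The key input is Corollary \ref{decLemCor}: for every $j \ge J$, $\be_j \le \ell_j$, so $\ga_j = 3(\be_j - 1) + 2N + 3\de_j \ge 2N$. Thus the tail $\{T_{J-1+j}\}_{j\ge 0}$ behaves like a Case~2 sequence with $Q=N$, except at the single transitional factor $\ga_{J-1}$.

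First I would write $T_{J-2+\ell} = T_{J-1+\ell-k}^{\,G}$, where $G = \ga_{J-1+\ell-k}\cdots\ga_{J-3+\ell}$ is a product of $k-1$ factors. Since $\ell>k$ forces the smallest index $J-1+\ell-k \ge J$, each factor is $\ge 2N$, so $G \ge (2N)^{k-1}$. The monotonicity argument from the proof of Lemma \ref{aBL2} (the map $G\mapsto (\log G + c)/G$ is decreasing for large $G$, applied with $c=\log\log T_{J-1+\ell-k}$) then yields
\[
\frac{\log\log T_{J-2+\ell}}{\log T_{J-2+\ell}} \le \frac{1}{(2N)^{k-1}}\cdot \frac{\log\log T_{J-1+\ell-k} + (k-1)\log(2N)}{\log T_{J-1+\ell-k}}.
\]

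Next I would lower-bound $\log\log T_{J-1+\ell-k}$. Writing $T_{J-1+\ell-k} = T_{J-1}^{\ga_{J-1}\ga_J\cdots\ga_{J-2+\ell-k}}$, the exponent is a product of $\ell-k$ factors: $\ga_{J-1} = 1+\De+\om_{J-1} \ge 1$, while the remaining $\ell-k-1$ factors each satisfy $\ga_j \ge 2N$. Combined with the hypothesis $\log\log T_{J-1} \ge \log(2N)$, this gives $\log\log T_{J-1+\ell-k} \ge (\ell-k-1)\log(2N) + \log\log T_{J-1} \ge (\ell-k)\log(2N)$. Hence the bracketed factor above is at most $1 + (k-1)/(\ell-k) = (\ell-1)/(\ell-k)$. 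An elementary check of $(\ell-1)(\ell-k+1) \le 2\ell(\ell-k)$ for $\ell > k \ge 1$ shows $(\ell-1)/(\ell-k) \le 2\ell/(\ell-k+1)$; absorbing the factor $2$ into $c$ and translating back to the $\de$'s via (\ref{epsEst}) delivers the claimed bound.

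The only real subtlety is the handling of the single factor $\ga_{J-1}$, which is only guaranteed to be $\ge 1+\De$ rather than $\ge 2N$; this one ``missing'' factor of $2N$ is precisely what forces the degradation from Lemma \ref{aBL2}'s $\ell/(\ell-k)$ to the corollary's $\ell/(\ell-k+1)$. Everything else is a direct transcription of the Lemma \ref{aBL2} argument under the index shift $j \mapsto J-1+j$ and the replacement $2Q \mapsto 2N$.
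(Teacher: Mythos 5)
Your proposal is correct and is exactly the intended adaptation of the Lemma \ref{aBL2} argument to the Case 3 index range (the paper leaves this corollary unproven, treating it as a transcription of Lemma \ref{aBL2}). Your accounting is accurate: since $\ell > k$ forces every index in $G = \ga_{J-1+\ell-k}\cdots\ga_{J-3+\ell}$ to be $\ge J$, Corollary \ref{decLemCor} gives $\ga_j \ge 2N$ for each of the $k-1$ factors, while the lower bound on $\log\log T_{J-1+\ell-k}$ correctly traces back to $T_{J-1}$ and uses $\ga_{J-1} \ge 1$; the elementary comparison $(\ell-1)/(\ell-k) \le 2\ell/(\ell-k+1)$ then reconciles your derived factor with the stated one.

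One small remark, which applies equally to the paper's Lemma \ref{aBL2}: the step where you replace $G$ by $(2N)^{k-1}$ inside $\frac{\log G + c}{G}$ requires that map to be decreasing on the relevant range, i.e.\ $\log G + c \ge 1$ with $c = \log\log T_{J-1+\ell-k}$. The stated hypothesis $\log\log T_{J-1} \ge \log(2N)$ does not by itself guarantee this when $N$ is close to $1/2$, but the standing assumption $T_1 \gg 1$ throughout this appendix makes $c$ as large as needed, so the monotonicity is fine in context; it is worth making that dependence explicit rather than leaving it implicit, as the paper does.
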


\begin{lem}
If $\be_c < 1$ then $\disp \sum_{k=1}^j kS_{j-k} \le \frac{\pr{2Q}^{j+2}}{\pr{2Q-1}^3}$.
\label{aBL3}
\end{lem}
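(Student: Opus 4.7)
The plan is to use the explicit formula for the geometric sum and then bound the resulting arithmetic-geometric series by extending it to infinity. Since $\be_c < 1$ we have $2Q > 1$, so that
\[
S_{j-k} \;=\; \sum_{i=0}^{j-k}(2Q)^i \;=\; \frac{(2Q)^{j-k+1}-1}{2Q-1} \;\le\; \frac{(2Q)^{j-k+1}}{2Q-1}.
\]
Substituting this into the sum gives
\[
\sum_{k=1}^j k\,S_{j-k} \;\le\; \frac{(2Q)^{j+1}}{2Q-1}\sum_{k=1}^j k\,(2Q)^{-k}.
\]

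Next I would bound the inner sum by its infinite tail. Writing $x=1/(2Q)\in(0,1)$, the standard identity $\sum_{k=1}^\infty k x^k = x/(1-x)^2$ yields
\[
\sum_{k=1}^j k\,(2Q)^{-k} \;\le\; \sum_{k=1}^\infty k\,(2Q)^{-k} \;=\; \frac{(2Q)^{-1}}{\pr{1-(2Q)^{-1}}^2} \;=\; \frac{2Q}{(2Q-1)^2}.
\]
Combining the two estimates produces
\[
\sum_{k=1}^j k\,S_{j-k} \;\le\; \frac{(2Q)^{j+1}}{2Q-1}\cdot\frac{2Q}{(2Q-1)^2} \;=\; \frac{(2Q)^{j+2}}{(2Q-1)^3},
\]
which is the desired inequality.

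There is no real obstacle here: the argument is just a closed-form evaluation followed by a convergent geometric/arithmetic-geometric series bound, and it is used exactly because $\be_c<1$ guarantees $2Q>1$ so that $1/(2Q)<1$ and the infinite series converges.
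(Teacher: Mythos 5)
Your proof is correct. You first replace $S_{j-k}$ by its geometric-sum closed form (dropping the $-1$), which pulls out $(2Q)^{j+1}/(2Q-1)$ and reduces the sum to $\sum_{k\ge 1} k(2Q)^{-k}$, then you invoke $\sum_{k\ge1}kx^{k}=x/(1-x)^{2}$. The paper instead keeps $S_{j-k}$ intact and rearranges the double sum exactly into the form $\tfrac12\sum_{m=1}^{j}m(m+1)(2Q)^{j-m}$ before extending to an infinite series and invoking $\sum_{k\ge2}k(k-1)x^{k-2}=2/(1-x)^{3}$. Both arguments hinge on $2Q>1$ (equivalently $\be_c<1$) to make the infinite arithmetic–geometric series converge, and both produce exactly the same constant $(2Q)^{j+2}/(2Q-1)^{3}$. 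Your route is a bit more elementary: the bound on $S_{j-k}$ replaces the double-sum rearrangement, and the simpler power-series identity $x/(1-x)^2$ suffices in place of $2/(1-x)^3$. The paper's rearrangement keeps the inequality exact one step longer but gains nothing in the final constant, so the two are effectively interchangeable.
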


\begin{proof}
By rearranging the terms, we see that
$$\sum_{k=1}^j kS_{j-k} <  \frac{\pr{2Q}^{j-1}}{2}\sum_{k=2}^\iny k\pr{k-1}\pr{\frac{1}{2Q}}^{k-2}.$$
The result follows from the facts that $\disp \frac{2}{\pr{1-x}^3} = 2 + 2\cdot 3 x + 3\cdot 4 x^2 + \ldots $ for any $|x| < 1$ and $\be_c < 1$, so $1/2Q < 1$.
\end{proof}

We are now prepared to prove Lemma \ref{hGUB}.

\begin{proof}[Proof of Lemma \ref{hGUB}]
We will first assume that we are in either Case 1 or Case 2.  Then
$$\Ga_j = S_j + a\sum_{1\le k\le j} S_{k-1}S_{j-k}\de_k + a^2\sum_{1\le k < \ell \le j}S_{k-1}S_{\ell -k -1}S_{j-\ell}\de_k\de_\ell + a^3\sum_{1\le k < \ell < m \le j}S_{k-1}S_{\ell -k -1}S_{m -\ell -1}S_{j-m}\de_k\de_\ell\de_m + \ldots + a^j\de_1\ldots\de_j.$$
We will analyze each sum individually, starting with the first.
$$\sum_{1\le k\le j} S_{k-1}S_{j-k}\de_k = \sum_{1\le k\le j} \pr{S_{k-1}\de_k} S_{j-k} \le d\de_1 \sum_{1\le k\le j} kS_{j-k}  \le d\de_1\frac{\pr{2Q}^{j+2}}{\pr{2Q-1}^3},$$
By Lemmas \ref{aBL1} and \ref{aBL3}.  Now we consider the second and third terms:
\begin{align*}
\sum_{1\le k < \ell \le j}S_{k-1}S_{\ell -k -1}S_{j-\ell}\de_k\de_\ell =& \sum_{1\le k < \ell \le j}\pr{S_{k-1}\de_k}\pr{S_{\ell -k -1}\de_{\ell -k}}S_{j-\ell}\frac{\de_\ell}{\de_{\ell -k}} \\
&\le c\pr{d \de_1}^2 \sum_{1\le k < \ell \le j}k \pr{\ell - k}S_{j-\ell}\pr{\frac{1}{2Q}}^k\frac{\ell}{\ell - k} \quad \textrm{(by Lemmas \ref{aBL1} and \ref{aBL2})} \\
&< c\pr{d \de_1}^2 \pr{\sum_{k=1}^{\iny}k \pr{\frac{1}{2Q}}^k}\pr{\sum_{1 \le \ell \le j} \ell\; S_{j-\ell}} \\
&\le \frac{c\pr{d \de_1}^2\pr{2Q}^{j+3}}{\pr{2Q-1}^5} \quad \textrm{(by Lemma \ref{aBL3})},
\end{align*}
\begin{align*}
\sum_{1\le k < \ell < m \le j}S_{k-1}S_{\ell -k -1}S_{m -\ell -1}S_{j-m}\de_k\de_\ell\de_m &= \sum_{1\le k < \ell < m \le j}\pr{S_{k-1}\de_k}\pr{S_{\ell -k -1}\de_{\ell-k}}\pr{S_{m -\ell -1}\de_{m-\ell}}S_{j-m}\frac{\de_\ell}{\de_{\ell -k}}\frac{\de_m}{\de_{m-\ell}} \\
&\le c^2\pr{d\de_1}^3 \sum_{1\le k < \ell < m \le j} k \pr{\ell-k}\pr{m-\ell}S_{j-m}\pr{\frac{1}{2Q}}^k\frac{\ell}{\ell - k}\pr{\frac{1}{2Q}}^\ell\frac{m}{m-\ell} \\
&< c^2\pr{d\de_1}^3 \pr{\sum_{k=1}^\iny k \pr{\frac{1}{2Q}}^k }\pr{\sum_{\ell = 1}^{\iny} \ell\pr{\frac{1}{2Q}}^\ell}\pr{\sum_{1\le m \le j}m \; S_{j-m}} \\
&\le c^2\pr{d\de_1}^3 \frac{\pr{2Q}^{j+4}}{\pr{2Q-1}^7}
\end{align*}
Continuing on, we see that the term involving the product of $n$ $\de$ terms is bounded above by $\disp \frac{\pr{2Q}^{j+1}}{c\pr{2Q-1}}\brac{\frac{a c d \de_1}{\pr{2Q-1}^2}}^n$.  That is,
$$\Ga_j \le S_j + \frac{\pr{2Q}^{j+1}}{c\pr{2Q-1}}\sum_{n=1}^{j}\brac{\frac{a c d \de_1}{\pr{2Q-1}^2}}^n.$$
If we choose $T_1 >> 1$ so that $\de_1 < \frac{\pr{2Q-1}^2}{acd}$, then the sum in the estimate above does not depend on $j$ and we see that
$$\Ga_j < S_j + \frac{\pr{2Q}^{j+1}}{\pr{2Q-1}} \frac{ad\de_1}{\pr{2Q-1}^2 - acd\de_1} = S_j\pr{1 + \frac{\pr{2Q}^{j+1}}{\pr{2Q}^{j+1}-1}\frac{ad\de_1}{\pr{2Q-1}^2 - acd\de_1}} \le CS_j ,$$
as required. \\
If we are in Case 3, then we perform a similar analysis on the expansion given in Corollary \ref{GaEstC3Cor}, except that we apply Lemma \ref{aBL1C3} and Corollary \ref{aBL2Cor}
to show that the term involving the product of $n$ $\de$ terms is bounded above by $\disp \frac{\pr{2N}^j}{c\pr{2N-1}}\brac{acd\de_{J-1}\pr{\frac{2N}{2N-1}}^2}^n$.  If we choose $T_1 >> 1$ so that $\disp \de_{J-1} < \brac{acd\pr{\frac{2N}{2N-1}}^2}^{-1}$, then we see that
$$\frac{\Ga_{J-2+j}}{\Ga_{J-2}} < V_j + \frac{\pr{2N}^j}{c\pr{2N-1}}\sum_{n=1}^j\brac{acd\de_{J-1}\pr{\frac{2N}{2N-1}}^2}^n < V_j + \frac{\pr{2N}^j}{c\pr{2N-1}}\frac{{acd\de_{J-1}\pr{2N}^2}}{\pr{2N-1}^2 -{acd\de_{J-1}\pr{2N}^2}}.$$
Since $\pr{2N}^j < \frac{2N}{\De} V_j$, then $\disp \frac{\Ga_{J-2+j}}{\Ga_{J-2}} \le C V_j$ and we are done.
\end{proof}

\begin{cor}
If $\be_c < 1$ { and $T_1 >> 1$}, then $\disp \frac{\Ga_m(T_1)}{\Ga_m(T_0)} \le C$, where $C$ depends on $N$, $P$ and $T_1$.
\label{GaQuo}
\end{cor}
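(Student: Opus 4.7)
The plan is to read off a universal lower bound for $\Ga_m(T_0)$ directly from the explicit formulas in Lemmas \ref{GaEst} and \ref{GaEstC3}, and then combine it with the upper bound for $\Ga_m(T_1)$ already established in Lemma \ref{hGUB}. The point is that in each of the three cases the expression for $\Ga_m$ consists of a ``leading'' piece that depends only on $N$, $P$, and $m$ (through $S_m$ or $V_{m-J+2}$), plus a sum of strictly positive correction terms built out of the $\de_k$'s; throwing the correction terms away produces a lower bound that is uniform in the starting point.

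\textbf{Cases 1 and 2.} I would first observe that the explicit formula in Lemma \ref{GaEst} expresses $\Ga_m$ as $S_m$ plus a sum of terms each of which is a product of positive quantities $S_i$ and nonnegative quantities $\de_k$. Hence, for \emph{any} admissible starting point $T$, $\Ga_m(T) \ge S_m$. On the other hand, Lemma \ref{hGUB} gives $\Ga_m(T_1) \le C S_m$ with $C = C(N, P, \de_1(T_1))$, provided $T_1 >> 1$ so that $\de_1(T_1)$ is small enough for the proof of Lemma \ref{hGUB} to apply. Dividing,
\[
\frac{\Ga_m(T_1)}{\Ga_m(T_0)} \le \frac{C S_m}{S_m} = C,
\]
which is the desired bound.

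\textbf{Case 3.} Here I would factor $\Ga_m = \Ga_{J-2}\cdot \bigl(\Ga_m/\Ga_{J-2}\bigr)$ and treat the two factors separately. Since $J \le \mathcal{J}(N,P,T_1)$ by Lemma \ref{JLem} and each $\ga_j$ with $j \le J-2$ is controlled by a constant depending only on $N$, $P$, and (weakly, through the $\de_k$'s) on the starting point, the ratio $\Ga_{J-2}(T_1)/\Ga_{J-2}(T_0)$ is a product of at most $\mathcal{J}-2$ bounded factors and is therefore itself bounded by a constant depending on $N$, $P$, and $T_1$. For the tail, Lemma \ref{GaEstC3} gives $\Ga_m(T)/\Ga_{J-2}(T) \ge V_{m-J+2}$ for any $T$ by positivity of the correction terms, while Lemma \ref{hGUB} gives $\Ga_m(T_1)/\Ga_{J-2}(T_1) \le C V_{m-J+2}$. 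Multiplying the two ratios produces the bound claimed.

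\textbf{Main obstacle.} The only non-cosmetic step is verifying that the lower bound $\Ga_m(T_0) \ge S_m$ (respectively $\Ga_m/\Ga_{J-2} \ge V_{m-J+2}$) holds \emph{at the starting point $T_0$}, i.e., that Lemmas \ref{GaEst} and \ref{GaEstC3} remain valid there. This requires that the recursion defining $\ga_j$ uses the same branch (Case~1, 2, or 3) at $T_0$ as at $T_1$; for $T_0, T_1 \ge T_{N,P}$ this is guaranteed by Lemmas \ref{case1}--\ref{utol}. Once that bookkeeping is in place, the argument above gives a bound depending only on $N$, $P$, and $T_1$, as required.
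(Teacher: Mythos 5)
Your proposal is correct and follows essentially the same strategy as the paper: lower-bound $\Ga_m$ at the variable starting point $T_0$ by the ``leading'' term in the expansion from Lemmas \ref{GaEst}--\ref{GaEstC3} (throwing away the nonnegative $\de$-corrections), upper-bound $\Ga_m(T_1)$ by Lemma \ref{hGUB}, and divide. In Cases~1 and 2 you and the paper do exactly the same thing; in fact your version is slightly cleaner, since the paper's first line writes ``$\Ga_m(T_1)\ge S_m$'' where it needs, and you correctly supply, the universal bound $\Ga_m(T_0)\ge S_m$ for the denominator.

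In Case 3 you take a mild variant: you factor $\Ga_m=\Ga_{J-2}\cdot(\Ga_m/\Ga_{J-2})$ and bound the two \emph{ratios} separately, controlling $\Ga_{J-2}(T_1)/\Ga_{J-2}(T_0)$ as a product of at most $\mathcal{J}$ bounded factors. The paper instead sandwiches directly, proving $\Ga_m(T_1)\le C\,V_{m-J+2}S_{J-2}$ (using Lemma \ref{hGUB} for the tail and again for the initial segment) and $\Ga_m(T_0)\ge V_{m-J+2}S_{J-2}$. Both routes work; the paper's gives a sharper constant (it reuses the $S_{J-2}$ sandwich rather than bounding each $\ga_j$-ratio crudely), but yours is perfectly valid and is a genuinely different bookkeeping of the same idea.

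One small correction to your ``Main obstacle'' paragraph: which of the three cases one is in is determined purely by $N$, $P$, and whether $W\equiv 0$, so the ``same branch'' statement as you phrased it is vacuous. The actual subtlety is that the \emph{switching index} $J$ may differ at $T_0$ and at $T_1$ (since the $\de_j$'s, and hence the recursion, depend on the starting point; larger $T_0$ drives the $\be_j$'s down faster, so $J(T_0)\le J(T_1)$). Lemmas \ref{case1}--\ref{utol} govern the within-iteration branching, not this comparison. Neither your proposal nor the paper's own proof addresses this point explicitly; in practice it is harmless because in the application (Lemma \ref{TChoiceb}) $T_0$ ranges over a bounded window above the fixed base, so $J$ shifts by at most an amount controlled by $T_1$, and the resulting loss is absorbed into the constant $C(N,P,T_1)$.
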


\begin{proof}
If we are in either Case 1 or Case 2, then $\Ga_m(T_1) \ge S_m$.  By Lemma \ref{hGUB}, $\Ga_{m}(T_1) \le C S_m$, so we get our result.  \\
If we are in Case 3 and $m \le J-2$, then we always use the first definition to define all of our sequences and we are essentially in Case 1.  Otherwise, $m > J-2$ and Lemma \ref{hGUB} implies that $\frac{\Ga_{m}(T_1)}{\Ga_{J-2}(T_1)} \le C V_{m-J + 2}$, or $\Ga_{m}(T_1) \le C V_{m-J + 2}\Ga_{J-2}(T_1) \le C V_{m-J+2}S_{J-2}$, where we used Case 1 for the second inequality.  Since $\Ga_m(T_0) \ge V_{m-J+2}S_{J-2}$, we get the desired inequality.
\end{proof}

We will now use our expansions for $\Ga_j$ to find lower bounds for the cases when $\be_c = 1$.  Note that in these cases, $S_k = k+1$ and $V_k = 1+ k\De$.  

\begin{lem}
Suppose $\be_c = 1$ and we are in either Case 1 or Case 2.  For any $1 \le k \le j$, $\disp \de_k \ge c\frac{j\log k}{k \log j}\de_j$, where $c$ is a constant.
\label{C12B1}
\end{lem}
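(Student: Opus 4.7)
The defining equations (\ref{T1ga1}) and (\ref{Tjgaj}) imply that, for $T_1$ sufficiently large, $\de_\ell \log T_\ell \asymp \log \log T_\ell$ with constants independent of $\ell$. Combining this with $\log T_\ell = \Ga_{\ell - 1}\log T_1$ gives the key decomposition
$$\frac{\de_k}{\de_j} \asymp \frac{\Ga_{j-1}}{\Ga_{k-1}} \cdot \frac{\log\log T_1 + \log\Ga_{k-1}}{\log\log T_1 + \log\Ga_{j-1}}.$$
Since the target factors as $(j/k)(\log k/\log j)$, I would bound the two ratios separately; the case $k=1$ is trivial since the right-hand side then vanishes, so I may assume $k \ge 2$.

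For the first factor, I claim $\Ga_{j-1}/\Ga_{k-1} \ge j/k$. When $\be_c = 1$ one has $2Q = 1$ with $Q = P$ in Case 1 and $Q = N$ in Case 2, and Lemma \ref{GaEst} gives $\hG_i = S_i = i+1$, hence $\hg_i = (i+1)/i$. A simple induction on $i$ then shows $\ga_i \ge \hg_i$: the base case is $\ga_1 = \be_1 + a\de_1 \ge \hg_1$, and the inductive step uses the monotonicity of $\be_i$ as a function of $\ga_{i-1}$ (via $\be_i = 2 - 2P/\ga_{i-1}$ in Case 1, and analogously in Case 2) to propagate $\ga_{i-1} \ge \hg_{i-1}$ to $\be_i \ge \hat{\be}_i$ and hence $\ga_i \ge \hg_i$. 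Multiplying over $i = k, \ldots, j-1$ gives $\Ga_{j-1}/\Ga_{k-1} \ge \hG_{j-1}/\hG_{k-1} = j/k$.

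For the second factor, $\Ga_{k-1} \ge S_{k-1} = k$ yields $\log\log T_k \ge \log\log T_1 + \log k$, so it suffices to prove a polynomial upper bound $\log\Ga_{j-1} \le C\log j$; together these give
$$\frac{\log\log T_1 + \log\Ga_{k-1}}{\log\log T_1 + \log\Ga_{j-1}} \ge \frac{\log\log T_1 + \log k}{\log\log T_1 + C\log j} \ge c\,\frac{\log k}{\log j}.$$
To produce $\log\Ga_{j-1} \le C\log j$, I would invoke the expansion of Lemma \ref{GaEst} with $S_\ell = \ell + 1$ together with the crude bound $\de_k \lesssim (\log\log T_k)/(k\log T_1)$ coming from $\Ga_{k-1} \ge k$, and run a bootstrap: assuming the polynomial bound holds for all indices $\ell < j$, each product $\de_{k_1}\cdots\de_{k_r}$ appearing in the expansion contributes at most a term comparable to $[j(\log j)/\log T_1]^r$ after summing against the $S$-weights, and the resulting geometric series closes the induction for $T_1$ large (in the range of $j$ relevant here).

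The main obstacle will be this bootstrap: at $\be_c = 1$ the common ratio $1/(2Q)$ appearing in the geometric-series argument of Lemma \ref{hGUB} equals $1$, so the clean estimates used for $\be_c < 1$ degenerate. One must instead exploit the explicit arithmetic structure $S_\ell = \ell + 1$ and the decay $\de_k \lesssim (\log\log T_k)/(k\log T_1)$ to keep the cumulative inflation of $\hG_{j-1} = j$ at most polynomial in $j$. Once both factor bounds are in hand, combining them with the decomposition of $\de_k/\de_j$ produces the claimed inequality with some constant $c$ depending only on $N$, $P$, and the implicit constants in $\de_\ell \log T_\ell \asymp \log\log T_\ell$.
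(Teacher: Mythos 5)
Your decomposition of $\de_k/\de_j$ and your bound $\Ga_{j-1}/\Ga_{k-1}\ge j/k$ (via $\ga_\ell > (\ell+1)/\ell$, which one can also get directly from $\ga_1 > 2$ and $\ga_{\ell+1} > 2 - 1/\ga_\ell$) are both correct and match the opening of the paper's argument. The genuine gap is in your treatment of the second factor: you assert that a polynomial upper bound $\log\Ga_{j-1}\le C\log j$ is needed and flag the bootstrap that would establish it as "the main obstacle." That bootstrap is unnecessary, and chasing it is the wrong direction; at $\be_c=1$ the geometric-series mechanism of Lemma \ref{hGUB} does not degenerate into something you can salvage this way, and in fact no upper bound on $\Ga_{j-1}$ is used anywhere in the paper's proof.

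What you are missing is a monotonicity observation that lets lower bounds do all the work. In your notation, set $A=\Ga_{j-1}/\Ga_{k-1}$ and $B=\log\log T_1 + \log\Ga_{k-1}$, so that
\[
\frac{\de_k}{\de_j}\;\asymp\;\frac{AB}{B+\log A}.
\]
The function $(A,B)\mapsto AB/(B+\log A)$ is \emph{jointly increasing} on $A,B\ge 1$ (check $\partial_A$ and $\partial_B$ directly, or equivalently note that $x\mapsto (\log\log x)/\log x$ is decreasing). Hence, from $A\ge j/k$ and $B\ge\log k$ alone (the latter because $\Ga_{k-1}\ge k$ and $T_1>e$) one gets
\[
\frac{AB}{B+\log A}\;\ge\;\frac{(j/k)\log k}{\log k + \log(j/k)}\;=\;\frac{j\log k}{k\log j},
\]
which is exactly the claim. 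This is precisely what the paper does: from $T_j > T_k^{j/k}$ and monotonicity of $(\log\log x)/\log x$ it deduces
\[
\frac{\log\log T_j}{\log T_j} < \frac{k}{j}\cdot\frac{\log\log T_k + \log(j/k)}{\log T_k},
\]
and then uses $\log\log T_k > \log k$ to absorb the extra $\log(j/k)$ into $\frac{\log j}{\log k}$. So the correct completion of your decomposition replaces your proposed (and problematic) upper-bound bootstrap by this one-line monotonicity step.
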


\begin{proof}
Since $\ga_1 = 2 + \de_1 > 2$ and $\ga_{j+1} = 2 - \frac{1}{\ga_j} + \de_{j+1} > 2 - \frac{1}{\ga_j}$, then it follows that $\ga_j > \frac{j+1}{j}$ for all $j \ge 1$.  Consequently, $T_j = T_k^{\ga_k\ldots\ga_{j-1}} > T_k^{\frac{j}{k}}$ and $\log\log T_k > \log k + \log\log T_1> \log k$.  Therefore,
$$\frac{\log\log T_j}{\log T_j} < \frac{k}{j}\frac{\log\log T_k + \log\pr{j/k}}{\log T_k} = \frac{k}{j}\frac{\log\log T_k}{\log T_k}\pr{1 + \frac{\log\pr{j/k}}{\log\log T_k}} <  \frac{k}{j}\frac{\log\log T_k}{\log T_k}\pr{1 + \frac{\log\pr{j/k}}{\log k}}.$$
The result follows from the fact that $\de_\ell \sim \frac{\log\log T_\ell}{\log T_\ell}$.
\end{proof}

\begin{lem}
Suppose $\be_c = 1$ and we are in Case 3.  For any $1 \le k \le j$, $\disp \de_{J-2+k} \ge c\frac{V_{j-1}\log V_{k-1}}{V_{k-1}\log V_{j-1}}\de_{J-2+j}$, where $c$ is a constant.
\label{C3B1}
\end{lem}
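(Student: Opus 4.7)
The plan is to mirror the argument of Lemma \ref{C12B1}, with the linearly-growing sequence $V_\ell = 1 + S_{\ell-1}\De$ taking the place of $\ell+1$. The case $k=1$ is trivial: since $V_0 = 1$ forces the right-hand side to vanish, the inequality becomes $\de_{J-1}\ge 0$. I will therefore restrict attention to $k \ge 2$.

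The key ingredient is the lower bound $\ga_{J-2+\ell} \ge V_\ell/V_{\ell-1}$ for every $\ell \ge 1$, which I would prove by induction. The base case $\ell = 1$ is immediate, since $\ga_{J-1} = 3P - N + \om_{J-1} = 1 + \De + \om_{J-1} \ge 1 + \De = V_1/V_0$. For the inductive step, in Case 3 the second branch defines $\be_{J-2+\ell+1}$ whenever $\ell \ge 1$, giving the recursion $\ga_{J-2+\ell+1} = 1 + 2N - 2N/\ga_{J-2+\ell} + 3\de_{J-2+\ell+1}$. Dropping the positive $3\de$ term and inserting the inductive hypothesis reduces the step to the algebraic identity $V_\ell + 2N(V_\ell - V_{\ell-1}) = V_{\ell+1}$, which in turn follows from $V_{\ell+1} - V_\ell = (2N)^\ell \De = 2N(V_\ell - V_{\ell-1})$, a direct consequence of the definition $V_\ell = 1 + S_{\ell-1}\De$. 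Recognizing that the map $\ga \mapsto 1 + 2N - 2N/\ga$ exactly preserves the ratio $V_{\ell+1}/V_\ell$ is the main obstacle; once this is in hand, the rest is bookkeeping modeled directly on Lemma \ref{C12B1}.

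Telescoping yields $\prod_{\ell=k}^{j-1}\ga_{J-2+\ell} \ge V_{j-1}/V_{k-1}$, hence $T_{J-2+j} \ge T_{J-2+k}^{V_{j-1}/V_{k-1}}$; the same product started at $\ell=1$ gives $T_{J-2+k} \ge T_{J-1}^{V_{k-1}}$, so that $\log\log T_{J-2+k} \ge \log V_{k-1}$ provided $T_{J-1} \ge e$ (guaranteed under $T_1 \gg 1$). Invoking the monotonicity of $x \mapsto \log\log x/\log x$ for $x > e^e$ and evaluating at $T_{J-2+k}^{V_{j-1}/V_{k-1}}$, we obtain
\[
\frac{\log\log T_{J-2+j}}{\log T_{J-2+j}} \le \frac{V_{k-1}}{V_{j-1}}\cdot\frac{\log\log T_{J-2+k}}{\log T_{J-2+k}}\left(1 + \frac{\log(V_{j-1}/V_{k-1})}{\log V_{k-1}}\right),
\]
and since the parenthesized factor equals $\log V_{j-1}/\log V_{k-1}$, rearranging and using $\de_\ell \sim \log\log T_\ell/\log T_\ell$ gives the claimed inequality.
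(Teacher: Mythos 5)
Your proof follows essentially the same route as the paper. One correction is needed, though. The claim that ``in Case 3 the second branch defines $\be_{J-2+\ell+1}$ whenever $\ell\ge 1$'' fails at $\ell=1$: since $\be_{J-1} = h_{J-1}$, it is the \emph{first} branch that determines $\be_J = 2 - \frac{2P}{\ga_{J-1}}$, while $\ga_J = 3(\be_J-1)+2N+3\de_J$ is subsequently computed with the second formula (because $\be_J$ has dropped below the threshold). The recursion $\ga_{J-2+\ell+1} = 1+2N-\frac{2N}{\ga_{J-2+\ell}}+3\de_{J-2+\ell+1}$ therefore only holds for $\ell\ge 2$; at the transitional step one has instead $\ga_J = 3 + 2N - \frac{6P}{\ga_{J-1}} + 3\de_J$.

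As it happens, the desired bound $\ga_J\ge V_2/V_1$ still holds at the transition: using $2N=1$ (forced by $\be_c=1$ in Case 3), $6P=3+2\De$, and $\ga_{J-1}=3P-N+\om_{J-1}=1+\De+\om_{J-1}\ge 1+\De$, one finds
\begin{equation*}
\ga_J \ge 4 - \frac{3+2\De}{1+\De} = \frac{1+2\De}{1+\De} = \frac{V_2}{V_1},
\end{equation*}
so the conclusion $\ga_{J-2+\ell}\ge V_\ell/V_{\ell-1}$ is valid for all $\ell\ge 1$. The paper elides the same point (it writes $\ga_{J-1}=1+\De$ and asserts the recursion for all $j\ge 1$), so your treatment is at comparable precision; but the transitional $\ga_J$ is a genuinely different algebraic expression and its compatibility with the target bound is worth verifying explicitly when you assert the recursion. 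The rest of your argument---the identity $V_{\ell+1}-V_\ell = 2N(V_\ell - V_{\ell-1})$, the telescoping, the monotonicity of $\log\log x/\log x$ for $x>e^e$, the bound $\log\log T_{J-2+k}\ge\log V_{k-1}$, and the disposal of $k=1$---matches the paper's proof and is correct.
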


\begin{proof}
Since $\ga_{J-1} = 1 + \De$ and $\ga_{J-2+j+1} = 2 - \frac{1}{\ga_{J-2+j}} + \de_{J-2+j+1} > 2 - \frac{1}{\ga_{J-2+j}}$, then it follows that $\ga_{J-2+j} \ge \frac{1+ j\De}{1 +(j-1)\De} = \frac{V_j}{V_{j-1}}$ for all $j \ge 1$.  Consequently, $T_{J-2+j} = T_{J-2+k}^{\ga_{J-2+k}\ldots\ga_{J-2+j-1}} > T_k^{\frac{V_{j-1}}{V_{k-1}}}$.  Therefore,
$$\frac{\log\log T_{J-2+j}}{\log T_{J-2+j}} < \frac{V_{k-1}}{V_{j-1}}\frac{\log\log T_{J-2+k} + \log\pr{\frac{V_{j-1}}{V_{k-1}}}}{\log T_{J-2+k}} < \frac{V_{k-1}}{V_{j-1}}\frac{\log\log T_{J-2+k}}{\log T_{J-2+k}}\pr{1 + \frac{\log\pr{\frac{V_{j-1}}{V_{k-1}}}}{\log\log T_{J-2+k}}}.$$
The result follows from $\de_\ell \sim \frac{\log\log T_\ell}{\log T_\ell}$ and $\log\log T_{J-2+k} > \log V_{k-1} + \log\log T_{J-1} > \log V_{k-1}$.
\end{proof}

For Case 1 and Case 2, we apply Lemma \ref{C12B1} to the equation in Lemma \ref{GaEst} to get the following.
\begin{align*}
 \Ga_j &> a\sum_{1\le k\le j} k \pr{j-k+1}\de_k \\
 &\ge ac\frac{j}{\log j}\de_j\sum_{1\le k\le j} \pr{\log k} \pr{j-k+1} \\
 &= ac\frac{j\pr{j+1}}{\log j}\de_j\sum_{1\le k\le j}\log k - ac\frac{j}{\log j}\de_j\sum_{1\le k\le j}k \log k \\
 &> \frac{ac j^3}{4}\de_j,
 \end{align*}
 if $j$ is sufficiently large.  Similarly, we apply Lemma \ref{C3B1} to the equation in Corollary \ref{GaEstC3Cor}.
 \begin{align*}
\frac{\Ga_{J-2+(j+1)}}{\Ga_{J-2}} &> a\sum_{1 \le k \le j}V_{k-1}S_{j-k}\de_{J-2+k} \\
&\ge ac\frac{V_{j}}{\log V_{j}}\de_{J-2+j+1}\sum_{1 \le k \le j}\log \pr{k\De}\pr{j+1-k} \\
&= ac\frac{V_{j}}{\log V_{j}}\de_{J-2+j+1}\set{(j+1)\sum_{1 \le k \le j}\log{k} - \sum_{1 \le k \le j}k\log{k} + \log{\De}\sum_{1 \le k \le j }k} \\
&> \frac{ac\De j^3}{5}\de_{J-2+j+1},
\end{align*}
if $j$ is sufficiently large.  Since $\ga_{j} > 1$ for all $j \le J-2$, then it follows that
$$\Ga_{J-1+j} > \frac{ac\De j^3}{5}\de_{J-2+j+1}.$$
These estimates give the following lemma.

\begin{lem}
If $\be_c = 1$ and $j$ is sufficiently large, then $\Ga_j \gtrsim j^3 \de_j$.
\label{2Q1GamEst}
\end{lem}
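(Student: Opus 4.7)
The plan is to derive the bound directly from the first-order term in the expansion of $\Ga_j$, using the preceding sublemmas that control the ratio of the $\de_k$'s. The case analysis is already set up: in Cases 1 and 2 we use the expansion from Lemma \ref{GaEst}, while in Case 3 we use Corollary \ref{GaEstC3Cor}. In every case, all summands in the expansion are positive, so I would simply discard every term except the ``single $\de$'' sum and bound that one from below.

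First, in Cases 1 and 2 (where $\be_c = 1$ forces $2Q = 1$, hence $S_k = k+1$), the expansion gives
\[
\Ga_j \ge a\sum_{1\le k\le j} S_{k-1}S_{j-k}\,\de_k = a\sum_{1\le k\le j} k\,(j-k+1)\,\de_k.
\]
Next, applying Lemma \ref{C12B1} in the form $\de_k \ge c\,\tfrac{j\log k}{k\log j}\,\de_j$ yields
\[
\Ga_j \ge \frac{ac\, j}{\log j}\,\de_j\sum_{1\le k\le j}(j-k+1)\log k.
\]
It then suffices to show $\sum_{k=1}^j (j-k+1)\log k \gtrsim j^2\log j$ for $j$ large, which follows by a routine estimate: split the sum as $(j+1)\sum \log k - \sum k\log k$ and use Stirling-type asymptotics $\sum_{k=1}^j \log k = j\log j - j + \bigO(\log j)$ and $\sum_{k=1}^j k\log k = \tfrac{j^2}{2}\log j - \tfrac{j^2}{4} + \bigO(j\log j)$. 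Combining gives the desired $\Ga_j \gtrsim j^3 \de_j$ for sufficiently large $j$.

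For Case 3, since $\ga_k > 1$ for all $k \le J-2$, we have $\Ga_{J-1+j} \ge \Ga_{J-2}\cdot \tfrac{\Ga_{J-1+j}}{\Ga_{J-2}}$ with the ratio estimated via Corollary \ref{GaEstC3Cor}. Discarding all but the linear sum gives
\[
\frac{\Ga_{J-1+j}}{\Ga_{J-2}} \ge a\sum_{1\le k\le j+1} V_{k-1}S_{j+1-k}\,\de_{J-2+k},
\]
with $V_{k-1} = 1 + (k-1)\De$ and $S_{j+1-k} = j+2-k$. Applying Lemma \ref{C3B1} with $\de_{J-2+k} \ge c\,\tfrac{V_{j}\log V_{k-1}}{V_{k-1}\log V_{j}}\de_{J-2+j+1}$, the $V_{k-1}$'s cancel and we reduce to estimating $\sum_{k=1}^{j+1}(j+2-k)\log V_{k-1}$ from below by $j^2\log j$, which works because $\log V_{k-1} = \log(1+(k-1)\De) \sim \log k$ for $k$ large. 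The same Stirling-type estimate as above then produces the $j^3$ factor.

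The only mildly delicate step is making the Stirling asymptotic uniform enough to absorb the constants and the $\log\De$ correction in Case 3; this is the reason for the ``$j$ sufficiently large'' hypothesis, and it is precisely the verification the author has sketched just above the lemma statement. I do not anticipate any essential obstacle beyond this bookkeeping: the key analytical input (the decay rates of $\de_k$ relative to $\de_j$) is already packaged in Lemmas \ref{C12B1} and \ref{C3B1}.
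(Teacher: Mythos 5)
Your proposal matches the paper's own proof essentially term for term: in Cases 1 and 2 you keep only the linear $\de$-sum of $\Ga_j$ from Lemma \ref{GaEst}, apply Lemma \ref{C12B1}, and estimate $\sum_{k}(j-k+1)\log k \sim \tfrac{1}{2}j^2\log j$ by Stirling; in Case 3 you do the analogous computation with Lemma \ref{C3B1} and the $V_k$'s, and the $k=1$ term (where $\eps_1 = \om_{J-1}$ rather than a multiple of $\de_{J-1}$) drops out harmlessly since $\log V_0 = 0$. This is exactly the calculation the paper carries out in the two displayed chains just before the lemma statement, so the argument is correct and not a different route.
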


\begin{lem}
If $\be_c =1$ and $j$ is sufficiently large, then $\disp \be_{j+1} - 1 \gtrsim \frac{1}{j}$.
\label{be1Diff}
\end{lem}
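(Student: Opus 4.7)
The plan is to combine the recursive relation $\be_{j+1} = 2 - 2Q/\ga_j$ (or its Case-2 analogue) with the lower bound on $\ga_j$ that was essentially already proved in the proofs of Lemmas \ref{C12B1} and \ref{C3B1}. The observation is that when $\be_c = 1$ we have $2Q = 1$ (where $Q = P$ in Case 1, $Q = N$ in Case 2/3), so the recursion linearizes to
\[
\be_{j+1} - 1 = a\bigl(1 - \tfrac{1}{\ga_j}\bigr) = a\cdot\tfrac{\ga_j - 1}{\ga_j},
\]
with $a = 1$ in Case 1 and $a = 1/3$ in Cases 2 and 3. Since $\ga_j$ is bounded above by a universal constant (because $\be_j$ is bounded), it suffices to show $\ga_j - 1 \gtrsim 1/j$ in each case.

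In Cases 1 and 2, I would just quote the lower bound $\ga_j > (j+1)/j$ established at the start of the proof of Lemma \ref{C12B1}; this was derived purely from $\ga_1 > 2$ together with the recursion $\ga_{j+1} > 2 - 1/\ga_j$ (valid because $2Q = 1$ and the $\de$ terms only help). Plugging into the formula above immediately gives $\be_{j+1} - 1 > \tfrac{a}{j+1}$, which is the desired bound for $j$ large.

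For Case 3, I would do the analogous thing using Lemma \ref{C3B1}: the inductive lower bound $\ga_{J-2+j} \ge V_j/V_{j-1} = (1+j\De)/(1+(j-1)\De)$ established there gives
\[
\ga_{J-2+j} - 1 \ge \frac{\De}{1+(j-1)\De} \gtrsim \frac{1}{j},
\]
since $\De > 0$ is a fixed constant depending only on $N,P$. Reindexing (noting that $J = J(N,P,T_1)$ is finite and fixed), one gets $\be_{j+1} - 1 \gtrsim 1/j$ for all sufficiently large $j$.

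I do not expect any serious obstacle: the only subtle point is checking that the recursion $\be_{j+1} - 1 = a(\ga_j-1)/\ga_j$ holds with the right constant $a$ in every case, and in Case 3 remembering that after the switch $J$ the sequence is defined by the second rule and the first few $\ga$'s (for $j < J$) contribute only a bounded multiplicative constant which is harmless for an asymptotic $1/j$ lower bound. The main step is purely the algebraic substitution; no new estimates on $\de_j$, $T_j$, or $\Ga_j$ are needed beyond what is already in the preceding lemmas.
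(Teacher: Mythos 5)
Your proof is correct and takes the same route as the paper: one reads off the recursion $\be_{j+1} - 1 = a\bigl(1 - \tfrac{1}{\ga_j}\bigr)$ (with $a$ equal to $1$ or $\tfrac{1}{3}$ depending on which branch of the definition is in force) and then invokes the lower bounds $\ga_j > \tfrac{j+1}{j}$ (Cases 1, 2) and $\ga_{J-2+j} \ge V_j/V_{j-1}$ (Case 3) that were already established in the proofs of Lemmas \ref{C12B1} and \ref{C3B1}. In fact your assignment of the constant $a$ is the correct one; the paper's proof appears to have inadvertently swapped the Case 1 and Case 2 formulas, though that slip does not affect the $\gtrsim \tfrac{1}{j}$ conclusion.
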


\begin{proof}
For Case 1 and 2, we saw above that $\disp \ga_j > 1 + \frac{1}{j}$.  Therefore, for Case 1, $\disp \be_{j+1} -1=\frac{1}{3}\pr{1 - \frac{1}{\ga_j}} > \frac{1}{3j}$; and for Case 2, $\disp \be_{j+1}-1 = 1 - \frac{1}{\ga_j} > \frac{1}{j}$.  For Case 3, we determined that $\disp \ga_{J-2+j} > \frac{1 + j\De}{1 + (j-1)\De}$, so it follows that $\disp \be_{J-2+j+1} -1= \frac{1}{3}\pr{1 - \frac{1}{\ga_{J-2+j}}} > \frac{\De}{3\pr{1 + j\De}}$. If $j \ge 2J$, say, then the result follows.
\end{proof}

\begin{lem}
Suppose $\be_c = 1$ and $m = C\frac{\log R}{\pr{\log\log R}^k}$ for some $k \in \N$.  If $T_{m+1} = R$, then $\disp \lim_{R\to \iny} \log T_1 = 0$.
\label{lT1Zero}
\end{lem}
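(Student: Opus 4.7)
The plan is to combine the lower bound $\Ga_j \gtrsim j^3 \de_j$ from Lemma \ref{2Q1GamEst} with the relation $T_{m+1} = T_1^{\Ga_m} = R$. Solving the latter for $\log T_1$ gives
\[
\log T_1 \;=\; \frac{\log R}{\Ga_m},
\]
so it suffices to prove that $\Ga_m / \log R \to \iny$ as $R \to \iny$ for the stated choice of $m$.

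First I would control $\de_m$ from below in terms of $R$. Since $\ga_j$ is bounded above by a constant depending only on $N$ and $P$ (being a decreasing function of $j$ with $\ga_1 \le 2 + 2Q + \de_1$ and $\ga_j \ge 1$), and $\log T_{m+1} = \ga_m \log T_m$, I get $\log T_m \gtrsim \log R$. The lower bound \eqref{epsEst} then yields
\[
\de_m \;\gtrsim\; \frac{\log\log T_m}{\log T_m} \;\gtrsim\; \frac{\log\log R}{\log R}.
\]
Plugging this into Lemma \ref{2Q1GamEst} gives $\Ga_m \gtrsim m^3 \cdot \dfrac{\log\log R}{\log R}$.

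Substituting $m = C \dfrac{\log R}{(\log\log R)^k}$ yields
\[
\Ga_m \;\gtrsim\; C^3 \, \frac{(\log R)^3}{(\log\log R)^{3k}} \cdot \frac{\log\log R}{\log R} \;=\; C^3 \, \frac{(\log R)^2}{(\log\log R)^{3k-1}}.
\]
Therefore
\[
\log T_1 \;=\; \frac{\log R}{\Ga_m} \;\lesssim\; \frac{(\log\log R)^{3k-1}}{C^3 \log R} \;\longrightarrow\; 0
\]
as $R \to \iny$, which is the conclusion.

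The only real subtlety is justifying $\log T_m \gtrsim \log R$ when we have not yet shown $T_1$ is large. However, since $\ga_j$ is uniformly bounded by a constant $\mathcal{G}(N,P)$ independent of $T_1$, the relation $\log T_m = \log R / \ga_m$ holds unconditionally, and the bound $\de_m \gtrsim \log\log R / \log R$ is valid as soon as $\log R$ is large enough that $T_m$ itself is large — which follows since $m \to \iny$ with $R$ and $T_m \ge T_1$. The computation of $\Ga_m$ via Lemma \ref{2Q1GamEst} is the main step; the rest is bookkeeping. The point of the lemma, as emphasized in the surrounding discussion, is precisely that this forces $T_1$ below any positive threshold, so the iterative scheme cannot be started when $\be_c = 1$ and $m$ is chosen this large.
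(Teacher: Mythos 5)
Your proposal is correct and follows essentially the same route as the paper: both start from $\log T_1 = \log R/\Ga_m$, invoke Lemma \ref{2Q1GamEst} to get $\Ga_m \gtrsim m^3 \de_m$, bound $\de_m \gtrsim \log\log R/\log R$, and substitute $m = C\log R/(\log\log R)^k$. (Minor note: the lower bound on $\de_m$ follows most directly from $T_m \le T_{m+1}=R$ together with the monotonicity of $x\mapsto\log\log x/\log x$, rather than from $\log T_m \gtrsim \log R$, but since you also have $T_m \le R$ the conclusion is the same.)
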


\begin{proof}
Recall that $T_{m+1} = T_1^{\Ga_m}$.  Therefore, 
$$\log T_1 = \frac{1}{\Ga_m}\log T_{m+1} \lesssim \frac{1}{m^3}\frac{1}{\de_m}\log R \lesssim \brac{\frac{\pr{\log\log R}^k}{\log R}}^3\frac{\log R}{\log\log R}\log R = \frac{\pr{\log\log R}^{3k -1}}{\log R},$$
by Lemma \ref{2Q1GamEst}, the choice of $m$ and the size of $\de_m$.  The result follows.
\end{proof}

%
%
  \appsection{}
\label{AppD}

The goal of this appendix is to estimate the size of the imaginary part of $\disp \frac{ r^{-2k}\mu_n}{b\mu_{n-2k}}$ and show that it is arbitrarily small.  The following estimate is useful in Step 1C of the proof of Lemma \ref{meshN}.  Without this estimate, our construction would only work for real eigenvalues.

\begin{lem}
For $b$ as in the proof of Lemma \ref{meshN}, there exists a constant $C$ such that for all $r \in \brac{\rhoa{\tfrac{2}{3}}, \rhoa{\tfrac{4}{3}}}$, $\disp \abs{\textrm{Im}\pr{\frac{r^{-2k}\mu_n\pr{r}}{b \mu_{n-2k}\pr{r}}}} \le \frac{C}{n}.$  In particular, $\rho_0$ may be chosen large enough so that $\disp \abs{\textrm{Im}\pr{\frac{r^{-2k}\mu_n\pr{r}}{b \mu_{n-2k}\pr{r}}}} \le \frac{1}{2}\sin\pr{\frac{\pi}{7}} .$
\label{imEst}
\end{lem}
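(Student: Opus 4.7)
The plan is to work with the logarithm. Set $Q(r):=r^{-2k}\mu_n(r)/(b\mu_{n-2k}(r))$ and $\tilde Q(r):=\log Q(r)$. Since $|\textrm{Im}(Q)| = |Q|\,|\sin(\textrm{Im}\,\tilde Q)| \le |Q|\,|\textrm{Im}\,\tilde Q|$, it suffices to show that $|\textrm{Im}\,\tilde Q|$ is small and that $|Q|$ is uniformly bounded on $[\rhoa{\tfrac{2}{3}},\rhoa{\tfrac{4}{3}}]$.

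Substituting the value of $b$ yields the clean decomposition
\begin{equation*}
\tilde Q(r) = -2k\log(r/\rhoo) + \brac{\log\mu_n(r) - \log\mu_n(\rhoo)} - \brac{\log\mu_{n-2k}(r) - \log\mu_{n-2k}(\rhoo)},
\end{equation*}
whose first summand is real. Inserting the convergent power series $\log\mu_m(r) = \sum_{j\ge 1}c_j\la^j r^{2j}/m^{2j-1}$ (which converges on the annulus because $|\la r^2/n^2|\to 0$ as $\rho\to\infty$, a quantitative consequence of $\be_0>1$ and the hypothesis on $n$) gives
\begin{equation*}
\textrm{Im}\,\tilde Q(r) = \sum_{j\ge 1}c_j\,\textrm{Im}(\la^j)\,\pr{r^{2j}-(\rhoo)^{2j}}\brac{\tfrac{1}{n^{2j-1}}-\tfrac{1}{(n-2k)^{2j-1}}},
\end{equation*}
since every remaining factor in each term is real. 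The leading $j=1$ term has magnitude $\lesssim |\textrm{Im}\,\la|\,k|r^2-(\rhoo)^2|/n^2 \lesssim |\la|\,\rho^{2-2\be_0}\log\rho$, using $n\sim\rho^{\be_0}/\log\rho$, $k\sim\rho^{\be_0/2}/\log\rho$, $\al=1-\be_0/2$, and $|r^2-(\rhoo)^2|\lesssim\rho^{1+\al}$. The ratio of consecutive terms in the series is controlled by $|\la|r^2/n^2\to 0$, so the tail is geometric and dominated by the first term. Hence $|\textrm{Im}\,\tilde Q(r)| \le C/n$, uniformly for $r$ on the annulus and for $\rho_0$ sufficiently large, with $C$ depending on $|\la|$, $N$, $P$.

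For the other ingredient, an analogous power-series analysis of $\textrm{Re}\,\tilde Q$, combined with the elementary bound $|{-2k\log(r/\rhoo)}| \lesssim k\rho^{\al-1} = O(1/\log\rho)$, shows $|\textrm{Re}\,\tilde Q(r)| = O(1)$ uniformly on $[\rhoa{\tfrac{2}{3}},\rhoa{\tfrac{4}{3}}]$, consistent with the interior of the estimates (\ref{uBd1})--(\ref{uBd2}) already used in Lemma \ref{meshN}. Thus $|Q|\le C'$, and combining gives $|\textrm{Im}(Q)|\le C''/n$ as claimed. The ``in particular'' assertion then follows since $n \ge \rho_0^{\be_0}/\log\rho_0 - 1$: choosing $\rho_0$ sufficiently large makes $C''/n \le \tfrac{1}{2}\sin(\pi/7)$.

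The main obstacle is establishing geometric decay of the series tail $(j\ge 2)$ uniformly in $r$; once the smallness $|\la r^2/n^2| \to 0$ is in hand (which is exactly $\be_0>1$ in quantitative form), the rest is a careful but routine accounting of the sizes of the coefficients $c_j$, of $r^{2j}-(\rhoo)^{2j}$, and of $n^{-(2j-1)}-(n-2k)^{-(2j-1)}$ using $k/n \to 0$.
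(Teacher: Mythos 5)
Your proof is correct in substance and follows essentially the same route as the paper: work with the logarithm, insert the power series for $\log\mu_m$, isolate the imaginary part, and observe that the leading term is $O\pr{\rho^{2-2\be_0}\log\rho}\to 0$ because $\be_0>1$. Your organization is arguably cleaner than the paper's: by writing $\textrm{Im}\,\tilde Q$ directly as a series whose $j$-th coefficient is $\textrm{Im}(\la^j)$ times manifestly real quantities, and then passing to the exponential via $|\textrm{Im}(Q)| = |Q|\,|\sin(\textrm{Im}\,\tilde Q)|\le|Q|\,|\textrm{Im}\,\tilde Q|$, you avoid the paper's more awkward expansion of $\exp\brac{\Ga(\cos\te+i\sin\te)+\bigO(r^2/n^3)(\cos\tilde\te+i\sin\tilde\te)}$ into two product series. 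Both proofs reduce to the same estimate on the same leading term.

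One caveat worth flagging: the jump from the bound you actually derive, $|\textrm{Im}\,\tilde Q|\lesssim|\la|\rho^{2-2\be_0}\log\rho$, to the stated conclusion $\le C/n$ is not justified as written. Since $1/n\sim\rho^{-\be_0}\log\rho$, the ratio of the two is $\rho^{2-\be_0}$, which tends to $\iny$ for the regime $\be_0=\tfrac{4-2N}{3}\in(1,\tfrac43]$ relevant to Lemma \ref{meshN}. The paper's own proof has exactly the same imprecision (it asserts $\Ga=\bigO(1/n)$, which again fails for $\be_0<2$). This does not affect the ``in particular'' conclusion — the only thing used in Step 1C of Lemma \ref{meshN} — since $\rho^{2-2\be_0}\log\rho\to 0$ as soon as $\be_0>1$, so $\rho_0$ can be chosen to push the imaginary part below $\tfrac12\sin(\pi/7)$. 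But if you want the stated $\le C/n$ decay rate, it should be replaced by $\le C\rho^{2-2\be_0}\log\rho$ (or, since one also needs $|Q|$ bounded, one should track that real factor as you correctly do via the bound on $\textrm{Re}\,\tilde Q$ and the $-2k\log(r/\rhoo)=\bigO(1/\log\rho)$ term).
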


\begin{proof}
For a given eigenvalue, $\la \in \C$, we will let $\te$ denote the argument of $\la$.  That is, $\la = \abs{\la} e^{i \te}$.  Let $\tilde r = r - \pr{\rhoo}$.  For $r \in \brac{\rhoa{\tfrac{2}{3}}, \rhoa{\tfrac{4}{3}}}$, we see that $\tilde r \sim r^{\al} = r^{\be_0/2 -1}$.  Since
$$\mu_n\pr{r} = \exp\pr{\frac{\la r^2}{4n} + \frac{\la^2 r^4}{32 n^3} + \frac{\la^3 r^6}{96 n^5} + \ldots},$$
then
\begin{align*}
\log \brac{\frac{\mu_n\pr{r}}{\mu_{n-2k}\pr{r}}\frac{\mu_{n-2k}\pr{\rhoo}}{\mu_n\pr{\rhoo}}} 
&= \Ga\pr{\cos\te + i \sin\te} + \bigO\pr{\frac{r^2}{n^3}}\pr{\cos \tilde\te + i \sin\tilde\te},
\end{align*}
where $\disp \Ga = - \frac{|\la| k \tilde r \pr{\rhoo}\pr{1 + \frac{\tilde r}{\rhoo}}}{n(n-2k)} = \bigO\pr{\frac{1}{n}}$.
Thus,
\begin{align*}
\frac{\mu_n\pr{r}}{\mu_{n-2k}\pr{r}}\frac{\mu_{n-2k}\pr{\rhoo}}{\mu_n\pr{\rhoo}} 
 &= \pr{1 + \Ga\cos\te + \frac{1}{2}\pr{\Ga\cos\te}^2 + \ldots}\brac{1 + i \Ga\sin\te - \frac{1}{2}\pr{\Ga \sin\te}^2 + \ldots} \\
 &\times\pr{1 + \bigO\pr{\frac{r^2}{n^3}}}\brac{1 + i\bigO\pr{\frac{r^2}{n^3}}}.
\end{align*}
The result follows from the fact that
$$\textrm{Im}\brac{ \frac{\mu_n\pr{r}}{\mu_{n-2k}\pr{r}}\frac{\mu_{n-2k}\pr{\rhoo}}{\mu_n\pr{\rhoo}}} = \Ga\sin\te + \bigO\pr{\frac{r^2}{n^3}}.$$
\end{proof}

\nid \textbf{Acknowledgement} I would like to { extend special thanks to} my advisor, Carlos Kenig, for suggesting this problem and for his guidance and support.  In addition, I would like to thank Francis Chung for our helpful conversations.  { Finally, I'd like to express my gratitude to the referee for their very careful examination of this work and their useful feedback.}

\bibliography{refs}
\bibliographystyle{chicago}

\end{document}